\documentclass[letterpaper,twosided,11pt]{article}
\title{Computing the Cousin-Zuckerman Resolution and the Lusztig-Vogan Bijection} 
\author{Jack A. Cook}
\date{}
\usepackage{authblk}
\usepackage{amssymb}
\usepackage{amsmath}
\usepackage{stmaryrd}
\usepackage{mathrsfs}
\usepackage{hyperref} 
\usepackage{titling}
\usepackage{bbm}
\usepackage{dsfont} 
\usepackage{ bbold }
\usepackage[T1]{fontenc}
\usepackage{palatino}
\usepackage{mathpazo}
\usepackage{marvosym} 
\usepackage{tikz-cd}
\usepackage[margin=1.5in]{geometry} 
\usepackage{ragged2e}
\usepackage[utf8]{inputenc}
\usepackage[english]{babel} 
\usepackage{blindtext}

\usepackage{amsthm}
\usepackage{pgfplots}
\usepackage{tikz-3dplot}
\usepackage{pgffor}
\usepackage{comment}
\usepackage{setspace}
\usepackage{asymptote}
\usepackage{color}   
\usepackage{hyperref}
\hypersetup{
    colorlinks,
    citecolor=blue,
    filecolor=black,
    linkcolor=black,
    urlcolor=black
}
\usepackage[bottom]{footmisc}
\usepackage{graphicx}

\usetikzlibrary{shapes,calc,positioning}
\usetikzlibrary{calc,intersections}
\usetikzlibrary{bending}
	
\theoremstyle{plain} 

\newtheorem*{ntheorem}{Theorem}
\newtheorem*{nlemma}{Lemma}

\newtheorem{theorem}{Theorem}[section]

\newtheorem{lemma}[theorem]{Lemma}
\newtheorem{proposition}[theorem]{Proposition}
\newtheorem{corollary}[theorem]{Corollary}

\theoremstyle{definition}
\newtheorem{definition}{Definition}
 
\newtheorem{example}{Example}

\newtheorem*{nquestion}{Question}

\theoremstyle{remark} 
\newtheorem{remark}{Remark}

\newcommand{\C}{\mathbb{C}}

\newcommand{\HH}{\mathbb{H}}

\newcommand{\R}{\mathbb{R}}
\newcommand{\Z}{\mathbb{Z}}
\newcommand{\OO}{\mathbb{O}}

\newcommand{\KSpRes}{\widetilde{\mathcal{N}_\theta}}

\newcommand{\ip}[1]{\left \langle #1 \right \rangle}
\newcommand{\lrp}[1]{\left ( #1 \right ) }

\newcommand{\Mat}[1]{\begin{pmatrix}#1\end{pmatrix}} 
\newcommand{\script}[1]{\mathscr{#1}}
\newcommand{\close}[1]{\overline{#1}}
\newcommand{\ds}{\oplus}
\newcommand{\tensor}{\otimes}
\newcommand{\Sym}{\operatorname{Sym}}
\newcommand{\im}{\operatorname{Im }}

\newcommand{\comp}{\circ}

\newcommand{\Aut}{\operatorname{Aut}}

\newcommand{\Hom}{\operatorname{Hom}}

\newcommand{\Ind}{\operatorname{Ind}}

\newcommand{\Coind}{\operatorname{Coind}}

\newcommand{\lie}[1]{\mathfrak{#1}}

\newcommand{\ad}{\operatorname{ad}}
\newcommand{\Ad}{\operatorname{Ad}}

\newcommand{\gr}{\operatorname{gr }}

\usetikzlibrary{hobby}

\begin{document}
\maketitle

\begin{abstract} 
	The goal of this article is to give a proof of a result seemingly absent from the literature characterizing global sections of standard $\mathcal{D}$-modules on the flag variety. This characterization yields a mixture of the Langlands Classification of admissible representations with the Knapp-Zuckerman classification of tempered representations of a real reductive group. We use this result to compute the Cousin-Zuckerman resolution of the trivial representation in terms of standard $(\lie{g},K)$-modules. Further, in the case of $GL(n,\HH)$ we use this to prove the Lusztig-Vogan bijection for $n=2,3$ and compute the lowest $K$-type map for the zero and principal orbits for general $n$ as well as the image of the trivial representation for even orbits. 
\end{abstract}

\tableofcontents

\section{Introduction}

A guiding principle in the representation theory of real groups has been the Kostant-Kirillov orbit method. Roughly, this philosophy would attach to a nilpotent coadjoint orbit $\OO$ of a real Lie group a particular unitary representation $\Pi(\OO)$ satisfying certain conditions. These representations are the so called "unipotent" ones of \cite{Vogan1991}. One approach to constructing such a map $\Pi$ is outlined in \cite{Vogan1998} which we briefly recall below. One issue with this construction is that the resulting representations are not easily identified as unitary. One idea to prove this is given in \cite[Section 8]{Vogan1998}: attached to a fundamental Cartan subalgebra $\lie{h}^f$ (Definition 8.3 of \cite{Vogan1998}) we can associate a \textit{canonical real part}: a subspace on which the roots all take real values. Then for any $\lambda\in (\lie{h}^f)^*$ we can take its restriction to this canonical real part. Then we can associate to any virtual $(\lie{g},K)$-module an \textit{infinitesimal character size} which is going to be bounded by the canonical real part of the infinitesimal character. This definition then extends to orbit closures $\close{\OO}$ where we apply the preceding idea to get that the infinitesimal character of the associated graded module is bounded below by the infinitesimal character size of its associated variety. For any hermitian representation supported on an orbit closure with infinitesimal character as small as possible (i.e. equal to the infinitesimal character size of the orbit closure) must then have its negative part supported on the boundary. As we choose this representation to have minimal infinitesimal character for the given orbit $\OO,$ the negative part must be $0$ giving unitarity.

To compute the infinitesimal character size of an orbit and to thus show the anticipated representations are unitary, one needs to invoke a conjectural computable bijection between the sets

 \begin{equation}  \begin{tikzpicture}[
  every node/.style={align=center},
  arrow/.style={-Latex, thick}
  ]

\node (B) {$\resizebox{!}{.6cm}{$\left\{\begin{array}{c}
\pi\ \text{irreducible, tempered}\\
\text{with real infinitesimal character}
\end{array}\right\}$}$};

\node (A) [left=2.5cm of B] {$\resizebox{!}{.6cm}{$\left\{\begin{array}{c}
 (\OO,\mathcal{L})\\
\OO\subseteq i\lie{g}_\R^* \; \text{nilpotent } G_\R \text{-orbit}\\ \mathcal{L}\; G_\R \text{-invariant local system}\end{array}\right\}$}$};

\draw[arrow] (A.east) -- (B.west);

\end{tikzpicture}
\end{equation}
These two sets arise as bases of a certain Grothendieck group and thus abstractly correspond to one another, however giving a (canonical) bijection which respects the natural ordering on both sets is the true desire. This has been done explicitly in the case of $GL(n,\C)$ \cite{Achar2001} and abstractly for $G$ a simple complex group \cite{Bez2001}. Notice that this list does not include a real Lie group which does not admit a complex structure. One goal of this article is to give an example of a genuine real group where this bijection holds. Our methods for doing so involve exploiting results from $\mathcal{D}$-module theory in order to get a computable expression. The main theorem gives a new proof \cite[Theorem 16.5]{Vogan2007} which is better adapted to the geometry.  

Let us now be more precise about the setting. Let $G$ be a connected complex reductive algebraic group defined over $\R$ and $G_\R$ its set of real points (with corresponding map $\sigma_\R$). Let $\sigma_c:G\to G$ be a compact real form of $G.$ Set \[ \theta=\sigma_c\comp \sigma_\R:G\to G \]
 a Cartan involution of $G.$ Let $Z(G)$ denote the center of $G$ and $\Ad:G\to GL(\lie{g})$ the adjoint representation. The assumptions on $G$ guarantee that we may realize $G$ as a subgroup of $GL(n,\C)$ and that $\ker \Ad=Z(G).$ Therefore, define the \textit{adjoint group} $G_{ad}:=G/Z(G)\simeq \Ad(G).$ This can be realized as the subgroup $\operatorname{Int}(\lie{g})=\Aut(\lie{g})_0$ of inner automorphisms of $\lie{g}$.     

We now define two subgroups of interest: $K$ and $K_\theta.$  Set \begin{align*}  K=\{g\in G: \theta(g)=g\} && K_\theta=\{g\in G: \theta(g)=gz, z\in Z(G)\} \end{align*} 
Clearly $K\subseteq K_\theta$ is a proper inclusion and equality holds when $G$ is adjoint. Denote by $K_0$ the connected identity component of $K.$ By differentiating $\theta$ (and by abusing notation), we obtain an involution $\theta:\lie{g}\to \lie{g}$ which gives a decomposition \[ \lie{g}=\lie{k}\ds \lie{s} \] into the $+1$ and $-1$ eigenspaces of $\theta.$ It follows that $\lie{k}$ is the Lie algebra of $K$ (and $K_0$) and $\lie{s}$ is a $K_\theta$-invariant subspace under the adjoint representation.

 Let $\mathcal{N}$ be the cone of nilpotent elements in $\lie{g}$ and $\mathcal{N}_\theta=\mathcal{N}\cap \lie{s}.$ $K_0$ (and hence $K$ and $K_\theta$)  have finitely many orbits on $\mathcal{N}_\theta.$ Further, by \cite[Theorem 6]{KostantRallis1971}, $K_\theta$ has a unique open dense orbit $\mathbb{O}_{prin}$ on $\mathcal{N}_\theta$ whose irreducible components are given by the closures of $K_0$ orbits of elements of $\mathbb{O}_{prin}.$ This fact about components is not specific to the principal orbit. That is to say, for any $K_\theta$ (resp. $K$) orbit $\mathbb{O}$ on $\mathcal{N}_\theta,$ let $P_1,...,P_\ell$ be the finitely many maximal (with respect to the closure order) $K_0$ orbits contained in $\mathbb{O}.$ Then  \[ \close{\mathbb{O}}=\bigcup_{i=1}^\ell \close{P_i}.\]
By a result of Sekiguchi \cite{Sekiguchi1987}, the $K$-orbits on $\mathcal{N}_\theta$ are in bijection with the $G_\R$-orbits on $\mathcal{N}_{i\R}.$ Therefore, our map (1) from above can be rephrased in terms of $K$-orbits on $\mathcal{N}_\theta^*.$ Therefore, we are justified in only investigating the $K$-orbits.  

Let $X\in \mathcal{N}_\theta.$ By the Jacobson-Morozov theorem, we may complete this to an $\lie{sl}(2,\C)$-triple $\{H,X,Y\}$ which by \cite[Proposition 4]{KostantRallis1971} can be chosen with $H\in \lie{k}.$ $H$ acts on $\lie{g}, \lie{k},$ and $\lie{s}$ with integer eigenvalues and thus determines gradings $\lie{g}_i, \lie{k}_i, \lie{s}_i$ on each compatible with the Cartan decomposition \cite[Lemma 3.5]{Cook2025}. Denote by $\lie{g}^i$ (resp. $\lie{k}^i, \lie{s}^i$) the sum of all weight spaces $\lie{g}_j$ for $j\geq i.$ Taking the non-negative weights of $H$ on $\lie{g}$ we obtain a $\theta$-stable parabolic subalgebra $\lie{q}:=\lie{g}^0=\bigoplus_{i\geq 0} \lie{g}_i$ of $\lie{g}$ and hence a $\theta$-stable parabolic subgroup $Q$ of $G.$ Note that for a different choice of $X$ in $K_\theta\cdot X$ we obtain a conjugate parabolic subgroup to $Q$. Under our assumptions on $G,$ the stabilizer $G^X$ of $X$ is contained in $Q$ and thus $K^X$ is contained in $Q\cap K.$ As $Q$ is $\theta$-stable, its intersection with $K$ (and thus $K_0$) is parabolic in $K$ (and $K_0$). From this parabolic, we can construct a resolution of singularities of the orbit closure $\widetilde{\mathbb{O}}:=K\times_{Q\cap K} \lie{s}^2$ which has the structure of a homogeneous vector bundle over the projective homogeneous space $K/Q\cap K$ (see \cite[Section 3]{Cook2025} for details, note the notation is slightly different here). The proper birational map 

\begin{align*} \mu: \widetilde{\mathbb{O}}\to \close{\mathbb{O}} && \mu(k,A)=\Ad(k)A\end{align*}  is an isomorphism over the orbit.

Let $Y$ be any noetherian $G$-scheme, we write $\mathbb{K}^G(Y)$ to be the Grothendieck group of $\operatorname{Coh}^G(Y)$ the category of $G$-equivariant coherent sheaves on $Y.$ We call elements of $ \mathbb{K}^G(Y)$ \textit{virtual} coherent sheaves. If $[\mathcal{L}]\in \mathbb{K}^G(Y)$ is the class of an object in $\operatorname{Coh}^G(Y)$ we call this element \textit{genuine virtual} or simply \textit{genuine} for short. A result of Thomason \cite[Proposition 6.2]{Thomason1987} tells us that:
 
\begin{nlemma}    
	$\mathbb{K}^{K}(\widetilde{\OO})\simeq \mathbb{K}^{K\cap Q}(\lie{s}^2)\simeq \mathbb{K}^{K\cap Q}(*)$
\end{nlemma}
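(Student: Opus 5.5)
The argument has two steps: induction along the homogeneous bundle, then homotopy invariance for the vector space $\lie{s}^2$.

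\textbf{Step 1 (induction equivalence).} Recall $\widetilde{\OO}=K\times_{Q\cap K}\lie{s}^2$ is the quotient of $K\times \lie{s}^2$ by the free action $p\cdot(k,A)=(kp^{-1},\Ad(p)A)$ of $Q\cap K$. The map $\lie{s}^2\to \widetilde{\OO}$, $A\mapsto [1,A]$, identifies $\lie{s}^2$ with the fiber over the base point $eQ\cap K$ of $\widetilde{\OO}\to K/Q\cap K$.

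Pulling back along this inclusion and restricting the equivariance from $K$ to $Q\cap K$ defines a functor
\[ \operatorname{Coh}^K(\widetilde{\OO})\to \operatorname{Coh}^{Q\cap K}(\lie{s}^2). \]
Its quasi-inverse sends $\mathcal{F}$ to the descent of $\mathcal{O}_K\boxtimes \mathcal{F}$ along the $Q\cap K$-torsor $K\times \lie{s}^2\to \widetilde{\OO}$. Descent applies because $K\times\lie{s}^2\to\widetilde{\OO}$ is a principal bundle, locally trivial in the étale (even Zariski) topology, since $K\to K/Q\cap K$ is. Checking that the two functors are mutually inverse is standard faithfully flat descent. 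Both functors are exact, so they induce $\mathbb{K}^K(\widetilde{\OO})\simeq \mathbb{K}^{Q\cap K}(\lie{s}^2)$. This is the ``induction'' isomorphism in Thomason \cite{Thomason1987}.

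\textbf{Step 2 (homotopy invariance).} Here $\lie{s}^2$ is a finite-dimensional linear representation of $Q\cap K$, so $\lie{s}^2\to *$ is an equivariant vector bundle. Thomason's equivariant homotopy invariance theorem \cite[Proposition 6.2]{Thomason1987} then says that pullback
\[ \pi^*:\mathbb{K}^{Q\cap K}(*)\to \mathbb{K}^{Q\cap K}(\lie{s}^2) \]
is an isomorphism. Here $\mathbb{K}$ means $G$-theory, i.e.\ the Grothendieck group of coherent sheaves, and $*$ is a regular point.

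The hypotheses of that theorem are where some care is needed. It is stated for $G$-theory of noetherian schemes with a flat affine group action, and $Q\cap K$ is a linear algebraic group, so the hypotheses hold.

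A more hands-on proof of Step 2 is also available, and it is the step I expect to be the main obstacle. It should go as follows:
\begin{itemize}
\item Every $(Q\cap K)$-equivariant coherent module $M$ over $S=\Sym((\lie{s}^2)^*)$ admits a finite equivariant free resolution by modules $S\otimes V_i$ with $V_i$ finite-dimensional representations. For this, use graded Nakayama together with the fact that $M/S_+M$ is a finite-dimensional representation. When $Q\cap K$ is not reductive, instead choose a finite-dimensional generating subrepresentation, which exists by local finiteness.
\item The syzygy theorem guarantees the resolution is finite. This gives surjectivity of $\pi^*$.
\item For injectivity, use the inverse map $[M]\mapsto \sum(-1)^i[\operatorname{Tor}_i^S(M,\C)]$, i.e.\ derived restriction to the origin. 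This is well-defined and additive by the long exact sequence of Tor, and it is inverse to $\pi^*$ because $\operatorname{Tor}^S_i(S\otimes V,\C)$ is $V$ for $i=0$ and vanishes otherwise.
\end{itemize}

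Composing the isomorphisms of Steps 1 and 2 gives $\mathbb{K}^K(\widetilde{\OO})\simeq \mathbb{K}^{Q\cap K}(\lie{s}^2)\simeq \mathbb{K}^{Q\cap K}(*)$, which is the lemma. The last group is the representation ring of $Q\cap K$.
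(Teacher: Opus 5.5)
Your main argument is correct and is essentially the paper's. The paper gives no proof beyond citing Thomason. Your Step 1 (the induction equivalence $\operatorname{Coh}^K(K\times_{Q\cap K}\lie{s}^2)\simeq\operatorname{Coh}^{Q\cap K}(\lie{s}^2)$ via descent along the $Q\cap K$-torsor) and your Step 2 (homotopy invariance for the linear representation $\lie{s}^2$) are exactly the two ingredients packaged in that citation.

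The optional ``hands-on'' proof of Step 2, however, does not work as written. It fails precisely in the relevant case, because $Q\cap K$ is not reductive: its unipotent radical $U\cap K$ is nontrivial in general. Already for the principal orbit of $GL(2,\HH)$, $Q\cap K$ is a Siegel parabolic of $Sp(4,\C)$. There are two problems.
\begin{itemize}
\item Graded Nakayama needs a grading on $M$ compatible with the group action. The polynomial grading of $S$ does not grade an arbitrary equivariant module; for instance, a module supported away from the origin has $M/S_+M=0$.
\item With non-minimal generating subrepresentations, the syzygy theorem only tells you that the $d$-th syzygy $P$ ($d=\dim\lie{s}^2$) is projective. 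It does not stop the resolution, because for a non-reductive group an equivariant projective module need not be equivariantly of the form $S\tensor V$. Already for $\mathbb{G}_a\rtimes\mathbb{G}_m$ acting on $\C$ through $\mathbb{G}_m$, take $x$ of weight $-1$ and $e_1,e_2$ of weights $w+2,w$. The free module $Se_1\ds Se_2$, with $\mathbb{G}_a$ acting by $\exp(aN)$, $Ne_1=0$, $Ne_2=xe_1$, has $\mathbb{G}_a$ acting trivially on $P/S_+P$ but not on $P$. So it is not $S\tensor V$ for any $V$.
\end{itemize}
Consequently your sketch does not establish surjectivity of $\pi^*$.

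The missing idea is the cocharacter $\lambda(z)=z^H$ of $L\cap K$ coming from the neutral element $H\in\lie{k}$. It acts on $\lie{s}^2$ with weights $\geq 2$ and on $\lie{u}\cap\lie{k}$ with positive weights. Hence:
\begin{itemize}
\item every $Q\cap K$-equivariant finitely generated $S$-module is $\lambda$-graded, with weights bounded above;
\item $S_+$ strictly lowers weights;
\item $U\cap K$ strictly raises weights.
\end{itemize}
Now take the projective syzygy $P$ and its top weight space $P_{\max}$. It maps isomorphically onto $(P/S_+P)_{\max}$, and it is $Q\cap K$-stable. The map $S\tensor P_{\max}\to P$ is an equivariant inclusion onto a (non-equivariant) direct summand, with projective quotient. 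Induction on $\dim P/S_+P$ then gives $[P]=\pi^*[P/S_+P]$. With this, your truncated resolution gives surjectivity of $\pi^*$, and your $\operatorname{Tor}$ argument gives injectivity. Alternatively, simply rely on Thomason, as the paper does.
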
 
The final group is identified as the representation ring of $K\cap Q.$ Restriction to an open $G$-invariant subscheme is always surjective on equivariant $\mathbb{K}$-theory. Thus we get a surjective group homomorphism \begin{equation} \mathbb{K}(K\cap Q)\to \mathbb{K}^{K}(\mu^{-1}(\OO))\simeq \mathbb{K}^{K}(\OO)\simeq \mathbb{K}(K^X)\end{equation} 
which agrees with the usual restriction map on representations. 

\begin{remark} 
	We have phrased this all in terms of $K.$ The same results hold if we instead consider $K_0.$ 
\end{remark}

We are now equipped with the notation to describe the desired map in (1). Begin with an irreducible representation $\tau\in \widehat{K^X}.$ Write $S_\tau$ the sheaf of sections of the $K$-equivariant vector bundle $K\times_{K^X} \tau$ on $\OO.$ Choose a lift $\tilde{\tau}$ of $\tau$ to $\mathbb{K}(K\cap P)$ by (2). The lift $\tilde{\tau}$ gives rise to a finite rank virtual homogeneous vector bundle $V_\tau:=K\times_{K\cap Q}\tilde{\tau}$ on $K/(K\cap Q)$ which by taking its sheaf of sections, determines a virtual coherent sheaf $\mathcal{V}_{\tilde{\tau}}$ on $K/K\cap Q.$ Let $\pi:\widetilde{\OO}\to K/K\cap P$ be the projection onto the first factor. We have that $\pi^*\mathcal{V}_{\tilde{\tau}}$ is a $K$-equivariant virtual coherent sheaf on $\widetilde{\OO}.$ As $\mu$ is proper, $R^i\mu_*$ preserves coherence. We assemble all of these derived functor sheaves into a single virtual sheaf by taking the Euler Characteristic: \[ [\textbf{R}\mu_*(\pi^*\mathcal{V}_{\tilde{\tau}})]=\sum_i (-1)^i[R^i\mu_*(\pi^*\mathcal{V}_{\tilde{\tau}})].    \]
This virtual sheaf has the following advantage: 
\begin{lemma}  $[\textbf{R}\mu_*(\pi^*\mathcal{V}_{\tilde{\tau}})]|_\OO=S_\tau.$
\end{lemma}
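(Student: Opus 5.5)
Restriction to the open subscheme $\OO\subseteq\close{\OO}$ is exact and commutes with higher direct images, so I will compute $[\textbf{R}\mu_*(\pi^*\mathcal{V}_{\tilde{\tau}})]|_\OO$ by base change. Write $j:\OO\hookrightarrow\close{\OO}$ and $\tilde{j}:\mu^{-1}(\OO)\hookrightarrow\widetilde{\OO}$. Since $j$ is an open immersion, it is flat. Flat base change for the proper map $\mu$ then gives $j^*R^i\mu_*\mathcal{F}\simeq R^i(\mu|_{\mu^{-1}(\OO)})_*\tilde{j}^*\mathcal{F}$ for any $K$-equivariant coherent sheaf $\mathcal{F}$ on $\widetilde{\OO}$. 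These isomorphisms are compatible with the $K$-equivariant structures, because $j$ and $\tilde{j}$ are $K$-equivariant. Taking alternating sums, and using additivity in the virtual sheaf, the restriction of the Euler characteristic becomes the Euler characteristic of the pushforward along $\mu^{-1}(\OO)\to\OO$.

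Next I use that $\mu$ is an isomorphism over the orbit, so $\mu|_{\mu^{-1}(\OO)}:\mu^{-1}(\OO)\to\OO$ is an isomorphism. Its higher direct images therefore vanish for $i>0$, and $R^0$ is just transport of the sheaf along the isomorphism. Hence
\[ [\textbf{R}\mu_*(\pi^*\mathcal{V}_{\tilde{\tau}})]|_\OO=(\mu|_{\mu^{-1}(\OO)})_*\bigl(\pi^*\mathcal{V}_{\tilde{\tau}}|_{\mu^{-1}(\OO)}\bigr). \]
This reduces the statement to identifying $\pi^*\mathcal{V}_{\tilde{\tau}}$ restricted to $\mu^{-1}(\OO)$ with $S_\tau$ under this isomorphism. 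Everything involved is additive, so it suffices to treat a genuine $K\cap Q$-representation $\tilde{\tau}$ and then extend to virtual ones.

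To make this identification, I use the dictionary of the Lemma (Thomason) and its restriction to the open set. $\pi^*\mathcal{V}_{\tilde{\tau}}$ is the $K$-equivariant sheaf on $K\times_{K\cap Q}\lie{s}^2$ attached to $\tilde{\tau}$, namely the sections of $K\times_{K\cap Q}(\lie{s}^2\times\tilde{\tau})$. Now $\mu^{-1}(\OO)\simeq\OO\simeq K/K^X$ is a single $K$-orbit, through the point $[1,X]$ with stabilizer $K^X\subseteq K\cap Q$. A $K$-equivariant vector bundle on a homogeneous space $K/K^X$ is determined by its fiber at the base point as a $K^X$-representation. The fiber of $\pi^*\mathcal{V}_{\tilde{\tau}}$ at $[1,X]$ is $\tilde{\tau}$, with $K^X$ acting by restriction. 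So the restricted bundle is $K\times_{K^X}(\tilde{\tau}|_{K^X})$. By the choice of lift in (2), $\tilde{\tau}|_{K^X}=\tau$ in $\mathbb{K}(K^X)$, and this bundle is exactly $S_\tau$ in $\mathbb{K}^K(\OO)$.

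The main point needing care is the identification in the last step. One must check that the map (2) really is "pull back to $\mu^{-1}(\OO)$, then take the fiber at $[1,X]$." Equivalently, the composite of the Thomason isomorphism with restriction to the open set must be the naive restriction of representations from $K\cap Q$ to $K^X$. I would verify this directly from the induction equivalences $\operatorname{Coh}^K(K\times_{K\cap Q}\lie{s}^2)\simeq\operatorname{Coh}^{K\cap Q}(\lie{s}^2)$ and $\operatorname{Coh}^K(K/K^X)\simeq\operatorname{Rep}(K^X)$, noting that the pullback of the free module $\mathcal{O}_{\lie{s}^2}\otimes\tilde{\tau}$ to the orbit $(K\cap Q)\cdot X$ has fiber $\tilde{\tau}$ at $X$. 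The base-change step is routine.
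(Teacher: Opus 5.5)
Your proof is correct and matches the reasoning the paper leaves implicit: the paper states the lemma without proof, relying on its assertions that $\mu$ is an isomorphism over $\OO$ and that the map (2) agrees with ordinary restriction from $K\cap Q$ to $K^X$, and your fiber computation at $[1,X]$ verifies the latter. One simplification is available: the first step needs neither flat base change nor properness, since restriction to an open subset commutes with $R^i\mu_*$ for any morphism.
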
 
To avoid such cumbersome notation, we denote $[\textbf{R}\mu_*(\pi^*\mathcal{V}_{\tilde{\tau}})]$ by $\mathcal{M}(\OO,\tilde{\tau}).$ By taking sections over the orbit closure, we obtain a virtual representation $M(\OO,\tilde{\tau})$ of $K$ which lies in $\mathcal{C}(\lie{g},K)$ (see \cite[Definition 6.8]{Vogan1998}). Thus, by \cite[Theorem 8.2]{Vogan1998} we can write \[ M(\OO,\tilde{\tau})=\sum a_{\nu,\tilde{\tau}} [\operatorname{gr} \nu] \]
for $\nu$ irreducible tempered with real infinitesimal character (tempiric). We denote the infinitesimal character of $\nu$ by $\chi(\nu)$.  As the collection on the right hand side is finite we may define \begin{align} L(M(\OO,\tilde{\tau}))=\hat{\nu}&& ||\chi(\hat{\nu})||=\max_{a_{\nu, \tilde{\tau}}\neq 0} ||\chi(\nu)||. \end{align} where $\hat{\nu}$ is a tempiric representation with the largest infinitesimal character in norm appearing in the expansion. If we choose a different lift $\tilde{\tau}',$ we will get (generically) a different largest infinitesimal character when expanded in the basis of tempered representations. So, define \begin{align}  LKT(\OO,\tau)=\nu^\star && ||\chi(\nu^\star)||= \min_{\tilde{\tau}} ||\chi(L(M(\OO,\tilde{\tau})))|| \end{align} 
The desired map is in theory given by $(\OO,\tau)\mapsto LKT(\OO,\tau).$ The output of this would be the "smallest-largest" tempiric appearing in extensions of $\tau$ to the orbit closure. The justification for this naming convention will be explained in Section 2. As the following example shows, this is sadly impossible for general real groups.    

\begin{example} 
	Let $\lie{g}=\lie{sl}(2,\C)$ and $G_\R=SL(2,\R).$ Choose $\theta$ to be \[ \theta(g)=\Mat{1 & 0 \\ 0 & -1} g \Mat{1 & 0 \\ 0 & -1}. \] This comes from the isomorphism of $SL(2,\R)\simeq SU(1,1).$ 
	Now, \begin{align*} K=\left\{ \Mat{a & 0 \\ 0 & a^{-1}}: a\in \C^\times\right\}\simeq \C^\times  &&  \lie{s}=\left\{\Mat{0 & b \\ c & 0}: b,c\in \C\right\}\end{align*}  and thus \[ \mathcal{N}_\theta=\left\{ \Mat{0 & b \\ c & 0}: bc=0\right\}.\]
	We see that $K$ is connected and has two distinct maximal orbits on $\mathcal{N}_\theta$ \begin{align*} Y^+=K\cdot \Mat{0 & 1 \\ 0 & 0} && Y^-=K\cdot \Mat{0 & 0 \\ 1 & 0} \end{align*} 
	The closures of these orbits intersect at the origin making $\mathcal{N}_\theta$ a non-normal variety. The stabilizers of the given matrix for each orbit is $\pm I.$ For $Y^+$ (resp. $Y^-$) pick $H=\Mat{1 &0 \\ 0 & -1}$ (resp. $-H$). The associated parabolic subalgebra of $\lie{g}$ attached to $H$ is \[ \lie{b}=\lie{g}_0\ds \lie{g}_2\simeq \left\{\Mat{a & b \\ 0 & -a}: a,b\in \C \right\}.\] Notice $\lie{b}\cap \lie{k}=\lie{k}$ and thus $K$ is the parabolic subgroup of $K$ attached to $H.$ 
	
	Let us first deal with the zero orbit. As this orbit is closed, each coherent $K$-equivariant sheaf supported there is necessarily just a finite dimensional representation of $K.$ Writing this in the basis of associated graded modules for tempiric representations is simple for such a small group. There are tempiric representations attached to this choice of Cartan subgroup ($K$ in this case) enumerated by non-zero integers which we denote by $D_n^{+} (resp. D_n^-)$ with lowest $K$-type $n$ (resp. $-n$) and $K$-types $n+2k$ (resp. $-n-2k$).  These are the holomorphic (for $D_n^+$) and antiholomorphic (for $D_n^-$) discrete series representations of $SL(2,\R).$ In this scheme, we see that for $n>0$ (resp $n<0$) \[[\gr D_n^+]-[\gr D_{n+2}^+]=[n] \text{ (resp. }    [\gr D_n^-]-[\gr D_{n+2}^-]=[-n])\]      
	with equality happening in $\mathbb{K}^K(\mathcal{N}_\theta).$ Following the ideas outline above, we assign \[ (\OO_0, n)\mapsto \begin{cases}  D_{n+2}^+ & n>0 \\ D_{n+2}^{-} & n<0 \end{cases} .\]
	This fills in some portion of $\Z=\widehat{K}.$ It will help to see what is happening in the following picture:  
	
\[	\begin{tikzpicture}[>=stealth]

  \draw[->] (-6.5,0) -- (6.5,0) node[right] {};
  \foreach \x in {-6,-5,-4,-3,-2,-1,0,1,2,3,4,5,6}
    {
      \draw (\x,0.08) -- (\x,-0.08);            
      \node[below=2pt] at (\x,0) {\small $\x$}; 
    }

  \node[below=10pt, font=\bfseries] at (0,0) {};

  \node at (3,0) {\(\bigl[\)};

  \node at (-3,0) {\(\bigr]\)};
 
\foreach \x in {-6,-5,-4,-3}
{
   \pgfmathsetmacro{\y}{int(\x+2)}
  \node at (\x,1) {\tiny ($\OO_0,\y$)};
  \fill[red] (\x,0) circle (2pt);
}

\foreach \x in {3,4,5,6}
{
  \pgfmathsetmacro{\y}{int(\x-2)} 
  \node at (\x,1) {\tiny ($\OO_0,\y$)};
  \fill[red] (\x,0) circle (2pt);
}
 
\end{tikzpicture}.
	\]
	
\noindent Notice that we have omitted $n=0$ from this map. To properly deal with this trivial representation, we need to consider the final remaining tempiric representation: the spherical principal series $D_0^\pm$ of $SL(2,\R)$ which has $K$-types $0, \pm 2, \pm 4, ...$ This tempiric is attached to a final $K$-Langlands parameter for the non-split Cartan subgroup. Now, \[ [0]=[D^{\pm}_0]-[D_2^+]-[D^-_2].\] As both $D^+_2$ and $D_2^-$ have the same infinitesimal character ($\chi(\nu)=2$) there is no canonical choice for \textit{the} largest tempiric appearing.   
So, what should we send $(\OO_0,0)$ to? Before answering this, let us deal with the principal orbits.  
		
	There are two irreducible representations of $\pm I$ which we denote by $\mathbbm{1}$ (trivial)  and $\varepsilon$ (sign). Thus $\mathbb{K}(\pm I)\simeq \Z^2$ (as a $\Z$-module) and $\Z[x]/{x^2}$ as a ring.  We want to find lifts of these representations to virtual representations of $K$ whose representation ring we identify with a free $\Z$-module with basis consisting of characters $a\mapsto a^n.$ As $\operatorname{Res}_{\pm I}^K$ is only an abelian group homomorphism, we do not pay attention to the product on $\mathbb{K}(K).$ In particular, we identify:  \begin{align}  \mathbb{K}(K)\overset{\sim}{\longrightarrow}\Z[x,x^{-1}] && (a\mapsto a^n)\mapsto x^n  \end{align}  Notice that $1\in \mathbb{K}(K)$ is given by the trivial representation of $K.$ From this identification, the restriction map is given on the powers of $x$ as \[ \operatorname{Res}^K_{\pm I} (x^n)=\begin{cases}  \varepsilon & n=2k+1\\ \mathbbm{1} & n=2k	\end{cases} .\]
By some elementary linear algebra, we see that the kernel of the restriction map is the set \[ \left\{ \sum_{i=-N}^M a_ix^i: \sum_{\substack{i=-N \\ i\equiv 0\text{ mod } 2}}^M a_i=\sum_{\substack{i=-N \\ i \equiv 1\text{ mod }  2}}^M a_i=0  \right\}\]
which is clearly generated by the set of polynomials $x^i-x^j$ for $i\equiv j\mod 2.$ As an example, $2x^3-x-x^{-5}$ restricts to the zero virtual representation of $\pm I$. 

At this point, we can appeal to some facts about the orbit closures in $SL(2,\R)$ to finish the computation: the trivial representation on $Y^\pm$ should lift to the structure sheaf on $\C.$ The $K$ action ($z\in \C$ acts by $z^2)$ identifies $\C[\overline{Y^\pm}]$ with $D_2^\pm$ (the two smallest genuine discrete series). For the sign representation, we find that the extension is given by $D_1^\pm.$ We have an issue with all of this however. Notice that using the spherical principal series, we can also extend the trivial representation on either principal orbit as \[\mathcal{M}(Y^\pm,\mathbbm{1})=[D_0^\pm]-[D_2^{\mp}].\]
Note that this has the exact same largest tempiric as the trivial representation on the zero orbit! Further, the spherical principal series cannot appear as the largest tempiric in any expansion. \hfill $\blacksquare$ \end{example} 

This leaves us with a bit of a quagmire. First, the geometry of a principal $K_0$-orbit closure is generally far more complicated than adding a single point. Second, we cannot possibly get a bijection in general! This is somewhat surprising given that in \cite{Bez2001}, Bezrukavnikov gives a full proof of this conjecture for $G_\R$ a complex connected simple group. His arguments abuse the fact that for complex connected simple algebraic groups, all $K$-orbits on $\mathcal{N}_\theta$ are even dimensional and there exists a single open $K$-orbit on $\mathcal{N}_\theta.$ The first of these properties allow him to define a perverse $t$-structure on the triangulated category $D^b(\operatorname{Coh}^K(\mathcal{N}_\theta))$. The heart of this $t$-structure has simple objects consisting of the intersection cohomology sheaves $IC(\OO,\tau).$ The second, combined with properties of the Springer resolution, gave rise to an a priori different $t$-structure whose heart has simple objects parametrized by $\Lambda^+_K$ the dominant weights of $K=\Delta G_\R\cong G_\R$. It is then shown that the $t$-structures coincide and thus he obtains a bijection between the two descriptions of simple objects. It was then shown in \cite{Achar2004} that Bezrukavnikov's bijection is given by the description above in (4).    

This leaves us with a few questions: 

\begin{nquestion}
	 \color{white} a\; \color{black} 
	\begin{enumerate} 
		\item What about the geometry of the nilpotent cone for a $SL(2,\R)$ is leading to the failure of the bijection? 
		\item Do analogous $t$-structures exist for a general real reductive group?
		\item Are there any other real reductive groups for which this bijection does hold?   
		\item Can the combinatorial methods of \cite{Achar2001} and \cite{Achar2004} be mimicked for general real reductive groups to obtain a well-defined map (1)? 
	\end{enumerate} 
\end{nquestion} 

We have a partial solution to $(a)$ and $(d)$ and a positive solution for $(c).$ The main results of the paper is as follows: 

\begin{theorem} 
	Let $G_\R=GL(2,\HH)$ or $G_\R=GL(3,\HH).$ Then there is a Lusztig-Vogan bijection $(1)$ given by the map $LKT$ of $(4).$ Explicitly, for $GL(2,\HH)$ the bijection is given by \begin{align*}
 (\OO_0,(a,b))&\mapsto (a+2,b) \\ (\OO_{prin},n)&\mapsto \begin{cases}  (k,k) & n=2k \\ (k+1,k) & n=2k+1
 \end{cases} 
 \end{align*} 
 and for $GL(3,\HH)$ it is given by 

\begin{align*}
(\OO_0,(a,b,c))&\mapsto (a+4,b+2,c) \\  
(\OO_{mid}, (n_1,n_2))& \mapsto {\begin{cases} \left(n_2+2,\left \lceil \frac{n_1}{2} \right\rceil, \left \lceil \frac{n_1-1}{2} \right\rceil\right) & n_1\leq 2n_2+1 \\ \; \\ \left(\left \lceil \frac{n_1}{2} \right\rceil+1, \left \lceil \frac{n_1-1}{2} \right\rceil+1, n_2\right) & n_1>2n_2+1\end{cases}  } \\ 
 (\OO_{prin},n)&\mapsto \left( \left \lceil \frac{n}{3} \right\rceil, \left \lceil \frac{n-1}{3} \right\rceil, \left \lceil \frac{n-2}{3} \right\rceil  \right) \end{align*} 
\end{theorem}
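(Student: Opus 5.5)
The plan is a direct computation for each $K$-orbit on $\mathcal{N}_\theta$, using the resolution $\mu:\widetilde{\OO}=K\times_{Q\cap K}\lie{s}^2\to\close{\OO}$ and the surjection (2).

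\textbf{Setup.} For $G_\R=GL(n,\HH)$ we have $K=Sp(n,\C)$ inside $G=GL(2n,\C)$, and $\lie{s}$ is the space of skew-forms, i.e.\ $\Lambda^2\C^{2n}$ twisted appropriately. The $K$-orbits on $\mathcal{N}_\theta$ are indexed by partitions of $n$ with all parts doubled. For $n=2$ this gives $\OO_0$ and $\OO_{prin}$; for $n=3$ it gives $\OO_0$, $\OO_{mid}$ and $\OO_{prin}$.

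\textbf{Step 1: stabilizers and lifts.} For each orbit I would choose a Jacobson--Morozov triple with $H\in\lie{k}$. From it I would read off the graded pieces $\lie{s}^2$, the parabolic $Q\cap K$ (a Levi of type $GL$ blocks inside $Sp$), and the reductive part of $K^X$:
\begin{itemize}
\item $Sp(2)\times Sp(2)$ for $\OO_0$ when $n=2$;
\item $SL_2\cong Sp(2)$ for the principal orbit (its reductive quotient is $Sp(2)$ inside $Sp(4)$);
\item $Sp(2)\times Sp(2)$ for $\OO_{mid}$ when $n=3$, written with weights $(n_1,n_2)$.
\end{itemize}
The labels $n$ and $(n_1,n_2)$ in the statement are the corresponding highest weights. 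Because (2) is restriction, the candidate lifts $\tilde\tau$ of a given $\tau$ differ by elements of the kernel of $\mathbb{K}(Q\cap K)\to\mathbb{K}(K^X)$, which I would describe explicitly, as was done for $SL(2,\R)$ in the Example.

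\textbf{Step 2: expansion into tempirics.} This step needs the earlier main result identifying global sections of standard $\mathcal{D}$-modules with standard $(\lie g,K)$-modules, together with the resulting description of $\gr$ of tempiric modules as $K$-equivariant sheaves on $\mathcal{N}_\theta$.
\begin{itemize}
\item For $G_\R=GL(n,\HH)$ there is a unique conjugacy class of Cartan subgroups whose compact part matters for tempirics with real infinitesimal character. Concretely, the tempirics are fundamental-series or limit-type modules induced from $GL(1,\HH)^n$, parametrized by $K$-dominant lowest $K$-types.
\item I would compute $M(\OO,\tilde\tau)=\Gamma(\widetilde{\OO},\pi^*\mathcal{V}_{\tilde\tau})$ as an Euler characteristic by Borel--Weil--Bott. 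Concretely, $\sum_i(-1)^iH^i(K/Q\cap K,\mathcal{V}_{\tilde\tau}\otimes\Sym(\lie{s}^2)^*)$ is an alternating sum of Koszul-type terms.
\item I would then match this against the $K$-type multiplicities of the tempirics. These follow from Blattner's formula, or equivalently from $\gr$ of the standard modules $\Gamma$ of standard $\mathcal D$-modules, which by the earlier theorem are computed by the same Bott-type sums on $K$-orbit closures in the flag variety.
\end{itemize}

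\textbf{Step 3: the zero orbit and a sanity check.} For $\OO_0$ the sheaves are just $K$-representations. The shift $(a,b)\mapsto(a+2,b)$ (respectively $(a+4,b+2,c)$) should come from the identity $[\tau]=\sum_{w}(-1)^{\ell(w)}[\gr\nu_{w\cdot}]$. Here the leading term is the tempiric whose lowest $K$-type is $\tau+2\rho_{\lie s\cap\lie n}$, the analogue of $[\gr D_n]-[\gr D_{n+2}]$ in the Example. This is a Koszul resolution of the origin inside $\lie s$ restricted to the relevant $\lie u\cap\lie s$.

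\textbf{Step 4: minimization over lifts (the main obstacle).} For the nonzero orbits I would reduce the minimization in (4) to a finite problem.
\begin{itemize}
\item Adding a kernel element shifts the expansion by terms supported on the boundary.
\item The norm of the largest infinitesimal character is convex in the highest weight of $\tilde\tau$.
\item Therefore the minimum is attained by the most ``balanced'' lift. This explains the ceilings $\lceil n/3\rceil,\lceil (n-1)/3\rceil,\lceil (n-2)/3\rceil$ for $\OO_{prin}$, since $\tilde\tau$ should restrict from the character of the Levi $GL_1^3$ that is as close to scalar as possible.
\item For $\OO_{mid}$, the case split at $n_1=2n_2+1$ records which of the two $GL$ blocks dominates.
\end{itemize}
Two things are hard here. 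First, I must prove a lower bound showing no other lift gives a smaller leading infinitesimal character. Second, I must check that the leading tempiric is unique, i.e.\ no ties like the $D_2^\pm$ tie for $SL(2,\R)$. I would argue the absence of ties from the fact that $K=Sp(n,\C)$ is connected and that $\mathcal N_\theta$ is irreducible with a single open orbit.

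\textbf{Step 5: bijectivity.} Finally, I check that the images of the three (respectively two) families partition the set of $K$-dominant weights, which parametrize the tempirics:
\begin{itemize}
\item the zero-orbit images are exactly the strictly-gapped weights;
\item the principal-orbit images are exactly the weights with all gaps at most $1$;
\item the middle-orbit images are exactly the remaining weights.
\end{itemize}
This is an elementary combinatorial verification.
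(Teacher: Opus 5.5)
Your outline has the right architecture (resolve, lift, expand into tempirics, minimize, check bijectivity), and your zero-orbit step reaches the paper's answer. The paper gets the zero orbit from the Cousin--Zuckerman resolution: the closed-orbit term $A_{\lie{b}}(0)$, with lowest $K$-type $2\rho(\lie{n}\cap\lie{s})$, is the largest one.

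The gap is in Steps 2 and 4 for $\OO_{prin}$ and $\OO_{mid}$. Nothing in your plan produces the finite expansion of $M(\OO,\tilde\tau)$ into tempirics. Computing $\sum_i(-1)^iH^i(K/Q_K,\mathcal{V}_{\tilde\tau}\tensor\Sym(\lie{s}^2)^*)$ by Bott and ``matching'' it against $K$-multiplicities of tempirics means inverting the infinite matrix $m(\tau,\pi)$ against an infinite virtual $K$-character. You do not know in advance which tempirics occur, so this never certifies a leading term.

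The missing idea is that this pushforward \emph{is} cohomological induction, by Zuckerman's Blattner formula: $M(\OO,\eta)|_K=\mathcal{R}_{\lie{q}}(\cdots)|_K$. The paper therefore proceeds in three moves:
\begin{enumerate}
\item It expands $\tilde\tau$ at the Levi level into tempiric $(\lie{l},L\cap K)$-modules: the complex-group Weyl character formula on the $GL(2,\C)$ factors, and Vogan's Theorem 16.6 on the $Sp(2,\C)$ factor.
\item It uses that $\mathcal{R}_{\lie{q}}$ sends these standard modules to standard $(\lie{g},K)$-modules with parameter shifted by $\rho(\lie{u})|_T$.
\item Only then does ``largest tempiric'' become finite bookkeeping of parameters.
\end{enumerate}

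You also miss that $\OO_{mid}$ is an odd orbit. There $\lie{s}^2\subsetneq\lie{u}\cap\lie{s}$, and passing from $\Sym(\lie{s}^2)^*$ to $\Sym(\lie{u}\cap\lie{s})^*$ introduces $\sum_j(-1)^j\textstyle\bigwedge^j\lie{s}_1^*$. This amounts to twists by $-2\rho(A)$ for subsets $A$ of $\{e_1\pm e_3,e_2\pm e_3\}$. The split at $n_1=2n_2+1$ is exactly where the leading term switches from $A=\{e_1-e_3,e_2-e_3\}$ to $A=\{e_2-e_3,e_2+e_3\}$. It is not ``which of the two $GL$ blocks dominates'': $L\cap K\cong GL(2,\C)\times SL(2,\C)$ has only one.

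The justifications in Step 4 do not hold either.
\begin{itemize}
\item ``Convexity in the highest weight of $\tilde\tau$'' is unproved, and it is not where the minimum comes from. The paper twists the lift by powers of $\det$ on the $GL(2,\C)$ factors, computes the leading parameter (after adding $\rho(\lie{u})|_T$ and $W_K$-dominating) as an explicit function of the twist, and minimizes: $k=-2$ for $GL(2,\HH)$, $k=-3$ for $\OO_{mid}$.
\item Your explanation of the ceilings, via a nearly scalar character of a Levi $GL_1^3$, misidentifies the Levi. For $\OO_{prin}$ of $GL(3,\HH)$ one has $L\cap K\cong GL(2,\C)\times Sp(2,\C)$ with $K^X_{red}$ a diagonal $SL(2,\C)$. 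The ceilings come from spreading the $SL(2)$ highest weight over the three $SL(2)$ blocks.
\item Connectedness of $K$ and irreducibility of $\mathcal{N}_\theta$ do not exclude ties, since distinct infinitesimal characters can have equal norm. Uniqueness has to be read off the explicit expansion.
\end{itemize}

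There are also two smaller slips. The stabilizer of $\OO_0$ is all of $K=Sp(4,\C)$, not $Sp(2)\times Sp(2)$. And in Step 5 the principal-orbit images are the weights whose entries differ pairwise by at most $1$, not those with consecutive gaps at most $1$: for example, $(2,1,0)$ is the image of $(\OO_{mid},(1,0))$.
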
 	 
In proving the above we give a proof of a special case of the following unpublished result: 

\begin{theorem}\label{Global_Sections_Introduction}[Hecht-Mili\v{c}i\'{c}-Schimd-Wolf]
	Assume $\lambda\in \lie{h}^*$ is strongly antidominant (definition \ref{Definition_antidominant}). Let $S$ be a $K$-orbit on $\mathcal{B}$ and $\tau$ an irreducible $K$-equivariant connection compatible with $\lambda+\rho$ on $S.$ Then there exists a cuspidal parabolic subgroup $P=MAU\subseteq G$ defined over $\R,$ a (limit of) discrete series $F_{S,\tau}$ of $M_\R$ and a character $\chi_\tau$ of $A_\R$ such that \[ \Gamma(\mathcal{B}, \mathcal{I}(S,\tau))^\vee \simeq \Ind_{P_\R}^{G_\R}(F_{S,\tau}\boxtimes \chi_\tau \boxtimes \mathbbm{1})_{[K_\R]}. \]
	   
\end{theorem}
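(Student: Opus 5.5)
We reduce the statement to three inputs: the Beilinson--Bernstein equivalence, the geometric description of $K$-orbits on $\mathcal{B}$ via Matsuki duality, and the known identification of standard modules on closed orbits with (limits of) discrete series (Schmid's realization).

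\textbf{Step 1: the Cartan and the parabolic.} Fix $x\in S$ and let $\lie{b}_x$ be the corresponding Borel subalgebra. Choose a $\theta$-stable Cartan subalgebra $\lie{c}\subseteq\lie{b}_x$; it is unique up to $K\cap B_x$-conjugacy. Its Cartan decomposition $\lie{c}=\lie{t}\oplus\lie{a}$ gives $\lie{a}$, and we set $M A=Z_G(\lie{a})$. Among the real roots we take the positive system determined by $\lie{b}_x$ and the real part of $\lambda$, and use it to pick $U$. Then $P=MAU$ is a cuspidal parabolic defined over $\R$, and $\lie{b}_x\cap\lie{m}$ is a $\theta$-stable Borel of $\lie{m}$ containing $\lie{t}$. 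The orbit $S_M=(K\cap M)\cdot x$ in the flag variety $\mathcal{B}_M$ is closed.

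\textbf{Step 2: reduction to $M$ by a fibration.} Realize $S$ as a fibration over $K/(K\cap P)$ with fibre $S_M$, using the Matsuki correspondence between $K$-orbits and $G_\R$-orbits. Then factor the standard module as
\[
\mathcal{I}(S,\tau)=i_{S*}\tau ,
\]
and compute $\Gamma(\mathcal{B},\mathcal{I}(S,\tau))$ by a relative Beilinson--Bernstein argument along the projection $\mathcal{B}\to G/P$ (more precisely, to the partial flag variety of $P$ over the closed $K$-orbit). This should produce a Zuckerman/induction functor applied to the global sections on $\mathcal{B}_M$ of $\mathcal{I}(S_M,\tau|)\boxtimes\chi_\tau$. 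Here $\chi_\tau$ is read off from the $\lie{a}$-part of $\lambda+\rho$ together with the component group of $A_\R$. Strong antidominance of $\lambda$ is used twice. It gives vanishing of the higher cohomology on $\mathcal{B}$ and on each fibre, so the Euler characteristic becomes an honest module. It also guarantees that restricting $\lambda$ to $\lie{m}$ stays antidominant.

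\textbf{Step 3: the closed-orbit case.} On $\mathcal{B}_M$ the orbit $S_M$ is closed and $\lambda|_{\lie{t}}$ is antidominant. Schmid's theorem (equivalently Hecht--Mili\v{c}i\'{c}--Schmid--Wolf duality for closed orbits) identifies $\Gamma(\mathcal{B}_M,\mathcal{I}(S_M,\tau))$ with the dual of a (limit of) discrete series $F_{S,\tau}$ of $M_\R$, after taking $K\cap M$-finite vectors. The duality $\vee$ enters here, through the comparison between sheaf cohomology with compact supports and maximal globalizations.

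\textbf{Step 4: assembling and the main obstacle.} Putting Steps 2 and 3 together, the global sections of $\mathcal{I}(S,\tau)$ are the dual of normalized parabolic induction
\[
\Ind_{P_\R}^{G_\R}(F_{S,\tau}\boxtimes\chi_\tau\boxtimes\mathbbm{1}).
\]
To make this precise we compare the $K$-types on both sides via Frobenius reciprocity: both sides are computed by the same Blattner/$K\cap M$-restriction formula. We then use exactness to promote the equality of $K$-types to an isomorphism, by producing a nonzero map. This map comes from the adjunction between $i_{S*}$ and fibre restriction.

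I expect the main obstacle to be the relative step. One has to show that pushing forward along the fibration $S\to K/(K\cap P)$ genuinely realizes real parabolic induction rather than cohomological induction. This mismatch between $K$-equivariant geometry and $G_\R$-induction is exactly what Matsuki duality encodes. I would first try to verify it in the $GL(n,\HH)$ case needed here. There all Cartans are tractable, and the orbit fibration can be written explicitly, so one can match $\rho$-shifts by direct computation.
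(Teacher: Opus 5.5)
\textbf{The fibration in Steps 1--2 exists for only one orbit per Cartan, and the reduction of a general $S$ to that orbit is missing.}

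Your map $S\to K/(K\cap P)$ with fibre $S_M$ requires $K\cap B_x\subseteq K\cap P$. For a real parabolic with $\theta$-stable Levi $L=MA$, $\theta(P)$ is opposite to $P$, so $K\cap P=K\cap L$. Moreover $\lie{k}\cap\lie{l}$ is just $\lie{t}$ plus the compact imaginary root spaces. Now suppose $\alpha$ and $\theta_S\alpha$ are both positive complex roots, i.e.\ $D_+(S)\neq\varnothing$. Then $X_\alpha+\theta X_\alpha\in\lie{k}\cap\lie{b}_x$ but it does not lie in $\lie{l}$. Counting dimensions gives $\dim S=\dim K/(K\cap L)+\dim S_M-\tfrac12|D_+(S)|$. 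Similarly, $\lie{b}_x+\lie{l}$ is a cuspidal parabolic only when $\Sigma^+_S\cup\Sigma_{S,im}$ is a parabolic system. So your recipe for $U$ also only makes sense there.

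Your picture is therefore correct exactly for the Langlands orbit $S^L$ attached to $\lie{c}$, where $\theta$ sends every positive complex root to a negative one. The paper treats that case directly and then reduces an arbitrary $S$ to it. It chooses $w\in W$ transversal to $S$ so that the intertwining functor carries $\mathcal{I}(S,\tau)$ to $\mathcal{I}(S^L,\tau^L)$ without changing $\textbf{R}\Gamma$. The price is that $\lambda$ is replaced by $w\lambda$. Lemma \ref{Antidominant_translation} then shows $s_y(w\lambda-\rho)|_{\lie t}+\rho_{im}$ is still antidominant on the imaginary roots, because $w$ moves only complex roots and so leaves the imaginary positive system unchanged. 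That is where \emph{strong} antidominance is actually used. It also means $P$, $F_{S,\tau}$ and $\chi_\tau$ must be read off at $S^L$ from $w\lambda$, not at $S$ from $\lambda$.

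\textbf{Your ``main obstacle'' comes from putting the base in the wrong place.} $S^L$ maps onto $K\cdot P\cong K/(K\cap L)$. This is the \emph{open}, affine $K$-orbit in $G/P$, not a closed orbit or a partial flag variety. A projective base $K/(K\cap Q)$ with $Q$ $\theta$-stable would indeed give cohomological induction. Affineness is what produces real induction. The Leray spectral sequence for $pr\colon S^L\to K/(K\cap L)$ collapses to $H^0(K/(K\cap L),R^p pr_*\tau)$. The fibrewise input is the closed-orbit computation on $\mathcal{B}_M$. In the paper's single-Cartan setting $M_\R$ is compact and $S^L$ is open in $\mathcal{B}$, so this is just Borel--Weil--Bott. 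Sections then embed into $\Hom_{\lie{p},M}(\mathcal{U}(\lie{g}),U)$ via $s\mapsto T_s$, $T_s(\xi)=(\xi s)(pr(x))$. They land in $I_{(\lie{p},M)}^{(\lie{g},K)}(U)$, which is the $K$-finite part of $\Ind_{P_\R}^{G_\R}$ by Knapp--Vogan.

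\textbf{In Step 4, a nonzero map is not enough.} These induced modules can be reducible. You need the map to be injective, as $s\mapsto T_s$ is, together with equality of the finite $K$-multiplicities from Frobenius reciprocity.
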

We have the following wonderful corollary: 

\begin{corollary} 
	Let $G$ be a real reductive group with single conjugacy class of Cartan subgroup. Then, under the assumptions on Theorem \ref{Global_Sections_Introduction}, $P$ is a minimal parabolic subgroup and $F_{S,\tau}$ is finite dimensional. Further, as $K_\R$ representations \[ \Gamma(\mathcal{B}, \mathcal{I}(S,\tau))|_{K_\R}\simeq \Ind_{P_\R}^{G_\R}(F_{S,\tau}\boxtimes \mathbbm{1} \boxtimes \mathbbm{1})_{[K_\R]}|_{K_\R}\]
	and the right hand side is the restriction of an irreducible tempered $(\lie{g},K)$-module with real infinitesimal character. It has a unique lowest $K_\R$-type whose highest weight is given by the $W_K$-dominant conjugate of the highest weight of $ F_{S,\tau}^\vee.$ 
\end{corollary}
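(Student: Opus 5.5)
We propose to deduce the corollary directly from Theorem \ref{Global_Sections_Introduction}, using the structure theory of cuspidal parabolics for groups with one conjugacy class of Cartan subgroups.

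\textbf{Step 1: $P$ is minimal and $F_{S,\tau}$ is finite dimensional.} Theorem \ref{Global_Sections_Introduction} gives a cuspidal parabolic $P=MAU$ defined over $\R$. Cuspidality means $M_\R$ has a compact Cartan subgroup $T_\R$, and then $T_\R A_\R$ is a Cartan subgroup of $G_\R$. Since $G_\R$ has a single conjugacy class of Cartan subgroups, this Cartan is conjugate to the maximally split one. Hence $\dim A_\R$ equals the real rank, so $P$ is minimal. Then $M_\R$ has compact Cartan subgroup and real rank zero. Because every Cartan of $M_\R$ yields a Cartan of $G_\R$ after adjoining $A_\R$, $M_\R$ itself has only compact Cartans. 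Thus $M_\R$ is compact modulo its center; more precisely $M_\R=Z_K(A_\R)$ is compact. So any (limit of) discrete series of $M_\R$ is an irreducible finite-dimensional representation, which gives the claim about $F_{S,\tau}$.

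\textbf{Step 2: restriction to $K_\R$.} Dualizing the isomorphism of Theorem \ref{Global_Sections_Introduction} gives $\Gamma(\mathcal{B},\mathcal{I}(S,\tau))$ as a contragredient principal series. By the Iwasawa decomposition $G_\R=K_\R P_\R$ and Frobenius reciprocity (the compact picture), the $K_\R$-restriction of any $\Ind_{P_\R}^{G_\R}(F\boxtimes\chi\boxtimes\mathbbm{1})$ is $\Ind_{M_\R}^{K_\R}(F)$, independent of $\chi$. We may therefore replace $\chi_\tau$ by $\mathbbm{1}$. Replacing $F_{S,\tau}$ by its dual where necessary (the contragredient of $\Ind_{M_\R}^{K_\R}F$ is $\Ind_{M_\R}^{K_\R}F^\vee$) matches the stated form.

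\textbf{Step 3: temperedness and irreducibility.} With unitary character $\mathbbm{1}$ on $A_\R$ and $F_{S,\tau}$ a unitary finite-dimensional representation of compact $M_\R$, the induced module is unitary and tempered. Its infinitesimal character is determined by the highest weight of $F_{S,\tau}$ plus $\rho_M$, with zero continuous part, so it is real. Irreducibility is the step I expect to be the main obstacle. I would argue via Knapp–Zuckerman: reducibility of unitary principal series with trivial $A$-parameter is governed by the $R$-group, generated by reflections in real roots. The hypothesis of a single Cartan class (equal rank between $M$ and the relevant structure) should force every real root to be non-Plancherel-singular. Alternatively, one can use the paper's identification of these modules as tempiric basis elements $\gr\nu$. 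If full irreducibility fails in general, the fallback is to state that the induced module is a direct sum of tempiric modules, and to verify the condition case by case (e.g.\ $GL(n,\HH)$, where $M_\R=SU(2)^n$ and the $R$-group is trivial).

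\textbf{Step 4: lowest $K_\R$-type.} By Vogan's theory for principal series of a minimal parabolic with $M_\R$ compact, the lowest $K$-types of $\Ind_{M_\R}^{K_\R}F^\vee$ are those whose highest weight is the $W_K$-dominant conjugate of the $M$-highest weight of $F^\vee$. Here $M_\R$ contains a maximal torus of $K_\R$ (single Cartan class), so this can be done directly. By Frobenius reciprocity, a $K_\R$-type $\mu$ occurs iff $\mu|_{M_\R}\supset F^\vee$. Since $T\subset M$, the extreme weight of $F^\vee$ must be a weight of $\mu$. This forces $\|\mu+2\rho_K\|$ to be minimized, with multiplicity one, exactly at the dominant conjugate; the unique $K_\R$-type with that extremal weight occurs with multiplicity one, giving uniqueness.
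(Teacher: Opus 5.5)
\paragraph{The gap: irreducibility.} Step 3 never actually proves that the induced module is irreducible. Your fallback, a direct sum of tempirics verified case by case, would not prove the statement as written.

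The mechanism you suggest also aims at the wrong thing. The point is not that real roots are ``non-Plancherel-singular'', but that there are no real roots at all. Suppose the Cartan subalgebra $\lie{h}=\lie{t}\oplus\lie{a}$ of the minimal parabolic had a real root $\alpha$. Then the inverse Cayley transform through $\alpha$ would give a $\theta$-stable Cartan subalgebra whose split part has dimension $\dim\lie{a}-1$. Such a Cartan is not conjugate to $\lie{h}$, contradicting the hypothesis.

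With this in hand, consider the parameter of $\Ind_{P_\R}^{G_\R}(F_{S,\tau}\boxtimes\mathbbm{1}\boxtimes\mathbbm{1})$. It has $\Psi=\Delta^+(\lie{m},\lie{t})$, and $\bar{\gamma}$ equals the highest weight of $F_{S,\tau}$ plus $\rho_M$ on $\lie{t}$ and is zero on $\lie{a}$. This is a \emph{final} $K$-pseudocharacter:
\begin{itemize}
\item (KF-2) is vacuous, since there are no real roots;
\item (KF-1) holds, since $\bar{\gamma}$ is $\lie{m}$-regular.
\end{itemize}
The lemma in Section 2 quoting Vogan's Theorem 11.9 then gives, all at once, irreducibility, temperedness, real infinitesimal character and a unique lowest $K$-type. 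This is how the paper settles the point (``this is immediately final as $G_\R$ has a single conjugacy class of Cartan subalgebras''). No Plancherel or $R$-group analysis is needed.

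\paragraph{The remaining steps.} The rest of your argument is sound, and parts of it take a different route from the paper.

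You use the Hecht--Mili\v{c}i\'{c}--Schmid--Wolf theorem as a black box, which is legitimate for a corollary. The paper instead reproves the special case geometrically. It fibres the Langlands orbit over the affine variety $K/M$, with fibres the flag variety of $\lie{m}$, and applies Borel--Weil on the fibres. It then identifies the sections with an induced module by Frobenius reciprocity, and reaches the other orbits via intertwining functors.

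For the lowest $K$-type the paper only cites Primc, whereas your Step 4 is a self-contained proof. Let $\nu$ be the $\lie{m}$-highest weight of $F_{S,\tau}^\vee$ and $\nu^+$ its dominant conjugate.
\begin{itemize}
\item If $\mu$ occurs, then $\nu^+$ is a weight of $\mu$, so $\nu^+\le\mu$. Hence $\|\mu+2\rho_K\|>\|\nu^++2\rho_K\|$ unless $\mu=\nu^+$.
\item The weight-$\nu$ vector in $V_{\nu^+}$ is $\lie{m}$-highest, because root strings through an extremal weight end there. So $F_{S,\tau}^\vee$ occurs in $V_{\nu^+}|_{M}$ exactly once.
\end{itemize}
This uses that $M$ is connected, as it is for $GL(n,\HH)$.

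Your treatment of the dual in Step 2 is also the right reading. The theorem gives $\Gamma(\mathcal{B},\mathcal{I}(S,\tau))|_{K_\R}\simeq\Ind_{M_\R}^{K_\R}F_{S,\tau}^\vee$. This is what the final clause about $F_{S,\tau}^\vee$ presupposes, and it agrees with the displayed form when $-1\in W_K$.
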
 

This corollary makes computing the Cousin resolution (and hence the $LKT$ map) on the zero orbit. In fact, modulo some branching problems, we obtain a formula for the pushforward of any virtual $L\cap K$-module on any orbit for $GL(n,\HH).$

\section{The Langlands Classification: Character Version}

In the example above, we abused the geometry of the cone to construct extensions of the trivial bundle to the orbit closure. In a more general setting, we do not have such luxury. Thus, we need a computational machinery to construct these extensions. To make this computation simpler, we will invoke a remarkable theorem of Vogan spread across \cite{Vogan1981} which gives the following bijection: 
	
	\begin{ntheorem}\label{Tempiric_LKT_Bijection}\cite{Vogan1981} (Theorem 11.9 in \cite{Vogan2007}) 
		Let $G_\R$ be a real reductive algebraic group with maximal compact subgroup $K_\R$ with Cartan involution $\theta.$ Then there exists a natural bijection between the following 
		\begin{description} 
			\item[(1)] Irreducible tempered representations of $G_\R$ with real infinitesimal character (Tempiric). 
			\item[(2)] Irreducible representations of $K_\R.$   
			\item[(3)] Final $K$-Pseudocharacters up to conjugation.  
		\end{description} 	 
		The bijection is given from (1) to (2) by taking the unique lowest $K_\R$-type of a given tempiric representation. We abuse notation and denote this map $LKT.$ The set of tempiric representations is denoted $\Pi(G_\R)_{temp,\R}.$ The bijection (3) to (1) is more subtle and explained below.    
	\end{ntheorem}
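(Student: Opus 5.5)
We prove the statement by following Vogan's construction, taking the three sets in turn. The plan has three parts: first identify (1) with (3) through the Knapp--Zuckerman classification, then show that lowest $K_\R$-types give the map from (1) to (2), and finally invert that map by constructing a final pseudocharacter from each $K_\R$-type.

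\emph{From (3) to (1).} A $K$-pseudocharacter consists of a $\theta$-stable Cartan subgroup $H_\R=T_\R A_\R$, a character of $T_\R$ (with a $\rho$-shift), and a choice of positive imaginary roots. It determines a cuspidal parabolic $P_\R=M_\R A_\R N_\R$. On $M_\R$ it gives a limit of discrete series $F$, and we take $A_\R$ to act trivially, so the infinitesimal character is real. We then form $\Ind_{P_\R}^{G_\R}(F\boxtimes \mathbbm{1}\boxtimes\mathbbm{1})$.

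The finality conditions are exactly what Knapp--Zuckerman need:
\begin{itemize}
\item no compact simple imaginary root is orthogonal to the parameter, so the limit $F$ is nonzero;
\item no real root satisfies the parity condition, so the induced module has no reducibility from real roots.
\end{itemize}
Under these conditions the Knapp--Zuckerman classification gives that the induced module is irreducible and tempered. It also shows that every tempiric representation arises this way, from a parameter that is unique up to $K$-conjugacy. So (3) and (1) are in bijection.

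\emph{From (1) to (2).} This is the main obstacle, and we handle it with Vogan's lowest $K$-type theory. For a representation of the form $\Ind_{P_\R}^{G_\R}(F\boxtimes\mathbbm{1}\boxtimes\mathbbm{1})$, the lowest $K_\R$-types are those $\mu$ whose Vogan $\lambda$-parameter $\lambda(\mu)$ (built from $\mu+2\rho_c$ by the usual projection algorithm) recovers the pseudocharacter. Because the $A_\R$-parameter is zero, the continuous part contributes nothing. Finality then forces a unique lowest $K_\R$-type, since the distinct lowest $K$-types of a general standard module are indexed by the fibers of the $R$-group, which collapse here.

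\emph{From (2) back to (3).} Given $\mu\in\widehat{K_\R}$, Vogan's algorithm produces $\lambda(\mu)$, a $\theta$-stable parabolic $\lie{q}(\mu)$, and a fine $K$-type of the Levi $L(\mu)$. A fine $K$-type corresponds, via the split Cartan of $L(\mu)$, to a final pseudocharacter. The bijectivity of this construction is Vogan's theorem relating $K$-types to fine data (\cite{Vogan1981}, Chapter 5, together with the results cited as \cite[Theorem 11.9]{Vogan2007}).

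Composing the three steps, $\mu \mapsto$ pseudocharacter $\mapsto$ tempiric representation $\mapsto LKT$ returns $\mu$, and the composite in the other order is the identity by the uniqueness statements above.
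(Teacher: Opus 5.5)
The paper gives no argument for this statement beyond citing Vogan. The lemma it quotes (for a final $K$-pseudocharacter $\gamma$, $X(\gamma)$ is irreducible, tempered, of real infinitesimal character, with a unique lowest $K$-type) is exactly your (3)$\to$(1)$\to$(2), so your outline follows the same route. The genuine gap is in your step (2)$\to$(3). The fine $L(\mu)\cap K$-type $\mu_L$ determines only (a $W$-orbit of) a representation $\delta$ of the group $M$ attached to the maximally split Cartan of $L(\mu)$. The pseudocharacter you build from $\delta$ on that Cartan, with zero continuous parameter, is in general not final, and it does not depend injectively on $\mu$. Take $SL(2,\R)$ and $\mu=\pm1$. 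Vogan's algorithm gives $\lambda(\mu)=0$, $\lie{q}(\mu)=\lie{g}$, $L(\mu)=G$, and both $\mu=1$ and $\mu=-1$ are fine $K$-types containing $\delta=\operatorname{sgn}$. The split-Cartan parameter $(\operatorname{sgn},\nu=0)$ has standard module $D_1^+\oplus D_1^-$, which is reducible with two lowest $K$-types. By part (c) of the Speh--Vogan theorem quoted in the paper, together with complete reducibility of tempered modules, this parameter cannot be final. So its real root satisfies the parity condition and violates \textbf{(KF-2)}. Your construction therefore neither lands in set (3) nor separates $\mu=1$ from $\mu=-1$. The claim that $\mu\mapsto\gamma\mapsto X(\gamma)\mapsto LKT$ returns $\mu$ fails at exactly this point.

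The missing ingredient is the passage to a more compact Cartan, the mechanism of part (b) of that same theorem. Use $\mu_L$ to select the irreducible summand of the tempered principal series $X_L(\delta,0)$ of $L(\mu)$ that contains it. Replace the split-Cartan parameter by the final parameter of that summand, obtained by Cayley transforms through the real roots satisfying parity. For $SL(2,\R)$ this gives the limits of discrete series on the compact Cartan, with $\bar\gamma=0$ and $\Gamma=\pm1$. Only then induce cohomologically to $G$ in the weakly good range. Injectivity also needs Vogan's result that $\mu$ is the unique lowest $K$-type of the resulting tempiric. Separately, your step-two justification via the $R$-group holds only at $\nu=0$: for generic $\nu$ the odd principal series of $SL(2,\R)$ is irreducible yet has two lowest $K$-types $\pm1$. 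Uniqueness of the lowest $K$-type should therefore be taken directly from Vogan's Theorem 11.9, as the paper does, not from an $R$-group count.
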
     
	
		\begin{definition}[Definition 2.2 \cite{Vogan1984}]
			Let $G_\R$ be a real reductive group. A \textbf{limit pseudocharacter} (or $M$-character) is a quadruple \[ \gamma=(H,\Psi, \Gamma,\bar{\gamma}).\]
			satisfying the following conditions: 
			\begin{description} 
				\item[(L-1)] $H=TA$ is a $\theta$-stable Cartan subgroup of $G_\R$, $\Psi$ is a positive system for the set of imaginary roots, $\Gamma\in \widehat{H},$ and $\bar{\gamma}\in \lie{h}^*.$  
				\item[(L-2)] For $\alpha\in \Psi,$ $\ip{\alpha,\bar{\gamma}}\geq 0.$ 
				\item[(L-3)] $d\Gamma=\bar{\gamma}+\rho(\Psi)-2\rho_c(\Psi)$
			\end{description} 
			with $\rho_c$ meaning the half sum of the compact roots. 
			
			A limit pseudocharacter $\gamma$ is \textbf{final} if 
			\begin{description} 
				\item[(F-1)] If $\alpha\in \Psi$ is simple and $\ip{\alpha,\bar{\gamma}}=0$ then $\alpha$ is non-compact. 
				\item[(F-2)] If $\alpha$ is real and $\ip{\alpha,\bar{\gamma}}=0$ then $\alpha$ does not satisfy the parity condition of \cite[Definition 8.3.11]{Vogan1981}
			\end{description} 
			The set of all limit pseudocharacters will be denoted $\script{P}_{lim}(G_\R)$ and the final ones will be $\script{P}_f(G_\R).$ If we want to consider a single Cartan subgroup $H,$ then we shall substitute $G$ for $H$ in the above notation. 
		\end{definition}  
		
		Fix a limit pseudocharacter $\gamma.$ Set $M=G_\R^A$ the centralizer of $A$ in $G_\R.$ To $\gamma$ we associate a $(\lie{m},M\cap K_\R)$-module $\pi$ which is a (relative, limit of) discrete series representation. Applying parabolic induction, we obtain a standard representation \[ X(\gamma)=\Ind_{MN}^{G_\R}(\pi \tensor \mathbbm{1}).\] 
		We now have the following result of \cite{SpehVogan1980}. 
		
		\begin{theorem} 
			Let $\gamma\in \script{P}_{lim}(H).$ 
			\begin{description}
				\item[a)] $\gamma$ satisfies \textbf{(F-1)} if and only if $X(\gamma)\neq 0.$ 
				\item[b)] If $\gamma$ satisfies \textbf{(F-1)} but not \textbf{(F-2)} then $X(\gamma)$ is a direct sum of standard representations attached to more compact Cartan subgroups. 
				\item[c)] If $\gamma\in \script{P}_f(H)$ then $X(\gamma)$ has a unique irreducible submodule. 
			\end{description} 
		\end{theorem}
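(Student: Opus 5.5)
The plan is to reduce everything to the structure of $X(\gamma)=\Ind_{MN}^{G_\R}(\pi\tensor\mathbbm{1})$ as an induced module from a (limit of) discrete series $\pi$ of $M$, and to treat the three parts separately, in each case pushing the question down to $M$ and then back up via exactness of parabolic induction. Since $\Ind$ is exact and faithful on $(\lie{m},M\cap K_\R)$-modules, (a) amounts to the claim that the limit of discrete series $\pi$ attached to $(H,\Psi,\Gamma,\bar\gamma)$ is nonzero exactly when \textbf{(F-1)} holds. I would prove this inside $M$, where $H\cap M$ is compact modulo center. There $\pi$ is realized by Zuckerman's cohomological induction (or coherent continuation) from the character $\Gamma$, with respect to the Borel determined by $\Psi$. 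If a simple root $\alpha\in\Psi$ is compact and $\ip{\alpha,\bar\gamma}=0$, the $K$-types predicted by Blattner's formula cancel in pairs under the reflection $s_\alpha\in W_K$, so $\pi=0$. Conversely, when every such singular simple root is noncompact, the lowest $K$-type $\Gamma|_T+2\rho_c-\ldots$ (the $\Lambda=d\Gamma+2\rho_c-\rho$ computation of \cite{Vogan1981}) is $\Psi_c$-dominant. It therefore occurs with multiplicity one, which forces $\pi\neq 0$.

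For (b), assume \textbf{(F-2)} fails, so there is a real root $\alpha$ with $\ip{\alpha,\bar\gamma}=0$ satisfying the parity condition. I would do a Cayley transform argument in the rank-one Levi $L_\alpha$ containing $M$. Inducing in stages, $X(\gamma)=\Ind(\Ind^{L_\alpha}(\pi\tensor\mathbbm{1}))$, and the inner induction is a principal series of a group locally like $SL(2,\R)\times M'$ at the parameter $0$ with the odd or even parity dictated by $\Gamma(m_\alpha)$. By the $SL(2,\R)$ computation, exactly as in the Example with $[D_0^\pm]$ replaced by the odd principal series $=D^+_{1}\oplus D^-_{1}$, this module is reducible and splits as a direct sum of two limits of discrete series on the Cayley transformed, more compact, Cartan $H^\alpha$. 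Inducing back up gives the direct sum of two standard modules $X(\gamma^\pm)$ with $\gamma^\pm$ on $H^\alpha$. Here the main check is that the parity condition is precisely the $SL(2)$ reducibility criterion, which I would take from \cite[Definition 8.3.11]{Vogan1981}.

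For (c), I expect the real difficulty. The approach is the Langlands/Vogan lowest $K$-type argument: show that for final $\gamma$ the $K$-type $\mu(\gamma)$ occurs in $X(\gamma)$ with multiplicity one, and that every nonzero submodule contains it. The submodule generated by the $\mu(\gamma)$-isotypic line is then the unique irreducible submodule. Multiplicity one follows from Frobenius reciprocity together with Blattner for $\pi$. The containment in every submodule is the hard step. I would try first to use the Langlands intertwining operator: pick $N$ so that the real part of $\bar\gamma$ is negative-dominant (here it is zero on $\lie{a}$, so one instead uses the tempered structure). For tempered $X(\gamma)$ the module is unitary, hence completely reducible. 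So uniqueness of the submodule is equivalent to irreducibility, and that can be deduced from Knapp--Zuckerman's R-group. Finality \textbf{(F-1)},\textbf{(F-2)} kills the R-group, because the $R$-group is generated by the parity-satisfying singular real roots. This last identification is the step I expect to be the obstacle.
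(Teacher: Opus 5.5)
The paper gives no proof of this statement. It is quoted from \cite{SpehVogan1980}, so your sketch has to stand on its own. Parts (a) and (b) follow the standard route: a limit of discrete series is nonzero exactly when no compact simple root is singular, and (b) comes from induction in stages followed by Schmid's identity. Your (b) needs one correction. The group $L_\alpha=Z_G(\ker\alpha|_{\lie{a}})$ is in general not locally $SL(2,\R)$ times a compact group, and $M$ need not be compact. Examples are $L_\alpha=G=SU(2,1)$ for its maximally split Cartan, or $L_\alpha=G=Sp(4,\R)$ for the Cartan with $M\supseteq SL(2,\R)$. So the $SL(2,\R)$ computation must be replaced by the Hecht--Schmid character identity for the maximal cuspidal parabolic $P\cap L_\alpha$ of $L_\alpha$. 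You then get a genuine direct sum, not just a character identity, because the inner induced module is unitary up to a character of the split center of $L_\alpha$, since $\langle\alpha,\bar\gamma\rangle=0$.

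The real gap is in (c). Elements of $\script{P}_f(H)$ are limit pseudocharacters, with $\Gamma\in\widehat{H}$ and $\bar\gamma\in\lie{h}^*$, so the continuous parameter $\nu=d\Gamma|_{\lie{a}}$ is arbitrary. The paper itself notes that $X(\gamma)$ is tempered only when $d\Gamma|_{\lie{a}}=0$. Your remark ``here it is zero on $\lie{a}$'' confuses these with the $K$-pseudocharacters defined afterwards.

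When $\operatorname{Re}\nu\neq 0$, $X(\gamma)$ is neither unitary nor completely reducible, and having a unique irreducible submodule is not equivalent to irreducibility. Take the odd principal series of $SL(2,\R)$ at the continuous parameter where the two-dimensional representation occurs. This parameter is final: there are no imaginary roots, and (F-2) is vacuous since $\langle\alpha,\bar\gamma\rangle\neq 0$. The module is reducible, and its socle is the two-dimensional representation for one choice of $N$ and the sum of two discrete series for the other.

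So (c) is the Langlands subrepresentation theorem, valid only for $N$ chosen according to the sign of $\operatorname{Re}\nu$. The missing idea is Langlands' argument:
\begin{enumerate}
\item induce in stages through the parabolic $P_1=M_1A_1N_1\supseteq P$ for which $\operatorname{Re}\nu$ is a character of $M_1A_1$ and is strictly negative on the $\lie{a}_1$-roots of $\lie{n}_1$;
\item show the inducing module of $M_1A_1$ is irreducible and tempered up to this twist, which is where (F-1) and (F-2) are used;
\item conclude with the Langlands intertwining operator or a leading-exponent argument.
\end{enumerate}

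Even that tempered input is only asserted in your sketch. The Knapp--Zuckerman $R$-group is a complement to the reflection subgroup $W'_\sigma$ in $W_\sigma$, not a group generated by the parity-satisfying singular real roots. For example, for the unitary principal series of $SL(4,\R)$ induced from $\operatorname{diag}(\epsilon_1,\dots,\epsilon_4)\mapsto\epsilon_3\epsilon_4$, one has $R\cong\Z/2$ generated by $(13)(24)$. So ``finality kills $R$'' is exactly the irreducibility theorem for final tempered standard modules (Knapp--Zuckerman, or Vogan's unique-lowest-$K$-type argument), and it must be proved or cited. Likewise, your claim that every nonzero submodule contains $\mu(\gamma)$ is never established.
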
 	
		
		Whenever $\gamma$ satisfies \textbf{(F-1)} set \[\overline{X}(\gamma)=\operatorname{soc}(X(\gamma))\]
		this is the largest completely reducible $(\lie{g},K)$-submodule of $X(\gamma).$ If $\gamma$ does not satisfy \textbf{(F-1)} then set $\overline{X}(\gamma)=0.$ 
		
		We now have the following form of the Langlands classification due to Langlands and Knapp--Zuckerman: 
		
			\begin{theorem}\label{Langlands_Classification_Pseudo_Chars}
				\begin{description} 
					\item[a)] Suppose $\gamma_i\in \script{P}_f(H_i)$ (i=1,2). Then $\overline{X}(\gamma_1)$ is equivalent to $\overline{X}(\gamma_2)$ if and only if $(H_1,\gamma_1)\sim_K (H_2,\gamma_2)$ where $\sim_K$ means $K$-conjugate. 
					\item[b)] If $X$ is any irreducible $(\lie{g},K)$-module, then there exists a $\theta$-stable Cartan $H$ and a final pseudocharacter $\gamma$ such that $X\cong \overline{X}(\gamma).$ 
				\end{description} 	
			\end{theorem}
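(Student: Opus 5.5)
The statement is the Langlands classification in the form of Knapp--Zuckerman, with limit pseudocharacters as parameters. I would prove it by reduction to the two classical classification theorems, using the Speh--Vogan theorem quoted just above to translate between them. The plan has two ingredients. The first is Langlands' classification of irreducible admissible $(\lie{g},K)$-modules as unique irreducible quotients (equivalently, after dualizing, unique irreducible submodules) of standard modules $\Ind_{MAN}^{G_\R}(\sigma\tensor e^{\nu}\tensor\mathbbm{1})$, with $\sigma$ tempered and $\operatorname{Re}\nu$ strictly dominant. The second is the Knapp--Zuckerman classification of irreducible tempered representations $\sigma$ of $M$ as irreducible constituents of induced limits of discrete series, parametrized by nondegenerate limit characters up to conjugacy.

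\textbf{Part (b): existence.} Let $X$ be irreducible. By Langlands, $X$ is the unique irreducible submodule of some $\Ind_{P_\R}^{G_\R}(\sigma\tensor e^\nu\tensor\mathbbm{1})$. By Knapp--Zuckerman, $\sigma$ embeds in a module induced from a limit of discrete series of a smaller Levi, attached to a Cartan subgroup $H=TA$ of $M$. Induction in stages then shows that $X$ embeds in $X(\gamma)$ for a limit pseudocharacter $\gamma=(H,\Psi,\Gamma,\bar\gamma)$. Here $\Psi$ is read off from the Harish-Chandra parameter of the limit of discrete series, and $\Gamma$ is its lowest-$K$-type character shifted as in (L-3).

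Next I would make $\gamma$ final. If (F-1) fails, then $X(\gamma)=0$ by part a) of the Speh--Vogan theorem, which is impossible since $X\neq 0$. If (F-2) fails, part b) of that theorem decomposes $X(\gamma)$ as a direct sum of standard modules attached to more compact Cartan subgroups, so $X$ lies in one of these summands. I would iterate this step. It terminates because the dimension of the split part $A$ strictly decreases at each step. At the end I have a final $\gamma$ with $X\subseteq X(\gamma)$, and part c) of Speh--Vogan gives $X=\overline{X}(\gamma)$.

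\textbf{Part (a): uniqueness.} The easy direction is that $K$-conjugate parameters give isomorphic induced modules, hence isomorphic socles. For the converse, suppose $\overline{X}(\gamma_1)\simeq\overline{X}(\gamma_2)$. Langlands' uniqueness says that the Langlands data are determined up to $K$-conjugacy by the irreducible module: the parabolic class and $\nu$ are determined by leading exponents, and $\sigma$ is then determined. So after conjugating, the real parts of $\bar\gamma_1,\bar\gamma_2$ along $A$ agree, and the two tempered $M$-representations coincide. Knapp--Zuckerman's uniqueness for tempered representations then identifies the finality-reduced limit-of-discrete-series data up to $M\cap K$-conjugacy. Here the conditions (F-1) and (F-2) are exactly the nondegeneracy conditions: (F-1) ensures nonvanishing, and (F-2) rules out the reducibility coming from real roots that satisfy the parity condition.

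\textbf{Main obstacle.} I expect the hardest step to be the bookkeeping that matches the final pseudocharacter data $(\Psi,\Gamma,\bar\gamma)$ with Knapp--Zuckerman's nondegenerate limit characters. In particular, one has to check that the parity condition of \cite[Definition 8.3.11]{Vogan1981} in (F-2) corresponds exactly to the Knapp--Zuckerman reducibility criterion, including the disconnected-group cases where $\Gamma$ is not determined by $d\Gamma$. To do this I would work through Vogan's lowest-$K$-type description \cite{Vogan1981}, which packages exactly this comparison, rather than checking the match by hand.
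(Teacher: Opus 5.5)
The paper gives no proof of this statement. It is quoted as the Langlands/Knapp--Zuckerman classification in Vogan's pseudocharacter form, placed right after the Speh--Vogan theorem, and your reduction to exactly those inputs is the standard way to unpack that citation, so your approach is essentially the paper's. The one step left implicit in your part (a) is that, for final $\gamma$, the unitarily induced tempered module on the Langlands Levi is irreducible, so that $\overline{X}(\gamma)$ really is the Langlands submodule with that tempered datum; this follows from part c) of the Speh--Vogan theorem, because a splitting of that module would split $X(\gamma)$ and make its socle reducible.
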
 
			
		Write $\lie{h}=\lie{t}\ds \lie{a}$ according to $\theta.$ Then, in this construction of standard modules it is clear that $X(\gamma)$ is tempered if and only if $d\Gamma|_{\lie{a}}=0.$ In this case, $X(\gamma)$ is irreducible. For a general pseudocharacter $\gamma,$ $d\Gamma|_{\lie{a}}$ controls the growth of matrix coefficients (see \cite{CasselmanMilicic1982}). 
		
		\begin{definition} 
			A $K$-\textbf{Pseudocharacter} for $G,$ is a quadruple $(H,\Psi,\Gamma,\bar{\gamma})$ satisfying:
				\begin{description} 
						\item[(K-1)] $H=TA$ is a $\theta$-stable Cartan subgroup of $G$, $\Psi$ is a positive system for the set of imaginary roots, $\Gamma\in \widehat{T},$ and $\bar{\gamma}\in \lie{t}^*.$  
				\item[(K-2)] For $\alpha\in \Psi,$ $\ip{\alpha,\bar{\gamma}}\geq 0.$ 
				\item[(K-3)] $d\Gamma=\bar{\gamma}+\rho(\Psi)-2\rho_c(\Psi)$
				\end{description} 
				A $K$-pseudocharacter $\gamma$ is \textbf{final} if 
			\begin{description} 
				\item[(KF-1)] If $\alpha\in \Psi$ is simple and $\ip{\alpha,\bar{\gamma}}=0$ then $\alpha$ is non-compact. 
				\item[(KF-2)] If $\alpha$ is real, then $\alpha$ does not satisfy the parity condition of \cite[Definition 8.3.11]{Vogan1981}. 
			\end{description} 
				The set of final $K$-pseudocharacters will be denoted $\script{P}_K.$ 
				 
		\end{definition} 	 
		 
		 Notice first that $K$-pseudocharacters are not defined off of $T$ (or are assumed to be $0$ there). Further, the finality condition requires that all real roots do not satisfy the parity condition as opposed to just those on which $\bar{\gamma}$ vanishes. This all together, defines a standard module which has the following properties: 
		 \begin{lemma}\cite[Theorem 11.9]{Vogan2007} 
		 	Let $\gamma\in \script{P}_K.$ Then  \begin{description} 
				\item[(a)]  $X(\gamma)$ is irreducible. 
				\item[(b)] $X(\gamma)$ is tempered. 
				\item[(c)] The infinitesimal character $\bar{\gamma}$ is real (in the sense of Vogan). 
				\item[(d)] $X(\gamma)$ has a unique lowest $K$-type denoted $LKT(\gamma).$ 
			\end{description} 
		 \end{lemma}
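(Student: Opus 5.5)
The plan is to reduce everything to the standard module attached to an ordinary limit pseudocharacter, and then apply the results already quoted: the Speh--Vogan theorem, the Langlands/Knapp--Zuckerman classification (Theorem \ref{Langlands_Classification_Pseudo_Chars}), and Vogan's theory of lowest $K$-types from \cite{Vogan1981}.

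\emph{Step 1: reduction to a limit pseudocharacter.} Given $\gamma=(H,\Psi,\Gamma,\bar{\gamma})\in\script{P}_K$ with $H=TA$, extend $\Gamma$ to a character $\Gamma'$ of $H$ that is trivial on $A$. Regard $\bar{\gamma}\in\lie{t}^*$ as an element of $\lie{h}^*$ vanishing on $\lie{a}$.
\begin{itemize}
\item Conditions (L-1)--(L-3) follow immediately from (K-1)--(K-3).
\item (F-1) is exactly (KF-1).
\item (F-2) follows from (KF-2), since (KF-2) is the stronger condition: it excludes parity for every real root, not only those vanishing on $\bar{\gamma}$.
\end{itemize}
So $\gamma'=(H,\Psi,\Gamma',\bar{\gamma})\in\script{P}_f(H)$, and $X(\gamma)=X(\gamma')=\Ind_{MN}^{G_\R}(\pi\tensor\mathbbm{1})$, where $\pi$ is a nonzero limit of (relative) discrete series of $M$ by part (a) of the Speh--Vogan theorem.

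\emph{Step 2: temperedness and real infinitesimal character.} Since $d\Gamma'|_{\lie{a}}=0$, the remark after Theorem \ref{Langlands_Classification_Pseudo_Chars} gives (b). Concretely, $\pi$ has tempered matrix coefficients, and unitary parabolic induction from $\pi\tensor\mathbbm{1}$ preserves temperedness by \cite{CasselmanMilicic1982}.

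For (c), the infinitesimal character is $(\bar{\gamma},0)\in\lie{t}^*\ds\lie{a}^*$. Because $d\Gamma$ is the differential of a character of the compact torus part $T$, and $\rho(\Psi)$ and $\rho_c(\Psi)$ take real values on $i\lie{t}_\R$, (K-3) shows that $\bar{\gamma}$ lies in the real span of the weights. Its $\lie{a}$-part is zero, so it lies in the canonical real part. This is exactly Vogan's notion of real infinitesimal character.

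\emph{Step 3: the lowest $K$-types.} Here I invoke Vogan's description of the lowest $K$-types of a standard module. The lowest $K$-types of $X(\gamma')$ are parametrized by the fine $K$-types of $M\cap K$ (fine characters of $T$ twisted by the $\lambda$-data) which are compatible with $\Gamma$. They come in orbits under the group generated by reflections in the real roots satisfying the parity condition relative to $\Gamma$.

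By (KF-2) no real root satisfies parity. Hence this group is trivial, and the lowest $K$-type $\mu(\gamma)$ is unique and occurs with multiplicity one. This gives (d).

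\emph{Step 4: irreducibility.} Being tempered, $X(\gamma)$ is unitary and therefore completely reducible. By Vogan's theorem every irreducible constituent of a standard module contains a lowest $K$-type of that standard module. Since there is exactly one lowest $K$-type, occurring with multiplicity one, there can be only one summand. This gives (a), and agrees with part (c) of the Speh--Vogan theorem, which says $X(\gamma')$ has a unique irreducible submodule.

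\emph{Main obstacle.} I expect the hardest step to be Step 3: justifying that the absence of parity real roots forces uniqueness and multiplicity one of the lowest $K$-type. This is where the Knapp--Zuckerman $R$-group is trivialized. I would first attempt it through the Knapp--Zuckerman criterion: reducibility of $\Ind(\pi\tensor\mathbbm{1})$ with $\nu=0$ is governed by $R$-groups generated by reflections in real roots satisfying parity, so (KF-2) forces $R=1$. If that route fails, I would fall back on the explicit fine $K$-type computation of \cite[Chapter 4]{Vogan1981}.
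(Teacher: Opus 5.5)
The paper gives no argument of its own for this lemma; it simply imports \cite[Theorem 11.9]{Vogan2007}. Your Step 1 reduction is fine, and so are (b) and (c): with $\bar{\gamma}|_{\lie{a}}=0$ every real root is orthogonal to $\bar{\gamma}$, so (F-2) for $\gamma'$ is literally (KF-2). Step 4, however, does not hold as written. The assertion that every irreducible constituent of a standard module contains a lowest $K$-type of that module is false in general. The spherical principal series of $SL(2,\R)$ at $\nu=\rho$ has unique lowest $K$-type $0$, yet its constituents $D_2^{\pm}$ do not contain it. You also do not need this. $X(\gamma')$ is unitarily induced, hence completely reducible, and by part (c) of the Speh--Vogan theorem (applied to $\gamma'\in\script{P}_f(H)$) it has a unique irreducible submodule. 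So it is irreducible, which proves (a) independently of (d).

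The genuine gap is Step 3, which carries all the content of (d). Vogan's description of lowest $K$-types is not ``orbits under the group generated by reflections in real roots satisfying parity.'' Even on your own phrasing, knowing that the acting group is trivial would not show there is only one orbit. What is true is the following:
\begin{itemize}
\item The lowest $K$-types of $X(\gamma)$ occur with multiplicity one and are counted by an $R$-group $R_\gamma=W_\gamma/W_\gamma^0$.
\item Here $W_\gamma$ is the stabilizer of the discrete data of $\gamma$.
\item $W_\gamma^0$ is generated by reflections in the \emph{good} roots, i.e.\ those that do \emph{not} satisfy parity. In Knapp--Zuckerman's language these are the roots with vanishing Plancherel factor at $\nu=0$; your fallback misdescribes the $R$-group in the same way.
\end{itemize}
Condition (KF-2) only tells you that no real root is bad. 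To get $R_\gamma=1$ you still have to show that $W_\gamma$ is generated by good reflections. That is exactly the substance of Vogan's theorem, proved by reduction to a quasi-split Levi and fine $K$-types. The $SL(2,\R)$ module $\Ind(\operatorname{sgn}\tensor it)$, $t\neq 0$, shows this input is unavoidable. It is irreducible and tempered but has two lowest $K$-types $\pm 1$, reflecting that its real root satisfies parity for $\Gamma|_T$. So, as written, (d) is assumed rather than derived. The repair is to cite the correct lowest-$K$-type theorem, which is what the paper does.
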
 
		 
		 This gives the bijection $(1)\iff (3)$ of Theorem \ref{Tempiric_LKT_Bijection}. 
		 
\subsection{Computing the Pushforward: Tempirics and $K$-types}
		 
	We now have the necessary tools to discuss a computational approach to computing the pushforward above. With respect to the notion of "lowest" $K_\R$-type, we can construct an upper triangular matrix $m$ of multiplicities. If $\tau\in \widehat{K_\R}$ and $\pi$ is tempiric then we define \[ m(\tau, \pi)=\dim \Hom_K(\tau,\pi).\]
	As each $\pi$ has a unique lowest $K_\R$-type, this matrix is upper triangular with $1$ along the diagonal. Further, each $m(\tau,\pi)$ is a non-negative integer. Therefore, $m$ is an invertible (infinite dimensional) integer matrix. Denote by \[ M(\pi,\tau)=m^{-1}(\tau,\pi). \]
	\begin{proposition}\cite{Vogan1981}  
		The matrix $M$ satisfies the following properties: 
		\begin{description} 
			\item[(1)] $M(\pi,\tau)=0$ unless $\tau\leq LKT(\pi).$ If $\tau=LKT(\pi)$ then $M(\pi,\tau)=1.$ 
			\item[(2)] If $\tau,\tau'\in \widehat{K_\R},$ then the finite sum \[ \sum_{\pi \text{ tempiric}} m(\tau, \pi)M(\pi,\tau')=\delta_{\tau,\tau'} .\] 
			\item[(3)] If $\pi,\pi'\in \Pi(G_\R)_{temp, \R},$ then \[ \sum_{\tau\in \widehat{K_\R}} M(\pi,\tau)m(\tau,\pi')=\delta_{\pi,\pi'}.\]   
		\end{description} 
	\end{proposition}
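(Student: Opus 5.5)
The proposition is formal linear algebra for a unitriangular matrix over a locally finite poset. The plan is to get finiteness and unitriangularity of $m$ from Vogan's lowest $K$-type theory, and then build the inverse by recursion.

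\textbf{Setup and triangularity.} Use the bijection $LKT:\Pi(G_\R)_{temp,\R}\to \widehat{K_\R}$ of Theorem \ref{Tempiric_LKT_Bijection} to index the rows and columns of $m$ by the same set $\widehat{K_\R}$. Order $\widehat{K_\R}$ by Vogan's norm $\|\mu\|=\langle \mu+2\rho_c,\mu+2\rho_c\rangle$ on highest weights, refined to a partial order. Vogan's theory gives that every $K_\R$-type $\tau$ of $\pi$ satisfies $\tau\geq LKT(\pi)$, with $LKT(\pi)$ occurring exactly once. So $m(\tau,\pi)=0$ unless $\tau\geq LKT(\pi)$, and $m(LKT(\pi),\pi)=1$.

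\textbf{Finiteness.} Two finiteness facts are needed.
\begin{enumerate}
\item For a fixed $\tau$, only finitely many tempirics $\pi$ contain $\tau$. This holds because $\|LKT(\pi)\|\le\|\tau\|$ and there are only finitely many $K_\R$-types of bounded norm.
\item The set of $\tau'$ with $\tau'\le\tau$ is finite, for the same reason. So every interval $[\tau',\tau]$ in the poset is finite.
\end{enumerate}
These make all the sums in (2) and (3) finite, and they are the only nonformal input.

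\textbf{Construction of $M$.} Define $M(\pi,\tau')$ recursively on the finite intervals. Set $M(\pi,\tau')=0$ unless $\tau'\le LKT(\pi)$, and set $M(\pi,LKT(\pi))=1$. For $\tau'< LKT(\pi)$, write $\pi_\sigma$ for the tempiric with lowest $K$-type $\sigma$, and put
\[
M(\pi,\tau')=-\sum_{\tau'<\sigma\le LKT(\pi)} M(\pi,\sigma)\,m(\tau',\pi_{\sigma})
\]
read in the appropriate orientation. This is the standard inversion of a unitriangular matrix over a locally finite poset, the Möbius-inversion argument.

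\textbf{Verifying (1)--(3).} Property (1) holds by construction. Property (3), the left-inverse identity, is exactly what the recursion enforces, and each sum in it is finite by the second finiteness fact. For (2), note that both $\sum_\pi m(\tau,\pi)M(\pi,\tau')$ and $\delta_{\tau,\tau'}$ are supported on the finite interval between $\tau'$ and $\tau$. On that interval $m$ and $M$ restrict to genuinely finite unitriangular integer matrices, and a left inverse of a finite square matrix is also a right inverse. Integrality follows because the inverse of a unitriangular integer matrix is integral.

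\textbf{Main obstacle.} The only nonformal step is the finiteness together with the fact that every $K$-type of $\pi$ is at least $LKT(\pi)$. Both come from Vogan's lowest $K$-type theorem, and I would cite \cite{Vogan1981} for them rather than reprove them. The only care needed is in setting up the order consistently, for instance by refining the norm order to a total order, so that the recursion defining $M$ is well founded.
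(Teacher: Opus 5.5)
Your argument is correct and is essentially what the paper has in mind. The paper gives no proof beyond the citation to \cite{Vogan1981} and its remark that $m$ is unitriangular for the lowest-$K$-type order, hence invertible; you make the inversion and the needed finiteness explicit.

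The one thing to fix is the recursion. As written, $m(\tau',\pi_\sigma)$ vanishes identically for $\tau'<\sigma=LKT(\pi_\sigma)$, by the very triangularity you established, so it would force $M$ to be the identity. It should read $M(\pi,\tau')=-\sum_{\tau'<\sigma\le LKT(\pi)}M(\pi,\sigma)\,m(\sigma,\pi_{\tau'})$, which is exactly (3) with $\pi'=\pi_{\tau'}$, solved for its $\tau=\tau'$ term.
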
 
		Using this proposition, we immediately get a formula for irreducible representations of $K_\R$ in terms of tempiric representations. For any $\tau\in \widehat{K_\R},$ we have \[ \tau=\sum_{\pi \text{ tempiric}} M(\pi,\tau)\pi .\]
	This sum is possibly infinite by 2.1(1).  
	
	\begin{example}\label{K_as_Tempiric_Example}
	Let us investigate such a result in the case $G_\R$ is a complex reductive group treated as a real group. In this case, $K_\R$ is any compact real form of $G_\R$ and $K_\C=\Delta G_\R\cong G_\R\subset G_\R\times G_\R.$ Further, fix a maximal torus $T\subseteq K_\R.$ Then $W(K_\R,T)=W(G_\R,H)$ where $H=Z_G(T)\cong T\times T.$ Any character of $H$ extends to a character of $B_0$ (a Borel subgroup) by making the unipotent radical act trivially.  The Weyl character formula in this situation gives the desired sum of tempiric representations. Let $\tau\in \widehat{K}$ of highest weight $\xi.$ Then in $\mathbb{K}(K)$ we have \[ [\tau]=\sum_{w\in W(K,T)} \operatorname{sgn}(w)[\Ind_{B_0}^{G_\R}(\xi+(\rho_{K}-w\rho_{K}))] \]
	where $\rho_{K_\R}$ is the half sum of positive roots in $\Delta(\lie{k},\lie{t}).$ 
	   
	   Let $G_\R=GL(2,\C), K_\R=U(2),$ and $\tau=V_{(a,b)}$ the irreducible representation of $K\cong G_\R$ (equivalently of $K_\R$) with highest weight $(a\geq b)\in \lie{t}^*.$ Then $W(K_\R,T)=\Z/2\Z, \rho=(\frac{1}{2},-\frac{1}{2})$ and the Weyl character formula gives \[ V_{(a,b)}=\Ind_{B_0}^{G_\R}(a,b)-\Ind_{B_0}^{G_\R}(a+1,b-1). \]
	   We now want to write these induced representations as standard modules in the parametrization given by pseudocharacters above. Zhelobenko first deduced the classification of admissible representations of a complex group in \cite{Zhelobenko1974}. The classification scheme goes as such: $(\lie{g}_\C,K_\C)$-modules for a complex group are precisely $(\lie{g}\times \lie{g}, \Delta(G))$-modules. Such modules are thus given by pairs of characters $(\lambda_L,\lambda_R)\in \lie{h}^*\times \lie{h}^*$ such that $\lambda_L+\lambda_R\in \mathbb{X}^*(H).$ A parameter is tempered if and only if $\lambda_L=\lambda_R.$ Therefore, we see that the bijection from tempiric representations (up to conjugacy) and irreducible representations of $K$ is simply given by $I(\gamma,\gamma)\mapsto 2\gamma$ with $2\gamma\in \mathbb{X}^*(T)$ (as tempiric implies the restriction to $A$ is trivial). 
	   With this in mind, we see that the tempiric Langlands-Vogan-Zhelobenko parameters in the Weyl character formula become \begin{equation}   V_{(a,b)}=X\left(\left(\frac{a}{2},\frac b 2\right),\left(\frac a 2,\frac b 2\right)\right)-X\left(\lrp{\frac{a+1}{2},\frac{b-1}{2}},\lrp{\frac{a+1}{2},\frac{b-1}{2}}\right )  \end{equation} 
	   \hfill $\blacksquare$
	 
	\end{example} 
	
We now can use this idea to piece together the Euler characteristic of the derived pushforward. Let $\eta\in \mathbb{K}(L\cap K).$ Assume first that $[\eta]$ is the class of a genuine irreducible representation. Then according to Vogan's yoga, we write \[ [\eta]=\sum_{\pi} M(\pi, \eta)[\gr \pi] \]
with $\pi$ tempiric $(\lie{l},L\cap K)$ module. By \cite[Theorem 16.6]{Vogan2007}, we have that this sum is finite. So, define \begin{align*} M(\mathbb{O},\eta)=\sum_{i\geq 0} (-1)^i [H^0(\close{\OO},R^i (\mu)_*\pi^*(W_\eta) )] \end{align*}
As $\mathcal{N}_\theta$ is affine and $\close{\OO}$ is closed (thus affine), we have (by \cite[Proposition 8.5]{Hartshorne1977}) \[ M(\mathbb{O},\tau)\simeq \sum_{i\geq 0} (-1)^i [H^i(\widetilde{\OO},\pi^*(W_\eta) )] .\]
Further, using the projection formula \cite[\href{https://stacks.math.columbia.edu/tag/01E8}{Lemma 01E8}]{stacks-project} and that $\pi$ is affine, we see that \[ \sum_{i\geq 0} (-1)^i [H^i(\widetilde{\OO},\pi^*(W_\eta) )] = \sum_{i\geq 0} (-1)^i [H^i(K/Q_K,W_\eta \tensor \Sym(\lie{s}^2)^*)].\]
Using the De-Rham complex identity for any finite dimensional vector space $V,$ \[ [\C]=\sum_{i\geq 0} (-1)^i [\Sym V \tensor \textstyle{\bigwedge^i}V]\]
applied to $\lie{s}_1^*,$ we obtain   
  \[ \sum_{i\geq 0} (-1)^i [H^i(K/Q_K,W_\eta \tensor \Sym(\lie{s}^2)^*)]=\sum_{i\geq 0} (-1)^i [H^i(K/Q_K,W_\eta \tensor \Sym(\lie{u}\cap \lie{s})^*\tensor \textstyle{\bigwedge^i \lie{s}_1^*})] .\]

Those familiar with Blattner's formula for discrete series will notice something interesting about the right hand side above. In particular, the presence of the symmetric algebra tells us that we indeed have some form of cohomological induction appearing here. To be precise, we need to use the following result due to Zuckerman:

\begin{theorem}[Zuckerman's Blattner Formula]\cite[pg. 151]{Vogan1987}
	Let $Z$ be a fininte length $\lie{l},L\cap K$ module. Then define \[ B[\gr Z]=\sum_{i} (-1)^i[\textbf{R}^i(\mu_{\mathcal{Q}_K})_*([\gr Z]_L)\in \mathbb{K}^K(\mathcal{N}_\theta)\] (with notation as below). 
	Then \[ [\gr \mathcal{R}_\lie{q}(Z)]=B[\gr Z]. \]
\end{theorem}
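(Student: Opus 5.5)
The formula is additive in $Z$: both $Z\mapsto[\gr\mathcal{R}_\lie{q}(Z)]$ and $Z\mapsto B[\gr Z]$ are Euler characteristics, and $[\gr Z]\in\mathbb{K}^{L\cap K}(\mathcal{N}_\theta(\lie{l}))$ does not depend on the choice of good filtration. I will therefore fix one finite length $(\lie{l},L\cap K)$-module $Z$ with a good filtration $F_\bullet Z$, and realize both sides as Euler characteristics of two complexes related by a filtration. Here $\mathcal{R}_\lie{q}(Z)=\sum_i(-1)^i[\mathcal{R}^i_\lie{q}(Z)]$ is the virtual module given by the derived Zuckerman functors applied to $\operatorname{pro}_{\bar{\lie{q}},L\cap K}^{\lie{g},L\cap K}(Z\tensor\bigwedge^{top}\lie{u})$. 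In these terms the theorem compares a filtered complex with its associated graded, and says that taking Euler characteristics commutes with passing to $\gr$.

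\textbf{Step 1: the algebraic side as a sheaf cohomology computation.} I will compute the derived Zuckerman functor $\Gamma^{\lie{g},K}_{\lie{g},L\cap K}$ through the standard complex. In its Euler characteristic it becomes $\sum_i(-1)^iH^i(K/Q_K,\,\cdot\,)$ applied to the homogeneous bundle with fiber
\[
\operatorname{pro}(Z\tensor{\textstyle\bigwedge^{top}}\lie{u})\;\simeq\;\Sym(\lie{u}\cap\lie{s})^*\tensor Z\tensor{\textstyle\bigwedge^{top}}(\lie{u}\cap\lie{s})
\]
as $Q_K$-modules. Up to the $\rho(\lie{u}\cap\lie{s})$-type twist, which is absorbed into the notation $[\gr Z]_L$, this is the Euler-characteristic form of the generalized Blattner formula (Vogan). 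At this stage it is only an identity of virtual $K$-representations.

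\textbf{Step 2: the geometric side.} Extend $\gr Z$, an $L\cap K$-equivariant coherent sheaf on $\mathcal{N}_\theta(\lie{l})$, trivially along $\lie{u}\cap\lie{s}$. This gives a $Q_K$-equivariant sheaf $[\gr Z]_L$ on $\mathcal{N}_\theta(\lie{l})+\lie{u}\cap\lie{s}$, which I then induce to $K\times_{Q_K}(\mathcal{N}_\theta(\lie{l})+\lie{u}\cap\lie{s})$ and push forward by $\mu_{\mathcal{Q}_K}$. The target $\mathcal{N}_\theta$ is affine, so global sections of $\textbf{R}^i(\mu_{\mathcal{Q}_K})_*$ are $H^i$ upstairs. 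The projection to $K/Q_K$ is affine, so the same projection-formula argument used above for $M(\OO,\eta)$ identifies these with $H^i(K/Q_K,\gr Z\tensor\Sym(\lie{u}\cap\lie{s})^*\tensor\text{twist})$, now as graded $\Sym(\lie{s})$-modules.

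\textbf{Step 3: matching Steps 1 and 2 as $\Sym(\lie{s})$-modules.} Put the tensor-product filtration, coming from $F_\bullet Z$ and the degree filtration on $\Sym(\lie{u}\cap\lie{s})$, on the complex of Step 1. Then:
\begin{itemize}
\item its associated graded is, term by term, the $K$-equivariant $\Sym(\lie{s})$-module complex of Step 2;
\item the induced filtration on $\mathcal{R}^i_\lie{q}(Z)$ is good;
\item the spectral sequence of the filtered complex gives $\sum_i(-1)^i[\gr\mathcal{R}^i_\lie{q}(Z)]=\sum_i(-1)^i[H^i(\gr)]$ in $\mathbb{K}^K(\mathcal{N}_\theta)$.
\end{itemize}
The last point holds because each page of the spectral sequence is an equivariant coherent subquotient of the previous one, so passing from one page to the next does not change the Euler characteristic.

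\textbf{Main obstacle.} The hard step is the filtration-compatibility in Step 3. I need the Zuckerman differential to respect the filtration, so that $\gr$ of the differential is the Koszul/Čech differential on the geometric side. I also need finiteness: only finitely many $K$-isotypic pieces contribute in each filtration degree, so that the spectral sequence converges in $\mathbb{K}^K(\mathcal{N}_\theta)$ and not merely $K$-type by $K$-type. My first attempt is to use the Hochschild–Serre description of $\mathcal{R}_\lie{q}$ as $\lie{k}$-relative derived functors of $\operatorname{Ind}_{\lie{q}\cap\lie{k}}^{\lie{k}}$. Here $\lie{s}$ acts by first-order operators that raise the filtration degree by one, which is exactly the condition needed for the differential to be filtered and for $\gr$ to be $\Sym(\lie{s})$-linear.
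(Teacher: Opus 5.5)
The paper gives no proof of this statement; it quotes it from Vogan's book. So your sketch has to stand on its own, and as written it does not.

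\textbf{Step 1.} The identification is misstated. As an $L\cap K$-module, $V=\operatorname{pro}_{\bar{\lie q}}^{\lie g}(Z')$ with $Z'=Z\tensor\bigwedge^{top}\lie u$ is $\Sym(\lie u)^*\tensor Z'$, not $\Sym(\lie u\cap\lie s)^*\tensor Z'$. Its restriction $V|_{\lie k}$ is only \emph{filtered} by the modules $\operatorname{pro}_{\bar{\lie q}\cap\lie k}^{\lie k}(Z'\tensor\Sym^n(\lie u\cap\lie s)^*)$. It is therefore not the fiber of a bundle on $K/Q_K$, and Step 1 gives at most an identity of $K$-characters.

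\textbf{Step 3: your filtration is not $\lie k$-stable.} All of the content of the theorem sits in Step 3, and the filtration you propose does not have the properties you need. The Zuckerman complex uses the $(\lie k,L\cap K)$-structure of $V$, so you need a filtration by $\lie k$-stable subspaces on which $\lie s$ raises degree by one. Your tensor-product filtration is built on the Step 1 identification, which is at best $L\cap K$-equivariant, and $\lie k$ does not preserve it.

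Take $Y\in\bar{\lie u}\cap\lie k$, $X\in\lie u\cap\lie s$, and $f\in V$ of $\Sym$-degree $0$ with $f(1)=z\in F_dZ'$. Then $(Yf)(X)=f(XY)=f([X,Y])=[X,Y]_{\lie l}\cdot z$, where $[X,Y]_{\lie l}\in\lie l\cap\lie s$. So $Yf$ has a component of $\Sym$-degree $1$ with value in $F_{d+1}Z'$. Hence $Y$ raises your total degree by $2$ whenever $[X,Y]_{\lie l}z\notin F_dZ'$.

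This already happens in the paper's own setting. For the principal orbit of $GL(2,\HH)$, take $D=\operatorname{diag}(1,-1)$, $X=\Mat{0&D\\0&0}$ and $Y=\Mat{0&0\\I&0}$. Then $[X,Y]=\operatorname{diag}(D,-D)$, a nonzero element of $\lie l\cap\lie s$. Its linear function does not vanish on the nilpotent cone of $\lie l\cap\lie s$, so it acts non-nilpotently on $\gr Z$ for the $GL(2,\C)$ principal series used there. Consequently the Zuckerman differential is not a filtered map.

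\textbf{The $\lie k$-stable filtration has the wrong variance.} The filtration that $\lie k$ \emph{does} preserve is the order-of-vanishing one, $F^n=\{f:\ f|_{U(\bar{\lie q})U_{n-1}(\lie g)U(\lie k)}=0\}$. Its graded pieces are exactly the produced modules above; this is the filtration behind the classical Blattner formula. But it is decreasing, with $\lie s F^{n+1}\subseteq F^n$. On its associated graded, $\lie u\cap\lie s$ acts by contraction on $\Sym(\lie u\cap\lie s)^*$. The result is not a finitely generated $\Sym(\lie s)$-module: already for $SL(2,\R)$ with $\lie q$ a Borel it is $\C[x]$, with $E$ acting by $\partial_x$ and $F$ by $0$. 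So it gives neither a coherent $E_1$ page nor the sheaf $[\gr Z]_L$.

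Neither candidate supports the bullets of Step 3. In addition, ``term by term'' would have to become a $\Sym(\lie s)$-linear Shapiro quasi-isomorphism between the $\lie k$-relative and $(\lie q\cap\lie k)$-relative complexes. The spectral-sequence bookkeeping is the easy part: once $E_1$ is coherent, the ascending chain of boundaries stabilizes, which forces degeneration at a finite page.

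\textbf{A route that avoids filtrations on $\mathcal R_\lie q(Z)$.} Suppose the classes $[\gr\nu]$, for $\nu$ tempiric, span $\mathbb K^K(\mathcal N_\theta)$, as the introduction uses. Then the lowest-$K$-type triangularity of Section 2 shows that a class in $\mathbb K^K(\mathcal N_\theta)$ is determined by its $K$-character. The identity then follows from three inputs:
\begin{itemize}
\item your Step 2;
\item the fact that $\gr Z$ and $Z$ have the same $L\cap K$-types;
\item the classical generalized Blattner formula for $K$-characters.
\end{itemize}
This works provided the spanning theorem you invoke is not itself proved via this formula.
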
 

The main content of this theorem is that the pushforward along moment maps is geometrizing cohomological induction. We now need to explain the notation of the result as it will be useful later (following \cite[Section 11]{AdamsVogan2021}).   

Suppose $\lie{q}=\lie{l}\ds \lie{u}$ is a $\theta$-stable parabolic subalgebra of $\lie{g}.$ As noted before, on the group level, $Q$ being $\theta$-stable implies that $Q\cap K$ is a parabolic subgroup of $K.$ This has a Levi decomposition as \[ Q_K:=Q\cap K=(L\cap K)(U\cap K)=:L_K\cdot U_K.\] Set $\mathcal{Q}_K$ to be the variety of $\theta$-stable parabolic subalgebras conjugate to $\lie{q}$ by $K.$ 

Observe that we have a triangular decomposition $\lie{g}=\lie{q}\ds \bar{\lie{u}}$ and thus an identification (by way of a choice of bilinear form) \[\lie{q}\simeq  (\lie{g}/\lie{u})^*. \] The natural projection $\lie{q}\to \lie{l}$ corresponds to restriction of functionals \[\pi_\lie{q}:(\lie{g}/\lie{u})^*\to (\lie{q}/\lie{u})^* .\]
For this reason, an element of $\lie{q}$ is nilpotent if and only if its image in $\lie{l}$ is nilpotent. Thus, \[ \mathcal{N}_\lie{q}^*=\pi_\lie{q}^{-1}(\mathcal{N}_\lie{l}^*)\simeq \mathcal{N}_\lie{l}+\lie{u}.\] 
As $\lie{q}$ is $\theta$-stable, we may intersect with $\lie{k}$ at each step and obtain the following: \begin{align*}
	\mathcal{N}_{\lie{l},\theta}^*&=\mathcal{N}_\lie{l}^*\cap (\lie{l}/(\lie{l}\cap \lie{k}))^* \\ 
	\mathcal{N}_{\lie{q},\theta}^*&\simeq \mathcal{N}_{\lie{l},\theta}^*+(\lie{u}\cap \lie{s})^* 
\end{align*} 
We can enhance this last variety from a $Q_K$-variety to a genuine $K$-variety by forming the equivariant "affine" bundle \[ \mathcal{N}_{{\mathcal{Q}_K}}^*\simeq K\times_{Q_K} \mathcal{N}_{\lie{q},\theta}^* .\] 
Further, by abusing heavily the invariant form, we can remove the upper asterisks. We thus get a moment map \[ \mu_{\mathcal{Q}_K}: \mathcal{N}_{{\mathcal{Q}_K}}\to \mathcal{N}_\theta \] 
given by $(k,\xi)\mapsto \Ad(k)\xi.$
This map is projective but may not be surjective (there need not exist enough parabolics for nilpotent elements to vanish on). 

\begin{corollary} 
	Let $\lie{g}$ be the complexification of a real reductive Lie algebra such that there exists a single $K$-orbit on $\mathcal{N}_\theta$ whose closure is the entire cone. Let $\lie{q}$ be the associated $\theta$-stable parabolic subalgebra of $\lie{g}.$  Then $\mu_{\mathcal{Q}_K}$ is surjective. 
\end{corollary}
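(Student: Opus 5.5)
The plan is to match the moment map $\mu_{\mathcal{Q}_K}$ with the resolution $\mu:\widetilde{\OO}\to\close{\OO}$ from the introduction, taking $\OO$ to be the orbit whose closure is all of $\mathcal{N}_\theta$. Choose $X\in\OO$ and a Kostant--Rallis triple $\{H,X,Y\}$ with $H\in\lie{k}$. The associated parabolic is $\lie{q}=\lie{g}^0=\bigoplus_{i\ge 0}\lie{g}_i$, with Levi $\lie{l}=\lie{g}_0$ and nilradical $\lie{u}=\bigoplus_{i>0}\lie{g}_i$. Then $\lie{u}\cap\lie{s}=\lie{s}^1\supseteq\lie{s}^2$. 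Since $\mathcal{N}_{\lie{q},\theta}\simeq\mathcal{N}_{\lie{l},\theta}+(\lie{u}\cap\lie{s})$ contains $0+\lie{s}^2=\lie{s}^2$, we get a closed $K$-equivariant embedding
\[ \widetilde{\OO}=K\times_{Q_K}\lie{s}^2\hookrightarrow K\times_{Q_K}\mathcal{N}_{\lie{q},\theta}=\mathcal{N}_{\mathcal{Q}_K}. \]
The map $\mu_{\mathcal{Q}_K}$ restricts along this embedding to $\mu$, because both are given by $(k,\xi)\mapsto\Ad(k)\xi$.

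Next I use that $\mu$ surjects onto $\close{\OO}$, which by hypothesis is $\mathcal{N}_\theta$. One way to see this: $X\in\lie{g}_2\cap\lie{s}=\lie{s}_2\subseteq\lie{s}^2$, so the image of $\mu$ contains $K\cdot X=\OO$. The map $\mu$ is projective, since $K/Q_K$ is projective and $\widetilde{\OO}$ is a closed subbundle of the trivial bundle $K/Q_K\times\lie{s}$. Its image is therefore closed, contains $\OO$, and so contains $\close{\OO}=\mathcal{N}_\theta$. Consequently $\mu_{\mathcal{Q}_K}$ is surjective onto $\mathcal{N}_\theta$. The other inclusion also needs checking, namely that $\mu_{\mathcal{Q}_K}$ really lands in $\mathcal{N}_\theta$. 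This holds because elements of $\mathcal{N}_{\lie{q},\theta}$ are nilpotent (their image in $\lie{l}$ is nilpotent), they lie in $\lie{s}$, and $\Ad(K)$ preserves both properties.

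The main obstacle is bookkeeping rather than substance. The paper identifies $\mathcal{N}^*_{\lie{q},\theta}$ with a subset of $\lie{s}$ through the invariant form and calls this ``abusing heavily'', so I must check that under this identification $\lie{s}^2$ really sits inside $\mathcal{N}_{\lie{q},\theta}$, i.e.\ that the dual of $(\lie{u}\cap\lie{s})^*$ corresponds to $\lie{u}\cap\lie{s}$ and not to $\bar{\lie{u}}\cap\lie{s}$. If the convention gives the opposite parabolic, I replace $H$ by $-H$, or equivalently use $\bar{\lie{q}}$, which is again $\theta$-stable and $K$-conjugate data. The argument is then identical. Note that only the single orbit whose closure is the whole cone is used, so $K$ versus $K_\theta$ causes no issue; for $K_0$ one would apply the argument componentwise.
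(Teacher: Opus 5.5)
Your argument is correct and is the paper's proof: restrict $\mu_{\mathcal{Q}_K}$ to the subbundle $\widetilde{\OO}=K\times_{Q_K}\lie{s}^2$, where it becomes $\mu$, whose image is $\close{\OO}=\mathcal{N}_\theta$; you also supply details the paper leaves implicit, namely that properness of $\mu$ gives a closed image containing $X\in\lie{s}_2$, and that $\mu_{\mathcal{Q}_K}$ lands in $\mathcal{N}_\theta$. Your convention worry resolves the way you need, since under the invariant form $\lie{u}^\perp=\lie{q}$, so the fiber really is $\mathcal{N}_{\lie{l},\theta}+(\lie{u}\cap\lie{s})\supseteq\lie{s}^2$ and the fallback to $-H$ is never needed; that fallback would in general move you to a different orbit, as with $Y^\pm$ for $SL(2,\R)$, so it is not ``$K$-conjugate data''.
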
 	   
\begin{proof} 
	Our $\mu$ defined above is the restriction of $\mu_{\mathcal{Q}_K}$ to the subbundle of $\mathcal{N}_{{\mathcal{Q}_K}}$ given by $\widetilde{\OO}.$ If there is a single orbit whose closure is the entire $K$-nilpotent cone, then $\mu$ is surjective and thus, so is $\mu_{\mathcal{Q}_K}.$ 
\end{proof} 

We need to recall the notation of cohomological induction to complete this story. Following \cite{Vogan1984}, attached to a $\theta$-stable parabolic subalgebra of $\lie{g}$ we have functors \[ \mathcal{R}_\lie{q}^i:\mathcal{M}(\lie{l},L\cap K)\to \mathcal{M}(\lie{g},K) \]
 defined as right derived functors of the Zuckerman functor composed with coinduction: \[\mathcal{R}_\lie{q}^0(V)=\Gamma_{L\cap K}^K\comp \Coind_{\lie{q}}^\lie{g}(V\tensor \textstyle{\bigwedge^{top}} \lie{u})\] with (notation as in \cite{CookCohomNotes} for Zuckerman's functor). We denote by $\mathcal{R}_\lie{q}$ the Euler-Characteristic $\sum_{i\geq 0} (-1)^i \mathcal{R}_\lie{q}^i.$ Note that we can define these functors for arbitrary Harish-Chandra pairs (again as in \cite{CookCohomNotes}) and we denote the derived functors as $R^p\operatorname{I}_{(\lie{b},T)}^{(\lie{g},K)}$\footnote{In \cite{CookCohomNotes} we do not use the convention of twisting by $\bigwedge^{top}\lie{u}.$ This introduces some differences in general, but in all cases these differences boil down to a shift in infinitesimal character.}.  We can now appropriately state a form of the pushforward.

\begin{corollary} 
	$\mu_K$ is a restriction of $\mu_{\mathcal{Q},\theta}$ to the subbundle $\KSpRes.$ Hence, \[M(\OO,\eta)|_K=\mathcal{R}_\lie{q}(W_\eta\tensor \textstyle{\bigwedge}^{top}(\lie{u}\cap \lie{s})^*)|_K.  \]
\end{corollary}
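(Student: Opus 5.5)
The plan is to identify the space $\KSpRes=K\times_{Q_K}\lie{s}^2$ as a closed $K$-stable subbundle of $\mathcal{N}_{\mathcal{Q}_K}=K\times_{Q_K}(\mathcal{N}_{\lie{l},\theta}+(\lie{u}\cap\lie{s}))$, and then to compare the two pushforwards using Zuckerman's Blattner formula. For the first part, recall that $\lie{q}=\lie{g}^0$ is the parabolic attached to the Jacobson--Morozov element $H\in\lie{k}$, so $\lie{l}=\lie{g}_0$ and $\lie{u}=\bigoplus_{i>0}\lie{g}_i$. Since $\lie{s}^2=\bigoplus_{i\geq 2}\lie{s}_i\subseteq \lie{u}\cap\lie{s}$ and $0\in\mathcal{N}_{\lie{l},\theta}$, we get a $Q_K$-stable inclusion $\lie{s}^2\hookrightarrow \mathcal{N}_{\lie{q},\theta}$. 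Inducing this up to $K$ gives a closed embedding $\KSpRes\hookrightarrow\mathcal{N}_{\mathcal{Q}_K}$. On the image, the map $(k,\xi)\mapsto\Ad(k)\xi$ is literally $\mu$, which gives the first assertion.

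For the second assertion, I would start from the computation preceding the statement:
\[ M(\OO,\eta)=\sum_i(-1)^i[H^i(K/Q_K,W_\eta\tensor\Sym(\lie{u}\cap\lie{s})^*\tensor \textstyle{\bigwedge^i}\lie{s}_1^*)]. \]
Here $\lie{u}\cap\lie{s}=\lie{s}_1\ds\lie{s}^2$. Applying the Koszul (de Rham) identity to $\lie{s}_1^*$ exhibits the structure sheaf of $\lie{s}^2$ as a virtual $Q_K$-equivariant coherent sheaf on $\lie{u}\cap\lie{s}$, namely $\sum_i(-1)^i\Sym(\lie{u}\cap\lie{s})^*\tensor\bigwedge^i\lie{s}_1^*$. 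The right hand side is therefore $\mathbf{R}\Gamma$ of a virtual sheaf on $K\times_{Q_K}(\lie{u}\cap\lie{s})\subseteq\mathcal{N}_{\mathcal{Q}_K}$, pushed forward along $\mu_{\mathcal{Q}_K}$. Since $\mathcal{N}_\theta$ is affine, taking global sections after $\mathbf{R}\mu_{\mathcal{Q}_K*}$ agrees with $\mathbf{R}\Gamma$ upstairs.

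Next I would choose a finite length $(\lie{l},L\cap K)$-module $Z$ whose associated graded class is the relevant virtual sheaf. On $\mathcal{N}_{\lie{l},\theta}$, I would use the finite expansion $[\eta]=\sum_\pi M(\pi,\eta)[\gr\pi]$ from \cite[Theorem 16.6]{Vogan2007}. The point is that $[\gr Z]_L$ then matches the $Q_K$-sheaf we need, after pulling back along $\pi_\lie{q}$ and accounting for the Koszul factor $\bigwedge^\bullet\lie{s}_1^*$. The restriction-to-$K$ statement only needs $K$-characters, so any twisting by one-dimensional characters can be tracked in $\mathbb{K}(L\cap K)$. Zuckerman's Blattner formula then gives $B[\gr Z]=[\gr\mathcal{R}_\lie{q}(Z)]$. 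Restricting to $K$, where $\gr$ does not change the $K$-character, yields $M(\OO,\eta)|_K=\mathcal{R}_\lie{q}(\cdots)|_K$.

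The main obstacle is the bookkeeping of the twist. $\mathcal{R}_\lie{q}$ includes the factor $\bigwedge^{top}\lie{u}$, and Blattner-type formulas carry $\Sym(\lie{u}\cap\lie{s})$ together with a top exterior power of $\lie{u}\cap\lie{k}$ coming from Serre duality/Borel--Weil--Bott on $K/Q_K$. I would check, by comparing the Euler characteristic of $H^\bullet(K/Q_K,-)$ with the standard $K$-type formula for $\mathcal{R}_\lie{q}$ (Blattner--Zuckerman: $\sum(-1)^i H^i(K/Q_K, Z\tensor\bigwedge^{top}\lie{u}\tensor\Sym(\lie{u}\cap\lie{s})\dots)$ up to duals), that the net correction is exactly $\bigwedge^{top}(\lie{u}\cap\lie{s})^*$. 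In doing so I must be careful about duals versus the identification $\lie{u}\cong\bar{\lie{u}}^*$ via the invariant form. This sign and dual consistency is where I expect errors to hide, and I would verify it on the $SL(2,\R)$ example of the introduction.
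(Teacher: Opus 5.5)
Your route is the one the paper takes. Its only justification is that $\mu$ is the restriction of $\mu_{\mathcal{Q}_K}$ to the closed subbundle $\KSpRes=K\times_{Q_K}\lie{s}^2$, followed by an appeal to Zuckerman's Blattner formula. Your embedding argument, the Koszul step, and the use of affineness of $\mathcal{N}_\theta$ are all fine.

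\textbf{The deferred twist check fails as stated.} The gap is the step you postpone, which you rightly flagged as the place where errors hide. Write $\chi(K/Q_K,V)=\sum_i(-1)^iH^i(K/Q_K,K\times_{Q_K}V)$ and $S=\dim(\lie u\cap\lie k)$. The paper's convention is $\mathcal{R}^0_{\lie q}=\Gamma^K_{L\cap K}\circ\Coind^{\lie g}_{\lie q}(\,\cdot\,\tensor\bigwedge^{top}\lie u)$. Under it, the PBW filtration of $\Coind^{\lie g}_{\lie q}$ restricted to $\lie k$ has graded pieces $\Coind^{\lie k}_{\lie q\cap\lie k}(\,\cdot\,\tensor\Sym^n(\lie u\cap\lie s))$, because $(\lie g/(\lie q+\lie k))^*\cong\lie u\cap\lie s$. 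Hence
\[
\mathcal{R}_{\lie q}(Z)|_K=\chi\bigl(K/Q_K,\,Z\tensor\textstyle\bigwedge^{top}\lie u\tensor\Sym(\lie u\cap\lie s)\bigr),
\qquad
M(\OO,\eta)=\chi\bigl(K/Q_K,\,W_\eta\tensor\Sym((\lie s^2)^*)\bigr).
\]
The two symmetric algebras are on dual spaces, so the $K$-types grow in opposite directions. No twist of $W_\eta$ by a character can reconcile them.

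Take the $SL(2,\R)$ example with $\OO=Y^+$, where $\lie q=\lie b$, $K/Q_K$ is a point, $S=0$, and $\lie s^2=\lie u\cap\lie s=\C E_{12}$. The left side has $K$-types $n,n-2,n-4,\dots$. The right side is $\Coind^{\lie g}_{\lie b}(\C_n)$, with $K$-types $n,n+2,n+4,\dots$.

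What your bookkeeping actually proves, for even orbits, uses two identities: $\chi(K/Q_K,V)^\vee=\chi(K/\bar{Q}_K,V^\vee)$, and $\chi(K/\bar{Q}_K,V)=(-1)^S\chi(K/Q_K,V\tensor\bigwedge^{top}(\lie u\cap\lie k))$. The result is $M(\OO,\eta)^\vee|_K=(-1)^S\mathcal{R}_{\lie q}\bigl(W_\eta^\vee\tensor\bigwedge^{top}(\lie u\cap\lie s)^*\bigr)|_K$. Equivalently, $M(\OO,\eta)|_K=(-1)^S\mathcal{R}_{\bar{\lie q}}\bigl(W_\eta\tensor\bigwedge^{top}(\lie u\cap\lie s)\bigr)|_K$. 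So the twist $\bigwedge^{top}(\lie u\cap\lie s)^*$ is correct only after passing to contragredients. The sign $(-1)^S$ disappears only if $\mathcal{R}_{\lie q}$ means $\mathcal{R}^S_{\lie q}$ rather than the Euler characteristic. Already for $SL(2,\C)$, with $\OO$ principal and $\eta$ trivial, the literal formula gives $-\C[\mathcal{N}_\theta]$.

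\textbf{The Koszul factor restricts the statement to even orbits.} $\lie s_1$ is an $L\cap K$-module but not an $\lie l$-module. So at the level of sheaves, accounting for $\bigwedge^\bullet\lie s_1^*$ means expanding the classes $(\eta\tensor\bigwedge^j\lie s_1^*)\tensor\mathcal{O}_{\{0\}}$ in $\mathbb{K}^{L\cap K}(\mathcal{N}_{\lie l,\theta})$ into tempirics of $L$. Either way, the output has $\sum_j(-1)^jW_\eta\tensor\bigwedge^j\lie s_1^*$ inside $\mathcal{R}_{\lie q}$. This is not a one-dimensional twist of $W_\eta$, so your argument cannot land on the displayed formula unless $\lie s_1=0$, that is, unless $\OO$ is even. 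Your final line, written $\mathcal{R}_{\lie q}(\cdots)$, hides exactly this.

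To get an actual proof, you have two options. Either restrict to even orbits, or prove the version that carries the Koszul sum; the paper itself reinstates that sum in Corollary~\ref{Formula_for_Pushforward} and in the $\OO_{mid}$ computation for $GL(3,\HH)$. In both cases the contragredient and sign corrections above are still needed.
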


 Now the Euler-characteristic is becoming more tractable as cohomological induction plays nicely with the version of the Langlands classification given above. To be more precise, we need the following definition: 
  
 \begin{definition}\label{Good_Fair_Range}
 	Let $\lambda\in \lie{h}^*$ and $\lie{q}$ a $\theta$-stable parabolic containing $\lie{h}.$ We say that $\lambda$ is \textbf{weakly good} if \[ \operatorname{Re}(\alpha^\vee(\lambda))\geq 0\] 
	for all $\alpha\in \Delta(\lie{u},\lie{h}).$ $\lambda$ is \textbf{weakly fair} if \[  \operatorname{Re}(\alpha^\vee(\lambda|_{\lie{z}(\lie{g})}))\geq 0.\]
 \end{definition}

 These ranges of infinitesimal characters have surprising properties. First, we have the following result \cite[Proposition 4.18]{Vogan1984}:
 
 \begin{proposition}\label{cohomological_induction_pseudo_chars} 
 	Let $\gamma_{\lie{q}}=(\Psi_\lie{q},\Gamma_\lie{q},\bar{\gamma}_\lie{q})\in \script{P}^L_{lim}(H)$ be weakly-good (definition \ref{Good_Fair_Range}). Then \[ \mathcal{R}_\lie{q}^S \overline{X}^L(\gamma_\lie{q})=X^G(\gamma)\]
	where $\gamma=(\Psi_\lie{q}\cup \Delta_{im}(\lie{u},\lie{h}),\Gamma, \bar{\gamma}_\lie{q}+\rho(\lie{u}))$ with $\Gamma|_A=\Gamma_\lie{q}|_A$ and $\Gamma|_T=\Gamma_\lie{q}|_T\tensor \bigwedge^{top}(\lie{u}\cap \lie{s})|_T$, and $S=\dim \lie{u}\cap \lie{k}.$  
 \end{proposition}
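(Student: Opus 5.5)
This statement is \cite[Proposition 4.18]{Vogan1984}. The plan is to reduce it, by induction in stages, to two standard ingredients: the construction of (limits of) discrete series by cohomological induction, and the commutation of cohomological induction with real parabolic induction. Throughout, write $H=TA$ and let $M=G_\R^A$, so that $L\cap M$ is the Levi factor of the $\theta$-stable parabolic $\lie{q}\cap\lie{m}$ of $\lie{m}$.

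\textbf{Step 1: reduce to a statement on $M$.} By definition, $\overline{X}^L(\gamma_\lie{q})$ is a submodule of the standard module $X^L(\gamma_\lie{q})=\Ind_{(L\cap M)N_L}^{L}(\pi_L\tensor\mathbbm{1})$. I would first check that under weak goodness one may work with the whole standard module; if necessary this can be replaced by an Euler characteristic identity, since the two sides have the same composition factors in the weakly good range. Next I would use the induction-in-stages isomorphism
\[ \mathcal{R}_\lie{q}\comp \Ind_{(L\cap M)N_L}^{L} \simeq \Ind_{MN}^{G}\comp \mathcal{R}_{\lie{q}\cap\lie{m}}. \]
At the level of Euler characteristics this is formal: real parabolic induction is exact and commutes with the Zuckerman functor, because both are built from coinduction followed by taking $K$-finite vectors. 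The $\rho$-shifts need care: the $\bigwedge^{top}\lie{u}$ twist splits into a piece along $\lie{u}\cap\lie{m}$ and a piece along $\lie{u}\cap\lie{n}$, and the latter is absorbed by the normalization of $\Ind$. This reduces the claim to showing that $\mathcal{R}_{\lie{q}\cap\lie{m}}^{S'}(\pi_L)$ is the relative (limit of) discrete series of $M$ attached to $\gamma$.

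\textbf{Step 2: the case where $H$ is compact modulo the center.} In $M$ the Cartan $T$ is compact modulo center, and $\pi_L$ is a limit of discrete series of $L\cap M$ with Harish-Chandra parameter $\bar\gamma_\lie{q}$. I would realize $\pi_L$ itself as $\mathcal{R}_{\lie{b}_L}$ of a character of $T$, using a $\theta$-stable Borel subalgebra $\lie{b}_L$ of $\lie{l}\cap\lie{m}$ determined by $\Psi_\lie{q}$. Then I would apply induction in stages for $\mathcal{R}$,
\[ \mathcal{R}_\lie{q}\comp\mathcal{R}_{\lie{b}_L}=\mathcal{R}_{\lie{b}}, \qquad \lie{b}=\lie{b}_L+\lie{u}, \]
with degrees adding. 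The result is $\mathcal{R}_\lie{b}$ of the character $\Gamma$, which by Zuckerman's construction is the limit of discrete series with parameter $\bar\gamma_\lie{q}+\rho(\lie{u})$ and positive system $\Psi_\lie{q}\cup\Delta_{im}(\lie{u},\lie{h})$. The character twist $\bigwedge^{top}(\lie{u}\cap\lie{s})|_T$ comes from comparing $\bigwedge^{top}\lie{u}$ with the $\rho_c$-shift in (L-3): $\rho(\lie{u})-2\rho(\lie{u}\cap\lie{k})$ is exactly the weight of $\bigwedge^{top}(\lie{u}\cap\lie{s})$ minus that of $\bigwedge^{top}(\lie{u}\cap\lie{k})$. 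In the same comparison, $S=\dim\lie{u}\cap\lie{k}$ appears as the sum of degrees.

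\textbf{Step 3: vanishing outside degree $S$.} This is the main obstacle. Weak goodness forces $\bar\gamma$ to be dominant for $\Psi_\lie{q}\cup\Delta_{im}(\lie{u},\lie{h})$, so (L-2) holds for $\gamma$. I would then invoke the vanishing theorem in the weakly good range, \cite[Theorem 5.3]{Vogan1984} (Knapp--Vogan), which gives $\mathcal{R}^i_\lie{q}=0$ for $i\neq S$ and exactness. This shows that the Euler characteristic computed above is concentrated in degree $S$, and the identification holds on the nose rather than only virtually.
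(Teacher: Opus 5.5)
\textbf{Main gap: Step 1 confuses the socle with the standard module.} The paper gives no argument beyond citing \cite[Proposition 4.18]{Vogan1984}, so your sketch has to stand on its own, and it breaks in Step 1. There you replace $\overline{X}^L(\gamma_\lie{q})$, the irreducible socle, by the whole standard module $X^L(\gamma_\lie{q})$. Your justification is that they have the same composition factors in the weakly good range. That is false. Weak goodness only constrains how $\bar{\gamma}_\lie{q}+\rho(\lie{u})$ pairs with $\Delta(\lie{u},\lie{h})$; it says nothing about whether the $L$-standard module is reducible.

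Take $\lie{q}=\lie{g}$. Then $\lie{u}=0$, $S=0$, $\mathcal{R}_\lie{q}$ is the identity functor, weak goodness is vacuous, and $\gamma=\gamma_\lie{q}$. The displayed identity would then say $\overline{X}^G(\gamma)=X^G(\gamma)$ for every limit pseudocharacter. This fails for every reducible standard module: the spherical principal series of $SL(2,\R)$ containing the trivial representation has length three but an irreducible socle. The same failure occurs for proper $\lie{q}$ in the good range whenever $X^L(\gamma_\lie{q})$ is reducible. There $\mathcal{R}^S_\lie{q}$ is exact and sends nonzero irreducibles to nonzero irreducibles, so it preserves length.

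So the statement as printed, with a bar on one side only, cannot be proved. What your Steps 1--3 actually outline, once $\overline{X}^L$ is replaced by $X^L$, is the standard-to-standard identity $\mathcal{R}^S_\lie{q}X^L(\gamma_\lie{q})=X^G(\gamma)$. That is essentially what the paper quotes next from \cite[Proposition 4.11]{Vogan1984}.

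\textbf{What the irreducible version needs.} Suppose the intended statement is $\mathcal{R}^S_\lie{q}\overline{X}^L(\gamma_\lie{q})=\overline{X}^G(\gamma)$. Then the missing idea is the passage from standard modules to their socles.
\begin{enumerate}
\item The vanishing theorem gives exactness of $\mathcal{R}^S_\lie{q}$ on all modules of weakly good infinitesimal character. Hence $\mathcal{R}^S_\lie{q}\overline{X}^L(\gamma_\lie{q})\hookrightarrow X^G(\gamma)$.
\item You then need Vogan's theorem that in this range $\mathcal{R}^S_\lie{q}$ carries irreducibles to irreducibles or zero.
\item You also need an argument, e.g.\ via bottom-layer $K$-types, that the image is nonzero exactly when $\gamma$ satisfies (F-1).
\end{enumerate}
Only then is the image the unique irreducible submodule $\overline{X}^G(\gamma)$.

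\textbf{Two smaller points}, which apply even to the standard-module version.
\begin{enumerate}
\item An Euler-characteristic identity together with vanishing outside degree $S$ gives equality only in the Grothendieck group. To get an isomorphism, Step 1 needs the module-level (spectral sequence) form of the commutation of $\mathcal{R}_\lie{q}$ with real parabolic induction.
\item The weight bookkeeping in Step 2 is off by a factor of two. On $\lie{t}$, $\rho(\lie{u})-2\rho(\lie{u}\cap\lie{k})$ equals $\rho(\lie{u}\cap\lie{s})-\rho(\lie{u}\cap\lie{k})$, which is half the difference of the weights of the two top exterior powers. The identity that actually makes (L-3) hold for $\gamma$ is $2\rho(\lie{u}\cap\lie{s})|_{\lie{t}}=\rho(\lie{u})|_{\lie{t}}+\rho(\Delta_{im}(\lie{u}))-2\rho_c(\Delta_{im}(\lie{u}))$.
\end{enumerate}
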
 
 
 In fact, we have the following result for any pseudocharacter in the weakly-fair range 
 
 \begin{proposition}\cite[Proposition 4.11]{Vogan1984}
 	Let $\gamma_\lie{q}$ and $\gamma$ be as above. Then \[ \mathcal{R}^i_\lie{q} X^L(\gamma_\lie{q})=\begin{cases} 
		0, & i\neq S\\
		X^G(\gamma),& i=S
	\end{cases} .\]
	 More generally, we have \cite[Proposition 4.13]{Vogan1984}: for all $(\lie{l},L_K)$-modules of infinitesimal character $\lambda-\rho(u),$ with $\lambda$ weakly-good, we have that $\mathcal{R}^i_\lie{q}$ vanishes unless $i=S.$  
 \end{proposition}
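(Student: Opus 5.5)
The statement to prove is the last one before the cut: for $\gamma_\lie{q}$ weakly fair (in particular weakly good), $\mathcal{R}^i_\lie{q}X^L(\gamma_\lie{q})$ vanishes for $i\neq S$ and equals $X^G(\gamma)$ for $i=S$. More generally, $\mathcal{R}^i_\lie{q}$ vanishes off degree $S$ on every $(\lie{l},L_K)$-module of infinitesimal character $\lambda-\rho(\lie{u})$ with $\lambda$ weakly good. The plan is to prove the general vanishing first, then identify the degree-$S$ term by induction in stages.

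\textbf{Step 1: vanishing.} Write $\mathcal{R}^i_\lie{q}=R^i\Gamma_{L\cap K}^K\comp\Coind_\lie{q}^\lie{g}(\,\cdot\,\tensor\bigwedge^{top}\lie{u})$.

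\emph{Vanishing above $S$.} Zuckerman's functor $R^i\Gamma$ is computed by a complex involving $\bigwedge^i(\lie{k}/(\lie{l}\cap\lie{k}))$. Its length is bounded by $\dim \lie{u}\cap\lie{k}=S$, since $K/Q_K$ has dimension $S$. So $\mathcal{R}^i_\lie{q}=0$ for $i>S$ on all modules.

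\emph{Vanishing below $S$.} Use the duality theorem for cohomological induction. Up to Hermitian or contragredient duals, it identifies $\mathcal{R}^i_\lie{q}(Z)$ with the dual of $\mathcal{R}^{2S-i}_\lie{q}(Z^h)$, where $Z^h$ has infinitesimal character $-\lambda$ suitably shifted. This reduces the claim to a statement in degrees above $S$ for the opposite parameter, which is \emph{not} covered by the trivial bound. For that I would invoke the weakly good hypothesis through a Casselman–Osborne/Kostant argument. Compute $\mathcal{R}^i_\lie{q}(Z)$ via the Hochschild–Serre spectral sequence, or via $\lie{u}$-cohomology, using $\Hom$ into a $K$-type. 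The infinitesimal character of $Z\tensor\bigwedge^{top}\lie{u}$ shifted by $\rho(\lie{u})$ is $\lambda$. Weakly good forces every generalized infinitesimal-character constituent of $H^j(\lie{u}\cap\lie{k},\cdot)$ to be dominant for $\Delta(\lie{u}\cap\lie{k})$. Borel–Weil–Bott on $K/Q_K$ then concentrates the cohomology in the top degree.

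In practice this is the standard Vogan argument. Filter $Z$ by the finite-length structure to reduce to irreducible $Z$. The five-lemma and the long exact sequences preserve vanishing outside a single degree, because all constituents share the infinitesimal character.

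\textbf{Step 2: identifying the degree-$S$ term.} This is the main obstacle. Factor induction in stages: cohomological induction from $\lie{l}$ to $\lie{g}$ commutes with real parabolic induction from $M_LA N_L$. Then
\[ X^L(\gamma_\lie{q})=\Ind_{M_LN_L}^{L}(\pi_L),\qquad \mathcal{R}^S_\lie{q}X^L(\gamma_\lie{q})=\Ind_{MN}^{G}\bigl(\mathcal{R}^{S}_{\lie{q}\cap\lie{m}}\pi_L\bigr). \]
The $S$ is unchanged because $\lie{u}\cap\lie{k}$ matches after conjugating $\lie{h}$ so that $\lie{a}$ is centralized.

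It then suffices to treat the case where $H$ is compact modulo center in $L$, i.e.\ $\pi_L$ is a relative limit of discrete series. There, Zuckerman's construction of (limits of) discrete series via $\mathcal{R}_\lie{q}$ of a character gives a (limit of) discrete series of $M$. Its Harish-Chandra parameter is $\bar\gamma_\lie{q}+\rho(\lie{u})$ and its positive system is $\Psi_\lie{q}\cup\Delta_{im}(\lie{u},\lie{h})$. The character on $T$ acquires the twist by $\bigwedge^{top}(\lie{u}\cap\lie{s})$, coming from $\bigwedge^{top}\lie{u}$ minus the $\lie{u}\cap\lie{k}$ part absorbed by Bott–Borel–Weil.

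To verify that the parameters match exactly, I would compare lowest $K$-types via Blattner's formula, or compare characters using the Euler-characteristic character formula for $\mathcal{R}_\lie{q}$. Exactness from Step 1 lets characters determine the module, and standard modules are determined by their characters. Getting the $\rho$-shifts and the $\bigwedge^{top}(\lie{u}\cap\lie{s})$ twist consistent is where I expect the care to be needed.
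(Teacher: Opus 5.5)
Your outline (vanishing theorem, then induction in stages down to a discrete series of $M$) is the standard one; the paper gives no argument and simply cites Vogan. However, both load-bearing steps fail as written.

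\textbf{Vanishing below $S$.} The duality you quote cannot hold.

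\emph{The duality.} $\mathcal{R}^j_{\lie{q}}$ vanishes for $j>S$ on every module. So an identification of $\mathcal{R}^i_{\lie{q}}(Z)$ with a dual of $\mathcal{R}^{2S-i}_{\lie{q}}(Z^h)$ would force vanishing below $S$ for every $Z$, with no hypothesis at all. Borel--Weil--Bott already contradicts this for compact $G$. It also contradicts your own remark that the dual statement is ``not covered by the trivial bound.''

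\emph{The bound above $S$.} Your reason for it is wrong. The Koszul complex on $\lie{k}/(\lie{l}\cap\lie{k})$ has length $2S$, not $S$. The bound comes instead from the fact that $\Coind_{\lie{q}}^{\lie{g}}(\cdot)$, restricted to $(\lie{k},L\cap K)$, is filtered by modules coinduced from $\lie{q}\cap\lie{k}$. Shapiro's lemma then turns $\Gamma^i$ into cohomology on $K/Q_K$, which has dimension $S$.

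\emph{The fallback.} This is a $K$-type computation. $\Hom_K(\delta,\cdot)$, $\lie{u}\cap\lie{k}$-cohomology and Borel--Weil--Bott on $K/Q_K$ see only the $L\cap K$-types of $Z\otimes\bigwedge^{top}\lie{u}\otimes S(\lie{u}\cap\lie{s})$. The $\lie{g}$-infinitesimal character does not control these. They contribute in degrees other than $S$ even in the good range, and cancel only in the Euler characteristic; that cancellation is the source of the signs in Blattner's formula. So ``weak goodness forces $\Delta(\lie{u}\cap\lie{k})$-dominance'' is false, and the vanishing cannot be read off $K$-type by $K$-type. The missing idea is an argument that genuinely uses the $\lie{g}$-action. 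For example: Casselman--Osborne applied to the $\lie{u}$-homology of a $(\lie{g},K)$-module mapping into $\mathcal{R}^i_{\lie{q}}(Z)$, fed through the Frobenius-reciprocity and Zuckerman spectral sequences; or Zuckerman's translation principle. This is how the standard proofs of the vanishing theorem go.

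\textbf{Induction in stages.} The degree is not preserved.

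\emph{The count.} With $\lie{m}\oplus\lie{a}=\lie{z}_{\lie{g}}(\lie{a})$, the roots of $\lie{u}\cap\lie{m}$ are exactly the imaginary roots in $\Delta(\lie{u},\lie{h})$. Each pair $\{\alpha,\theta\alpha\}$ of complex roots in $\Delta(\lie{u},\lie{h})$ adds one dimension to $\lie{u}\cap\lie{k}$, and real roots cannot occur in $\lie{u}$. Hence $\dim(\lie{u}\cap\lie{m}\cap\lie{k})=S-\tfrac12|\Delta_{\C}(\lie{u},\lie{h})|$, and no conjugation of $\lie{h}$ changes this.

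\emph{A counterexample.} Take $G_\R=SL(2,\C)$ and $\lie{q}$ a $\theta$-stable Borel subalgebra. Then $S=1$ and $\lie{q}\cap\lie{m}=\lie{m}$ is abelian. Your formula gives $\mathcal{R}^1_{\lie{q}}(\chi)=\Ind(\mathcal{R}^1_{\lie{q}\cap\lie{m}}(\chi))=0$, whereas for $\chi$ in the good range $\mathcal{R}^1_{\lie{q}}(\chi)$ is a principal series.

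\emph{The fix.} The correct statement shifts degrees by $\tfrac12|\Delta_{\C}(\lie{u},\lie{h})|$, with a compatible choice of $\lie{n}$. The same complex roots account for the discrepancy between $\rho(\lie{u})|_{\lie{t}}$ and $\rho(\lie{u}\cap\lie{m})$ in the Harish-Chandra parameter, which is exactly the bookkeeping you flagged.

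\emph{Run it in every degree.} Your parenthetical reverses the inclusion of ranges: weakly good implies weakly fair, not conversely. So Step 1 gives nothing for a weakly fair $\gamma_{\lie{q}}$. The vanishing for standard modules has to come from the stage reduction together with the vanishing theorem on $M$. What that uses is $\operatorname{Re}\langle\bar\gamma_{\lie{q}}+\rho(\lie{u}),\alpha^\vee\rangle\ge 0$ for imaginary $\alpha\in\Delta(\lie{u},\lie{h})$, which is precisely condition (L-2) for $\gamma$. A condition on the restriction to $\lie{z}(\lie{l})$ alone does not supply it: for compact $G$, a fair but not good irreducible $Z$ can have $\mathcal{R}^i_{\lie{q}}(Z)$ nonzero only in a degree below $S$.
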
  
 
By the second part of the previous proposition, we see that $\mathcal{R}_\lie{q}^S$ descends to a map on Grothendieck groups. This in turn has the following consequence \cite[Theorem 4.23]{SpehVogan1980}:
\begin{theorem} 
	Let $\gamma_\lie{q}\in \script{P}_{lim}^L(H)$ be weakly-fair. Write \[ X^L(\gamma_\lie{q})=\sum_{i} \overline{X}^L(\gamma_\lie{q}^i) \]
	with $\gamma_\lie{q}^i$ final parameters (attached to possibly different Cartan subgroups). Then \[ X^G(\gamma)=\sum_i \close{X}^G(\gamma^i) \]
	where $\gamma$ and $\gamma^i$ are related to $\gamma_\lie{q}$ and $\gamma_\lie{q}^i$ by Proposition \ref{cohomological_induction_pseudo_chars}.    
\end{theorem}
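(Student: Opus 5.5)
The plan is to apply the exact functor $\mathcal{R}^S_\lie{q}$ to the given decomposition of $X^L(\gamma_\lie{q})$ in the Grothendieck group. The work then splits into two parts: identifying the image of each standard module, and identifying the image of each irreducible constituent $\overline{X}^L(\gamma^i_\lie{q})$.

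First, every constituent $\overline{X}^L(\gamma_\lie{q}^i)$ of $X^L(\gamma_\lie{q})$ has the same infinitesimal character as $X^L(\gamma_\lie{q})$, namely $\bar{\gamma}_\lie{q}$. So every $(\lie{l},L\cap K)$-module in the relevant Jordan--H\"older filtrations lies in the weakly-fair range. By the second part of \cite[Proposition 4.13]{Vogan1984} quoted above, $\mathcal{R}^i_\lie{q}$ vanishes on all these modules for $i\neq S$. The long exact sequence for the derived functors $\mathcal{R}^\bullet_\lie{q}$ then shows that $\mathcal{R}^S_\lie{q}$ is exact on this subcategory. Hence it descends to an additive map on Grothendieck groups, and
\[ \mathcal{R}^S_\lie{q} X^L(\gamma_\lie{q})=\sum_i \mathcal{R}^S_\lie{q}\overline{X}^L(\gamma^i_\lie{q}). \]
By \cite[Proposition 4.11]{Vogan1984}, the left side is $X^G(\gamma)$. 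It therefore suffices to prove, for each $i$,
\[ \mathcal{R}^S_\lie{q}\overline{X}^L(\gamma^i_\lie{q})=\overline{X}^G(\gamma^i), \]
where the convention $\overline{X}=0$ is used when \textbf{(F-1)} fails.

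Second, I would check the parameter bookkeeping. I would verify that the rule $\gamma_\lie{q}\mapsto \gamma$ of Proposition \ref{cohomological_induction_pseudo_chars} sends limit pseudocharacters for $L$ to limit pseudocharacters for $G$. The identity \textbf{(L-3)} should follow from $\rho(\Psi_\lie{q}\cup\Delta_{im}(\lie{u}))-2\rho_c(\Psi_\lie{q}\cup\Delta_{im}(\lie{u}))=\rho(\Psi_\lie{q})-2\rho_c(\Psi_\lie{q})+\rho(\lie{u})-2\rho(\lie{u}\cap\lie{k})$ restricted to $\lie{t}$, matched against the twist by $\bigwedge^{top}(\lie{u}\cap\lie{s})$. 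I would then compare the finality conditions. \textbf{(F-2)} concerns real roots, which all lie in $\lie{l}$, so it transfers verbatim. \textbf{(F-1)} can only change at simple roots of $\Psi$ coming from $\Delta_{im}(\lie{u})$. In the weakly-fair (not good) range such a root may be compact with $\ip{\alpha,\bar{\gamma}}=0$, and then both sides should be zero. So the claimed identity is really "final maps to final or to zero", which is consistent with Speh--Vogan a).

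Third, which I expect to be the main obstacle, is to show $\mathcal{R}^S_\lie{q}\overline{X}^L(\gamma^i_\lie{q})=\overline{X}^G(\gamma^i)$ itself. Outside the good range, irreducibility of $\mathcal{R}^S_\lie{q}$ of an irreducible module is not automatic, so I would avoid arguing module by module. Instead I would argue by induction on the (finite) Bruhat-type order on final parameters of the fixed infinitesimal character, working entirely in the Grothendieck group. Invert the upper-triangular decomposition matrix to write each $\overline{X}^L(\gamma^i_\lie{q})$ as an integer combination of standard modules $X^L(\delta)$. Each of these is sent to $X^G(\delta')$ by Proposition 4.11 (using Speh--Vogan b) to rewrite nonfinal ones). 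Comparing with the analogous inversion for $G$, the two triangular systems coincide term by term via $\delta\mapsto\delta'$. Checking this comparison is the delicate point. The inductive hypothesis on smaller parameters then forces equality at the top parameter $\gamma^i$. Here I would use that the map on parameters is injective on final ones and that $X^G(\gamma^i)$ contains $\overline{X}^G(\gamma^i)$ with multiplicity one.
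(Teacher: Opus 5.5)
Your first step is the paper's whole argument. The paper justifies the theorem only by noting that $\mathcal{R}^S_\lie{q}$ descends to Grothendieck groups and applying it to the composition series. It then imports the termwise identification $\mathcal{R}^S_\lie{q}\overline{X}^L(\gamma^i_\lie{q})=\overline{X}^G(\gamma^i)$ from the cited source, i.e.\ from Proposition \ref{cohomological_induction_pseudo_chars}.

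\textbf{The third step is circular.} Write $m_L(\epsilon,\delta)$ for the multiplicity of $\overline{X}^L(\epsilon)$ in $X^L(\delta)$, and $m_G$ similarly. Applied to every parameter in the block, the theorem says the decomposition matrices agree: $m_G(\epsilon',\delta')=m_L(\epsilon,\delta)$. Your claim that ``the two triangular systems coincide term by term via $\delta\mapsto\delta'$'' is the same statement for the inverse matrices. The Bruhat-order induction adds nothing. From $\mathcal{R}^S_\lie{q}X^L(\gamma^i_\lie{q})=X^G(\gamma^i)$ and the hypothesis on smaller parameters you only get $\mathcal{R}^S_\lie{q}\overline{X}^L(\gamma^i_\lie{q})=X^G(\gamma^i)-\sum_{\epsilon<\gamma^i_\lie{q}}m_L(\epsilon,\gamma^i_\lie{q})\,\overline{X}^G(\epsilon')$. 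This equals $\overline{X}^G(\gamma^i)$ if and only if $m_G(\epsilon',\gamma^i)=m_L(\epsilon,\gamma^i_\lie{q})$ for all $\epsilon$, which is the theorem for $\gamma^i_\lie{q}$ itself. Multiplicity one of the top term and injectivity of $\epsilon\mapsto\epsilon'$ say nothing about these lower multiplicities.

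\textbf{What is missing is a module-level fact, and no bookkeeping in $\mathbb{K}$ can replace it.} Since $\mathcal{R}^S_\lie{q}$ is exact here and the other degrees vanish, $\mathcal{R}^S_\lie{q}\overline{X}^L(\gamma^i_\lie{q})$ is an honest finite-length module. Its class equals that of one irreducible (or $0$) exactly when it is irreducible (or zero). Exactness does give $\overline{X}^G(\gamma^i)\subseteq\mathcal{R}^S_\lie{q}\overline{X}^L(\gamma^i_\lie{q})\subseteq X^G(\gamma^i)$ when the middle term is nonzero. To see this, apply $\mathcal{R}^S_\lie{q}$ to $\overline{X}^L(\gamma^i_\lie{q})\hookrightarrow X^L(\gamma^i_\lie{q})$ and use that $X^G(\gamma^i)$ has simple socle. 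Forcing equality needs Vogan's irreducibility theorem, which is Proposition \ref{cohomological_induction_pseudo_chars}; with it the proof is one line.

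\textbf{The range also matters.} That proposition, and the vanishing statement you invoke in Step 1, are stated for weakly good parameters, not weakly fair ones. The paper's one-line justification leans on the same weakly good vanishing statement. In the strictly fair range, with weakly fair defined by restriction to the center of $\lie{l}$, $\bar\gamma_\lie{q}+\rho(\lie{u})$ can be strictly negative on an imaginary root of $\lie{u}$. Then $\gamma^i$ violates \textbf{(L-2)}, contrary to your Step 2, which only handles a vanishing pairing. There $\overline{X}^G(\gamma^i)$ has no independent meaning beyond $\mathcal{R}^S_\lie{q}\overline{X}^L(\gamma^i_\lie{q})$, which reduces the statement to bare additivity.

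So drop the triangularity argument, restrict to the weakly good range, and apply Proposition \ref{cohomological_induction_pseudo_chars} termwise.
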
 
 
 \begin{corollary}\label{Formula_for_Pushforward} 
 	 By writing \[\eta=\sum_{\pi \text{ tempiric} (\lie{l},L_K)} M(\pi,\eta)[\gr \pi],\] we obtain \[ M(\OO,\eta)=\sum_{j}(-1)^j\gr \mathcal{R}_\lie{q}\left(\sum_{\pi} M(\pi,\eta)\pi \tensor \textstyle{\bigwedge^j}\lie{s}_1^*\right).\]
	By replacing each $\pi$ with its corresponding Langlands parameter (in accordance with Theorem \ref{Langlands_Classification_Pseudo_Chars}), we obtain a computable formula \[ \sum_{j}(-1)^j \operatorname{gr} \mathcal{R}_\lie{q}\left(\sum_{\pi} M(\pi,\eta) \overline{X}(\gamma_\pi)\tensor \textstyle{\bigwedge^j}\lie{s}_1^*\right) .\]
 \end{corollary}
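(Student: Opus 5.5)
The formula should follow by chaining the reductions already established in this section, applying them linearly to the expansion of $\eta$ in tempirics of $L$. The plan has four steps.

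First, I start from the identification
\[ M(\OO,\eta)=\sum_{i}(-1)^i[H^i(K/Q_K,W_\eta\tensor \Sym(\lie{u}\cap\lie{s})^*\tensor \textstyle{\bigwedge^i}\lie{s}_1^*)], \]
obtained above from affineness of $\close{\OO}$, the projection formula and the de Rham (Koszul) identity applied to $\lie{s}_1^*$. I regroup the alternating sum so that, for each fixed $j$, the term $W_\eta\tensor\bigwedge^j\lie{s}_1^*$ is regarded as an $L_K$-module tensored with $\Sym(\lie{u}\cap\lie{s})^*$. The sheaf cohomology over $K/Q_K$ of such a bundle, with $\lie{u}\cap \lie{s}$ the fibre directions, is exactly $\sum_i(-1)^i\textbf{R}^i(\mu_{\mathcal{Q}_K})_*$ applied to the class of that $L_K$-module pulled back to $\mathcal{N}_{\mathcal{Q}_K}$, supported on the zero section of $\mathcal{N}_{\lie{l},\theta}$.

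Second, I use linearity. The class of an $L_K$-representation, viewed as a sheaf supported at $0\in\mathcal{N}_{\lie{l},\theta}$, expands by Vogan's yoga as
\[ [\eta\tensor\textstyle{\bigwedge^j}\lie{s}_1^*]=\sum_\pi M(\pi,\eta)[\gr \pi]\tensor\textstyle{\bigwedge^j}\lie{s}_1^*, \]
and this sum is finite by \cite[Theorem 16.6]{Vogan2007}. Tensoring a Harish-Chandra module with the finite-dimensional $L_K$-module $\bigwedge^j\lie{s}_1^*$ commutes with taking associated graded classes in $\mathbb{K}^{L_K}(\mathcal{N}_{\lie{l},\theta})$. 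Since the Euler characteristic $\textbf{R}(\mu_{\mathcal{Q}_K})_*$ is additive on $K$-groups, the pushforward distributes over this finite sum.

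Third, Zuckerman's Blattner formula converts each term $B[\gr(\pi\tensor\bigwedge^j\lie{s}_1^*)]$ into $[\gr\mathcal{R}_\lie{q}(\pi\tensor\bigwedge^j\lie{s}_1^*)]$. The $\bigwedge^{top}$ twist is accounted for exactly as in the preceding corollary, where $M(\OO,\eta)|_K$ was identified with $\mathcal{R}_\lie{q}$ of the twisted module. Summing with signs $(-1)^j$ then gives the first displayed formula.

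Fourth, for the computable form I replace each tempiric $\pi$ by $\overline{X}(\gamma_\pi)$ using Theorem \ref{Langlands_Classification_Pseudo_Chars}; for tempiric final parameters $\overline{X}(\gamma_\pi)=X(\gamma_\pi)$ by the lemma citing \cite[Theorem 11.9]{Vogan2007}. Afterwards, Proposition \ref{cohomological_induction_pseudo_chars} and the Speh--Vogan theorem can be applied term by term when the parameters are in the weakly good or fair range. The corollary itself only asserts the expression, so this range condition is not needed for its proof.

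The main obstacle I expect is the first step: verifying that the Koszul regrouping matches the sign and grading conventions in Zuckerman's formula. That formula pushes forward from the full bundle $K\times_{Q_K}\mathcal{N}_{\lie{q},\theta}$, whereas our resolution uses only $\lie{s}^2$. The discrepancy is precisely the factor $\lie{s}_1$, which the de Rham identity cancels in $K$-theory. I would check this by comparing both sides as graded $\Sym$-modules fibrewise over $K/Q_K$.
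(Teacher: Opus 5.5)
Your proposal is correct and is essentially the paper's own argument. The paper gets the corollary by the same chain: affineness of $\close{\OO}$ and the projection formula, the Koszul identity on $\lie{s}_1^*$, the finite expansion of $\eta$ in classes $[\gr \pi]$ via \cite[Theorem 16.6]{Vogan2007}, Zuckerman's Blattner formula applied by linearity, and finally the substitution $\pi\mapsto\overline{X}(\gamma_\pi)$.

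One step needs tightening, and the paper also leaves it implicit. Since $[\lie{s}_0,\lie{s}_1]\subseteq\lie{k}_1$, $\lie{s}_1$ is only $L_K$-stable, so $\pi\tensor\bigwedge^j\lie{s}_1^*$ is not an $(\lie{l},L_K)$-module and your claim that tensoring commutes with $\gr$ does not apply. Instead, apply Zuckerman's formula after re-expanding the class $[\gr\pi]\tensor\bigwedge^j\lie{s}_1^*$ (or simply $[\eta\tensor\bigwedge^j\lie{s}_1^*]$) in the tempiric basis of $\mathbb{K}^{L_K}(\mathcal{N}_{\lie{l},\theta})$.
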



\section{The Cousin--Zuckerman Formula and Standard Modules}

We now take
up a more computational approach to the problem at hand.  Adams and
Vogan in
\cite{AdamsVogan2021} give a desideratum for what the Lusztig--Vogan
bijection should be given by.  In vague terms, it is something involving
taking lowest K-types of certain tempiric (tempered irreducible with
real infinitesimal character) representations arising from expansions in
$ \mathbb{K} $-theory.  This idea is present in
\cite{Achar2001} for $ GL(n,\C). $ Achar as well as Adams--Vogan bring
up the main issue with computing such a map.  During the step of lifting
representations from the stabilizer to the Jacobson-Morozov parabolic we
have many choices.  In some sense, Achar's algorithm computes the
``minimal'' lift by a tour de force.  We will compute something which
gives a bijection and avoids much of the brute force in other cases.
Recall the notation above, $ G_\R $ is a real reductive group, $ K_0 $ a
maximal compact in $ G_\R. $ By $ G $ and $ K $ we denote their
complexifications.


\subsection{$\mathcal{D}$-Module Methods}

The theory of $\mathcal{D}$-modules provides an efficient way 
of passing from geometry to algebra. In this section we will review 
the general theory of Beilinson-Bernstein localization and the classification
of irreducible holonomic modules. We conclude with a description of the global 
sections of standard $\mathcal{D}$-modules in terms of 
parabolic induction.


\subsubsection{Generalities on $ \mathcal{D} $-Modules}

Let $ \lie{g} $ be
a complex reductive Lie algebra $ \mathcal{B} $ denote the variety of
Borel subalgebras of $ \lie{g}. $ As demonstrated in appendix B of
\cite{Hecht1987}, we may assume $ \lie{g} $ is in fact semisimple:  this
eases the notation significantly.  Let $ \script{B} $ denote the
tautological bundle (the vector bundle on $ \mathcal{B} $ with fiber
over $ x $ equal to $ \lie{b}_x $) and $ \script{U} $ the sub-bundle
consisting of the nilpotent radicals $ \script{U}_x=[\lie{b}_x,\lie{b}_x]:=\lie
{n}_x. $ Then the quotient bundle $ \script{H}:=\script{B}/\script{U} $
is a trivial bundle over $ \mathcal{B} $ with fiber $ \lie{h} $ we call
this fiber the \textit{abstract Cartan subalgebra}%
\footnote{The introduction of this abstract Cartan subalgebra is to
avoid choosing a single basepoint in $ \mathcal{B}. $ It turns out to be
useful to have this freedom when attempting to phrase the localization
theorem and the classification.}.  We fix a root system $ \Sigma $ and a
choice of positive roots $ \Sigma^+ $ in $ \lie{h}^*. $ Set $ \rho=\frac
{1}{2}\sum_{\alpha\in \Sigma^+} \alpha. $ Let $ W $ be the Weyl group (the
group generated by reflections through the roots).  This acts naturally
on $ \mathbb{X}(\Sigma) $ the weight lattice attached to $ \Sigma. $ The
set $ \mathbb{X}(\Sigma) $ is naturally in bijection with the set of
line bundles on $ \mathcal{B} $ with the map being given by $ \lambda\mapsto
\mathcal{O}(-\lambda) $ the sheaf of sections of the $ G $-equivariant
line bundle $ G\times_B \C^*_\lambda. $ This negative twist is to be
compatible with the statements of
\cite{Cook2025}.  For proofs of statements in this section, see
\cite{MilicicLocalization,Milicic1993}.

\begin{definition}
    \label{Definition_antidominant} Let $ V $ be a finite dimensional
    vector space over $ \C $ and $ R $ a root system in $ V $.  If $
    \alpha \in R $ let $ \alpha^\vee $ denote the corresponding dual
    element in $ V^*. $ Given a choice of positive roots $ R^+, $ we say
    that $ \lambda\in V $ is \textbf{antidominant} if
    \[
        \alpha^\vee(\lambda)\leq 0, \forall \alpha\in R^+.
    \]
    We say that $ \lambda $ is \textbf{strongly antidominant} if in
    addition $
    \operatorname{Re}
    \alpha^\vee(\lambda)\leq 0 $ for all $ \alpha\in R^+. $ Further, $
    \lambda $ is regular if $ \alpha^\vee(\lambda)\neq 0 $ for all $
    \alpha\in R^+ $ (equivalently for all $ \alpha\in R $).

\end{definition}

For $ \lambda\in \lie{h}^* $ we now construct twisted sheaves of
differential operators $ \mathcal{D}_\lambda $ which are the subject of
the Localization theorem.  We follow
\cite[Section 3]{Milicic1993}.  Let $ \lie{g}^\circ:=\mathcal{O}_\mathcal
{B}\tensor_\C \lie{g}. $ We define similarly, $ \lie{b}^\circ $ and $
\lie{n}^\circ $ the subsheaves of local sections of $ \script{B} $ and $
\script{U}. $ These are sheaves of Lie algebras and we have inclusions
of subsheaves of (Lie theoretic) ideals $ \lie{n}^\circ \trianglelefteq
\lie{b}^\circ \trianglelefteq \lie{g}^\circ $.  Set $ \mathcal{U}^\circ:=\mathcal
{O}_\mathcal{B}\tensor_\C \mathcal{U}(\lie{g}). $ This is a sheaf of
associative algebras on $ \mathcal{B} $ which contains $ \lie{g}^\circ $
as a subsheaf.  Using the bracket relations, we have that $ \lie{n}^\circ\mathcal
{U}^\circ $ is a sheaf of two sided ideals in $ \mathcal{U}^\circ. $
Define
\[
    \mathcal{D}_\lie{h}=\mathcal{U}^\circ/\lie{n}^\circ \mathcal{U}^\circ.
\]
We have the following facts:

\begin{lemma}
    \cite{Milicic1993}
    \begin{enumerate}
        \item
            There is an algebra isomorphism
            \[
                \Psi:\mathcal{U}(\lie{g})\tensor_{Z(\lie{g})}\mathcal{U}
                (\lie{h})\to \Gamma(\mathcal{B},\mathcal{D}_\lie{h}).
            \]
        \item
            $ H^i(\mathcal{B},\mathcal{D}_\lie{h})=0 $ for all $ i\geq
            0. $
    \end{enumerate}
\end{lemma}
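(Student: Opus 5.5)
The plan is to prove both statements at once by filtering $\mathcal{D}_\lie{h}$ and reducing to a statement about functions on the Grothendieck--Springer variety. I read (2) as the vanishing of $H^i$ for $i>0$; for $i=0$ the content is part (1). The PBW filtration $F_p\mathcal{U}(\lie{g})$ induces a filtration of $\mathcal{U}^\circ$ by $\mathcal{O}_\mathcal{B}$-submodules $\mathcal{O}_\mathcal{B}\tensor F_p\mathcal{U}(\lie{g})$, hence a filtration $F_p\mathcal{D}_\lie{h}$ of the quotient by locally free coherent sheaves. Since $\lie{n}^\circ$ is a locally free subbundle of $\lie{g}^\circ$, a local PBW argument identifies
\[
  \operatorname{gr}\mathcal{D}_\lie{h}\simeq \mathcal{O}_\mathcal{B}\tensor \Sym(\lie{g})/\lie{n}^\circ(\mathcal{O}_\mathcal{B}\tensor\Sym(\lie{g}))\simeq p_*\mathcal{O}_{\tilde{\lie{g}}},
\]
where $\tilde{\lie{g}}=G\times_B(\lie{g}/\lie{n})^*\simeq G\times_B\lie{b}$ (using the invariant form) and $p:\tilde{\lie{g}}\to\mathcal{B}$ is the affine projection.

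\textbf{Step 1 (geometric input).} I would show that $H^i(\tilde{\lie{g}},\mathcal{O})=0$ for $i>0$ and that $\Gamma(\tilde{\lie{g}},\mathcal{O})\simeq\C[\lie{g}]\tensor_{\C[\lie{h}]^W}\C[\lie{h}]$. The argument uses the proper map $\tilde{\lie{g}}\to\lie{g}\times_{\lie{h}/W}\lie{h}$, $(g,x)\mapsto(\Ad(g)x,\,x \bmod \lie{n})$. This map is birational, since it is an isomorphism over the regular semisimple locus. The target is a complete intersection, because $\C[\lie{g}]$ is free over $\C[\lie{g}]^G=\C[\lie{h}]^W$ by Kostant. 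It is also normal: it is smooth in codimension one, using Kostant's results on regular elements, and then Serre's criterion applies. The source is smooth, being a vector bundle over $\mathcal{B}$.

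The higher cohomology vanishing then follows either from Grauert--Riemenschneider, since the canonical bundle of $\tilde{\lie{g}}$ is trivial, or by computing $H^i(\mathcal{B},\Sym^k(\lie{g}/\lie{n})^*)$ directly with Kempf vanishing, filtering $(\lie{g}/\lie{n})^*$ by $\lie{h}^*$ and $(\lie{g}/\lie{b})^*$. Normality gives the statement about global functions. I expect this step to be the main obstacle, specifically the normality (codimension-one smoothness) of $\lie{g}\times_{\lie{h}/W}\lie{h}$.

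\textbf{Step 2 (lifting to $\mathcal{D}_\lie{h}$).} Since $\operatorname{gr}_p$ has no higher cohomology, induction on $p$ with the exact sequences $0\to F_{p-1}\to F_p\to\operatorname{gr}_p\to 0$ gives $H^i(\mathcal{B},F_p\mathcal{D}_\lie{h})=0$ for $i>0$. The filtration is exhaustive and cohomology on the noetherian space $\mathcal{B}$ commutes with direct limits, so $H^i(\mathcal{B},\mathcal{D}_\lie{h})=0$ for $i>0$, which is (2). The same sequences show that $\operatorname{gr}\Gamma(\mathcal{B},\mathcal{D}_\lie{h})=\Gamma(\mathcal{B},\operatorname{gr}\mathcal{D}_\lie{h})$.

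\textbf{Step 3 (constructing $\Psi$).} The map $\Psi$ on $\mathcal{U}(\lie{g})$ is the obvious one, $\xi\mapsto 1\tensor\xi$. On $\mathcal{U}(\lie{h})$ I would use the global trivialization $\lie{b}^\circ/\lie{n}^\circ\simeq\mathcal{O}_\mathcal{B}\tensor\lie{h}$: a local section of $\lie{b}^\circ$ lifting $h\in\lie{h}$ gives a well-defined section of $\mathcal{D}_\lie{h}$, because the ambiguity lies in $\lie{n}^\circ$. I then check locally on an open set $N\cdot x$ that:
\begin{itemize}
\item these sections commute with each other and with the image of $\lie{g}$, since the bracket of $\lie{b}_x$ with $\lie{g}$ modulo $\lie{n}_x\mathcal{U}$ is accounted for by the $\mathcal{O}$-derivative terms, which is the standard verification;
\item for $z\in Z(\lie{g})$, the image of $z\tensor 1$ equals that of $1\tensor\gamma(z)$, where $\gamma$ is the Harish-Chandra homomorphism with the $\rho$-normalization fixed by the paper's conventions, because $z-\gamma(z)\in\lie{n}_x\mathcal{U}(\lie{g})$ for the appropriate twist.
\end{itemize}
This produces the algebra map $\Psi$.

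\textbf{Step 4 (bijectivity).} I filter $\mathcal{U}(\lie{g})\tensor_{Z(\lie{g})}\mathcal{U}(\lie{h})$ by total degree. Since $\mathcal{U}(\lie{g})$ is free over $Z(\lie{g})$, with associated graded free over $\Sym(\lie{g})^G$ by Kostant, the associated graded of the source is $\Sym(\lie{g})\tensor_{\Sym(\lie{g})^G}\Sym(\lie{h})$. The map $\operatorname{gr}\Psi$ is then exactly the isomorphism of Step 1, combined with the identification $\operatorname{gr}\Gamma=\Gamma(\operatorname{gr})$ from Step 2. A filtered map whose associated graded is an isomorphism, with exhaustive filtrations bounded below, is an isomorphism, which proves (1).
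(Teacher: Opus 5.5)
The paper gives no proof of this lemma; it only cites Mili\v{c}i\'{c}, so there is no in-paper argument to compare yours against. Your outline is correct, and it is the standard proof. The PBW filtration gives $\operatorname{gr}\mathcal{D}_{\lie{h}}\simeq p_*\mathcal{O}_{\tilde{\lie{g}}}$. The proper birational map $\tilde{\lie{g}}\to\lie{g}\times_{\lie{h}/W}\lie{h}$ onto a normal complete intersection computes $\Gamma(\tilde{\lie{g}},\mathcal{O})$. Grauert--Riemenschneider, using $\omega_{\tilde{\lie{g}}}\simeq\mathcal{O}$, kills the higher cohomology; this holds because $\omega_{\mathcal{B}}$ and the determinant of the dual of the fibre $\lie{b}$ are the characters $2\rho$ and $-2\rho$. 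Your inductions along $F_p\mathcal{D}_{\lie{h}}$ then lift both the vanishing and the identification $\operatorname{gr}\Gamma=\Gamma(\operatorname{gr})$ to $\mathcal{D}_{\lie{h}}$. You are also right that (2) must be read for $i>0$; as printed, with $i\geq 0$, it contradicts (1).

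The one thing to discard is the alternative you offer in Step 1. Functions on $G\times_B(\lie{g}/\lie{n})^*$ form $\Sym(\lie{g}/\lie{n})$, not $\Sym$ of the dual. Filtering by $\lie{h}$ and $\lie{g}/\lie{b}$ therefore only reduces you to $H^i(\mathcal{B},\Sym^k\mathcal{T}_{\mathcal{B}})=0$, which is the higher-cohomology vanishing for $\mathcal{O}_{T^*\mathcal{B}}$. That does not follow from Kempf vanishing applied to line-bundle subquotients, because their weights are not dominant and they can carry higher cohomology. Already for $SL(3)$, the cube of the relative tangent bundle of a $\mathbb{P}^1$-fibration $\mathcal{B}\to G/P$ is a line subbundle of $\Sym^3\mathcal{T}_{\mathcal{B}}$ with Euler characteristic $-35$, so its $H^1$ is nonzero. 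The vanishing for $T^*\mathcal{B}$ is a separate theorem (Hesselink, Andersen--Jantzen, or Grauert--Riemenschneider for the Springer resolution). Keep the Grauert--Riemenschneider route you give first; with it the argument is complete.
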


The $ Z(\lie{g}) $-algebra structure on $ \mathcal{U}(\lie{h}) $ is
given by the Harish-Chandra homomorphism $ \gamma:Z(\lie{g})\to \mathcal
{U}(\lie{h}) $ (see
\cite[Chapter 5]{Knapp2005} for details).  Further, the canonical map of
$ Z(\lie{g})\to \Gamma(\mathcal{B},\mathcal{D}_\lie{h}) $ factors
through the Harish-Chandra homomorphism%
\footnote{This tells us that $ \mathcal{D}_\lie{h} $ is a sort of
``sheafified" version of $ \mathcal{U}(\lie{h})^W. $ The precise
relationship will be made clear later.}.

Now, for any $ \lambda\in \lie{h}^*, $ we have the corresponding linear
functional $ \lambda+\rho:\lie{h}\to\C $ which extends uniquely an
algebra homomorphism $ \lambda+\rho:
\operatorname{Sym}
(\lie{h})\to \C. $ Set $ I_\lambda $ the kernel of $ \lambda+\rho. $
Then by a theorem of Harish-Chandra,
\[
    \gamma^{-1}(I_\lambda)=\gamma^{-1}(I_\mu) \iff \lambda=w\mu
\]
for some $ w\in W. $

\begin{definition}
    For $ \lambda\in \lie{h}^*, $ define
    \[
        \mathcal{D}_\lambda:=\mathcal{D}_\lie{h}/I_\lambda \mathcal{D}_\lie
        {h}.
    \]
    These are twisted sheaves of differential operators.
\end{definition}

We see immediately that $ I_{-\rho}=\lie{h}
\operatorname{Sym}
(\lie{h}) $ and thus $ \mathcal{D}_{-\rho}=\mathcal{U}^\circ/\lie{b}^\circ
\mathcal{U}^\circ $ is the sheaf of local differential operators on $
\mathcal{B}. $ If $ \lambda\in \mathbb{X}(\Sigma) $ (the weight lattice),
$ \mathcal{D}_\lambda $ is the sheaf of differential endomorphisms of $
\mathcal{O}(\lambda+\rho). $ In a similar fashion, we can consider the
global version of the above construction.  Take the ideal $ I_\lambda $
and construct
\[
    \mathcal{U}_\lambda:=\mathcal{U}(\lie{g})/\gamma^{-1}(I_\lambda)\mathcal
    {U}(\lie{g}).%
    \footnote{The notation here differs from
    \cite{Milicic1993}.  There, Mili\v{c}i\'{c} refers to this algebra
    as $\mathcal{U}_\theta$ where $\theta=W\cdot \lambda.$ We deviate
    from Mili\v{c}i\'{c} as we use the historical choice of $\theta$ to
    be the Cartan involution whereas he denotes it $\sigma.$ }
\]
By Harish-Chandra's result, we have that $ \mathcal{U}_\lambda=\mathcal{U}_\mu
$ whenever $ \lambda=w\mu $ for some $ w\in W. $ Their relationship is
given as follows:

\begin{lemma}
    \label{Global_Sections_D_lambda}
    \cite{Milicic1993}
    \begin{enumerate}
        \item
            The morphism
            \[
                \Psi:\mathcal{U}_\lambda \to \Gamma(\mathcal{B},\mathcal
                {D}_\lambda)
            \]
            is an isomorphism.
        \item
            $ H^i(\mathcal{B},\mathcal{D}_\lambda)=0 $ for all $ i\geq
            0. $
    \end{enumerate}
\end{lemma}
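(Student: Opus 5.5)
The plan is to deduce both statements from the preceding lemma on $\mathcal{D}_\lie{h}$ by base change along the character $\lambda+\rho:\operatorname{Sym}(\lie{h})\to\C$. The key point is that $\mathcal{D}_\lie{h}$ is a sheaf of $\mathcal{U}(\lie{h})$-modules with $\mathcal{U}(\lie{h})$ central. This is because $\lie{h}$ acts through the quotient $\lie{b}^\circ/\lie{n}^\circ$. Hence
\[
\mathcal{D}_\lambda=\mathcal{D}_\lie{h}\tensor_{\mathcal{U}(\lie{h})}\C_{\lambda+\rho}.
\]

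\textbf{Step 1 (resolve the character).} Choose a basis $h_1,\dots,h_r$ of $\lie{h}$ and set $z_i=h_i-(\lambda+\rho)(h_i)$. These form a regular sequence in $\operatorname{Sym}(\lie{h})$ generating $I_\lambda$. The Koszul complex $K_\bullet(z_1,\dots,z_r)$ is then a free resolution of $\C_{\lambda+\rho}$ over $\operatorname{Sym}(\lie{h})$.

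\textbf{Step 2 (local flatness).} Locally on $\mathcal{B}$ (on the open cells $N^- x$), one has
\[
\mathcal{D}_\lie{h}\simeq \mathcal{D}_{U}\tensor_\C \mathcal{U}(\lie{h}),
\]
with $\mathcal{D}_U$ the differential operators on the cell; this is the standard trivialization in \cite{Milicic1993}. So $\mathcal{D}_\lie{h}$ is locally free over $\mathcal{U}(\lie{h})$. Tensoring the Koszul complex with $\mathcal{D}_\lie{h}$ therefore yields an exact complex of sheaves resolving $\mathcal{D}_\lambda$, whose terms are finite direct sums of copies of $\mathcal{D}_\lie{h}$.

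\textbf{Step 3 (cohomology vanishing).} By the preceding lemma, $H^i(\mathcal{B},\mathcal{D}_\lie{h})=0$ for $i>0$. A hypercohomology spectral sequence for the resolution (each term $\Gamma$-acyclic) gives
\[
R\Gamma(\mathcal{B},\mathcal{D}_\lambda)\simeq \Gamma(\mathcal{B},\mathcal{D}_\lie{h})\tensor^{L}_{\mathcal{U}(\lie{h})}\C_{\lambda+\rho}.
\]
Since $\Gamma(\mathcal{B},\mathcal{D}_\lie{h})\simeq \mathcal{U}(\lie{g})\tensor_{Z(\lie{g})}\mathcal{U}(\lie{h})$, it suffices that this is flat over $\mathcal{U}(\lie{h})$. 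That follows from Kostant's theorem that $\mathcal{U}(\lie{g})$ is free over $Z(\lie{g})$. Then higher $\operatorname{Tor}$ vanish, giving (2): $H^i(\mathcal{B},\mathcal{D}_\lambda)=0$ for $i>0$.

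\textbf{Step 4 (global sections).} In degree zero the same computation gives
\[
\Gamma(\mathcal{B},\mathcal{D}_\lambda)\simeq \mathcal{U}(\lie{g})\tensor_{Z(\lie{g})}\mathcal{U}(\lie{h})\tensor_{\mathcal{U}(\lie{h})}\C_{\lambda+\rho}=\mathcal{U}(\lie{g})\tensor_{Z(\lie{g})}\C_{\chi_\lambda}=\mathcal{U}_\lambda,
\]
where $\chi_\lambda$ is the central character with kernel $\gamma^{-1}(I_\lambda)$. One must also check that this isomorphism is the map $\Psi$ induced by $\mathcal{U}(\lie{g})\to\mathcal{U}^\circ$. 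That holds by naturality of the isomorphism in the earlier lemma.

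\textbf{Main obstacle.} I expect the delicate point to be the flatness in Steps 2 and 3, i.e. justifying that the derived tensor collapses both sheaf-theoretically and globally. For the global part I would lean on freeness of $\mathcal{U}(\lie{g})$ over $Z(\lie{g})$ together with the fact that $\mathcal{U}(\lie{h})$ is free over $\mathcal{U}(\lie{h})^W$ (Chevalley).
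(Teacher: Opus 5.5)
Your proof is correct. The paper gives no argument of its own for this lemma; it simply quotes it from Mili\v{c}i\'{c}. What you wrote is the standard way to pass from the statement for $\mathcal{D}_\lie{h}$ to the one for $\mathcal{D}_\lambda$. The one genuinely new input is the local freeness of $\mathcal{D}_\lie{h}$ over $\mathcal{U}(\lie{h})$ on translates of the big cell. That makes $\mathcal{D}_\lie{h}\tensor_{\mathcal{U}(\lie{h})}K_\bullet$ a finite resolution of $\mathcal{D}_\lambda$ by $\Gamma$-acyclic sheaves. Everything else is supplied by the preceding lemma on $\Gamma(\mathcal{B},\mathcal{D}_\lie{h})$.

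You do not need Kostant's freeness of $\mathcal{U}(\lie{g})$ over $Z(\lie{g})$, nor Chevalley's theorem, in Step 3. The complex $\Gamma(\mathcal{B},\mathcal{D}_\lie{h})\tensor_{\mathcal{U}(\lie{h})}K_\bullet$ lives in degrees $-r,\dots,0$, so $H^i(\mathcal{B},\mathcal{D}_\lambda)=0$ for $i>0$ is automatic. Equivalently, dimension-shift along $0\to F_r\to\cdots\to F_0\to\mathcal{D}_\lambda\to 0$. Its $H^0$ is the cokernel $\Gamma(\mathcal{B},\mathcal{D}_\lie{h})\tensor_{\mathcal{U}(\lie{h})}\C_{\lambda+\rho}$, which is exactly what Step 4 needs. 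The higher $\operatorname{Tor}$ groups sit in negative degrees. So their vanishing follows from the fact that $R\Gamma$ of a sheaf has no negative cohomology; it is not the reason (2) holds, as your write-up suggests.

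Part (2) as printed says $i\geq 0$, which contradicts part (1). The intended statement is $i>0$, and that is what you prove.
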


We now have two (left%
\footnote{We will use left modules for most of our discussion except in
one key place where it will be evident.}) module categories to consider:
$ \mathcal{M}(\mathcal{U}_\lambda) $ and $ \mathcal{M}(\mathcal{D}_\lambda).
$ It is clear that this second category is far too big to be of use to
our discussion.  Getting motivation from Serre's Theorem on
quasi-coherent sheaves for affine schemes, we look at $
\operatorname{QCoh}
(\mathcal{D}_\lambda) $ the category of $ \mathcal{D}_\lambda $-modules
which are quasi-coherent as $ \mathcal{O}_\mathcal{B} $-modules.  That
is, a $ \mathcal{D}_\lambda $-module $ \script{V} $ is quasi-coherent if
there exists an open cover $ \{U_r\} $ of $ \mathcal{B} $ such that
there is a right exact sequence
\[
    \mathcal{D}_\lambda|_{U_r}^{\ds J }\to \mathcal{D}_\lambda|_{U_r}^{\ds
    I}\to \script{V}|_{U_r}\to 0.
\]
We say that $ \script{V} $ is a \textit{coherent} $ \mathcal{D}_\lambda $
module if we have $ |J|,|I|<\infty. $ We denote the category of coherent
$ \mathcal{D}_\lambda $-modules as $
\operatorname{Coh}
(\mathcal{D}_\lambda). $ We call a quasicoherent $ \mathcal{D}_\lambda $-module
\textbf{irreducible} if it contains no non-trivial quasicoherent
submodules.

Write $ D^b(\mathcal{U}_\lambda) $ to be the bounded derived category of
$ \mathcal{U}_\lambda $-modules and denote by $ D^b_{qc}(\mathcal{D}_\lambda)
$ the bounded derived category of $
\operatorname{QCoh}
(\mathcal{D}_\lambda). $

Similar to the affine scheme case, we have a localization functor:
\begin{align*}
    \Delta_\lambda:\mathcal{M}(\mathcal{U}_\lambda)\to
    \operatorname{QCoh}
    (\mathcal{D}_\lambda)&& \Delta_\lambda(V)=V\tensor_{\mathcal{U}_\lambda}\mathcal
    {D}_\lambda.
\end{align*}
Additionally, by Lemma \ref{Global_Sections_D_lambda} (a), we have a
global sections functor
\begin{align*}
    \Gamma:
    \operatorname{QCoh}
    (\mathcal{D}_\lambda)\to \mathcal{M}(\mathcal{U}_\lambda)&& \Gamma(\script
    {V})=\Gamma(\mathcal{B},\script{V}).
\end{align*}
These are easily shown to be adjoint functors with $ \Gamma $ the right
adjoint.  In fact, for certain $ \lambda $ we can say more:

\begin{theorem}
    \cite{BeilinsonBernstein1981} Let $ \lambda\in\lie{h}^* $ be
    regular.  Then
    \[
        \textbf{L}\Delta_\lambda:  D^b(\mathcal{U}_\lambda)
        \rightleftarrows D^b_{qc}(\mathcal{D}_\lambda):\textbf{R}\Gamma
    \]
    is an equivalence of triangulated categories.  If in addition $
    \lambda $ is antidominant, then $ \Delta_\lambda $ is exact and is
    an equivalence of categories between $ \mathcal{M}(\mathcal{U}_\lambda)
    $ and $ \mathcal{M}(\mathcal{D}_\lambda). $ Further, this
    equivalence restricts to an equivalence between $ \mathcal{M}_{fg}(\mathcal
    {U}_\lambda) $ and $
    \operatorname{Coh}
    (\mathcal{D}_\lambda). $
\end{theorem}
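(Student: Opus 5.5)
The plan is the classical Beilinson--Bernstein argument, organized around one geometric input. For a quasi-coherent $\mathcal{D}_\lambda$-module $\script{V}$ and a weight $\mu\in\mathbb{X}(\Sigma)$, the twist $\script{V}(\mu)=\script{V}\tensor_{\mathcal{O}_\mathcal{B}}\mathcal{O}(\mu)$ is a $\mathcal{D}_{\lambda+\mu}$-module (up to the sign convention $\mathcal{O}(-\lambda)$ fixed above). For a finite dimensional $G$-module $F$, the sheaf $\mathcal{O}_\mathcal{B}\tensor_\C F$ carries a $G$-equivariant filtration whose graded pieces are the line bundles $\mathcal{O}(\nu)$, with $\nu$ running over the weights of $F$. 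Tensoring $\script{V}$ with this filtration gives a filtration of $\script{V}\tensor_\C F$ with graded pieces $\script{V}(\nu)$, which are $\mathcal{D}_{\lambda+\nu}$-modules. Because $Z(\lie{g})$ acts on $\Gamma(\mathcal{B},\script{V}\tensor F)$ through the generalized infinitesimal characters $\chi_{\lambda+\nu}$, the filtration splits according to $W$-orbits. The key combinatorial lemma is the following: if $\lambda$ is antidominant and $\mu$ is dominant, then among the weights $\nu$ of the irreducible module $F_\mu$ of highest weight $\mu$, the only one with $\lambda+\nu\in W\cdot(\lambda+\mu)$ is $\nu=\mu$. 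Similarly, with lowest weight $-\mu$, the only one with $\lambda+\mu+\nu\in W\cdot\lambda$ is $\nu=-\mu$. From this I will deduce that $\script{V}$ is a direct summand of $\script{V}(-\mu)\tensor F_\mu$ (after the appropriate sign bookkeeping) and also a direct summand of $\script{V}(\mu)\tensor F$ for a suitable $F$.

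The steps, in order, are as follows.

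\textbf{Step 1: vanishing.} For $\lambda$ antidominant, I show $H^i(\mathcal{B},\script{V})=0$ for $i>0$ and every quasi-coherent $\script{V}$. First reduce to coherent $\mathcal{O}$-submodules: $\script{V}$ is the union of its coherent $\mathcal{O}$-submodules, cohomology commutes with filtered colimits on a noetherian scheme, and a cohomology class dies at some finite stage. For $\mu$ sufficiently dominant, Serre vanishing kills the higher cohomology of $\script{V}(\mu)$, hence of $\script{V}(\mu)\tensor F$. Since $\script{V}$ is a direct summand of such a module by the lemma, its higher cohomology vanishes too. Hence $\Gamma$ is exact.

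\textbf{Step 2: conservativity.} For $\lambda$ regular antidominant, I show that $\Gamma(\script{V})=0$ forces $\script{V}=0$. For $\mu$ large, $\script{V}(\mu)$ is generated by global sections, so it has nonzero global sections whenever $\script{V}\neq 0$. I then need the other summand statement, that $\script{V}(\mu)$ is a summand of $\script{V}\tensor F$. Regularity enters precisely here: it makes the relevant $W$-orbit intersection a single point, so the splitting occurs in the needed direction. This gives $\Gamma(\script{V}\tensor F)=\Gamma(\script{V})\tensor F\neq0$.

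\textbf{Step 3: the abelian equivalence.} The counit and unit of $\Delta_\lambda\dashv\Gamma$ are checked as follows. By Lemma \ref{Global_Sections_D_lambda}, $\Gamma\Delta_\lambda(\mathcal{U}_\lambda)=\Gamma(\mathcal{D}_\lambda)=\mathcal{U}_\lambda$. Since $\Gamma$ is exact and commutes with direct sums, and $\Delta_\lambda$ is right exact, taking a free presentation gives $\Gamma\Delta_\lambda\cong\mathrm{id}$. For the counit $\Delta_\lambda\Gamma\script{V}\to\script{V}$, apply the exact functor $\Gamma$ to its kernel and cokernel. Both give zero, so both vanish by Step 2. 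Exactness of $\Delta_\lambda$ then follows from the equivalence. Finitely generated modules correspond to coherent ones because $\mathcal{D}_\lambda$ is coherent and, conversely, a coherent module is a quotient of $\mathcal{D}_\lambda^{\ds n}$ (using generation by global sections from Step 2).

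\textbf{Step 4: the derived statement for $\lambda$ regular.} Here $\textbf{R}\Gamma$ has amplitude at most $\dim\mathcal{B}$ by Grothendieck vanishing. For $\textbf{L}\Delta_\lambda$ to be bounded I will use that $\mathcal{U}_\lambda$ has finite global dimension for regular $\lambda$, which I take as known. Lemma \ref{Global_Sections_D_lambda} gives $\textbf{R}\Gamma\,\textbf{L}\Delta_\lambda(\mathcal{U}_\lambda)=\mathcal{U}_\lambda$, and this extends to all of $D^b(\mathcal{U}_\lambda)$ by way-out arguments. It remains to show that $\textbf{R}\Gamma$ is conservative, that is, $\textbf{R}\Gamma(\script{V})=0$ implies $\script{V}=0$ for regular, not necessarily antidominant, $\lambda$. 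I would prove this with the same twisting trick applied to the whole complex. Write $\lambda=w\lambda'$ with $\lambda'$ antidominant, and twist by $\mathcal{O}(\mu)$ with $\lambda+\mu$ antidominant. Then $\script{V}$ is a direct summand of $\script{V}(\mu)\tensor F$, while $\script{V}(\mu)$ is a summand of $\script{V}\tensor F'$, whose derived sections vanish. So $\textbf{R}\Gamma(\script{V}(\mu))=0$, and $\script{V}(\mu)=0$ by Step 2.

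The main obstacle is the weight lemma in each direction: making the direct summand decomposition precise. The generalized-eigenspace splitting for $Z(\lie{g})$ must be lifted from global sections to a splitting of sheaves. I would do this using the $Z(\lie{g})$-action on $\script{V}\tensor F$ through $\mathcal{U}^\circ$, together with the fact that the filtration pieces have distinct generalized central characters under the antidominance and regularity hypotheses. The central-character comparison is exactly where those hypotheses are used.
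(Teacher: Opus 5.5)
The paper does not prove this theorem; it cites Beilinson--Bernstein. So I am comparing your sketch with the standard argument. Steps 1--3 are that argument, and they are fine once the signs below are corrected. Step 4, however, has a genuine gap.

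\textbf{The gap in Step 4.} To show $\textbf{R}\Gamma$ is conservative for $\lambda$ regular but not antidominant, you twist $\script{V}$ to a $\mathcal{D}_{\lambda+\mu}$-module with $\lambda+\mu$ antidominant. You then claim this twist is a direct summand of $\script{V}\tensor F'$. Every splitting of this kind depends on antidominance of the parameter you \emph{start} from: the inequality $w\lambda-\lambda\geq 0$ is what excludes other weights with the same infinitesimal character. For non-antidominant $\lambda$ the claim is false, as the following example shows.
\begin{itemize}
\item Take $\lie{g}=\lie{sl}(2,\C)$ and $\lambda=\rho$. Then $\mathcal{D}_\rho$ is the sheaf of differential operators on $\omega_{\mathbb{P}^1}=\mathcal{O}_{\mathbb{P}^1}(-2)$.
\item Let $\script{V}=\mathcal{O}_{\mathbb{P}^1}(-2)$. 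Its twist $\mathcal{O}_{\mathbb{P}^1}$ is a $\mathcal{D}_{-\rho}$-module, and $-\rho$ is regular antidominant.
\item Whenever $\mathcal{O}_{\mathbb{P}^1}$ occurs as a graded piece of $\script{V}\tensor F'$, the zero weight of $F'$ also occurs. It contributes the piece $\script{V}$ itself, which has the same infinitesimal character since $\chi_\rho=\chi_{-\rho}$.
\item For $F'=\lie{g}$ the pieces are $\mathcal{O}_{\mathbb{P}^1}$, $\mathcal{O}_{\mathbb{P}^1}(-2)$ and $\mathcal{O}_{\mathbb{P}^1}(-4)$.
\item All of $\textbf{R}\Gamma(\script{V}\tensor\lie{g})=\C^3[-1]$ is carried by the $\chi_{3\rho}$-component $\mathcal{O}_{\mathbb{P}^1}(-4)$. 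Hence the $\chi_\rho$-component $E$, an extension of $\mathcal{O}_{\mathbb{P}^1}$ and $\mathcal{O}_{\mathbb{P}^1}(-2)$, has $\textbf{R}\Gamma(E)=0$. In fact $E\cong\mathcal{O}_{\mathbb{P}^1}(-1)^{\ds 2}$.
\item Since $\Gamma(\mathcal{O}_{\mathbb{P}^1})=\C$, the sheaf $\mathcal{O}_{\mathbb{P}^1}$ cannot be a summand of $E$.
\end{itemize}
The underlying problem is that twisting by a line bundle always changes the infinitesimal character, so it can never compare $\mathcal{D}_\lambda$ with $\mathcal{D}_{w\lambda}$. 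Note that the $\lambda'$ you introduce is never used.

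The missing ingredient is the intertwining functors $\textbf{L}I_w:D^b_{qc}(\mathcal{D}_{\lambda'})\to D^b_{qc}(\mathcal{D}_{w\lambda'})$, which the paper itself uses later when passing to Langlands orbits. These are equivalences of triangulated categories. For $\lambda'$ antidominant they satisfy $\textbf{R}\Gamma\comp\textbf{L}I_w\cong\textbf{R}\Gamma$. With $\lambda=w\lambda'$, this reduces the regular case to the antidominant one. There the derived equivalence is the one induced by Step 3, because $\mathcal{U}_\lambda=\mathcal{U}_{\lambda'}$. The same reduction gives the finite global dimension of $\mathcal{U}_\lambda$ that you take as known. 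It is Step 3 at $\lambda'$ combined with the finite homological dimension of quasi-coherent $\mathcal{D}_{\lambda'}$-modules.

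\textbf{Signs in your combinatorial lemma.} As stated, both halves fail for antidominant $\lambda$. In $\lie{sl}(2,\C)$ with $\lambda=-\rho$:
\begin{itemize}
\item $\mu=2\rho$, $\nu=0$ gives $\lambda+\nu=-\rho\in W\cdot(\lambda+\mu)$;
\item $\mu=\rho$, $\nu=\rho$ gives $\lambda+\mu+\nu=\rho\in W\cdot\lambda$.
\end{itemize}
What you wrote is the version for (regular) \emph{dominant} $\lambda$, as in Beilinson--Bernstein's original normalization. In the paper's normalization you need two facts. Both follow from $w\lambda-\lambda\geq 0$, where $\geq 0$ means a nonnegative combination of positive roots.
\begin{itemize}
\item If $\lambda$ is antidominant and $\omega\neq 0$ is such a combination, then $\lambda-\omega\notin W\cdot\lambda$.
\item If $\lambda$ is regular antidominant, $\mu$ is dominant and $\nu$ is a weight of $F_\mu$, then $\lambda-\nu\in W\cdot(\lambda-\mu)$ forces $\nu=\mu$. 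Indeed, applying $w^{-1}$ gives $w^{-1}\lambda-\lambda=w^{-1}\nu-\mu$, which is both $\geq 0$ and $\leq 0$, so regularity gives $w=1$.
\end{itemize}
Take $F=F_\mu$ or $F_\mu^*$ according to your twisting convention. The first fact splits $\script{V}$ off (its twist over $\lambda-\mu$)$\,\tensor F$; that twist is the ample one, which is what Step 1 needs. The second fact splits that twist off $\script{V}\tensor F^*$, which is what Step 2 needs.

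Finally, in Step 1 no single $\mu$ works for a quasi-coherent $\script{V}$. The splitting is $\mathcal{O}$-linear, so it carries a coherent subsheaf $\script{W}$ supporting a given class into the corresponding twist of $\script{W}$ tensored with $F$. You must choose $\mu$ depending on $\script{W}$.
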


For the purposes of representation theory, we need to also consider $ K $-equivariant
analogs of the above theorem.  To properly state what we will need
later, let us set up some more general notation.  For any $ \script{V}\in
\operatorname{Coh}
(\mathcal{D}_\lambda) $ there exists a (non-unique) \textit{good
filtration} (see
\cite{MilicicLocalization}) which we will denote by $ F_\bullet \script{V}.
$ Embedded in the definition of these filtrations, there is a
compatibility condition with the standard degree filtration on $
\mathcal{D}_\lambda $ making $ \gr_F \script{V} $ a coherent $ \gr
\mathcal{D}_\lambda $-module.  The graded ring $ \gr \mathcal{D}_\lambda
$ does not depend on the twist given by $ \lambda. $ Denote by $ \pi_\mathcal
{B} $ the canonical projection $ T^*\mathcal{B}\to \mathcal{B}. $ We
have an isomorphism
\[
    \gr \mathcal{D}_\lambda\simeq (\pi_\mathcal{B})_*\mathcal{O}_{T^*\mathcal
    {B}}.
\]
This gives the following result:

\begin{lemma}
    For $ \script{V}\in
    \operatorname{Coh}
    (\mathcal{D}_\lambda), $ there exists a unique $ \widetilde{\script{V}}\in
    \operatorname{Coh}
    (\mathcal{O}_{T^*\mathcal{B}}) $ such that $ (\pi_\mathcal{B})_*(\widetilde
    {\script{V}})= \gr_F \script{V}. $
\end{lemma}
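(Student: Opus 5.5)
The plan is to reduce the statement to the classical equivalence between coherent $\mathcal{O}_{T^*\mathcal{B}}$-modules and coherent $(\pi_\mathcal{B})_*\mathcal{O}_{T^*\mathcal{B}}$-modules on $\mathcal{B}$. This works because $\pi_\mathcal{B}:T^*\mathcal{B}\to\mathcal{B}$ is an affine morphism. Indeed, $T^*\mathcal{B}$ is the total space of the vector bundle $\Omega_\mathcal{B}$ (the cotangent bundle), so
\[ T^*\mathcal{B}=\operatorname{Spec}_{\mathcal{B}}\operatorname{Sym}_{\mathcal{O}_\mathcal{B}}(\mathcal{T}_\mathcal{B}), \]
with $\mathcal{T}_\mathcal{B}$ the tangent sheaf. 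Hence $(\pi_\mathcal{B})_*\mathcal{O}_{T^*\mathcal{B}}\simeq \operatorname{Sym}_{\mathcal{O}_\mathcal{B}}(\mathcal{T}_\mathcal{B})$ as sheaves of graded commutative $\mathcal{O}_\mathcal{B}$-algebras. The first step is to check that the isomorphism $\gr\mathcal{D}_\lambda\simeq(\pi_\mathcal{B})_*\mathcal{O}_{T^*\mathcal{B}}$ quoted above is this identification, i.e.\ an isomorphism of sheaves of $\mathcal{O}_\mathcal{B}$-algebras. Locally, on an open set where $\mathcal{O}(\lambda+\rho)$ trivializes, $\mathcal{D}_\lambda$ is isomorphic to the sheaf of differential operators, and its order filtration has associated graded the symbol algebra $\operatorname{Sym}(\mathcal{T}_\mathcal{B})$. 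This makes $\gr_F\script{V}$ a module over the quasi-coherent $\mathcal{O}_\mathcal{B}$-algebra $\mathcal{A}:=(\pi_\mathcal{B})_*\mathcal{O}_{T^*\mathcal{B}}$.

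The second step invokes the standard fact for an affine morphism $f=\pi_\mathcal{B}$ (EGA II, 1.4, or Hartshorne Ex.~II.5.17). The functor $f_*$ gives an equivalence between $\operatorname{QCoh}(\mathcal{O}_{T^*\mathcal{B}})$ and the category of quasi-coherent $\mathcal{O}_\mathcal{B}$-modules carrying an $\mathcal{A}$-module structure. Its quasi-inverse is $\mathcal{M}\mapsto\widetilde{\mathcal{M}}$, defined over each affine open $U=\operatorname{Spec}R\subseteq\mathcal{B}$ by taking the module $\mathcal{M}(U)$ over $\mathcal{A}(U)$ and sheafifying it on $\operatorname{Spec}\mathcal{A}(U)=\pi_\mathcal{B}^{-1}(U)$. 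These local sheaves glue because localization commutes with restriction. Uniqueness of $\widetilde{\script{V}}$, up to unique isomorphism, is then the full faithfulness of $f_*$. This holds because any $\mathcal{O}_{T^*\mathcal{B}}$-module $\mathcal{F}$ with $f_*\mathcal{F}\simeq\gr_F\script{V}$ is recovered over each $\pi_\mathcal{B}^{-1}(U)$ as the sheaf associated to the $\mathcal{A}(U)$-module $\Gamma(U,\gr_F\script{V})$.

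The remaining point, which is the main (mild) obstacle, is coherence. We need $\widetilde{\script{V}}$ to be a coherent $\mathcal{O}_{T^*\mathcal{B}}$-module, not just quasi-coherent. Here we use the definition of a good filtration. Locally $F_\bullet\script{V}$ is generated by finitely many sections over the order filtration of $\mathcal{D}_\lambda$, and each $F_p\script{V}$ is $\mathcal{O}_\mathcal{B}$-coherent. Hence on an affine open $U$ the module $\Gamma(U,\gr_F\script{V})$ is finitely generated over $\mathcal{A}(U)=\operatorname{Sym}_R(\mathcal{T}(U))$, which is a noetherian ring. A finitely generated module over a noetherian ring gives a coherent sheaf on its spectrum, so $\widetilde{\script{V}}$ is coherent on each $\pi_\mathcal{B}^{-1}(U)$, and therefore coherent. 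I would take care to use the quasi-coherence of $\gr_F\script{V}$ as an $\mathcal{O}_\mathcal{B}$-module, which follows from quasi-coherence of the $F_p\script{V}$ and the $\mathcal{O}_\mathcal{B}$-coherence of the graded pieces. This justifies applying the affine-morphism equivalence in the second step.
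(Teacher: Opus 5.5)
Your proposal is correct and follows the same route as the paper, which states the lemma without proof as an immediate consequence of $\gr\mathcal{D}_\lambda\simeq(\pi_\mathcal{B})_*\mathcal{O}_{T^*\mathcal{B}}$: affineness of $\pi_\mathcal{B}$ gives existence and uniqueness, and the good filtration gives coherence, which is the detail you add. One small wording fix: for non-integral $\lambda$ there is no line bundle $\mathcal{O}(\lambda+\rho)$ to trivialize, so either justify the identification from the construction of $\mathcal{D}_\lambda$ out of $\mathcal{U}^\circ$, which gives $\gr\mathcal{D}_\lambda\simeq\operatorname{Sym}(\lie{g}^\circ/\lie{b}^\circ)\simeq\operatorname{Sym}\mathcal{T}_\mathcal{B}$, or simply take it as given, as the paper does.
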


\begin{definition}
    For $ \script{V}\in
    \operatorname{Coh}
    (\mathcal{D}_\lambda), $ the \textbf{characteristic variety} of $
    \script{V} $ is
    \[
        \operatorname{Ch}
        (\script{V}):=
        \operatorname{supp}
        (\widetilde{\script{V}}).
    \]
    A theorem of Gabber
    \cite[Theorem 1]{Gabber1981} proves that $
    \operatorname{Ch}
    (\script{V}) $ is a coisotropic subvariety of $ T^*\mathcal{B} $ for
    the canonical symplectic structure.  This implies that
    \[
        \dim
        \operatorname{Ch}
        (\script{V})\geq \dim \mathcal{B}.
    \]
    We say $ \script{V} $ is \textbf{holonomic} if $ \dim
    \operatorname{Ch}
    (\script{V})=\dim \mathcal{B}. $ This can all be carried out on
    arbitrary smooth algebraic varieties and we will use this fact
    freely.
\end{definition}

The usual $ \mathcal{O}_\mathcal{B} $-theoretic functors of pushforward
and pullback are inadequate for studying $ \mathcal{D}_\lambda $-modules.
The analogs have the deficit of being ``sided" (in the sense that they
only respect left or right $ \mathcal{D}_\lambda $-modules).

\begin{definition}
    Put $ \mathcal{M}^L(R) $ denote the category of left modules over a
    ring (or sheaf of rings).  For a smooth algebraic variety $ X, $
    write $ \mathcal{D}_X $ as the sheaf of differential endomorphisms
    of $ \mathcal{O}_X. $ If $ f:X\to Y $ is a morphism of smooth
    algebraic varieties, define the \textbf{transfer module}
    \[
        \mathcal{D}_{X\to Y}=f^*(\mathcal{D}_Y)=\mathcal{O}_X\tensor_{f^
        {-1}\mathcal{O}_Y} f^{-1}\mathcal{D}_Y.
    \]
    This is a $ (\mathcal{D}_X,f^{-1}\mathcal{D}_Y) $-bimodule.

    For $ \script{V}\in \mathcal{M}^L(\mathcal{D}_Y), $ define the ($
    \mathcal{D} $-module) \textbf{inverse image} of $ \script{V} $ to be
    \[
        f^+(\script{V})=\mathcal{D}_{X\to Y} \tensor_{f^{-1}\mathcal{D}_Y}
        f^{-1}\script{V}.
    \]
    As an $ \mathcal{O}_X $-module, this is the same as $ f^*\script{V}.
    $ Sadly, the direct image is not as elegant.

    For $ \script{V}^\bullet \in D(\mathcal{D}_X) $ (unbounded derived
    category of right modules), the \textbf{direct image} of $ \script{V}^\bullet
    $ is
    \[
        f_+(\script{V}^\bullet)=\textbf{R}f_*(\script{V}^\bullet \tensor^
        {\textbf{L}}_{\mathcal{D}_X} \mathcal{D}_{X\to Y}).
    \]
\end{definition}

Although these definitions are a bit obscure, they have the following
excellent properties:

\begin{theorem}
    \cite[V.3.1]{MilicicDModules} Denote by $ L^pf^+ $ and $ R^pf_+ $
    the cohomology of the inverse and direct image functors.  Then $ L^pf^+
    $ and $ R^pf_+ $ send holonomic modules to holnomic modules.
\end{theorem}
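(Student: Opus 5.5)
The plan is the standard one from Bernstein's theory of holonomic $\mathcal{D}$-modules (as in \cite{MilicicDModules}). First reduce to the two basic classes of morphisms. Any morphism $f:X\to Y$ of smooth varieties factors as the graph embedding $X\to X\times Y$, a closed immersion, followed by the projection $X\times Y\to Y$. Since $L^p(g\circ f)^+$ and $R^p(g\circ f)_+$ are computed from the composites $f^+\circ g^+$ and $g_+\circ f_+$ via spectral sequences, and holonomic modules form a thick (Serre) subcategory of $\operatorname{Coh}(\mathcal{D}_X)$, it suffices to handle closed immersions and projections. Throughout we work with the derived category $D^b_h$ of complexes with holonomic cohomology. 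Thickness follows from additivity of characteristic cycles in short exact sequences, and the dimension bound from Gabber's theorem quoted above.

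\textbf{Inverse images.} For a projection $p:X\times Y\to Y$ we have $p^+\script{V}=\mathcal{O}_X\boxtimes\script{V}$, and $\operatorname{Ch}(p^+\script{V})=T^*_XX\times\operatorname{Ch}(\script{V})$. This has dimension $\dim X+\dim Y$, so $p^+\script{V}$ is holonomic, and $p^+$ is exact. For a closed immersion $i:X\to Y$, localize so that $X$ is cut out by coordinates and factor $i$ into codimension-one embeddings. In codimension one, $L^pi^+$ is computed by the two-term complex $\script{V}\xrightarrow{t}\script{V}$. The key input is Bernstein's lemma: if $\script{V}$ is holonomic, then the kernel and cokernel of multiplication by $t$ are holonomic on $\{t=0\}$. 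To prove it, I would use Kashiwara's $V$-filtration, or equivalently the $b$-function, and bound the dimension of the characteristic variety by the image of $\operatorname{Ch}(\script{V})\cap T^*Y|_X$ under $T^*Y|_X\to T^*X$. Isotropy of $\operatorname{Ch}(\script{V})$ then gives the bound $\dim X$.

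\textbf{Direct images.} For a closed immersion, Kashiwara's equivalence gives $i_+\script{V}=i_*(\script{V}\otimes_{\mathcal{O}}\mathcal{O}[\partial_{t}])$, which is exact. Its characteristic variety is the preimage of $\operatorname{Ch}(\script{V})$ under $T^*Y|_X\to T^*X$, whose dimension is $\dim \operatorname{Ch}(\script{V})+\operatorname{codim}X=\dim Y$. For a projection, $p_+$ is computed by the relative de Rham complex $\textbf{R}p_*\operatorname{DR}_{X\times Y/Y}$. To avoid a direct estimate, which fails for non-proper $p$, I would first reduce to the case where the module is, generically on its support, an integrable connection on a locally closed smooth subvariety. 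This uses the fact that holonomic modules have finite length and that each simple one is the minimal extension of a connection on a locally closed smooth stratum. By dévissage and thickness it then suffices to treat $j_+$ of a connection for an open affine embedding $j$. Holonomicity there follows from the existence of $b$-functions, i.e. holonomicity of $\mathcal{O}[1/f]\,f^{s}$ specialized to an integer, together with the already-established cases. This reduces the general direct image to direct images of $\mathcal{O}$-coherent connections under proper maps, where the coherence theorem and the estimate $\dim\operatorname{Ch}(f_+\script{V})\leq\dim\{(y,\eta): (df)^*\eta\in \operatorname{Ch}(\script{V})\}$ give holonomicity.

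The main obstacle is the direct image along a non-proper morphism. There the characteristic variety estimate does not hold directly, and one must genuinely use the Bernstein--Sato polynomial (existence of $b$-functions) to show that $j_+$ of an integrable connection for an open embedding $j$ is holonomic. I would take this step directly from the literature \cite{MilicicDModules} and assemble the rest by the factorization and dévissage arguments above.
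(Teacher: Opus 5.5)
The paper does not prove this statement; it only cites \cite[V.3.1]{MilicicDModules}, so your sketch has to stand on its own. Several of your steps are fine: the graph factorization, thickness, $p^+\script{V}=\mathcal{O}_X\boxtimes\script{V}$, and $\operatorname{Ch}(i_+\script{V})$ for a closed immersion.

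\textbf{The proof you give for Bernstein's lemma fails.} You claim that $\operatorname{Ch}$ of $\ker t$ and $\operatorname{coker} t$ is controlled by the image of $\operatorname{Ch}(\script{V})\cap T^*Y|_X$ in $T^*X$. That is the non-characteristic restriction estimate, and without that hypothesis it is false, even as a dimension bound. Take $Y=\C^2$ with coordinates $(x,t)$, $X=\{t=0\}$, $C=\{t=x^2\}$ and $\script{V}=\mathcal{O}_Y[(t-x^2)^{-1}]/\mathcal{O}_Y$. Then $\operatorname{Ch}(\script{V})=T^*_CY$ meets $T^*Y|_X$ only in the line $\C\,dt$ over the origin, whose image in $T^*X$ is the single point $(0;0)$. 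Yet $\operatorname{coker}(t)$ is nonzero, since the class of $(t-x^2)^{-1}$ is not in $t\script{V}$, and it is supported at $0$. So its characteristic variety is the whole fibre $T^*_0X$.

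The $V$-filtration does not rescue your estimate. The bounds it yields are in terms of the normal cone of $\operatorname{Ch}(\script{V})$ along $T^*_XY$, not the naive intersection. You already have what you need for a correct argument. Use the triangle $i_+\textbf{R}i^!\script{V}\to\script{V}\to j_+j^+\script{V}\xrightarrow{+1}$, with $j$ the affine complement of $\{t=0\}$. Here $j_+j^+\script{V}=\script{V}[t^{-1}]$ is holonomic by your $b$-function step, so $i_+\textbf{R}i^!\script{V}$ has holonomic cohomology. Kashiwara's equivalence and your formula for $\operatorname{Ch}(i_+)$ then give holonomicity of $\ker t$ and $\operatorname{coker}t$.

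\textbf{The direct image along a projection is also misassembled.} After the d\'evissage you must push a connection $\mathcal{V}$ forward along a non-proper map $Z\to Y$. Compactifying as $Z\overset{j}{\hookrightarrow}\bar Z\overset{\bar f}{\to}Y$ gives $\bar f_+(j_+\mathcal{V})$. But $j_+\mathcal{V}$ is holonomic, not $\mathcal{O}$-coherent, so you do not land in ``$\mathcal{O}$-coherent connections under proper maps.'' What you actually need is:
\begin{enumerate}
\item a smooth compactification (Nagata plus resolution of singularities);
\item the estimate $\operatorname{Ch}(\bar f_+\script{N})\subseteq\{(y,\eta): (x,(d\bar f_x)^*\eta)\in\operatorname{Ch}(\script{N})\text{ for some }x\in\bar f^{-1}(y)\}$ for proper $\bar f$ and an arbitrary coherent $\script{N}$ with a good filtration (Kashiwara, Laumon);
\item the symplectic fact that this set is isotropic when $\operatorname{Ch}(\script{N})$ is conic Lagrangian.
\end{enumerate}
With these, the d\'evissage to standard modules is unnecessary. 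Compactify the factor $X\subset\bar X$ and write $p_+=\bar p_+\circ j_+$, computing $j_+$ by a \v{C}ech cover by hypersurface complements if $X\hookrightarrow\bar X$ is not affine.

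With both repairs your outline becomes the standard Kashiwara-style proof. Bernstein's purely algebraic route avoids both issues: work locally over Weyl algebras with the Bernstein filtration, and use the Fourier transform to turn $\ker/\operatorname{coker}\,\partial_t$ into $\ker/\operatorname{coker}\,t$.
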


Now let $ X $ be a smooth algebraic variety and $ Y\subseteq X $ a
smooth closed subvariety.  We would like to know if there is a relation
between $ \mathcal{D}_X $-modules supported in $ Y $ and $ \mathcal{D}_Y
$-modules on their own.  Let $ i:Y\to X $ be the closed embedding.  As $
i $ is proper, $ i_*=i_! $ and $ i_* $ is exact.  Further, $ \mathcal{D}_
{Y\to X} $ is a locally free $ \mathcal{D}_Y $-module.  Therefore, $ i_+
$ is exact and takes right $ \mathcal{D}_Y $-modules to right $ \mathcal
{D}_X $-modules without needing to pass to derived categories.  In this
situation, Kashiwara has the following wonderful result:

\begin{theorem}[Kashiwara]
    The functor $ i_+ $ induces an equivalence of categories
    \[
        \mathcal{M}(\mathcal{D}_Y)\to \mathcal{M}_Y(\mathcal{D}_X)
    \]
    between right $ \mathcal{D}_Y $-modules and $ \mathcal{D}_X $-modules
    supported on $ Y. $
\end{theorem}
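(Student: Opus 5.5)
We prove Kashiwara's equivalence by reducing to a local computation in coordinates and then gluing. Both $\mathcal{M}(\mathcal{D}_Y)$ and $\mathcal{M}_Y(\mathcal{D}_X)$ are sheaves of categories on $X$. Because $i_+$ is built from the local transfer module $\mathcal{D}_{Y\to X}$, it is compatible with restriction to open subsets. So it suffices to show that $i_+$ is fully faithful and essentially surjective on a cover of $X$ by open sets $U$ on which $Y\cap U$ is cut out by part of a coordinate system. Off $Y$ both sides vanish, so only neighborhoods of points of $Y$ matter.

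The plan is to build an explicit quasi-inverse $i^\natural$ on $\mathcal{M}_Y(\mathcal{D}_X)$ for right modules:
\[ i^\natural(\script{V})=\mathcal{H}om_{i^{-1}\mathcal{D}_X}(\mathcal{D}_{Y\to X},i^{-1}\script{V}), \]
the largest subsheaf of $i^{-1}\script{V}$ annihilated by the ideal of $Y$. This functor is right adjoint to $i_+$, since $i_*$ is exact and $\mathcal{D}_{Y\to X}$ is locally free over $\mathcal{D}_Y$. It therefore suffices to show that the unit and counit are isomorphisms, and that can be checked locally. We first reduce to codimension one. Choose local coordinates $x_1,\dots,x_n$ with $Y=\{x_{n-r+1}=\dots=x_n=0\}$ and factor the embedding as a chain of smooth hypersurface embeddings $Y=Y_r\subset Y_{r-1}\subset\cdots\subset Y_0=X$. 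Transitivity of direct images, $(j\circ i)_+=j_+\circ i_+$, holds here because $\mathcal{D}_{Y\to Z}\otimes\mathcal{D}_{Z\to X}\cong\mathcal{D}_{Y\to X}$ for closed embeddings. Hence the general case follows from the hypersurface case $Y=\{t=0\}$, with $t$ a coordinate and $\partial=\partial_t$.

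In the hypersurface case, for $\script{V}$ supported on $Y$ we use the Euler operator $\theta=t\partial$ in its right-module form. The key step is to show that $\script{V}=\bigoplus_{k\geq 0}\script{V}_k$, where $\script{V}_k=\ker(\theta+k)$ or a suitably shifted eigenvalue. Moreover, $t$ and $\partial$ should shift the degrees by $\pm1$, be mutually inverse isomorphisms between $\script{V}_k$ and $\script{V}_{k+1}$ for $k\geq0$, and satisfy $\script{V}_0=\ker t=i^\natural\script{V}$. Granting this decomposition, $\script{V}\cong \script{V}_0\otimes_{\C}\C[\partial]$. We identify $\script{V}_0\otimes\C[\partial]$ with $i_+(\script{V}_0)$ in local coordinates, since $\mathcal{D}_{Y\to X}\cong\mathcal{D}_Y\otimes\C[\partial]$. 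This identification makes the counit $i_+i^\natural\script{V}\to\script{V}$ an isomorphism. Conversely, for a $\mathcal{D}_Y$-module $\script{W}$, the kernel of $t$ on $\script{W}\otimes\C[\partial]$ is exactly $\script{W}\otimes 1$, by the commutation relation $[\partial,t]=1$. So the unit is an isomorphism as well.

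The main obstacle is the eigenspace decomposition. Let $s$ be a local section. Because $\script{V}$ is supported on $Y$ and is quasi-coherent, $s$ is killed by some power $t^N$. We argue by induction on $N$ using the identity $t\partial t^{m}=t^{m}(t\partial)+m t^{m}$, adjusted for right modules. This shows that $\prod_{k<N}(\theta+k)$ annihilates $s$, so $\theta$ acts locally finitely with integer eigenvalues in a fixed half-line. The relations $t\theta=(\theta\pm1)t$ then show that $t$ and $\partial$ shift eigenspaces. The composites $t\partial$ and $\partial t$ act on $\script{V}_k$ by nonzero scalars for $k\geq 0$, which gives the claimed isomorphisms. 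The delicate part is keeping the signs straight for right modules, where $t$ and $\partial$ act on the right. Once that is done, gluing the local equivalences uses only the naturality of the adjunction.
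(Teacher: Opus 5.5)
The paper gives no proof of this statement. Kashiwara's theorem is quoted as a known result and is used only to pass from an irreducible holonomic module to a connection on a smooth open piece of its support. There is therefore no argument in the paper to compare against.

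Your proposal is the standard proof and it is correct: a global right adjoint $i^\natural$, a local reduction to a coordinate hypersurface via $(j\circ i)_+=j_+\circ i_+$, and the Euler-operator decomposition. What it adds over the citation is that it shows where the hypotheses enter. Quasi-coherence, which the paper's statement leaves implicit, is exactly what makes every local section killed by a power of $t$, and you invoke it at the right point. Without it the result fails when $Y\neq X$. For example, a skyscraper at $y\in Y$ with stalk $\mathcal{D}_{X,y}$ is supported on $Y$, but right multiplication by any nonzero local section of the ideal of $Y$ is injective on it, so $i^\natural$ kills it.

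A few details should be tightened.
\begin{itemize}
\item \emph{Signs.} With $\theta=t\partial$ acting on the right, $\ker t\subseteq\ker\theta$, and $t\theta=(\theta-1)t$, $\partial\theta=(\theta+1)\partial$. So $t$ lowers eigenvalues, $\partial$ raises them, and $\script{V}=\bigoplus_{k\geq 0}\ker(\theta-k)$.
\item \emph{Semisimplicity.} $vt^N=0$ implies $v\prod_{k=0}^{N-1}(\theta-k)=0$; the induction step uses $vt^{N-1}\theta=v(\theta-N+1)t^{N-1}$. State explicitly that the roots of this polynomial are distinct. That, and not local finiteness alone, is what gives the direct sum.
\item \emph{Composites.} The ones that matter are $\partial t=\theta+1$ on $\script{V}_k$ and $t\partial=\theta$ on $\script{V}_{k+1}$, both equal to $k+1$. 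Since $t\partial$ vanishes on $\script{V}_0$, your sentence about both composites acting on $\script{V}_k$ by nonzero scalars for $k\geq 0$ is off by one. Also, $t$ and $\partial$ are inverse only up to the factor $k+1$.
\item \emph{Adjunction.} $i_+\dashv i^\natural$ comes from the tensor--hom adjunction together with the fact that $i_*$ identifies $i^{-1}\mathcal{D}_X$-modules with $\mathcal{D}_X$-modules supported on $Y$. Local freeness of $\mathcal{D}_{Y\to X}$ over $\mathcal{D}_Y$ gives exactness of $i_+$, not the adjunction.
\item \emph{Opening reduction.} Checking full faithfulness and essential surjectivity on a cover would need a gluing argument. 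It is unnecessary here: $i^\natural$ is defined globally, so the unit and counit are morphisms of sheaves, and being an isomorphism is a local property.
\end{itemize}
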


There is a particular class of $ \mathcal{D}_X $-modules that this
theorem will apply to.  Note that a holonomic $ \mathcal{D}_X $-module
is one whose charactersitic variety is a Lagrangian subvariety of $ T^*X.
$.  A large class of Lagrangian subvarieties of cotangent bundles are
conormal bundles.  The most special conormal bundle is the zero-section
(conormal to all of $ X $!).

\begin{lemma}
    \cite[V.2]{MilicicDModules} The following statements are equivalent
    for a $ \mathcal{D}_X $-module $ \script{V}: $

    \begin{enumerate}
        \item
            $
            \operatorname{Ch}
            (\script{V}) $ is the zero section.
        \item
            $ \script{V} $ is a coherent $ \mathcal{O}_X $-module.
        \item
            $ \script{V} $ is a finite locally free $ \mathcal{O}_X $-module.
    \end{enumerate}
    If $ \script{V} $ satisfies any of these properties, we call $
    \script{V} $ a \textbf{connection}.
\end{lemma}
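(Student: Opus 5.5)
The plan is to prove the cycle of implications $(1)\Rightarrow(3)\Rightarrow(2)\Rightarrow(1)$, working locally on $X$ throughout. Fix an affine open $U\subseteq X$ with étale coordinates $x_1,\dots,x_n$ and the corresponding commuting vector fields $\partial_1,\dots,\partial_n$. All three conditions are local, and $\operatorname{Ch}$ is compatible with restriction to opens, so it is enough to argue on such a $U$ with $\script{V}$ coherent.

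The implication $(3)\Rightarrow(2)$ is immediate. For $(2)\Rightarrow(1)$, suppose $\script{V}$ is $\mathcal{O}_X$-coherent. Then the trivial filtration $F_0\script{V}=\script{V}$ is a good filtration. It is compatible with the degree filtration because $\mathcal{D}_X$ preserves $\script{V}$, and each $\partial_i$ raises filtration degree while the filtration is constant. The associated graded module is $\script{V}$ itself, and the symbols $\xi_i=\sigma(\partial_i)$ act on it by zero, since they map $F_0$ into $F_1/F_0=0$. So $\widetilde{\script{V}}$ is annihilated by the fiber coordinates and is supported on the zero section. The support is not smaller than the zero section, because $\operatorname{supp}\script{V}$ is closed and $\mathcal{D}$-stable, so by the usual argument it is a union of components of $X$. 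For $\script{V}\neq 0$ this gives $\operatorname{Ch}(\script{V})=$ zero section over $\operatorname{supp}\script{V}$. I will read statement (1) as the characteristic variety being contained in the zero section, which handles the degenerate cases.

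For $(1)\Rightarrow(2)$, choose a good filtration $F_\bullet\script{V}$ on $U$. If $\operatorname{Ch}(\script{V})$ lies in the zero section, then by the Nullstellensatz some power of the ideal $(\xi_1,\dots,\xi_n)$ kills $\gr_F\script{V}$. Since $\gr_F\script{V}$ is finitely generated over $\mathcal{O}_U[\xi]$, it is therefore a finite $\mathcal{O}_U$-module, concentrated in finitely many degrees. So $F_k\script{V}=F_{k+1}\script{V}=\cdots$ for $k\gg 0$. Each $F_k\script{V}$ is $\mathcal{O}$-coherent, and the filtration is exhaustive, so $\script{V}=F_k\script{V}$ is $\mathcal{O}_U$-coherent.

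The main obstacle is $(2)\Rightarrow(3)$: a coherent $\mathcal{O}$-module carrying a flat connection is locally free. I would use the standard argument at a point $x$ with maximal ideal $\mathfrak{m}$. Take $s_1,\dots,s_r$ in the stalk lifting a basis of $\script{V}_x/\mathfrak{m}\script{V}_x$. By Nakayama they generate, and I need to rule out a nontrivial relation $\sum f_i s_i=0$. Choose a relation with some $f_i$ of minimal $\mathfrak{m}$-adic order $d$, with $d\ge 1$ because the $s_i$ are independent mod $\mathfrak{m}$. Apply a suitable $\partial_j$ chosen so that $\partial_j f_i$ has order $d-1$. The term $\sum f_i\,\partial_j s_i$ expands, via $\partial_j s_i=\sum_k a_{ik}s_k$, into coefficients of order $\ge d$. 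This produces a relation of order $d-1$, contradicting minimality.

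Alternatively, I can pass to the completion. There $\script{V}_x^\wedge$ is a finite module over $\C[[x_1,\dots,x_n]]$ with a flat connection. The formal Cauchy–Kovalevskaya solution gives a basis of horizontal sections, so $\script{V}_x^\wedge$ is free. Faithful flatness of completion then shows $\script{V}_x$ is free, so $\script{V}$ is locally free. The only subtle point is making the order-lowering step airtight. I would do this by working in $\gr_{\mathfrak{m}}$ and taking leading forms.
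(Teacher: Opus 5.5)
Your proof is correct. The paper gives no argument of its own and only cites Mili\v{c}i\'{c}'s notes. Your cycle of implications is the standard proof of that cited result: the trivial filtration for $(2)\Rightarrow(1)$, stabilization of a good filtration once a power of $(\xi_1,\dots,\xi_n)$ kills $\gr_F\script{V}$ for $(1)\Rightarrow(2)$, and Nakayama plus the order-lowering differentiation argument for $(2)\Rightarrow(3)$.

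Two minor points are worth making explicit. First, the required $\partial_j$ exists because a nonzero homogeneous leading form of degree $d\geq 1$ in $\gr_{\mathfrak{m}}\mathcal{O}_x$ has a nonzero partial derivative in characteristic zero, by Euler's identity. Second, your aside about the support should say that the annihilator ideal of $\script{V}$ is stable under vector fields; it is not needed anyway once (1) is read as containment in the zero section.
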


Attached to any connected subvariety $ Y $ of $ \mathcal{B} $ and
connection $ \tau $ on $ Y $ we assign a \textbf{standard} $ \mathcal{D}_\lambda
$-module
\[
    \mathcal{I}(Y,\tau):=i_+(\tau).
\]

\begin{lemma}
    $ \mathcal{I}(Y,\tau) $ has the following properties:
    \begin{enumerate}
        \item
            $
            \operatorname{supp}
            (\mathcal{I}(Y,\tau))=\close{Y}. $
        \item
            $ \mathcal{I}(Y,\tau) $ has a unique irreducible submodule $
            \mathcal{L}(Y,\tau). $
    \end{enumerate}
\end{lemma}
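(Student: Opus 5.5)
We write $i:Y\to\mathcal{B}$ for the inclusion of the locally closed, smooth, connected subvariety $Y$; in our applications $Y$ is a $K$-orbit $S$. We take $\tau$ to be an irreducible connection (in the equivariant setting, an irreducible $K$-equivariant connection), since part (2) fails without irreducibility. The plan is to factor $i$ as
\[
Y\overset{i'}{\longrightarrow} U\overset{j}{\longrightarrow}\mathcal{B},
\]
where $U=\mathcal{B}\setminus(\close{Y}\setminus Y)$ is open, $i'$ is a closed embedding and $j$ is an open embedding. Then $i_+=j_+\comp i'_+$. Since $j$ is an open embedding, $\mathcal{D}_{U\to\mathcal{B}}=\mathcal{D}_U$, so $j_+=\textbf{R}j_*$. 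Because $j$ is affine (the boundary is a divisor in the orbit situation, or we may work on an affine cover), $j_+=j_*$ is exact on quasi-coherent modules. So $\mathcal{I}(Y,\tau)=j_*(i'_+\tau)$ is an honest module and not a complex.

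\emph{Support.} By Kashiwara's theorem, $i'_+\tau$ is a nonzero $\mathcal{D}_U$-module supported exactly on $Y$; since $\tau$ is locally free and nonzero, the support is not smaller. Applying $j_*$ gives a sheaf whose restriction to $U$ is $i'_+\tau$, so $Y\subseteq\operatorname{supp}\mathcal{I}(Y,\tau)$. Any section of $j_*(i'_+\tau)$ over an open set $V$ is a section of $i'_+\tau$ over $V\cap U$. Such a section vanishes on $V\setminus\close{Y}$, so the support lies in $\close{Y}$. The support is closed and contains $Y$, hence equals $\close{Y}$.

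\emph{Unique irreducible submodule.} The key observation is that $j_*\mathcal{M}$ has no nonzero local sections supported on the boundary $Z=\close{Y}\setminus Y=\mathcal{B}\setminus U$. Indeed, a section $s$ of $j_*\mathcal{M}$ over $V$ is by definition a section over $V\cap U$, and if $s$ is supported in $Z$ it restricts to zero there. Consequently, for every nonzero submodule $\mathcal{V}\subseteq\mathcal{I}(Y,\tau)$, the restriction $\mathcal{V}|_U$ is a nonzero submodule of $i'_+\tau$.

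By Kashiwara's equivalence, submodules of $i'_+\tau$ correspond to $\mathcal{D}_Y$-submodules of $\tau$. In the equivariant case these are $K$-equivariant submodules, since the equivalence is compatible with the $K$-action. Irreducibility of $\tau$ then forces $\mathcal{V}|_U=i'_+\tau$. In particular, any two nonzero submodules $\mathcal{V}_1,\mathcal{V}_2$ satisfy $(\mathcal{V}_1\cap\mathcal{V}_2)|_U=i'_+\tau\neq0$, so $\mathcal{V}_1\cap\mathcal{V}_2\neq0$.

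Next, $\mathcal{I}(Y,\tau)$ is holonomic, since $\tau$ is a connection and direct images preserve holonomicity by the theorem of \cite[V.3.1]{MilicicDModules} quoted above. Holonomic modules have finite length, so the family of nonzero submodules has a minimal element $\mathcal{L}(Y,\tau)$. By the intersection property just shown, this minimal element is contained in every nonzero submodule: otherwise intersecting it with another submodule would produce a strictly smaller nonzero submodule. Therefore $\mathcal{L}(Y,\tau)$ is the unique irreducible submodule, and it is the intersection of all nonzero submodules.

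The main technical point is the affineness of $j$, which is what lets us identify $j_+$ with the exact functor $j_*$. If this identification is unavailable in some generality, we would instead work with $H^0 j_+=j_*$. The argument is unchanged, because only the left-exact piece $j_*$ is used to detect submodules and the absence of sections supported on $Z$.
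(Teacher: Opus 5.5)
\paragraph{Verdict.} The paper states this lemma without proof, treating it as a standard fact from the $\mathcal{D}$-module literature. Your argument is the standard one and is correct:
\begin{itemize}
\item pass to $U=\mathcal{B}\setminus\partial Y$, in which $Y$ is closed;
\item use Kashiwara to see that $\mathcal{I}(Y,\tau)|_U$ is irreducible;
\item observe that $j_*(i'_+\tau)$ has no nonzero local sections supported on $\partial Y$, so any two nonzero submodules meet nontrivially;
\item conclude with finite length of holonomic modules.
\end{itemize}
You are also right to add irreducibility of $\tau$. The paper's statement omits it, and for $\tau=\mathcal{O}_Y^{\oplus 2}$ the conclusion fails.

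\paragraph{A wrong but non-essential justification.} The map $j:U\to\mathcal{B}$ is not affine in general. Affine imbedding of $Y$ constrains $\partial Y$ inside $\overline{Y}$; it does not make $\partial Y$ a divisor in $\mathcal{B}$.

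For instance, take $GL(2,\HH)$ and let $Y$ be the codimension-one orbit $(13)(24)$. Then $\partial Y$ is the closed orbit, which has codimension two in $\mathcal{B}$. By the lemma preceding the Cousin resolution, $R^1j_*\mathcal{O}_U\neq 0$. Restricting to an affine cover does not help, since affineness of a morphism is local on the target.

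The correct reason that $\mathcal{I}(Y,\tau)$ is an honest module is that the composite $i$ is affine, because $K$-orbits are affinely imbedded. Hence $i_+$ is exact on quasi-coherent modules. Since $j_+=\mathbf{R}j_*$ and $i'_+\tau$ is a module, this gives $\mathcal{I}(Y,\tau)=H^0(j_+i'_+\tau)=j_*(i'_+\tau)$, with no exactness of $j_+$ needed. As your final paragraph anticipates, only $j_*$ enters the support and submodule arguments, so the proof stands once this replacement is made.

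\paragraph{Minor ordering point.} Saying ``the support is closed'' uses coherence of $\mathcal{I}(Y,\tau)$; a merely quasi-coherent sheaf can have non-closed support. You only establish coherence in the next paragraph, via holonomicity. Invoke holonomicity before the support argument, or note the dependence there.
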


We are now fit to classify the irreducible holonomic $ \mathcal{D}_\mathcal
{B} $-modules.  Suppose that $ \script{V} $ is an irreducible holonomic
module.  Let $ S=
\operatorname{supp}
(\script{V}) $ denote its support.  This is a closed subvariety of $
\mathcal{B} $ and is irreducible as $ \script{V} $ is irreducible.  Let $
i:S:\to \mathcal{B} $ denote the inclusion of this closed subvariety.
There exists an open affine subvariety $ U $ of $ \mathcal{B} $ such
that $ V=
\operatorname{supp}
(\script{V})\cap U $ is a closed smooth subvariety of $ U. $ Clearly, $
\script{V}|_U $ is irreducible and has support $ V. $ By Kashiwara's
theorem, there exists some irreducible holonomic $ \script{W} $ such
that $ (i|_V)_+(\script{W})=\script{V}|_U. $ Further, $
\operatorname{supp}
(\script{W})=V. $ Therefore, by the lemma, there exists a dense open
subset $ A\subseteq V $ such that $ \script{W}|_A $ is a connection.
Hence, by shrinking $ U $ if necessary, we can assume $ \script{W} $ is
a connection.

We now have two irreducible submodules of $ (i|_V)_+(\script{W})|_U: $ $
\script{V}|_U $ and $ \mathcal{L}(V,\script{W})|_U. $ By uniqueness,
these must be isomorphic.  It then follows that $ \script{V}\simeq
\mathcal{L}(V,\tau). $ Thus, we have the following:

\begin{theorem}
    Let $ \script{V} $ be an irreducible holonomic $ \mathcal{D}_\mathcal
    {B} $-module.  Then there exists an irreducible smooth open subset $
    U $ of the support of $ \script{V} $ and an irreducible connection $
    \tau $ on $ U $ such that $ \script{V}\simeq \mathcal{L}(U,\tau). $
\end{theorem}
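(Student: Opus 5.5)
The argument mirrors the discussion immediately preceding the statement, but organized as a proof with the finiteness and uniqueness inputs made explicit. Let $\script{V}$ be an irreducible holonomic $\mathcal{D}_\mathcal{B}$-module and $S=\operatorname{supp}(\script{V})$.

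\textbf{Step 1: the support is irreducible.} Suppose $S=S_1\cup S_2$ with $S_1,S_2$ proper closed subsets. Then the subsheaf of sections supported in $S_1$ is a nonzero proper quasicoherent $\mathcal{D}$-submodule. It is nonzero because $S_1$ contains points off $S_2$; it is proper because its support avoids the nonempty open set $S\setminus S_1$. This contradicts irreducibility, so $S$ is irreducible.

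\textbf{Step 2: reduce to a connection on a smooth locally closed piece.} Choose an open affine $U_0\subseteq\mathcal{B}$ meeting $S$ such that $V=S\cap U_0$ is smooth; this is possible because the smooth locus of a variety is dense and open.
\begin{enumerate}
\item Irreducibility passes to the restriction: $\script{V}|_{U_0}$ is irreducible. Any nonzero submodule $\script{W}'\subseteq\script{V}|_{U_0}$ extends to a nonzero submodule of $\script{V}$ via $j_*$ and intersection with $\script{V}$, where $j:U_0\to\mathcal{B}$ is the inclusion. Irreducibility of $\script{V}$ then forces this extension to be all of $\script{V}$, hence $\script{W}'=\script{V}|_{U_0}$.
\item Kashiwara's theorem, applied to the closed embedding $V\hookrightarrow U_0$, gives $\script{V}|_{U_0}\simeq (i|_V)_+\script{W}$ for an irreducible $\mathcal{D}_V$-module $\script{W}$. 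Since $i_+$ preserves characteristic varieties up to the standard correspondence, $\script{W}$ is holonomic.
\item By holonomicity, $\operatorname{Ch}(\script{W})$ is Lagrangian in $T^*V$. Over a dense open subset $A\subseteq V$ it coincides with the zero section: each non-zero-section component of $\operatorname{Ch}(\script{W})$ is a conormal bundle of a proper subvariety, and there are finitely many such components. By the connection lemma from \cite[V.2]{MilicicDModules}, $\script{W}|_A$ is a connection.
\item Shrink $U_0$ to an open $U$ with $U\cap S=A$. The resulting piece $A$ is smooth, irreducible, and locally closed in $\mathcal{B}$.
\end{enumerate}

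\textbf{Step 3: identify $\script{V}$ with the unique irreducible submodule.} This is the step I expect to be the main obstacle. Set $\tau=\script{W}|_A$. Adjunction for the open-then-closed inclusion $A\hookrightarrow\mathcal{B}$ gives a nonzero morphism $\script{V}\to\mathcal{I}(A,\tau)$, since $\mathcal{I}(A,\tau)$ is a pushforward that includes $j_*$. Because $\script{V}$ is irreducible, this morphism is injective, so $\script{V}$ is an irreducible submodule of $\mathcal{I}(A,\tau)$. By the earlier lemma, $\mathcal{I}(A,\tau)$ has a unique irreducible submodule, hence $\script{V}\simeq\mathcal{L}(A,\tau)$.

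Two things must be checked here, and I would verify both carefully.
\begin{enumerate}
\item The adjunction morphism really is nonzero. This holds because it restricts to the identity isomorphism over $U$.
\item The standard module lemma, stated for connections on connected subvarieties, applies to the locally closed $A$. I would argue this directly: any nonzero submodule of $i_+\tau$ restricts nontrivially to $U$, since $i_+\tau$ has no sections supported on $\close{A}\setminus A$ (the pushforward of a connection admits no such sections). Over $U$, Kashiwara's theorem and the irreducibility of $\tau$ make the restriction equal to all of $(i|_A)_+\tau$. Consequently every nonzero submodule contains the socle, which gives uniqueness.
\end{enumerate}

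Taking $U$ to be $A$ viewed inside the support completes the proof of the statement.
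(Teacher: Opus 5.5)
Your proposal is correct and follows essentially the same route as the paper: show the support is irreducible, pass to an open set where the support is smooth, apply Kashiwara's theorem, shrink so that $\script{W}$ becomes a connection, and conclude by uniqueness of the irreducible submodule of the standard module. Your Step 3 differs only in a useful refinement: you embed $\script{V}$ into $\mathcal{I}(A,\tau)$ via the adjunction for the open inclusion, which is injective by irreducibility, and this makes explicit the passage from the isomorphism over $U$ to a global one that the paper leaves as ``it then follows.''
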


This theorem is nearly what we need for representation theory
considerations.  However, the $ K $-action is missing.  Recall that in
our setting, we have that $ K $ act by finitely many orbits on $
\mathcal{B}. $ Further, for simplicity of the notation, we assume $ K $
is connected (again this can be removed, see
\cite{Hecht1987} appendix B).

Returning to the equivalence of categories above, consider the two new
categories $ \mathcal{M}_{fg}(\mathcal{U}_\lambda,K) $ and $
\operatorname{Coh}
(\mathcal{D}_\lambda,K) $ consisting of finitely generated $ (\lie{g},K)
$-modules with infinitesimal character $ \lambda $ and $ K $-equivariant
coherent $ \mathcal{D}_\lambda $-modules respectively.

\begin{definition}
    The objects of $
    \operatorname{Coh}
    (\mathcal{D}_\lambda,K) $ are called \textbf{Harish-Chandra sheaves}.
\end{definition}

Clearly, the image of the localization functor $ \Delta_\lambda $ on $
\mathcal{M}_{fg}(\mathcal{U}_\lambda,K) $ lands in $
\operatorname{Coh}
(\mathcal{D}_\lambda,K). $ Further, the cohomology of these sheaves are
finitely generated $ \mathcal{U}_\lambda $-modules.

\begin{lemma}
    \cite[pg.  88]{MilicicLocalization} Every Harish-Chandra sheaf
    admits a $ K $-equivariant good filtration.  Further, its
    characteristic variety is contained in the union of all conormal
    bundles to $ K $-orbits in $ \mathcal{B}. $ In particular,
    Harish-Chandra sheaves are holonomic.
\end{lemma}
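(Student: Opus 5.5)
The lemma has three claims. Every Harish-Chandra sheaf $\script{V}\in \operatorname{Coh}(\mathcal{D}_\lambda,K)$ admits a $K$-equivariant good filtration. Its characteristic variety lies in $\bigcup_{Q} T^*_Q\mathcal{B}$, the union over the finitely many $K$-orbits $Q\subseteq\mathcal{B}$. Consequently $\script{V}$ is holonomic. I will prove them in that order.

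\textbf{Equivariant filtration.} Since $\script{V}$ is coherent and $K$-equivariant, I first choose a coherent $\mathcal{O}_\mathcal{B}$-submodule $\script{F}_0$ that generates $\script{V}$ as a $\mathcal{D}_\lambda$-module. Replacing it by the $\mathcal{O}_\mathcal{B}$-span of its $K$-translates, I may assume it is $K$-stable and still coherent. This step uses that $K$ is an algebraic group acting algebraically, so the translates of a coherent subsheaf sweep out a coherent subsheaf; this is the standard local finiteness argument for equivariant quasi-coherent sheaves. I then set $F_p\script{V}=F_p\mathcal{D}_\lambda\cdot \script{F}_0$. Because the degree filtration of $\mathcal{D}_\lambda$ is $K$-stable, this filtration is $K$-stable. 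It is good because $\gr\mathcal{D}_\lambda\simeq(\pi_\mathcal{B})_*\mathcal{O}_{T^*\mathcal{B}}$ is noetherian and $\gr_F\script{V}$ is generated by $\script{F}_0$ in degree $0$.

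\textbf{Characteristic variety.} This is the key step. The $K$-action on $\script{V}$ differentiates to an action of $\lie{k}$. The compatibility condition in the definition of a Harish-Chandra sheaf says this differentiated action agrees with the action of $\lie{k}\subseteq\lie{g}$ through $\mathcal{D}_\lambda$, via $\lie{g}^\circ\to\mathcal{D}_\lambda$. Each $\xi\in\lie{k}$ gives an element of $F_1\mathcal{D}_\lambda$, and its principal symbol is the function $\sigma(\xi)(x,\eta)=\eta(\xi_x)$ on $T^*\mathcal{B}$, where $\xi_x$ is the vector field induced by $\xi$. Because the filtration is $K$-stable, $\xi F_p\script{V}\subseteq F_p\script{V}$, so $\sigma(\xi)$ annihilates $\gr_F\script{V}$. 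Hence $\operatorname{Ch}(\script{V})$ is contained in the common zero locus of the $\sigma(\xi)$. That locus is $\{(x,\eta):\eta|_{T_x(K\cdot x)}=0\}=\bigcup_Q T^*_Q\mathcal{B}$.

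\textbf{Holonomicity.} Each conormal bundle $T^*_Q\mathcal{B}$ has dimension $\dim\mathcal{B}$, and there are finitely many $K$-orbits. So $\dim\operatorname{Ch}(\script{V})\le\dim\mathcal{B}$. Gabber's theorem gives the reverse inequality, hence equality, and $\script{V}$ is holonomic.

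The main obstacle is the first step: producing a $K$-stable coherent generating subsheaf, and checking that the differentiated $K$-action matches the $\lie{k}\subseteq\mathcal{D}_\lambda$ action so that the symbols really kill $\gr_F\script{V}$. I would handle it with the standard local-finiteness argument for algebraic group actions.
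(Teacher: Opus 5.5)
Your proof is correct and is essentially the standard argument behind the citation; the paper itself gives no proof, only the reference to Mili\v{c}i\'{c}. In that argument, a $K$-stable coherent $\mathcal{O}_\mathcal{B}$-submodule $\script{F}_0$ generating $\script{V}$ gives the equivariant good filtration $F_p\mathcal{D}_\lambda\cdot\script{F}_0$, the compatibility of the differentiated $K$-action with $\lie{k}\to\mathcal{D}_\lambda$ forces the symbols $\sigma(\xi)$, $\xi\in\lie{k}$, to annihilate $\gr_F\script{V}$, their common zero locus is $\bigcup_Q T^*_Q\mathcal{B}$, and finiteness of $K$-orbits together with $\dim T^*_Q\mathcal{B}=\dim\mathcal{B}$ and Gabber's inequality gives holonomicity, exactly as you say.
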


The final piece we need to fully classify the irreducible Harish-Chandra
sheaves is a characterization of the irreducible connections which are $
\mathcal{D}_\lambda $-modules.  Suppose $ \tau $ is an irreducible
connection on a $ K $-orbit $ O. $ Let $ x\in O $ be any point.  As $
\tau $ is irreducible, the geometric fiber $ T_x\tau $ is an irreducible
finite dimensional representation of the stabilizer $ S_x=\{k\in K:
kx=x\}. $ In fact, $ \tau $ is completely determined by this
representation $ \omega $ of $ S_x. $ Let $ \lie{c} $ be a $ \theta $-stable
Cartan subalgebra contained in $ \lie{b}_x. $ Then the Lie algebra of $
S_x $ is a semi-direct product of the toroidal part $ \lie{t} $ of $
\lie{c} $ with the nilpotent radical $ \lie{u}_x=\lie{k}\cap \lie{n}_x. $
Let $ T $ and $ U $ be the Levi factor and unipotent radical of $ S_x $
corresponding to $ \lie{t} $ and $ \lie{u}_x $ respectively.  Then as $
\omega $ is irreducible, $ U $ acts trivially and thus $ \omega $ can be
viewed as an irreducible representation of the reductive group $ T. $
The differential of this action is necessarily then a direct sum of
copies of the specialization of $ \lambda+\rho $ restricted to $ \lie{t}
$ (as $ \lie{t} $ is abelian and contained in $ \lie{c} $).  In this
case, we say that $ \tau $ is \textit{compatible with} $ \lambda+\rho. $

\begin{lemma}
    \label{Classification_of_Connections} Let $ O $ be a $ K $-orbit in $
    \mathcal{B} $ and $ \lambda $ a weight of a finite dimensional
    representation of $ K. $ The irreducible connections compatible with
    $ \lambda+\rho $ on $ O $ stand in one-to-one correspondence with
    irreducible representations of $ \pi_0(S_x) $ (the component group
    of the stabilizer of any point in $ x $).
\end{lemma}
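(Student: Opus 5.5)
The plan is to reduce the classification of $K$-equivariant connections on $O$ to the representation theory of the stabilizer $S_x$, and then to peel off the part fixed by $\lambda+\rho$. The reduction to the stabilizer is standard equivariant descent. Since $O\simeq K/S_x$ is a homogeneous space, taking the geometric fiber at $x$ is an equivalence. It identifies $K$-equivariant $\mathcal{O}_O$-modules that are coherent (hence locally free, by the lemma on connections) with finite dimensional algebraic representations of $S_x$; the inverse is $\omega\mapsto$ the sheaf of sections of $K\times_{S_x}\omega$. A $K$-equivariant $\mathcal{D}_\lambda$-module structure on such a bundle (more precisely, a $\mathcal{D}$-module structure for the twist $\mathcal{D}_\lambda|_O$ induced on $O$) is then forced by equivariance on the $\lie{k}$-directions, because $\lie{k}$ surjects onto the tangent space of $O$. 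The only remaining condition is the compatibility condition on the isotropy Lie algebra $\lie{s}_x=\lie{t}\ltimes\lie{u}_x$.

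That compatibility condition says two things. First, $\lie{u}_x$ acts by zero. Second, $\lie{t}$ acts on $\omega$ by the scalar character $(\lambda+\rho)|_{\lie{t}}$, specialized through the identification of $\lie{c}$ with the abstract Cartan $\lie{h}$ via $\lie{b}_x$. So the problem becomes: classify the irreducible representations $\omega$ of $S_x=T\ltimes U$ on which $U$ acts trivially (which is automatic for irreducibles, as noted before the statement) and whose differential is $(\lambda+\rho)|_{\lie{t}}$ times the identity.

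Next I write $T_0$ for the identity component of $T$. Then $\pi_0(S_x)=\pi_0(T)=T/T_0$, since $U$ is connected unipotent. The differential condition pins down $\omega|_{T_0}$: it must be the isotypic sum of the single character $\chi$ of $T_0$ with $d\chi=(\lambda+\rho)|_{\lie{t}}$.

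The hypothesis that $\lambda$ is a weight of a finite dimensional representation of $K$ is used here, and this is the main obstacle. I must show that $\chi$ exists as a character of $T_0$ and that it extends to a character $\tilde\chi$ of all of $T$. Here I would try to use the $\rho$-shift. In the convention $\mathcal{O}(-\lambda)$, the sections of $\mathcal{D}_\lambda$ on $O$ are twisted by $\lambda+\rho$, and the relevant restriction to $\lie{t}$ should equal the differential of the $T$-character on the fiber of $\mathcal{O}(\lambda+\rho)$ at $x$, tensored with the half-density $\bigwedge^{top}$ of the normal/conormal directions. I expect both to be genuine $T$-characters, because $\lambda$ integral for $K$ integrates on the torus $T\subset K$, and $\rho$ restricted to $\lie{t}$ integrates on $T$ up to the top exterior power of $\lie{n}_x/\lie{n}_x\cap\lie{k}$, which is a $T$-module. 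If this bookkeeping fails (for instance, if $\rho$ contributes only a double cover), I would instead fix any irreducible compatible $\omega_0$ produced by the Borel–Weil style construction on $O$, and use it as a base point in place of $\tilde\chi$.

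Given the extension $\tilde\chi$, the correspondence is $\sigma\mapsto\tilde\chi\otimes\sigma$, with $\sigma$ an irreducible representation of $T/T_0=\pi_0(S_x)$ inflated to $S_x$ (with $U$ acting trivially). It is well defined: $T_0$ acts by $\chi$ and $\lie{t}$ by $\lambda+\rho$. It preserves irreducibility since $\tilde\chi$ is one-dimensional. Conversely, given a compatible irreducible $\omega$, the tensor product $\tilde\chi^{-1}\otimes\omega$ is trivial on $T_0$ and on $U$, so it descends to $\pi_0(S_x)$. These two constructions are mutually inverse.

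Finally, I combine this with the fiber equivalence of the first step. Irreducibility transfers in both directions because the equivalence is exact and preserves subobjects. The result is independent of the choice of $x$, since stabilizers at different points of $O$ are $K$-conjugate.
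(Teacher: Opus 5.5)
Your route is the paper's route in slightly different packaging. The paper also passes to the isotropy representation of $S_x=T\ltimes U$ and uses that $T$ lies in a torus, so $\omega$ is a character. It then writes $T=T_0F$ with $F\simeq\pi_0(S_x)$ and extends a character $\kappa$ of $F$ by letting $T_0$ act through $\lambda+\rho$. Your fixed extension $\tilde\chi$ and the map $\sigma\mapsto\tilde\chi\otimes\sigma$ give the same bijection without choosing a splitting. You also make explicit the fiber equivalence that the paper takes for granted just before the lemma.

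The gap is in the step you flag as the main obstacle, and its two halves should be separated.

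\emph{Extension from $T_0$ to $T$.} This half is automatic. $T=K\cap C$ is a closed subgroup of the torus $C$ with Lie algebra $\lie{c}$, hence diagonalizable. So $T\cong T_0\times F$, and every character of $T_0$ extends to $T$; this is all the paper's $T=T_0F$ uses.

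\emph{Integrality on $T_0$.} The real content is that $(\lambda+\rho)|_{\lie{t}}$ is the differential of a character of $T_0$, and neither of your arguments establishes it.
\begin{itemize}
\item Your fallback is circular: producing a compatible $\omega_0$ is exactly the existence statement at issue.
\item Your bookkeeping claim is false in general. Sorting $R^+$ by type gives $\rho|_{\lie{t}}=\rho_{im}|_{\lie{t}}+d\psi$ for a genuine character $\psi$ of $T$. Real roots vanish on $\lie{t}$, a pair $\alpha,-\theta\alpha\in R^+$ cancels, and a pair $\alpha,\theta\alpha\in R^+$ contributes $\alpha|_{\lie{t}}$. But $\rho_{im}|_{\lie{t}}$ need not integrate.
\end{itemize}

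Here is a counterexample. Take the compact form $G_\R=SO(3)$, so $K=G=PGL(2,\C)$, and take $\lambda=0$, a weight of the trivial representation of $K$. Then $\mathcal{B}$ is a single orbit and $S_x=B$ is connected, so $\pi_0(S_x)$ has one irreducible representation. However, $\rho$ is not a character of the maximal torus, so there are no compatible connections at all.

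The paper's proof has the same hole: it simply treats ``$\lambda+\rho$ on $T_0$'' as a character. The honest repair is to make this integrality a hypothesis, for instance that the specialization of $\lambda+\rho$ lies in the character lattice of $C$. This holds for $\lambda=-\rho$, the case used in the Cousin resolution. Under that hypothesis your proof is complete and coincides with the paper's.
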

\begin{proof}
    As we assume $ G_\R $ is the real points of a connected reductive
    complex algebraic group, $ G_\R $ is linear.  In this case, $ T $ is
    contained in a complex torus in $ G $ and is hence abelian.
    Therefore, $ \omega $ is one-dimensional and is the restriction of a
    single copy of $ \lambda+\rho. $

    If $ S_x $ is connected, this determines the representation
    entirely.  As $ U $ is always connected, the connectedness of $ S_x $
    is controlled by $ T. $ Suppose $ T $ is disconnected.  Then as an
    algebraic group $ T $ has finitely many connected components.
    Therefore $ T=T_0F $ for a finite abelian group $ F $ and $ \pi_0(T)=T/T_0\simeq
    F. $ We can extend every irreducible representation $ \kappa $ of $
    F $ to a representation of $ T $ by making it act as $ \lambda+\rho $
    on $ T_0. $ Thus, to any such $ \kappa $ we obtain a connection $
    \tau_\kappa $ on $ O $ which as a $ K $-equivariant vector bundle is
    given by
    \[
        \tau_\kappa(-)=\Gamma(-, K\times_{S_x} \kappa).
    \]
    This finishes the proof.

\end{proof}

We now have the capacity to fully classify Harish-Chandra sheaves.

\begin{theorem}
    \cite{BeilinsonBernstein1993} Let $ \script{V}\in
    \operatorname{Coh}
    (\mathcal{D}_\lambda,K) $ be irreducible.  Then there exists a $ K $-orbit
    $ S $ and an irreducible connection compatible with $ \lambda+\rho, $
    such that $ \script{V}=\mathcal{L}(S,\tau). $ Further, if $ \mathcal
    {L}(S,\tau)\simeq \mathcal{L}(S',\tau') $ then $ S=S' $ and $ \tau\simeq\tau'.
    $
\end{theorem}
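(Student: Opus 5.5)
The plan is to reduce the classification to the results already established for irreducible holonomic modules, using $K$-equivariance to pin down the geometric data. Let $\script{V}$ be an irreducible Harish-Chandra sheaf.

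\textbf{Step 1: holonomicity and a $K$-stable support.} By the lemma cited from \cite[pg. 88]{MilicicLocalization}, $\script{V}$ is holonomic. Its support $Z=\operatorname{supp}(\script{V})$ is closed, irreducible, and $K$-stable. Since $K$ is connected and has finitely many orbits on $\mathcal{B}$, $Z$ is a finite union of $K$-orbits. Irreducibility of $Z$ then forces a unique orbit $S\subseteq Z$ that is open and dense in $Z$. This orbit $S$ is the one we take.

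\textbf{Step 2: reduction to a connection on $S$.} Choose an open $U\subseteq\mathcal{B}$ with $U\cap Z=S$, for instance the complement of $\close{S}\setminus S$. Then $S$ is a smooth closed subvariety of $U$. Kashiwara's theorem gives $\script{V}|_U=i_+(\script{W})$ for some $K$-equivariant $\mathcal{D}$-module $\script{W}$ on $S$, which is irreducible because $\script{V}|_U$ is nonzero and irreducible.

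The point is to show $\script{W}$ is a connection. Its characteristic variety is $K$-stable and, by the same lemma, lies in the union of conormal bundles to $K$-orbits of $S$. Since $S$ is a single orbit, that union is just the zero section, so $\script{W}$ is a connection. It is compatible with $\lambda+\rho$ by the discussion preceding Lemma \ref{Classification_of_Connections}, which classifies it by an irreducible representation of $\pi_0(S_x)$. Call this connection $\tau$.

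\textbf{Step 3: identify $\script{V}$ with $\mathcal{L}(S,\tau)$.} By adjunction for $i_+$ (or directly, since $\script{V}$ has no sections supported on $\close{S}\setminus S$), the identification on $U$ extends to a nonzero morphism $\script{V}\to\mathcal{I}(S,\tau)$. This map is injective because $\script{V}$ is irreducible. Since $\mathcal{I}(S,\tau)$ has a unique irreducible submodule, $\script{V}\simeq\mathcal{L}(S,\tau)$.

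\textbf{Step 4: uniqueness.} Suppose $\mathcal{L}(S,\tau)\simeq\mathcal{L}(S',\tau')$. The supports agree, so $\close{S}=\close{S'}$, and the open orbits in this closure agree, giving $S=S'$. Restricting to $U$ as above and applying Kashiwara's equivalence gives $\tau\simeq\tau'$.

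\textbf{Main obstacle.} The hardest point is Step 3: constructing the map to the standard module requires care with the direct image for the locally closed, non-closed embedding of $S$ into $\mathcal{B}$. I would handle it by factoring the embedding as $S\to U\to\mathcal{B}$. The pushforward along the open embedding $U\to\mathcal{B}$ is $\mathbf{R}j_*$, which is exact here because $j$ is affine. I would then use the adjunction $\Hom(\script{V},j_*j^*\script{V})$ to produce the map.
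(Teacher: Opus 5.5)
Your proof is correct and follows essentially the paper's route: the paper cites Beilinson--Bernstein and sketches the holonomic classification (Kashiwara's theorem on an open set where the support is smooth, then uniqueness of the irreducible submodule of the standard module), and specializes it using the conormal-bundle lemma for Harish-Chandra sheaves and Lemma \ref{Classification_of_Connections}; your adjunction map $\script{V}\to j_*j^*\script{V}\cong\mathcal{I}(S,\tau)$ makes rigorous the passage from $U$ to all of $\mathcal{B}$ that the paper covers with ``it then follows'', and your Step 4 supplies the uniqueness clause the paper leaves to the citation. One correction to your last paragraph: $j\colon U\to\mathcal{B}$ is not affine when $S$ is neither open nor closed (then $\close{S}\setminus S$ is nonempty of codimension at least $2$ in the smooth variety $\mathcal{B}$, so Hartogs forbids it), but nothing is lost, since $\mathbf{R}j_*(i_+\tau)=\mathcal{I}(S,\tau)$ sits in degree zero because the composite $S\hookrightarrow\mathcal{B}$ is affinely embedded, and in any case you only use the unit of the adjunction.
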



\subsubsection{The Cousin Resolution}

We now set up the main computational tool for computing the extensions:
the Cousin Resolution.  Earlier treatments of this result due to Kempf (
\cite{Kempf1978}) are given in terms of the Bruhat decomposition of
homogeneous spaces ($ B $-orbits on $ \mathcal{B} $).  The version (and
its applications) we give here is in terms of a stratification on a
smooth variety.  In our application, we use the $ K $-orbits on $
\mathcal{B} $ to give a more explicit description of the resolution in
terms of standard $ \mathcal{D}_\lambda $-modules.  For proofs, see
appendix A of
\cite{MilicicRomanov2025}

\begin{definition}
    Let $ X $ be a smooth algebraic variety.  A \textbf{stratification
    of length n} on $ X $ consists of a decreasing filtration
    \[
        X=F_0\supseteq F_1\supseteq ...  \supseteq F_n\supseteq F_{n+1}=\varnothing.
    \]
    by closed algebraic subvarieties satisfying the condition
    \begin{description}
        \item[(S)]
            $ F_p-F_{p+1} $ is a nonempty smooth affinely imbedded
            subvariety of $ X $ of dimension $ \dim X-p $ for all $ 0\leq
            p\leq n. $
    \end{description}
    The subvarieties $ S_p=F_p-F_{p+1} $ are called the $ (\dim X-p) $-dimensional
    strata.
\end{definition}

A stratification $ F_p $ on $ X $ induces stratifications of the smooth
subset $ X_q=X-F_q $ of length $ q. $ Now let $ Y\subseteq X $ be a
closed smooth subvariety and $ U=X-Y $ its complement denote by $ i:Y\to
X $ and $ j:U\to X $ the inclusions.  As $ j $ is an open immersion, $ j_+=j_*.
$ As $ Y $ is closed, $ i_*=i_! $ is exact and thus $ i_+ $ is an exact
functor on the level of the module categories.  Denote by $ i^! $ the
right adjoint of $ i_+ $ (see
\cite[IV.8]{MilicicDModules} for a definition).  We then see (by some
adjointness properties) that
\[
    \textbf{R}i^!\simeq \textbf{R}i^*[\dim Y-\dim X].
\]

\begin{lemma}
    Let $ m=\dim X-\dim Y. $ Then
    \begin{enumerate}
        \item
            For $ m=1, $ we have an exact sequence
            \[
                0\to \mathcal{O}_X\to j_*(\mathcal{O}_U)\to i_+(\mathcal
                {O}_Y)\to0.
            \]
        \item
            For $ m>1, $ we have that
            \[
                R^pj_*(\mathcal{O}_U)=
                \begin{cases}
                    \mathcal{O}_X & p=0\\
                    i_+(\mathcal{O}_Y) & p=m-1 \\
                    0 & else
                \end{cases}.
                .
            \]
    \end{enumerate}
\end{lemma}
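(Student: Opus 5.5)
The plan is to reduce both statements to the computation of the local cohomology sheaves $\mathcal{H}^p_Y(\mathcal{O}_X)$, and then to identify the only nonvanishing one with $i_+(\mathcal{O}_Y)$ using Kashiwara's theorem.

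\emph{Step 1: the local cohomology sequence.} For any sheaf $\mathcal{F}$ on $X$ there is the standard exact sequence
\[
0\to \mathcal{H}^0_Y(\mathcal{F})\to \mathcal{F}\to j_*j^*\mathcal{F}\to \mathcal{H}^1_Y(\mathcal{F})\to 0,
\]
together with isomorphisms $R^pj_*(j^*\mathcal{F})\simeq \mathcal{H}^{p+1}_Y(\mathcal{F})$ for $p\geq 1$. Both come from the triangle $\textbf{R}\Gamma_Y\to \mathrm{id}\to \textbf{R}j_*j^*$. When $\mathcal{F}$ is a $\mathcal{D}_X$-module, all of these maps are $\mathcal{D}_X$-linear. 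This is because $j_+=j_*$ for the open immersion, and $\textbf{R}\Gamma_Y$ may be computed by flasque (or injective) $\mathcal{D}_X$-resolutions. Apply this with $\mathcal{F}=\mathcal{O}_X$, so that $j^*\mathcal{F}=\mathcal{O}_U$.

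\emph{Step 2: vanishing.} Since $X$ and $Y$ are smooth, $Y$ is locally cut out by a regular sequence $x_1,\dots,x_m$ that is part of a local coordinate system. On such an affine chart, compute $\mathcal{H}^p_Y(\mathcal{O}_X)$ with the \v{C}ech complex of the localizations at the $x_i$ (equivalently, the limit of Koszul complexes). Regularity of the sequence gives $\mathcal{H}^p_Y(\mathcal{O}_X)=0$ for $p\neq m$. It also gives the explicit description
\[
\mathcal{H}^m_Y(\mathcal{O}_X)\simeq \mathcal{O}_X[x_1^{-1}\cdots x_m^{-1}]\Big/\textstyle\sum_i \mathcal{O}_X[(x_1\cdots\widehat{x_i}\cdots x_m)^{-1}],
\]
which is free over $\mathcal{O}_Y[\partial_1,\dots,\partial_m]$ on the class of $(x_1\cdots x_m)^{-1}$.

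Now read off the two cases.
\begin{enumerate}
\item For $m=1$: $\mathcal{H}^0_Y(\mathcal{O}_X)=0$, so the sequence of Step 1 becomes $0\to\mathcal{O}_X\to j_*\mathcal{O}_U\to\mathcal{H}^1_Y(\mathcal{O}_X)\to 0$.
\item For $m>1$: $\mathcal{H}^0_Y=\mathcal{H}^1_Y=0$, so $j_*\mathcal{O}_U=\mathcal{O}_X$ (algebraic Hartogs). The higher direct images satisfy $R^pj_*\mathcal{O}_U\simeq \mathcal{H}^{p+1}_Y(\mathcal{O}_X)$, which vanishes unless $p=m-1$.
\end{enumerate}

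\emph{Step 3: identifying $\mathcal{H}^m_Y(\mathcal{O}_X)$ with $i_+(\mathcal{O}_Y)$.} This is the step I expect to be the main obstacle. The local formula above agrees with the local form of $i_+(\mathcal{O}_Y)$, namely $\mathcal{O}_Y[\partial_1,\dots,\partial_m]\delta$. However, a chartwise identification does not immediately glue canonically, because of orientation and twist issues with $\bigwedge^m$ of the normal bundle.

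To avoid this, I would argue globally. First, $\mathcal{H}^m_Y(\mathcal{O}_X)$ is a $\mathcal{D}_X$-module supported on $Y$. By Kashiwara's theorem it is therefore $i_+$ of the $\mathcal{D}_Y$-module $H^0(\textbf{R}i^!\,\mathcal{H}^m_Y(\mathcal{O}_X))$. Next, $\textbf{R}i^!\textbf{R}\Gamma_Y=\textbf{R}i^!$, and $\textbf{R}i^!\simeq\textbf{R}i^*[\dim Y-\dim X]$. Together with the vanishing of Step 2, this gives
\[
H^0\big(\textbf{R}i^!\,\mathcal{H}^m_Y(\mathcal{O}_X)\big)\simeq H^m\big(\textbf{R}i^*\mathcal{O}_X[-m]\big)=\mathcal{O}_Y.
\]
Hence $\mathcal{H}^m_Y(\mathcal{O}_X)\simeq i_+(\mathcal{O}_Y)$ globally and canonically. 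Substituting this into the two cases of Step 2 yields exactly statements (1) and (2).
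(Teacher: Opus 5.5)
Your proof is correct. The paper gives no argument of its own for this lemma; it defers to Appendix A of Mili\v{c}i\'{c}--Romanov, so there is nothing in the text to compare against step by step. However, the facts it records just before the statement are exactly your ingredients: $j_+=j_*$, exactness of $i_+$, Kashiwara's theorem, and $\mathbf{R}i^!\simeq\mathbf{R}i^*[\dim Y-\dim X]$. Your route is the standard way to fill it in: the triangle $\mathbf{R}\Gamma_Y\to\mathrm{id}\to\mathbf{R}j_*j^*$, then vanishing of $\mathcal{H}^p_Y(\mathcal{O}_X)$ for $p\neq m$ via the \v{C}ech complex of a regular sequence, then Kashiwara. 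Your Step 3 gives a canonical global isomorphism $\mathcal{H}^m_Y(\mathcal{O}_X)\simeq i_+\mathcal{O}_Y$. That is better than matching $\mathcal{O}_X[(x_1\cdots x_m)^{-1}]/\sum_i(\cdots)$ chart by chart with the local model of $i_+\mathcal{O}_Y$, because no gluing or orientation check is needed.

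A few points deserve an explicit line when you write it up.
\begin{itemize}
\item The identity $\mathbf{R}i^!\mathbf{R}\Gamma_Y\simeq\mathbf{R}i^!$ is equivalent to $\mathbf{R}i^!\mathbf{R}j_*=0$. This follows from the adjunction $\operatorname{Hom}(\mathcal{N},\mathbf{R}i^!\mathbf{R}j_*\mathcal{G})\simeq\operatorname{Hom}(j^*i_+\mathcal{N},\mathcal{G})=0$.
\item The isomorphism $H^0(\mathbf{L}i^*\mathcal{O}_X)\simeq\mathcal{O}_Y$ is $\mathcal{D}_Y$-linear, because the $\mathcal{D}$-module inverse image of $\mathcal{O}_X$ is $\mathcal{O}_Y$ with its standard $\mathcal{D}_Y$-structure. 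The paper's $\mathbf{R}i^*$ means this derived pullback.
\item You are using the left-module form of Kashiwara's equivalence, whereas the paper states it for right modules. This is harmless after side-changing.
\item In Step 2, the vanishing for $p>m$ comes from the length of the \v{C}ech complex, not from regularity of the sequence.
\end{itemize}
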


Using this lemma, we now have the main tool in the computation:  the
Cousin resolution.

\begin{theorem}
    Let $ X $ be a smooth variety with stratification $ \{F_i\}_{i\in I}.
    $ Denote by $ S_p=F_p-F_{p+1} $ and $ i_p:S_p\to X $ the inclusion
    of the strata.  Then there exists a canonical exact sequence of
    quasi-coherent $ \mathcal{D}_X $-modules:
    \[
        0\to \mathcal{O}_X\to (i_0)_+(\mathcal{O}_{S_0})\to ...  \to (i_n)_+
        (\mathcal{O}_{S_n})\to 0.
    \]
\end{theorem}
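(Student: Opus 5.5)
The plan is to build the resolution by induction on the length $n$ of the stratification. At each step we splice together the short exact sequences coming from the lemma on $R^pj_*(\mathcal{O}_U)$. The natural filtration is by the open subsets $X_q=X-F_q$. Write $j_q:X_q\to X$ for the open immersion, so that $X_{q+1}=X_q\sqcup S_q$ with $S_q$ closed in $X_{q+1}$. Each $X_q$ inherits a stratification of length $q$.

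First I would establish a local-cohomology description of the terms. Because each stratum $S_p$ is affinely imbedded, the inclusion $S_p\to X-F_{p+1}$ is a closed immersion into an open subset, followed by an affine open immersion. Consequently $(i_p)_+$ is exact on $\mathcal{O}_{S_p}$ and produces a single quasi-coherent $\mathcal{D}_X$-module. The key identification I aim for is
\[
(i_p)_+(\mathcal{O}_{S_p})\simeq \mathcal{H}^p_{F_p}\big(j_{p+1,*}\mathcal{O}_{X_{p+1}}\big)\simeq \mathcal{H}^p_{S_p}(\mathcal{O}_{X_{p+1}})\ \text{pushed forward to }X.
\]
The second isomorphism follows from excision: $S_p$ is closed in $X_{p+1}$. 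The first should follow from the lemma applied to the pair $S_p\subseteq X_{p+1}$, whose codimension is $m=p$. That lemma says the only nonzero local cohomology of $\mathcal{O}_{X_{p+1}}$ along $S_p$ sits in degree $p$, where it equals the $\mathcal{D}$-module pushforward of $\mathcal{O}_{S_p}$. In other words, part (b) of the lemma, read through the triangle $\mathbf{R}\Gamma_{S}\to \mathrm{id}\to \mathbf{R}j_*j^{*}$, gives purity of local cohomology. Pushing forward to $X$ along the affine immersion $X_{p+1}\to X$ preserves this purity.

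Next, I would take the Cousin complex to be the standard one attached to the filtration $F_\bullet$ of supports. Its $p$-th term is $\mathcal{H}^p_{F_p/F_{p+1}}(\mathcal{O}_X)$, and its differentials are the connecting maps of the triples $F_{p+2}\subseteq F_{p+1}\subseteq F_p$. Canonicity is then automatic, since all maps are boundary maps of local cohomology triangles and these are $\mathcal{D}_X$-linear. The remaining claim is exactness, with $\mathcal{O}_X$ as the kernel of the first map.

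I would prove exactness by the usual Cohen–Macaulay-type argument. Using purity from the first step and induction on $p$, one shows $\mathcal{H}^i_{F_p}(\mathcal{O}_X)=0$ for $i<p$. This rests on the long exact sequences for $F_{p+1}\subseteq F_p$, which give $\mathcal{H}^i_{F_p}\simeq\mathcal{H}^i_{F_{p+1}}$ for $i<p$, together with the codimension vanishing. The spectral sequence of the filtration, $E_1^{p,q}=\mathcal{H}^{p+q}_{F_p/F_{p+1}}(\mathcal{O}_X)\Rightarrow \mathcal{H}^{p+q}(\mathcal{O}_X)$, then has $E_1$ concentrated on the row $q=0$. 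It therefore degenerates at $E_2$, and the $E_1$ row is exact except in degree $0$, where its cohomology is $\mathcal{O}_X$. This is exactly the claimed sequence.

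I expect the main obstacle to be the purity step, that is, verifying that the higher local cohomology of $\mathcal{O}$ along the lower strata vanishes away from degree $p$ even after pushing forward to $X$. The difficulty is that $F_p$ itself need not be smooth. The plan is to handle this via excision and the affineness of the imbeddings, so that only the smooth pair $(X_{p+1},S_p)$ appears. For the case $m=1$, the first part of the lemma supplies the base step.
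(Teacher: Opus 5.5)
Your overall architecture is sound: the Cousin complex of the filtration by supports, with $E^p=\mathcal{H}^p_{F_p/F_{p+1}}(\mathcal{O}_X)$ and connecting maps as differentials, is a resolution as soon as $\mathcal{H}^i_{F_p/F_{p+1}}(\mathcal{O}_X)=0$ for $i\neq p$. This follows from the filtration spectral sequence exactly as you describe, and the vanishing of $\mathcal{H}^i_{F_p}(\mathcal{O}_X)$ for $i<p$ is then a consequence rather than an input. This is the global form of the inductive splicing along $X_q\subset X_{q+1}$ that the paper intends. The paper only cites Mili\v{c}i\'{c}--Romanov, placing the lemma on $R^pj_*\mathcal{O}_U$ just before the theorem.

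There is, however, a genuine error at the purity step, which is exactly where you located the difficulty. You twice assert that the open immersion $j_{p+1}\colon X_{p+1}=X-F_{p+1}\to X$ is affine, and this is false in general. For $p\geq 1$ with $F_{p+1}\neq\emptyset$, the closed set $F_{p+1}$ has codimension $\geq 2$ in the normal variety $X$. Hartogs then gives $j_{p+1,*}\mathcal{O}_{X_{p+1}}=\mathcal{O}_X$, so affineness would force $X_{p+1}=\mathbf{Spec}_X\,\mathcal{O}_X=X$. For instance, with $X=\mathbb{A}^2$, $F_1=\{x=0\}$, $F_2=\{0\}$, one gets $X_2=\mathbb{A}^2-\{0\}$. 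Hypothesis (S) only says that the composite $i_p\colon S_p\to X$ is affine.

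The same Hartogs remark breaks your first displayed isomorphism if $j_{p+1,*}$ is underived. It would then read $(i_p)_+\mathcal{O}_{S_p}\simeq\mathcal{H}^p_{F_p}(\mathcal{O}_X)$, but the right side is only the image of $d^{p-1}$. In the example with $p=1$, the global sections are $\mathbb{C}[x^{\pm1},y]/\mathbb{C}[x,y]$ versus $\mathbb{C}[x^{\pm1},y^{\pm1}]/\mathbb{C}[x,y^{\pm1}]$. You need $\mathbf{R}j_{p+1,*}$ there, i.e.\ the object $\mathcal{H}^p_{F_p/F_{p+1}}(\mathcal{O}_X)$, which is what you use later anyway.

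The repair is to apply affineness to the right morphism. Let $k_p\colon S_p\to X_{p+1}$ be the closed immersion. Excision and the lemma give $\mathbf{R}\underline{\Gamma}_{F_p/F_{p+1}}(\mathcal{O}_X)\simeq\mathbf{R}j_{p+1,*}\,\mathbf{R}\underline{\Gamma}_{S_p}(\mathcal{O}_{X_{p+1}})\simeq\mathbf{R}j_{p+1,*}\,(k_p)_+\mathcal{O}_{S_p}[-p]$. For the open immersion $j_{p+1}$, the $\mathcal{D}$-module direct image is $\mathbf{R}j_{p+1,*}$. Moreover $j_{p+1,+}\circ(k_p)_+\simeq(i_p)_+$, and this is exact because $i_p$ is an affine immersion. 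Hence $R^qj_{p+1,*}\bigl((k_p)_+\mathcal{O}_{S_p}\bigr)=0$ for $q>0$, and the degree-$0$ term is $(i_p)_+\mathcal{O}_{S_p}$, which is exactly purity.

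Concretely, $(k_p)_+\mathcal{O}_{S_p}$ is the union of its $\mathcal{O}$-submodules killed by powers of the ideal of $S_p$. Each of these lives on an infinitesimal thickening of $S_p$, whose map to $X$ is affine because $i_p$ is. Since $\mathbf{R}j_{p+1,*}$ commutes with filtered colimits on a noetherian space, the vanishing follows. With purity established this way, the rest of your argument goes through unchanged: the $\mathcal{D}_X$-linearity of the connecting maps, the $E_1$ page concentrated on the row $q=0$, and the identification of the degree-$0$ cohomology with $\mathcal{O}_X$.
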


Applying this to $ K $-orbits needs some discussion.  Firstly, $ K $
acts on $ \mathcal{B} $ with finitely many orbits
\cite{Wolf1974}.  These orbits are affinely imbedded by
\cite[Chapter 4, Proposition 1.1]{MilicicLocalization}.  Further, there
exist $ K $-orbits of each dimension $ p $ from $ \dim X $ to the
minimal dimension over all orbits.  Thus, we can build a stratification
by collecting all orbits of the same dimension into one strata.  For $ Y
$ a smooth connected subvariety of $ X $, $ \mathcal{O}_Y $ is an
irreducible $ \mathcal{D}_Y $-module.  Further, it is coherent as an $
\mathcal{O}_Y $-module and thus is a connection on $ Y. $ Set $ \mathcal
{I}(Y)=i_+(\mathcal{O}_Y) $ the standard $ \mathcal{D}_X $-module
attached to the structure sheaf.  This gives the following corollary for
the flag variety:
\begin{corollary}
    \label{Cousin_Resolution_Corollary} Let $ Y\subseteq X $ be a a $ K $-orbit
    and suppose $ Z=\overline{Y} $ is smooth.  Then we have a canonical
    exact sequence:
    \[
        0\to \mathcal{L}(Y)\to \mathcal{I}(Y) \to \bigoplus_{%
        \operatorname{codim}
        O=1} \mathcal{I}(O) \to ...\to \bigoplus_{%
        \operatorname{codim}
        O=\dim Z-p} \mathcal{I}(O)\to 0
    \]
    where $ p $ is the minimal dimension of a $ K $-orbit in $ Z. $
\end{corollary}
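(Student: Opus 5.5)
The plan is to apply the Cousin resolution theorem to the smooth variety $X=Z=\overline{Y}$, with a stratification built from the $K$-orbits contained in $Z$, and then push the result forward to $\mathcal{B}$ along the closed embedding $i_Z:Z\to\mathcal{B}$.

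\textbf{Step 1: the stratification.} Since $K$ acts on $\mathcal{B}$ with finitely many orbits, and $Z$ is $K$-stable, $Z$ is a finite union of $K$-orbits. Put
\[
F_q=\bigcup_{O\subseteq Z,\ \operatorname{codim}_Z O\geq q} O .
\]
Each $F_q$ is closed: the boundary of an orbit consists of orbits of strictly smaller dimension. The difference $S_q=F_q-F_{q+1}$ is the disjoint union of the orbits of codimension exactly $q$ in $Z$. Each such orbit is open and closed in $S_q$, because two orbits of the same dimension cannot lie in each other's closures. Hence $S_q$ is smooth of dimension $\dim Z-q$.

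\textbf{Step 2: the obstacle.} Condition (S) asks that every codimension from $0$ up to $n=\dim Z-p$ actually occur. This needs the fact quoted before the corollary, that orbits of every intermediate dimension exist; here it must hold inside $Z$. I would justify this as follows. Any orbit in $Z$ is reached from $Y$ by a chain of closure relations, and one can refine the chain so that dimensions drop by one at each step. The tool is the Richardson--Springer description of the closure order via $SL_2$/$PGL_2$/$SO_3$-type $\mathbb{P}^1$-fibrations, under which a closure relation forms a codimension-one step. If this argument fails, I would instead allow empty strata and note that the proof of the Cousin resolution in \cite{MilicicRomanov2025} never uses nonemptiness; empty strata contribute zero terms. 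I would also need affine imbeddedness of the $S_q$ into $Z$. This follows from the affine imbeddedness of $K$-orbits in $\mathcal{B}$ \cite[Chapter 4, Proposition 1.1]{MilicicLocalization}, since the inclusion $O\to Z$ is the base change of $O\to\mathcal{B}$ along the closed embedding $Z\to\mathcal{B}$, and affineness of a morphism is preserved under base change. Finally, each $S_q$ is a finite disjoint union of closed pieces, so $(i_q)_+\mathcal{O}_{S_q}=\bigoplus_{O}(i_O)_+\mathcal{O}_O$.

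\textbf{Step 3: the resolution on $Z$.} The Cousin resolution theorem now gives an exact sequence of $\mathcal{D}_Z$-modules
\[
0\to\mathcal{O}_Z\to (i_Y)_+\mathcal{O}_Y\to\bigoplus_{\operatorname{codim}_Z O=1}(i_O)_+\mathcal{O}_O\to\cdots\to\bigoplus_{\operatorname{codim}_Z O=\dim Z-p}(i_O)_+\mathcal{O}_O\to 0.
\]
I then apply $(i_Z)_+$. This functor is exact, by the closed-embedding discussion preceding Kashiwara's theorem. Functoriality $(i_Z)_+\circ(i_O)_+=(i_{O\to\mathcal{B}})_+$ identifies each term with $\mathcal{I}(O)$. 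Canonicity of the maps is inherited from the Cousin construction, and $K$-equivariance follows because the stratification is $K$-stable.

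\textbf{Step 4: the first term.} It remains to show $(i_Z)_+\mathcal{O}_Z=\mathcal{L}(Y)$. Because $\mathcal{O}_Z$ is an irreducible $\mathcal{D}_Z$-module ($Z$ is smooth and connected), Kashiwara's equivalence makes $(i_Z)_+\mathcal{O}_Z$ irreducible. The exact sequence embeds it into $\mathcal{I}(Y)$. By the lemma on standard modules, $\mathcal{I}(Y)$ has a unique irreducible submodule $\mathcal{L}(Y)$, so the two coincide. This completes the proof.
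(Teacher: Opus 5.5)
Your proposal is correct and follows the paper's route. The paper's implicit argument, in the paragraph before the corollary, is likewise to stratify by $K$-orbits grouped by dimension, apply the Cousin resolution on the smooth closure $Z$, and use that $\mathcal{O}_Z$ is an irreducible connection; you supply the details it leaves unstated (affine imbeddedness inside $Z$, exactness and functoriality of the pushforward along $Z\hookrightarrow\mathcal{B}$, and the identification of the first term with $\mathcal{L}(Y)$ as the unique irreducible submodule of $\mathcal{I}(Y)$).

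In Step 2 you need neither Richardson--Springer nor empty strata, and this also fills a point the paper only asserts. Since $O\hookrightarrow\overline{O}$ is affine, $\overline{O}-O$ has pure codimension one in $\overline{O}$, so descending through boundary components produces $K$-orbits of every dimension between $p$ and $\dim Z$ inside $Z$.
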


We can instead phrase all of this in terms of $ \mathcal{D}_\lambda $-modules
by replacing the structure sheaves with connections compatible with $
\lambda+\rho. $ The following corollary results from taking global
sections.

\begin{corollary}[Zuckerman Character formula]
    Let $ \mathbb{K}_0(\mathcal{U}_\lambda) $ denote the Grothendieck
    group of the category of $ \mathcal{U}_\lambda $-modules.  Set $ p=\displaystyle\min_
    {K-orbits} \dim O. $ Then in $ \mathbb{K}_0(\mathcal{U}_0) $ we have
    the following identity:
    \[
        [\C]=\sum_{i=0}^{\dim \mathcal{B}-p}(-1)^i\sum_{%
        \operatorname{codim}
        O=i}[\Gamma(\mathcal{B},\mathcal{I}(O))].
    \]
\end{corollary}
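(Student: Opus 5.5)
The identity should follow by applying the exact functor $\Gamma(\mathcal{B},-)$ to the Cousin resolution of $\mathcal{O}_\mathcal{B}$ for the stratification of $\mathcal{B}$ by $K$-orbits, and then passing to the Grothendieck group.

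\textbf{Step 1 (stratification).} Set $n=\dim\mathcal{B}$ and let $F_i$ be the union of all $K$-orbits of codimension $\geq i$. Each $F_i$ is closed, since the boundary of an orbit consists of orbits of strictly smaller dimension, and there are finitely many orbits \cite{Wolf1974}. The stratum $S_i=F_i-F_{i+1}$ is the disjoint union of the codimension $i$ orbits. This is a smooth subvariety of pure dimension $n-i$, and it is affinely imbedded because each orbit is \cite[Chapter 4, Proposition 1.1]{MilicicLocalization}; a finite disjoint union of affinely imbedded locally closed pieces of the same dimension remains affinely imbedded. Condition \textbf{(S)} also requires every codimension $0\le i\le n-p$ to occur, which is the paper's remark that orbits exist in each dimension down to $p$.

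\textbf{Step 2 (resolution and global sections).} The Cousin resolution theorem then gives an exact sequence of $K$-equivariant $\mathcal{D}_\mathcal{B}$-modules
\[0\to\mathcal{O}_\mathcal{B}\to\bigoplus_{\operatorname{codim}O=0}\mathcal{I}(O)\to\cdots\to\bigoplus_{\operatorname{codim}O=n-p}\mathcal{I}(O)\to0,\]
using $(i_q)_+\mathcal{O}_{S_q}=\bigoplus_{O\subseteq S_q}\mathcal{I}(O)$.

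Now $\mathcal{D}_\mathcal{B}=\mathcal{D}_{-\rho}$, and $\lambda=-\rho$ is antidominant. By the Beilinson--Bernstein theorem, $\Gamma$ is exact on quasi-coherent $\mathcal{D}_{-\rho}$-modules; Kempf vanishing for $\mathcal{D}_\lambda$-modules with $\lambda$ antidominant holds even without regularity. Taking global sections therefore yields an exact sequence of $\mathcal{U}_{-\rho}$-modules. Here $\mathcal{U}_{-\rho}=\mathcal{U}_0$ in the paper's normalization, which is the trivial infinitesimal character, and $\Gamma(\mathcal{B},\mathcal{O}_\mathcal{B})=\C$.

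\textbf{Step 3 (Euler characteristic).} I would then take the alternating sum in $\mathbb{K}_0(\mathcal{U}_0)$. To make this legitimate, either regard the sequence as giving a relation in the Grothendieck group of $(\lie{g},K)$-modules, which is the natural home for these classes, or check that each $\Gamma(\mathcal{B},\mathcal{I}(O))$ has finite length. Finite length holds because $\mathcal{I}(O)$ is a holonomic Harish-Chandra sheaf, hence of finite length, and $\Gamma$ is exact. The sequence is finite, so the alternating sum of its classes vanishes. This gives
\[[\C]=\sum_{i=0}^{n-p}(-1)^i\sum_{\operatorname{codim}O=i}[\Gamma(\mathcal{B},\mathcal{I}(O))].\]

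\textbf{Main obstacle.} The delicate point is exactness of $\Gamma$ at the non-regular parameter $\lambda=-\rho$, together with the sign and normalization conventions relating $\mathcal{U}_{-\rho}$ to $\mathcal{U}_0$. I would handle this by citing vanishing of higher cohomology for antidominant twists \cite{Milicic1993}. Alternatively, for $\mathcal{D}_\mathcal{B}$ directly, each $\mathcal{I}(O)$ is the pushforward from an affinely imbedded orbit of a module. Its $\mathcal{O}$-cohomology can be computed on the orbit, and vanishes above degree $0$ because the composition through the affine morphism reduces to a filtered colimit of twisted line bundles, which are handled by Kempf.
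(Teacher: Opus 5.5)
Your proposal is correct and is essentially the paper's argument: the paper gets the identity simply by taking global sections of the Cousin resolution by $K$-orbits (Corollary \ref{Cousin_Resolution_Corollary} applied to the open orbit, where $\mathcal{L}=\mathcal{O}_\mathcal{B}$). Your version supplies the details the paper omits: the orbit stratification, exactness of $\Gamma$, and finite length of the terms.

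One small correction: in the paper's normalization $\lambda=-\rho$ is regular and antidominant, so the stated Beilinson--Bernstein theorem applies directly and there is no non-regular subtlety. Relatedly, the $\mathcal{U}_0$ in the statement should be read as the trivial infinitesimal character ($\lambda+\rho=0$), since literally $0\notin W\cdot(-\rho)$.
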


We have an even more specific version when $ G_\R $ has a single
conjugacy class of Cartan subgroups.  Recall the definition of $ A_\lie{b}
(0) $ from
\cite{VoganZuckerman1984}.

\begin{corollary}
    Suppose $ G_\R $ has a single conjugacy class of Cartan subgroups.
    Then the character formula takes the form:
    \[
        [\C]=\sum_{i=0}^{\dim \mathcal{B}-p-1}(-1)^i\sum_{
        \operatorname{codim}
        O=i}[\Gamma(\mathcal{B},\mathcal{I}(O))]+[A_\lie{b}(0)].
    \]
\end{corollary}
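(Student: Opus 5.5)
The plan is to start from the Zuckerman character formula of the previous corollary, specialized to $\lambda=0$ (so $\lambda+\rho=\rho$ is the infinitesimal character of the trivial representation):
\[
[\C]=\sum_{i=0}^{\dim\mathcal{B}-p}(-1)^i\sum_{\operatorname{codim}O=i}[\Gamma(\mathcal{B},\mathcal{I}(O))].
\]
The only work is to isolate the last term, $i=\dim\mathcal{B}-p$, which collects the closed $K$-orbits, and identify it with $[A_\lie{b}(0)]$.

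First I would use the hypothesis that $G_\R$ has a single conjugacy class of Cartan subgroups. The $K$-orbits on $\mathcal{B}$ correspond to $K$-conjugacy classes of pairs consisting of a $\theta$-stable Cartan and a Borel containing it. Under the hypothesis every $\theta$-stable Cartan is $K$-conjugate to the fundamental one, hence every Cartan is fundamental. The closed orbits are exactly those whose Borel $\lie{b}_x$ is $\theta$-stable; these have dimension $p=\dim K/(B\cap K)$.

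Second, for such a closed orbit $O=K\cdot x$, I would identify $\Gamma(\mathcal{B},\mathcal{I}(O))$. Since $O$ is closed, $i_+$ of the structure sheaf (with the $\rho$-twist compatible connection) is a $\mathcal{D}$-module pushforward from a closed smooth subvariety. Its global sections are computed by Zuckerman's functor applied to the $(\lie{b},B\cap K)$-module, i.e. the derived functor $R^{S}\operatorname{I}_{(\lie{b},T)}^{(\lie{g},K)}$ in degree $S=\dim\lie{u}\cap\lie{k}$. By the standard comparison (Hecht--Mili\v{c}i\'{c}--Schmid--Wolf duality, used in the Hecht--Mili\v{c}i\'{c}--Schmid--Wolf theorem stated in the introduction), this is $\mathcal{R}_\lie{b}^S(\C)$ for the $\theta$-stable Borel $\lie{b}=\lie{b}_x$, which is $A_\lie{b}(0)$ by definition in \cite{VoganZuckerman1984}.

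Because $\lambda=0$ is in the good range, the Vogan vanishing result quoted above (Proposition 4.13 of \cite{Vogan1984}) shows that only degree $S$ survives, so the Euler characteristic equals this single module. I then need that the sum over the closed orbits collapses to a single $[A_\lie{b}(0)]$, up to $K$-conjugacy of $\lie{b}$. Different closed orbits give $\theta$-stable Borels that are not $K$-conjugate, and their $A_\lie{b}(0)$ are distinct, in general inequivalent, modules. So the statement must either implicitly sum over the $W_K$-classes of $\theta$-stable Borels, or use that the relevant group has one closed orbit; in the $GL(n,\HH)$ case all roots are complex or compact, so the $\theta$-stable Borels form one $K$-class. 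I would therefore read $[A_\lie{b}(0)]$ as $\sum_{\lie{b}}[A_\lie{b}(0)]$ over $K$-classes of $\theta$-stable Borels, and verify that there is one class under the hypothesis.

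The sign also needs care: the term has sign $(-1)^{\dim\mathcal{B}-p}$, while the statement carries $+$. I expect the resolution to be that $A_\lie{b}(0)$ is defined with the $\bigwedge^{top}\lie{u}$ twist and the duality $\Gamma(\mathcal{I}(O))^\vee$. The parity of $\dim\mathcal{B}-p=\dim\lie{u}\cap\lie{s}$ is then absorbed by the standard convention $[\mathcal{R}_\lie{q}]=(-1)^S[\mathcal{R}^S_\lie{q}]$ together with the Euler characteristic sign. Checking this sign and the one-class claim is where I expect the main obstacle.
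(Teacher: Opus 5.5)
Your identification of the last term is the paper's step (b): for a closed orbit $O=K\cdot x$, Hecht--Mili\v{c}i\'{c}--Schmid--Wolf duality gives $\Gamma(\mathcal{B},\mathcal{I}(O))\cong\mathcal{R}^{S}_{\lie{b}_x}(\C)=A_{\lie{b}_x}(0)$.

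\textbf{Missing step: uniqueness of the closed orbit.} This is the paper's step (a), that under the hypothesis there is exactly one closed $K$-orbit. You only defer it (``verify that there is one class''). Your fallback of reading $[A_\lie{b}(0)]$ as a sum over $K$-classes of $\theta$-stable Borels changes the statement rather than proving it.

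The paper argues as follows. All $\theta$-stable Cartans are $K$-conjugate, so every orbit is attached to one Cartan $\lie{c}$. Orbits then correspond to $W_K$-classes of positive systems in $\Delta(\lie{g},\lie{c})$, and the closure order (Richardson--Springer) has a unique minimal element.

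Your remark that for $GL(n,\HH)$ ``all roots are complex or compact'' is the right idea, but it has to be turned into an argument that holds under the hypothesis:
\begin{itemize}
\item The fundamental Cartan $\lie{c}=\lie{t}\oplus\lie{a}$ has no real roots.
\item It also has no noncompact imaginary roots, since a Cayley transform would produce a Cartan with larger split part, hence not conjugate.
\item For a complex root $\alpha$, the nonzero vector $X_\alpha+\theta X_\alpha\in\lie{k}$ has $\lie{t}$-weight $\alpha|_{\lie{t}}$. So every element of $\Delta(\lie{g},\lie{t})$ lies in $\Delta(\lie{k},\lie{t})$.
\item $\theta$-stable positive systems of $\Delta(\lie{g},\lie{c})$ correspond to chambers of $\Delta(\lie{g},\lie{t})$, so $W(\lie{k},\lie{t})$ acts transitively on them.
\item Every $\theta$-stable Borel contains a $\theta$-stable, necessarily fundamental, Cartan, which is $K$-conjugate to $\lie{c}$.
\end{itemize}
Hence all $\theta$-stable Borels are $K$-conjugate, and the closed orbit is unique.

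\textbf{The sign.} Your worry is justified, but the proposed fix does not work. The closed-orbit term is $H^0$ of a single module, not an Euler characteristic. That module is isomorphic to the genuine module $A_\lie{b}(0)=\mathcal{R}^S_\lie{b}(\C)$ with $S=\dim\lie{u}\cap\lie{k}$. So no convention relating $\mathcal{R}_\lie{b}$ and $\mathcal{R}^S_\lie{b}$ can absorb the factor $(-1)^{\dim\mathcal{B}-p}=(-1)^{\dim\lie{u}\cap\lie{s}}$. In any case $S$ is not $\dim\lie{u}\cap\lie{s}$.

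The paper's proof passes over this sign without comment. What the argument actually yields is the formula with last term $(-1)^{\dim\lie{u}\cap\lie{s}}[A_\lie{b}(0)]$.
\begin{itemize}
\item This agrees with the displayed $+$ sign exactly when $\dim\lie{u}\cap\lie{s}$ is even. For $GL(n,\HH)$ it equals $n(n-1)$, consistent with the $GL(2,\HH)$ resolution computed later in the paper.
\item It fails for $SL(2,\C)$, which has a single class of Cartans. There the diagonal in $\mathcal{B}=\mathbb{P}^1\times\mathbb{P}^1$ is a divisor. With $U$ the open orbit, one gets $[\C]=[\C[U]]-[\C[U]/\C]=[\Gamma(\mathcal{B},\mathcal{I}(U))]-[A_\lie{b}(0)]$.
\end{itemize}
You should record the sign this way rather than try to explain it away.
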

\begin{proof}
    The only additional work to be done here is to show that
    \begin{enumerate}
        \item
            There is a single closed $ K $-orbit $ O_{cl} $ on $
            \mathcal{B}. $
        \item
            $ \Gamma(\mathcal{B},\mathcal{I}(O_{cl}))=A_\lie{b}(0). $
    \end{enumerate}

    For $ (a), $ we recall some general facts about $ K $-orbits.  To
    each $ K $-orbit $ S $ we can associate a $ K $-conjugacy class of $
    \theta $-stable Cartan subalgebras determined by
    \[
        x\mapsto \Ad K\cdot \lie{c}
    \]
    for $ \lie{c} $ a $ \theta $-stable Cartan subalgebra in $ \lie{b}_x
    $ (see
    \cite[Lemmas 5.3-4]{Milicic1993}.  This map is surjective and the
    fiber of this map over any fixed $ \theta $-stable Cartan subalgebra
    is parametrized by $ W_K $-conjugacy classes of choices of positive
    roots in $ \Delta(\lie{g},\lie{c}). $

    In the context of the corollary, we assume $ G_\R $ has a single
    conjugacy class of Cartan subalgebras.  Therefore, there is a single
    $ K $-conjugacy class of $ \theta $-stable Cartan subalgebras by
    Matsuki.  By the above, we thus have that every $ K $-orbit
    is attached to the same conjugacy class and thus a bijection
    \[
        \{ K-\text{orbits}\} \overset{\sim}{\longleftrightarrow} \left\{\parbox
        {4cm}{%
        \centering
        $W_K$ -conjugacy classes of positive roots in $\Delta(\lie{g},\lie
        {c})$ }\right\}.
    \]
    Let $ w_0^K $ and $ e^K $ be the unique longest element and identity
    element of $ W_K $ respectively.  It is known that the closure order
    on $ K $-orbits then coincides with the reverse Bruhat order on $ W_K
    $
    \cite{RichardsonSpringer1990}.  Therefore, there are unique $ K $-orbits
    corresponding to $ w_0^K $ and $ e^K $ respectively.  The orbit
    attached to $ w_0^K $ is the unique open orbit on $ \mathcal{B} $
    and the orbit attached to $ e^K $ is the unique closed orbit.  This
    prove (a).

    The proof of $ (b) $ is a rather straightforward application of the
    duality theorem of
    \cite{Hecht1987}.  Fix a $ \theta $-stable Cartan subalgebra $ \lie{c}
    $ of $ \lie{g}. $ Write $ C $ for the corresponding connected
    subgroup of $ G. $ Set $ T=C^\theta. $ Let $ S $ be a $ K $-orbit in
    $ \mathcal{B} $ and $ x\in S $ such that $ \lie{c}\subseteq \lie{b}_x.
    $ Set $ s=\dim( \lie{k}\cap \lie{n}_x). $ Denote by $ \tau $ a
    connection on $ S $ compatible with $ \lambda+\rho. $ In our
    notation, that theorem says that
    \[
        H^p(\mathcal{B},\mathcal{I}(S,\tau))^\vee\simeq R^{s-p}
        \operatorname{I}
        _{(\lie{b}_x,T)}^{(\lie{g},K)} (\tau_x^\vee\tensor T_x\omega_{\mathcal
        {B}}).
    \]
    For application to the Cousin resolution, we simply consider $ \tau=\mathcal
    {O}_{O_{cl}} $ the structure sheaf of the closed orbit.  This is a $
    \mathcal{D}_\mathcal{B}=\mathcal{D}_{-\rho} $-module and is
    compatible with $ -\rho+\rho=0 $ as the the differential of the
    representation of the stabilizer $ S_x $ on $ (\mathcal{O}_{O_{cl}})_x
    $ is the trivial representation (see Lemma \ref{Classification_of_Connections}).
    As $ O_{cl} $ is closed, standard $ \mathcal{D}_\lambda $-modules
    attached to it are irreducible.%
    \footnote{The quotient $ \mathcal{I}(S,\tau)/\mathcal{L}(S,\tau) $
    is supported on the boundary of the orbit $ S $.} Lastly, any $ x\in
    O_{cl} $ correspond to $ \theta $-stable Borel subalgebras.  Hence,
    we have that $ \mathcal{I}(S,\mathcal{O}_{O_{cl}})=\mathcal{L}(S,\mathcal
    {O}_{O_{cl}}) $ and
    \[
        H^p(\mathcal{B},\mathcal{I}(S,\mathcal{O}_{O_{cl}}))\simeq R^{s-p}
        \operatorname{I}
        _{(\lie{b}_x,T)}^{(\lie{g},K)} (\C\tensor T_x\omega_{\mathcal{B}})
        = \mathcal{R}_{\lie{b}_x}^{s-p}(\C_0).
    \]
    Taking $ p=0, $ we see the right hand side is exactly the definition
    of $ A_{\lie{b}_x}(0). $

\end{proof}

In proving $ (a) $, we showed the following holds:
\begin{corollary}
    Suppose $ G_\R $ has a single conjugacy class of Cartan subgroups.
    Then all $ \theta $-stable Borel subalgebras of $ \lie{g} $ are $ K $-conjugate.
\end{corollary}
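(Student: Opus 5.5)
Under the hypothesis that $G_\R$ has a single conjugacy class of Cartan subgroups, we show that any two $\theta$-stable Borel subalgebras are $K$-conjugate. The plan is to reduce everything to the orbit combinatorics recalled in the proof of part (a) of the previous corollary.

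First, let $\lie{b}$ be any $\theta$-stable Borel subalgebra, and view it as a point $x\in\mathcal{B}$. Since $\lie{b}$ is $\theta$-stable, it contains a $\theta$-stable Cartan subalgebra $\lie{c}$. This is standard: $\lie{b}\cap\theta(\lie{b})=\lie{b}$, and the intersection of two Borels always contains a Cartan, which can be chosen $\theta$-stable by the argument of \cite[Lemmas 5.3-4]{Milicic1993}. We then show that the $K$-orbit $K\cdot x$ is closed in $\mathcal{B}$. Because $\theta(\lie{b}_x)=\lie{b}_x$, the subalgebra $\lie{k}\cap\lie{b}_x$ is a Borel subalgebra of $\lie{k}$. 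Hence the stabilizer $S_x$ contains a Borel subgroup of $K$, so $K\cdot x\simeq K/S_x$ is a projective variety and therefore closed in $\mathcal{B}$.

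Conversely, every closed orbit consists of $\theta$-stable Borels. We will not actually need this direction, but it is the same standard fact, since closed $K$-orbits are exactly the orbits of $\theta$-stable Borels.

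Next we invoke the uniqueness established in part (a). By Matsuki there is a single $K$-conjugacy class of $\theta$-stable Cartan subalgebras. The $K$-orbits therefore correspond bijectively to $W_K$-classes of positive systems, and by \cite{RichardsonSpringer1990} the closure order is the reverse Bruhat order. Consequently there is exactly one closed $K$-orbit $O_{cl}$, namely the one attached to $e^K$. It follows that every $\theta$-stable Borel subalgebra lies in $O_{cl}$, and any two such lie in one $K$-orbit, i.e.\ they are $K$-conjugate.

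The main obstacle is making the closed-orbit step airtight when $K$ is disconnected; here we use the standing connectedness assumption from \cite{Hecht1987}. We also need to confirm that ``minimal in the closure order'' coincides with ``closed.'' This holds because orbit closures are unions of orbits and are closed. Thus every closed orbit is minimal, and uniqueness of the minimal element $e^K$ forces uniqueness of the closed orbit.
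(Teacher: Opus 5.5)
Your proposal is correct and follows the paper's argument. Every $\theta$-stable Borel lies in a closed $K$-orbit: the paper cites \cite[Lemma 6.16]{Hecht2022}, while you prove the needed half directly from $\lie{k}\cap\lie{b}_x$ being a Borel subalgebra of $\lie{k}$, so that $K/S_x$ is complete. Part (a) then gives uniqueness of the closed orbit. Your worry about disconnected $K$ is unnecessary, since $K/S_x$ is complete as soon as $S_x$ contains a Borel subgroup of $K_0$; the $\theta$-stable Cartan step is also never actually used.
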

\begin{proof}
    The variety of $ \theta $-stable Borel subalgebras is a union of the
    closed $ K $-orbits
    \cite[Lemma 6.16]{Hecht2022}.  There is a unique closed $ K $-orbit
    in this situation.
\end{proof}


\subsection{Global Sections of Standard $ \mathcal{D}_\lambda $-Modules}

The standard modules constructed above are well suited for proving
theorems using the combinatorial description of pseudocharacters.  We
would like to compare this to the geometric construction which can be
best adapted to the situation of the nilpotent cone.  In particular, we
want to make the Cousin--Zuckerman resolution computable.  We will (mostly)
follow the notation of
\cite{Hecht1987} and
\cite{Hecht2022}.  I thank Dragan Mili\v{c}i\'{c} for explaining the
theory of $ \mathcal{D} $-modules to me and alerting me to the following
unpublished result:

\begin{theorem}(Hecht--Mili\v{c}i\'{c}--Schmid--Wolf) 
    \label{Global_Sections_from_D} Let $ S $ be a $ K $-orbit on $
    \mathcal{B} $ and $ \tau $ an irreducible $ K $-equivariant
    connection compatible with $ \lambda+\rho $ on $ S. $ Then there
    exists a cuspidal parabolic subgroup $ P=MAU\subseteq G $ defined
    over $ \R, $ a (limit of) discrete series $ F_{S,\tau} $ of $ M_\R $
    and a character $ \chi_\tau $ of $ A_\R $ such that
    \[
        \Gamma(\mathcal{B}, \mathcal{I}(S,\tau))^\vee \simeq \Ind_{P_\R}^
        {G_\R}(F_{S,\tau}\boxtimes \chi_\tau \boxtimes \mathbbm{1})_{[K_\R]}.
    \]

\end{theorem}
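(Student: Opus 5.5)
We prove the statement by combining the duality theorem of \cite{Hecht1987}, already used in the proof of the corollary on groups with a single conjugacy class of Cartan subgroups, with induction in stages. Fix $x\in S$ and a $\theta$-stable Cartan subalgebra $\lie{c}\subseteq\lie{b}_x$, with $C$, $T=C^\theta$ and $s=\dim(\lie{k}\cap\lie{n}_x)$ as there. Throughout we assume $\lambda$ is strongly antidominant, as in the version of the theorem stated in the introduction. Then the localization theorem gives $H^p(\mathcal{B},\mathcal{I}(S,\tau))=0$ for $p>0$. The duality theorem then reads
\[
\Gamma(\mathcal{B},\mathcal{I}(S,\tau))^\vee\simeq R^{s}\operatorname{I}_{(\lie{b}_x,T)}^{(\lie{g},K)}(\tau_x^\vee\tensor T_x\omega_{\mathcal{B}}),
\]
and all other derived functors vanish. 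The task is therefore to identify a single Zuckerman derived functor module with a parabolically induced one.

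\textbf{Step 1: the parabolic.} Write $\lie{c}=\lie{t}\ds\lie{a}$ according to $\theta$, and let $A$ be the split part of the corresponding real Cartan subgroup. Set $M A=Z_{G}(A)$. The roots of $\Delta(\lie{g},\lie{c})$ that are positive for $\lie{b}_x$ and not imaginary determine, through their restrictions to $\lie{a}$, a positive system of restricted roots. This gives a real parabolic $P=MAU$, which is cuspidal because $\lie{t}$ is a compact Cartan subalgebra of $\lie{m}$. Then $\lie{b}_x=(\lie{b}_x\cap\lie{m}_\C)\ds\lie{a}_\C\ds(\text{part of }\lie{u}\ \text{or}\ \bar{\lie{u}})$. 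The first summand $\lie{b}_{M}$ is a $\theta$-stable Borel subalgebra of $\lie{m}$. We may need to replace $\lie{b}_x$ by a $K$-conjugate, or choose the sign of $\lie{a}$, so that the non-imaginary positive roots all lie in $\lie{u}$. Here we use the antidominance of $\lambda$ on real roots to make the choice that yields induction and not coinduction.

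\textbf{Step 2: induction in stages.} We factor the Zuckerman induction through $(\lie{m}\ds\lie{a}\ds\lie{u},(M\cap K)A_K)$, using a Mackey/Frobenius-type identity of the form
\[
R^s\operatorname{I}_{(\lie{b}_x,T)}^{(\lie{g},K)}\simeq \operatorname{I}_{(\lie{p},K\cap P)}^{(\lie{g},K)}\comp R^{s}\operatorname{I}_{(\lie{b}_M,T)}^{(\lie{m},M\cap K)},
\]
which we establish via a Grothendieck spectral sequence. The outer functor is exact, since $(K\cap P)=K\cap M$ and $K/(K\cap M)$ is handled by ordinary induction. We also need to check that $s=\dim(\lie{k}\cap\lie{n}_x)$ equals $\dim(\lie{k}\cap\lie{m}\cap\lie{n}_x)$, which holds because the remaining root spaces of $\lie{n}_x$ lie in $\lie{u}$ and meet $\lie{k}$ trivially after a correct choice; at worst a shift of degrees occurs. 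By the identification of Zuckerman functors with the $K$-finite parabolic induction $\Ind_{P_\R}^{G_\R}(\cdot)_{[K_\R]}$ (compact picture), the outer functor yields the induced module with $\chi_\tau$ given by $(\lambda+\rho)|_{\lie{a}}$, suitably $\rho_{\lie{u}}$-shifted.

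\textbf{Step 3: the inner module.} On $M$ the Cartan $\lie{t}$ is compact, so $R^{s}\operatorname{I}_{(\lie{b}_M,T)}^{(\lie{m},M\cap K)}(\tau_x^\vee\tensor T_x\omega)$ is a cohomologically induced module from a $\theta$-stable Borel subalgebra with a one-dimensional character, by Lemma~\ref{Classification_of_Connections}. The antidominance of $\lambda$ places this character in the weakly good range. Proposition~\ref{cohomological_induction_pseudo_chars} and its vanishing companion then identify it with a (limit of) discrete series $F_{S,\tau}$ of $M_\R$, with the $\pi_0(S_x)$-datum of $\tau$ fixing the representation of the disconnected part of $M$.

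\textbf{Main obstacle.} The main difficulty is Step 2: the stage-wise factorization with exact degree bookkeeping, together with matching $\tau_x^\vee\tensor T_x\omega_{\mathcal{B}}$ against $\rho$-shifts so that the outcome is $\chi_\tau\boxtimes\mathbbm{1}$ on $AU$ rather than a dual or a twist. I would first test the argument on $SL(2,\R)$, where both the open orbit (yielding principal series) and the closed orbits (yielding discrete series) are known.
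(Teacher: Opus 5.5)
Your Step~1, and with it the factorization in Step~2, only works when $S$ is a Langlands orbit, i.e.\ when $\theta_S$ sends every positive complex root to a negative root.

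\begin{itemize}
\item Suppose some $\lie{b}_x$-positive complex root $\alpha$ has $\theta\alpha$ positive as well. Then $(\theta\alpha)|_{\lie{a}}=-\alpha|_{\lie{a}}\neq 0$, so the restrictions of the positive non-imaginary roots contain both $\beta$ and $-\beta$. They do not form a positive system.
\item Equivalently, no real parabolic with Levi factor $Z_{\lie{g}}(\lie{a})$ contains $\lie{b}_x$. Its nilradical is cut out by a generic element of $\lie{a}$ and cannot contain both $\lie{g}_\alpha$ and $\lie{g}_{\theta\alpha}$.
\item A $K$-conjugate of $\lie{b}_x$ that still contains $\lie{c}$ differs from it by an element of $N_K(\lie{c})$. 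That element commutes with $\theta$, so it preserves this property.
\item Changing the chamber in $\lie{a}$ does not help either, since $\Delta(\lie{u})$ is always stable under $\alpha\mapsto-\theta\alpha$.
\item Enlarging the Levi to absorb $\alpha$ and $\theta\alpha$ just moves the problem into an $M'$ in which $\lie{c}\cap\lie{m}'$ still has a nonzero split part.
\end{itemize}

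A concrete instance in this paper is the closed orbit for $GL(2,\HH)$. There $\lie{b}_x$ is $\theta$-stable and the global sections are $A_{\lie{b}}(0)$, and identifying this with a minimal principal series is exactly the nontrivial content. The degree mismatch you flag is the same phenomenon, not a harmless shift. The element $X_\alpha+\theta X_\alpha\in\lie{k}\cap\lie{n}_x$ lies outside $\lie{m}$, so $s>\dim(\lie{k}\cap\lie{m}\cap\lie{n}_x)$. Your proposed test case $SL(2,\R)$ has no complex roots, so every orbit there is a Langlands orbit and the test would not expose this.

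The missing ingredient is a reduction from an arbitrary orbit to the Langlands orbit $S^L$ attached to the same Cartan subalgebra. The paper does this with an intertwining functor. It chooses $w$ transversal to $S$ so that $I_w$ carries $\mathcal{I}(S,\tau)$ to $\mathcal{I}(S^L,\tau^L)$ at parameter $w\lambda$ while preserving $\mathbf{R}\Gamma$. Lemma~\ref{Antidominant_translation} then checks that the new parameter remains antidominant for the imaginary roots.

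On $S^L$ itself the paper argues geometrically rather than via duality:
\begin{itemize}
\item the Leray spectral sequence for $S^L\to K/M$, with affine base and flag-variety fibres;
\item Borel--Weil on the fibres;
\item Frobenius reciprocity to identify $H^0$ with $I_{(\lie{p},M)}^{(\lie{g},K)}(U)$.
\end{itemize}
It does this only when there is a single conjugacy class of Cartan subgroups (Theorem~\ref{Single_Conjugacy_Class_Sections}), so that $M_\R$ is compact and $S^L$ is open.

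Restricted to Langlands orbits, your duality-plus-induction-in-stages argument is a sound alternative. It is in fact more general, since it yields genuine (limits of) discrete series for noncompact $M$, where $S^L$ need not be open. To complete it you must add one of two things. One is the intertwining-functor step. The other is an algebraic comparison theorem relating $R^{s}\operatorname{I}_{(\lie{b},T)}^{(\lie{g},K)}$ for two Borel subalgebras containing $\lie{c}$ that differ by complex roots, valid under your antidominance hypothesis.
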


This has a specialization for the case of a single conjugacy class of
Cartan subgroups.

\begin{theorem}\label{Single_Conjugacy_Class_Sections} 
    Let $ G_\R $ be a real reductive group with single conjugacy class
    of Cartan subgroups.  Let $ \lie{h} $ be the abstract Cartan
    subalgebra.  Let $ S $ be a $ K $-orbit on $ \mathcal{B}. $ Let $ P_\R=M_\R
    A_\R N_\R $ be the Langlands decomposition of a minimal parabolic
    subgroup of $ G_\R. $ Assume $ \lambda\in \lie{h}^* $ is strongly
    antidominant, then
    \[
        \Gamma(X,\mathcal{I}(S,\tau))^\vee\cong \Ind_{P_\R}^{G_\R}(\mu\boxtimes
        \eta\boxtimes \mathbbm{1})_{[K_\R]}
    \]
    with $ \mu $ a finite dimensional irreducible representation of $ M_\R
    $ and $ \eta $ an irreducible representation of $ A_\R. $
\end{theorem}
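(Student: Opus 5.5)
The plan is to deduce Theorem \ref{Single_Conjugacy_Class_Sections} from Theorem \ref{Global_Sections_from_D} by showing that, when $G_\R$ has one conjugacy class of Cartan subgroups, the cuspidal parabolic produced there is forced to be minimal and the (limit of) discrete series $F_{S,\tau}$ is forced to be finite dimensional. First, by the proof of the preceding corollaries (Matsuki's correspondence and \cite[Lemmas 5.3--4]{Milicic1993}), every $K$-orbit $S$ is attached to the same $K$-conjugacy class of $\theta$-stable Cartan subalgebras $\lie{c}=\lie{t}\ds\lie{a}$. This common class must be the maximally split one, since a fundamental (maximally compact) Cartan also exists and the two coincide. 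In the Hecht--Mili\v{c}i\'{c}--Schmid--Wolf construction, the cuspidal parabolic $P=MAU$ attached to $(S,\tau)$ has $A$ equal to the split part of the Cartan attached to $S$, and $M_\R$ equal to the centralizer of $A_\R$ modulo $A_\R$. Hence $A_\R$ is the maximal split torus, and $P_\R$ is a minimal parabolic.

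Next we identify $M_\R$. Since $\lie{c}$ is also fundamental, $\lie{t}$ contains a Cartan subalgebra of $\lie{k}$. Moreover, $M_\R$ has a compact Cartan subgroup $T_\R$, and it has no other Cartans: a noncompact imaginary root of $\lie{m}$ would produce, via a Cayley transform, a more split Cartan of $G_\R$, contradicting the single-conjugacy-class hypothesis. So all roots of $(\lie{m},\lie{t})$ are compact imaginary, and $\lie{m}$ is compact modulo center. Equivalently, $M_\R$ is a compact group (up to a center that is trivial on the relevant part), namely $Z_{K_\R}(\lie{a})$. A (limit of) discrete series of a compact group is just an irreducible finite dimensional representation. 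We set $\mu:=F_{S,\tau}$ and $\eta:=\chi_\tau$, which gives the displayed isomorphism.

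The main obstacle is the role of the strong antidominance hypothesis. Theorem \ref{Global_Sections_from_D} as stated does not assume it, but without it a limit of discrete series on a compact $M$ could vanish, or $\Gamma$ could fail to capture the full standard module because of higher cohomology. I would invoke the Beilinson--Bernstein theorem: for $\lambda$ antidominant, $\Gamma$ is exact and $H^i(\mathcal{B},\mathcal{I}(S,\tau))=0$ for $i>0$. In the duality formula of \cite{Hecht1987}, $H^p(\mathcal{B},\mathcal{I}(S,\tau))^\vee\simeq R^{s-p}\operatorname{I}$, this leaves only the degree $s$ derived Zuckerman functor. Strong antidominance places the $M$-parameter in the weakly good range, so by Proposition \ref{cohomological_induction_pseudo_chars} this functor gives a nonzero standard module rather than a vanishing limit. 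Concretely, the $\lie{t}$-part of $\lambda+\rho$ is $\rho_M$-regular dominant after the sign flip coming from dualizing, and Borel--Weil--Bott on the flag variety of $M\cap K$ then produces the finite dimensional irreducible $\mu$ in a single degree. Finally, I would check that the $A_\R$-character $\eta$ is the restriction of the $\lie{a}$-part of $\lambda+\rho$, twisted by $\rho_{\lie{u}}$ as dictated by the normalization of $\Ind$.
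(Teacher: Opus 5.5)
Granting Theorem \ref{Global_Sections_from_D}, your first two paragraphs are a valid deduction, and they match the way the introduction phrases the corresponding corollary. With one conjugacy class of Cartan subgroups every cuspidal parabolic is minimal, so you do not need to know how the Hecht--Mili\v{c}i\'{c}--Schmid--Wolf parabolic is built. $M_\R=Z_{K_\R}(A_\R)$ is then compact, which is automatic for a minimal parabolic, so the Cayley transform argument is superfluous. Finally, a nonzero (limit of) discrete series of a compact group is an irreducible finite dimensional representation.

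This is, however, a genuinely different route from the paper's, which deliberately avoids Theorem \ref{Global_Sections_from_D}: that result is unpublished, and this theorem is meant to prove its special case independently. The paper argues directly:
\begin{enumerate}
\item It reduces an arbitrary orbit to the Langlands orbit (here the open orbit) by an intertwining functor, and checks in Lemma \ref{Antidominant_translation} that strong antidominance is preserved.
\item It uses $S_x\subseteq M$, with $S_x$ a Borel subgroup of $M$ (Lemmas \ref{Intersection_with_k} and \ref{Fibers_of_pr}), to fiber $S^L\to K/M$ with flag-variety fibres.
\item The Leray spectral sequence collapses because $K/M$ is affine.
\item Borel--Weil on the fibres produces a finite dimensional $(\lie{p},M)$-module $U$.
\item Frobenius reciprocity identifies $H^0(K/M,pr_*\tau)$ with $I_{(\lie{p},M)}^{(\lie{g},K)}(U)$, and \cite[Theorem 11.47]{KnappVogan1995} converts this to $\Ind_P^G(U)_{[K]}$.
\end{enumerate}
Your route is much shorter, but it offers no verification independent of the unpublished theorem and says nothing about what $\mu$ and $\eta$ are. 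The paper's route identifies $\mu$ through the specialization of $\lambda+\rho$ on $\lie{t}$ and $\eta$ through its $\lie{a}$-part, which is precisely the input for the later lowest $K$-type computations for $GL(n,\HH)$.

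Your last paragraph, on the role of strong antidominance, does not work as written. Antidominance gives vanishing of higher cohomology and puts the Borel--Weil--Bott computation on the fibres in degree $0$. It cannot give nonvanishing, though. Since $M_\R$ is compact, every imaginary root is compact, and if the parameter transported to the Langlands orbit is singular on one of them, then $\Gamma(\mathcal{B},\mathcal{I}(S^L,\tau))=0$, as the paper's computation shows. So the statement really needs imaginary regularity, or must allow $\mu=0$. Moreover, Proposition \ref{cohomological_induction_pseudo_chars} concerns a $\theta$-stable parabolic, while $\lie{b}_x$ in the duality formula is $\theta$-stable only when $S$ is the closed orbit, so you cannot invoke it there. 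Passing from induction off $\lie{b}_x$ or $(\lie{p},M)$ to real parabolic induction is exactly what \cite[Theorem 11.47]{KnappVogan1995} is used for.
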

Before giving a proof of this special version, we need to recall some
 fundamental constructions from homological algebra and the structure 
 theory of reductive groups. We start with structure theory: 
 
 Let $ S $
be a $ K $-orbit on $ \mathcal{B}, $ $ x\in S $ and $ \lie{c} $ a $
\theta $-stable Cartan subalgebra contained in $ \lie{b}_x. $ Then $
\lie{b}_x $ determines a set of positive roots $ R^+ $ in $ \lie{c}^*. $
Let $ (\lie{h}^*,\Sigma, \Sigma^+)\to (\lie{c}^*,R,R^+) $ be the
specialization map.  We can lift $ \theta $ along this map to obtain an
involution $ \theta_S $ which depends only on the orbit $ S $ (not the
particular basepoint).  From this, we have the following subsets of
roots:
\begin{align*}
    \Sigma_{S,im}&=\{\alpha\in \Sigma:  \theta_S(\alpha)=\alpha\} \\
    \Sigma_{S,\R}&=\{\alpha\in \Sigma:\theta_S(\alpha)=-\alpha\} \\
    \Sigma_{S,\C}&=\{\alpha\in \Sigma:  \theta_S(\alpha)\neq \pm \alpha\}.
\end{align*}
consisting of $ S $-imaginary, $ S $-real, and $ S $-complex roots
respectively.  Finally, we have the subset $ D_+(S)=\{ \alpha\in \Sigma_
{S,\C}^+:  \theta_S(\alpha)\in \Sigma^+_{S}\}. $ As shown in Section $ 5
$ of
\cite{Hecht2022}, $ D_+(S) $ controls the dimension of the orbit $ S $
attached to $ \lie{c}. $ That is, there exist orbits attached to $ \lie{c}
$ for which $ |D_+(S)| $ is minimal and maximal respectively.  We call
these orbits a \textbf{Langlands orbit} and a \textbf{Zuckerman orbit}
respectively.  Let $ S^L $ denote the Langlands orbit corresponding to $
\lie{c}. $

\begin{lemma}
    \cite[Section 5]{Hecht2022} The set of roots $ \Sigma^+_{S^L}\cup
    \Sigma_{S^L,im} $ is a parabolic root system in $ \Sigma. $ Let $
    \lie{p} $ be the parabolic subalgebra defined by this set of roots.
    If $ \lie{c} $ is stable under the involution defining the real
    form, the parabolic subalgebra $ \lie{p} $ is cuspidal.
\end{lemma}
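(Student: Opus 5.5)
The plan is to describe the set $\Sigma^+_{S^L}\cup\Sigma_{S^L,im}$ via a real-valued linear functional on $\lie{c}^*$ and then to identify the resulting parabolic using the real structure of $\lie{c}$. Write $\lie{c}=\lie{t}\ds\lie{a}$ according to $\theta$.

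First I will characterize the Langlands orbit. Since $S^L$ minimizes $|D_+(S)|$ among the orbits attached to $\lie{c}$, I expect that no positive complex root $\alpha$ has $\theta_{S^L}(\alpha)$ positive. If some $\alpha\in D_+(S^L)$ existed, a complex cross action or Cayley-type move through $s_\alpha$ should produce an orbit attached to $\lie{c}$ with strictly smaller $D_+$; this is the standard argument of \cite[Section 5]{Hecht2022}, and I will take it as given. So for $S^L$, every positive complex root $\alpha$ has $\theta_{S^L}(\alpha)<0$.

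Next I choose a functional compatible with this. Take $\nu\in\lie{a}^*$ real and generic, meaning $\nu(\alpha|_{\lie{a}})\neq0$ for every root not vanishing on $\lie{a}$. Then define \[\Phi=\{\alpha:\nu(\alpha|_{\lie a})>0\}\cup\{\alpha:\alpha|_{\lie a}=0\}.\] A root vanishes on $\lie{a}$ exactly when it is imaginary. Since $\Phi$ is the set of roots that are nonnegative on a real functional, it is a parabolic subset of the root system. It remains to show that $\Phi=\Sigma^+_{S^L}\cup\Sigma_{S^L,im}$ for a suitable $\nu$. For this I want to arrange the positive system $R^+$ of $S^L$ so that a root restricting nontrivially to $\lie{a}$ is positive precisely when its $\lie{a}$-part is positive. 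Real roots impose no constraint, since $\nu$ can be chosen in the chamber that their positive system determines. For a complex root $\alpha$, note that $\alpha-\theta\alpha$ equals twice $\alpha|_{\lie a}$. If $\alpha$ is positive, then $\theta\alpha$ is negative by the first step, so $-\theta\alpha$ is positive. Hence $\alpha-\theta\alpha$ is a sum of positive roots, and so it lies in the positive cone. This shows that $\alpha|_{\lie a}$ lies in the cone spanned by the positive restricted roots. The main obstacle is proving that this cone is strictly convex and is cut out by a single generic $\nu$. I would handle this by checking that the restrictions to $\lie{a}$ of the roots in $R^+\setminus\Sigma_{im}$ form the positive system of the restricted root system. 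Such a positive system exists because the set $\{\alpha|_{\lie a}:\alpha\in R^+\setminus\Sigma_{im}\}$ is closed and contains no pair $\pm\beta$. If both $\beta$ and $-\beta$ occurred, the same positivity computation, carried out on both roots, would give a positive sum equal to zero.

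Finally, if $\lie{c}$ is stable under $\sigma_\R$, then $\lie{a}$ is the complexification of a real split part $\lie{a}_\R$, and the parabolic defined by $\nu$ is defined over $\R$. Its Levi factor is the centralizer of $\lie{a}$, which is $\lie{m}\ds\lie{a}$. By construction $\lie{c}\cap\lie{m}=\lie{t}$ is a compact Cartan subalgebra of $\lie{m}$, since all roots of $\lie{m}$ are imaginary. This is exactly the definition of a cuspidal parabolic subalgebra.
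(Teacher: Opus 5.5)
Your overall strategy is sound, and the key computation is correct. On the Langlands orbit, for every positive non-imaginary root $\alpha$, the weight $\alpha-\theta_{S^L}\alpha=2\alpha|_{\lie a}$ is a nonzero sum of positive roots. (The paper gives no argument here beyond the citation to Section 5 of \cite{Hecht2022}.)

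The gap is in the step you flag as the main obstacle. You want $P=\{\alpha|_{\lie a}:\alpha\in R^+\setminus\Sigma_{im}\}$ to be a positive system of ``the restricted root system'', because it is closed and contains no pair $\pm\beta$. Closedness is asserted but never proved. More seriously, the criterion ``$P$ closed, $P\cap -P=\emptyset$, $P\cup -P$ everything, hence $P$ is cut out by a regular functional'' is a theorem about root systems. The nonzero restrictions of $\Sigma$ to $\lie{a}$ form a root system when $\lie{a}$ is maximally split, but not in general. For example, in split $G_2$, take the $\theta$-stable Cartan obtained by a Cayley transform through a long real root. Its $\lie{a}$ is spanned by a short coroot, and the restrictions are proportional to $\pm1,\pm2,\pm3$. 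For an arbitrary finite symmetric set of vectors, a closed half need not lie in an open half-space. So as written, this step does not produce your $\nu$.

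The repair uses only what you already proved:
\begin{itemize}
\item The cone $\sum_{\alpha\in R^+}\R_{\geq 0}\,\alpha$ is spanned by the simple roots, which are linearly independent. So it is pointed, and $\rho$ pairs strictly positively with each of its nonzero elements.
\item Put $\nu(\gamma)=\langle\gamma-\theta_{S^L}\gamma,\rho\rangle$. This is a real linear functional depending only on $\gamma|_{\lie a}$; it is pairing with $2\rho|_{\lie a}$.
\item $\nu$ vanishes on imaginary roots. By your computation it is strictly positive on positive real and positive complex roots, hence strictly negative on negative non-imaginary roots.
\item Therefore $\Sigma^+_{S^L}\cup\Sigma_{S^L,im}=\{\gamma:\nu(\gamma)\geq 0\}$ directly, which is a parabolic subset. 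No restricted root system is needed.
\item When $\lie{c}$ is $\sigma_\R$-stable, the element of $\lie{c}$ representing $\nu$ lies in $\lie{a}_\R$. This is exactly what your cuspidality paragraph requires.
\end{itemize}

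Your first step also needs no cross-action argument. A lexicographic order that puts the $\lie{a}$-coordinates first gives a positive system with $D_+=\emptyset$. Every positive system of $\Delta(\lie g,\lie c)$ comes from a Borel subalgebra containing $\lie c$, whose $K$-orbit is attached to $\lie c$. Hence the minimal value of $|D_+|$ is $0$.
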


We thus have a Levi decomposition \[ \lie{p}=\lie{l}\ds \lie{u}\]
with $\lie{l}$ corresponding to the $S^L$-imaginary roots. 
This is a reductive subalgebra. 

\begin{lemma}\label{Intersection_with_k}
	$\lie{p}\cap \lie{k}=\lie{l}\cap \lie{k}.$ Further, $\lie{l}\cap \lie{k}$ is spanned by $\lie{t}$ 
	and the compact imaginary root spaces. 
\end{lemma}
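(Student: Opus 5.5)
The plan is to work at a point $x\in S^L$ with $\lie{c}\subseteq \lie{b}_x$, writing $\lie{c}=\lie{t}\ds\lie{a}$ for the $\theta$-eigenspace decomposition. I would make the decomposition of $\lie{p}$ explicit in terms of root spaces of $\lie{c}$:
\[
\lie{l}=\lie{c}\ds\bigoplus_{\alpha\in R_{im}}\lie{g}_\alpha,\qquad \lie{u}=\bigoplus_{\alpha\in R^+\setminus R_{im}}\lie{g}_\alpha .
\]
Here $R$, $R^+$ and $R_{im}$ are the specializations of $\Sigma$, $\Sigma^+_{S^L}$ and $\Sigma_{S^L,im}$. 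Put $\bar{\lie{u}}$ for the sum of the negative non-imaginary root spaces, so that $\lie{g}=\bar{\lie{u}}\ds\lie{l}\ds\lie{u}$.

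The key input is the following property of the Langlands orbit:
\[
\theta(R^+\setminus R_{im})\subseteq R^-.
\]
Equivalently, $\theta(\lie{u})\subseteq\bar{\lie{u}}$. For real roots this is automatic, since $\theta\alpha=-\alpha$. For complex roots it is exactly the statement that $S^L$ minimizes $|D_+(S)|$, namely $D_+(S^L)=\varnothing$. I expect this to be the main obstacle, because the paper only recalls that $|D_+|$ is minimal on $S^L$, not that the minimum is zero.

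To settle it, I would first use the description in \cite[Section 5]{Hecht2022}: the Langlands orbit is the one whose positive system is compatible with an ordering on $\lie{a}^*$. Concretely, one fixes a regular element $H\in\lie{a}_\R$ and declares a non-imaginary root $\alpha$ positive when $\operatorname{Re}\alpha(H)>0$. Then $(\theta\alpha)(H)=\alpha(\theta H)=-\alpha(H)$, so $\theta\alpha$ is negative. Failing that description, I would argue directly. If some complex $\alpha$ lay in $D_+(S^L)$, a complex simple reflection (a Cayley-type move of \cite{Hecht2022}) would produce an orbit attached to the same $\lie{c}$ with strictly smaller $D_+$. That contradicts minimality.

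Granting this, the first claim is linear algebra. Take $X\in\lie{p}\cap\lie{k}$ and write $X=X_{\lie{l}}+X_{\lie{u}}$. Since $\lie{c}$ is $\theta$-stable and $\theta$ preserves the set of imaginary roots, $\theta(\lie{l})=\lie{l}$. Hence
\[
X=\theta X=\theta X_{\lie{l}}+\theta X_{\lie{u}},\qquad \theta X_{\lie{l}}\in\lie{l},\quad \theta X_{\lie{u}}\in\bar{\lie{u}}.
\]
Comparing components in $\bar{\lie{u}}\ds\lie{l}\ds\lie{u}$ gives $X_{\lie{u}}=0$ and $\theta X_{\lie{u}}=0$. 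It also gives $\theta X_{\lie{l}}=X_{\lie{l}}$, so $X\in\lie{l}\cap\lie{k}$. The reverse inclusion is trivial.

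For the second claim, $\theta$ preserves each imaginary root space $\lie{g}_\alpha$, because $\theta\alpha=\alpha$. Since $\theta$ is an involution and root spaces are one-dimensional, $\theta$ acts on $\lie{g}_\alpha$ by $+1$ when $\alpha$ is compact and by $-1$ when $\alpha$ is noncompact. On $\lie{c}$ the fixed points are $\lie{t}$. Taking $\theta$-fixed vectors in the root space decomposition of $\lie{l}$ then gives
\[
\lie{l}\cap\lie{k}=\lie{t}\ds\bigoplus_{\alpha\ \mathrm{compact\ imaginary}}\lie{g}_\alpha .
\]
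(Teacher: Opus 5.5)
Your proposal is correct and follows the paper's proof. You decompose an element of $\lie p\cap\lie k$ along $\lie l\ds\lie u$, use that on the Langlands orbit $\theta$ sends positive non-imaginary roots to negative ones (so $\theta(\lie u)\subseteq\bar{\lie u}$) to kill the $\lie u$-component, and read off $\lie l\cap\lie k$ from the $\theta$-eigenvalues on $\lie c$ and on the imaginary root spaces. The only addition is your justification of $D_+(S^L)=\varnothing$, which the paper simply asserts: your $\lie a$-ordering argument, showing the minimum of $|D_+|$ over orbits attached to $\lie c$ is $0$, suffices on its own, so the loosely sketched reflection fallback (a cross action, not a Cayley move) is unnecessary.
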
   
\begin{proof} 
	Note that $\theta(\lie{l})=\lie{l}$ as each root imaginary root subspace is fixed. 
	On the Langlands orbit, $\theta$ sends all positive complex roots to negative roots. 
	Therefore, we have that $\theta(\lie{u})\cap \lie{u}=\{0\}.$ Let $A\in \lie{p}\cap \lie{k}.$
	 Then there exist elements $B\in \lie{l}$ and $C\in \lie{u}$ such that 
	 $ A=B+C.$ Consider \[ A=\theta(A)=\theta(B)+\theta(C).\]
	 As $\theta(C)\in \theta(\lie{u}),$ we must have that $\theta(C)=0.$ Hence, $A=B$ is 
	 contained in the Levi factor. 
	 
	 The second statement is evident. 
\end{proof} 

As we assume that $G_\R$ has a single conjugacy class of Cartan subgroups, there is 
a unique conjugacy class of cuspidal parabolic subgroups. Namely, these are all minimal parabolic 
subgroups and thus in a Langlands decomposition of $\lie{p}=\lie{m}\ds\lie{a}\ds\lie{u}$
 we have that $\lie{m}=\lie{k}\cap \lie{l}.$ Let $P$ and $M$ be the 
 corresponding subgroups of $G$ with Lie algebras $\lie{p}$ and $\lie{m}.$

By its construction $P$ contains $B_x$ and further:

\begin{corollary} 
	The stabilizer $S_x=K\cap B_x\subseteq M.$  
\end{corollary}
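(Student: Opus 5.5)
The plan is to reduce the statement to Lemma \ref{Intersection_with_k}, combined with the structure of the stabilizer $S_x$ described just before Lemma \ref{Classification_of_Connections}. Since every orbit is attached to the single conjugacy class of $\theta$-stable Cartans, I will choose the base point $x$ so that $\lie{c}\subseteq \lie{b}_x$, and so that $\lie{p}$ is the parabolic built from $\lie{c}$ and $\lie{b}_x$ exactly as in the Langlands-orbit construction. Then $\lie{b}_x\subseteq \lie{p}$, because $\lie{p}$ contains all of $\Sigma^+$ together with the imaginary roots. On the group level this gives $B_x\subseteq P$, since both are connected parabolics and $P$ is the normalizer of $\lie{p}$. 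Consequently
\[ S_x=K\cap B_x\subseteq K\cap P. \]

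Next I will show $K\cap P\subseteq M$. On Lie algebras, Lemma \ref{Intersection_with_k} gives $\lie{p}\cap\lie{k}=\lie{l}\cap\lie{k}$. In the present setting we have $\lie{m}=\lie{k}\cap\lie{l}$, so $\operatorname{Lie}(K\cap P)=\lie{m}$, and the identity component satisfies $(K\cap P)_0\subseteq M$. Concretely, $(S_x)_0=T_0U$, where $\operatorname{Lie}(U)=\lie{k}\cap\lie{n}_x\subseteq \lie{p}\cap\lie{k}=\lie{m}$. This is consistent with the earlier remark, since it forces $\lie{u}_x\subseteq\lie{l}$, i.e. $\lie{k}\cap\lie{n}_x$ is spanned by compact imaginary root vectors. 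The Levi factor $T$ lies in the torus $C$, and $C\subseteq L$.

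The component group is the main obstacle. I need that every element of $K\cap P$ lies in $M$, i.e. that $K\cap P\subseteq L$, and in fact in $K\cap L$, which should equal $M$ under the paper's identification. The approach I will try first is to use the Levi decomposition $P=LU_P$ and write $k=lu$ with $k\in K$. Applying $\theta$ gives $k=\theta(l)\theta(u)$. Since $\theta(L)=L$, this yields $l^{-1}\theta(l)=u\,\theta(u)^{-1}\in L\cdot\theta(U_P)$. Then $u\theta(u)^{-1}\in U_P\theta(U_P)$, and since $\theta(\lie{u})\cap\lie{u}=0$ with $\theta(\lie u)\subseteq\bar{\lie u}$ (positive complex roots go to negative ones and there are no real roots in $\lie{u}$'s $\theta$-image overlap), the product map $\bar U_P\times L\times U_P\to G$ is injective on the big cell. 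This should force $u=1$, exactly as in the Lie-algebra argument of Lemma \ref{Intersection_with_k}.

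So $K\cap P=K\cap L$. Finally, $K\cap L$ is the group attached to $\lie{m}$: the Cartan is compact-modulo-split, and $M$ is taken as the full centralizer-type subgroup $K\cap L$ in the Langlands decomposition, so $K\cap L\subseteq M$. Combining these gives $S_x\subseteq K\cap P=K\cap L\subseteq M$. If the definition of $M$ is only the connected subgroup, I will instead invoke the earlier reduction to connected $K$, together with the fact that $T\subseteq C\cap K$ with $C$ connected, to absorb the component group.
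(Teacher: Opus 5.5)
Your proposal is correct and is essentially the paper's argument. As in the paper, $B_x\subseteq P$ by the construction of $\lie{p}$ from $\Sigma^+_{S^L}\cup\Sigma_{S^L,im}$, and Lemma \ref{Intersection_with_k} identifies $\lie{p}\cap\lie{k}$ with $\lie{l}\cap\lie{k}=\lie{m}$. Your group-level step, which the paper leaves implicit, correctly handles components under the standard convention $M=Z_K(\lie{a})$; it also shortens to $K\cap P\subseteq P\cap\theta(P)=P\cap\bar{P}=L$, so $K\cap P=K\cap L=Z_K(\lie{a})$.

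The one misstep is the justification in your fallback for a connected $M$: connectedness of $C$ does not make $C^\theta$ connected (take $\C^\times$ with $z\mapsto z^{-1}$). The right reason is that $K$ is connected and $\lie{t}$ is a Cartan subalgebra of $\lie{k}$, so $C^\theta=Z_K(\lie{c})=Z_K(\lie{t})$ is a connected maximal torus. Hence $S_x=C^\theta\ltimes(K\cap N_x)$, with $N_x$ the unipotent radical of $B_x$, is connected, and the Lie algebra inclusion already gives $S_x\subseteq M$.
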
 

By fixing a point in the open orbit, we also fix an identification $S^L\simeq K/S_x.$ Denote
 by $i:S^L\to \mathcal{B}$ the inclusion of the orbit. By \cite{Hecht1987}, this is an affinely 
 embedded subvariety. We want to compute the cohomology of the standard
  Harish-Chandra sheaf $\mathcal{I}(S^L,\tau)=i_+(\tau).$ As $i$ is an open 
  immersion, we have that  $i_+=i_*$ is the usual $\mathcal{O}_\mathcal{B}$-module pushforward. 
  Furthermore, \[ H^p(\mathcal{B},\mathcal{I}(S^L,\tau))\simeq H^p(S^L,\tau).\]
   Thus, our problem has reduced to computing these cohomology groups. 
   
   By the previous corollary, we have a projection \[ pr: S^L\to K/M\] with fiber 
   $M/(K\cap B_x)$ and by Lemma \ref{Intersection_with_k}, we have:
   
   \begin{lemma}\label{Fibers_of_pr}
   	$K\cap B_x$ is a Borel subgroup in $M.$ 
   \end{lemma}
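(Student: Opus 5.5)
The plan is to identify $K\cap B_x$ inside $M$ through its Lie algebra, and then use that a connected closed subgroup of a connected reductive group whose Lie algebra is a Borel subalgebra is a Borel subgroup. Two facts from above supply what we need. By the preceding corollary, $S_x=K\cap B_x\subseteq M$. Since $G_\R$ has a single conjugacy class of Cartan subgroups, $\lie{m}=\lie{k}\cap\lie{l}$, and by Lemma \ref{Intersection_with_k} this is spanned by $\lie{t}$ together with the compact imaginary root spaces $\lie{g}_\alpha$, $\alpha\in\Sigma_{S^L,im}$. In this single-Cartan situation every imaginary root is compact: a noncompact imaginary root would yield, by Cayley transform, a second conjugacy class of Cartan subalgebras. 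Hence $\lie{m}$ is reductive with Cartan subalgebra $\lie{t}$ and root system $\Sigma_{S^L,im}$.

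Next I compute $\lie{k}\cap\lie{b}_x$. Decompose $\lie{b}_x=\lie{c}\oplus\bigoplus_{\alpha\in R^+}\lie{g}_\alpha$, and note that $\theta$ permutes the root spaces according to $\theta_S$. The compact imaginary positive root spaces lie in $\lie{k}$, and $\lie{c}\cap\lie{k}=\lie{t}$. For complex positive roots on the Langlands orbit, $\theta$ sends $\alpha$ to a negative root. So an element of $\lie{k}\cap\lie{b}_x$ has components in the complex positive root spaces that must be matched by components in $\theta$-images, which are negative root spaces and therefore not in $\lie{b}_x$; these components vanish. This is the same argument as in Lemma \ref{Intersection_with_k}, which gives $\lie{k}\cap\lie{b}_x\subseteq\lie{k}\cap\lie{p}=\lie{m}$. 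Real roots do not occur in $\lie{m}$, and their root spaces contribute nothing to $\lie{k}\cap\lie{b}_x$ for the same reason, since $\theta$ sends a positive real root to a negative one. Therefore
\[
\lie{k}\cap\lie{b}_x=\lie{t}\oplus\bigoplus_{\alpha\in\Sigma^+_{S^L,im}}\lie{g}_\alpha,
\]
which is the Borel subalgebra of $\lie{m}$ determined by the positive system $\Sigma^+_{S^L,im}=R^+\cap\Sigma_{S^L,im}$.

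Finally, I pass to groups. $K\cap B_x$ is a closed solvable subgroup of $M$ whose Lie algebra is a Borel subalgebra of $\lie{m}$. Its identity component is therefore a Borel subgroup of $M_0$. Borel subgroups are self-normalizing and $K\cap B_x$ normalizes its own identity component, so $K\cap B_x$ is contained in $N_{M_0}$ of that Borel only when $K\cap B_x\subseteq M_0$.

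The main obstacle is this disconnectedness. I would handle it using the standing assumption that $K$ is connected (appendix B of \cite{Hecht1987}) and the connectedness of stabilizers in $K$ of points in $\mathcal{B}$ under that assumption. If that fails, I would read "Borel subgroup" in the sense appropriate to the possibly disconnected $M$, namely the normalizer of a Borel subgroup of $M_0$ intersected with the stabilizer. That is exactly what is needed to identify the fibre $M/(K\cap B_x)$ of $pr$ with the flag variety of $M$.
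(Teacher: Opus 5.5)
Your Lie-algebra computation is the paper's argument. The paper derives this lemma directly from Lemma \ref{Intersection_with_k} and the inclusion $S_x\subseteq M$: it uses $\lie{k}\cap\lie{b}_x\subseteq\lie{k}\cap\lie{p}=\lie{m}$, so $\lie{k}\cap\lie{b}_x=\lie{t}\oplus\bigoplus_{\alpha\in\Sigma^+_{S^L,im}}\lie{g}_\alpha$ is a Borel subalgebra of $\lie{m}$, and it says nothing about components.

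The step that fails is your treatment of disconnectedness. It is false that stabilizers in a connected $K$ of points of $\mathcal{B}$ are connected. For $SL(2,\R)$ with $K=\{\operatorname{diag}(a,a^{-1})\}\simeq\C^\times$, the stabilizer of any point of the open orbit in $\mathbb{P}^1$ is $\{\pm I\}$. The paper's own Lemma \ref{Classification_of_Connections} is phrased in terms of $\pi_0(S_x)$ for exactly this reason. Your fallback reading also needs a fact you do not supply: that $K\cap B_x$ meets every component of $M$. Without it, $M/(K\cap B_x)$ could be a disjoint union of several copies of the flag variety of $\lie{m}$, and the identification of the fibres of $pr$ breaks down.

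The repair is short.
\begin{enumerate}
\item On the Langlands orbit $\theta(\lie{u})=\bar{\lie{u}}$, so $K\cap B_x\subseteq K\cap P\subseteq P\cap\theta(P)=L=Z_G(\lie{a})$. Hence $K\cap B_x\subseteq M:=Z_K(\lie{a})$.
\item Let $B_{M_0}$ be the Borel subgroup of $M_0$ with Lie algebra $\lie{k}\cap\lie{b}_x$. Your self-normalizing argument, applied inside $M_0$, gives $M_0\cap B_x=B_{M_0}$.
\item Since $G$ is connected, the standard Borel--Tits/Matsumoto structure result gives $M=M_0F$. Here $F$ is the group of elements of order at most two in the torus $\exp(\lie{a})\subseteq C\subseteq B_x$; these lie in $K$ because $\theta$ inverts $\exp(\lie{a})$.
\item Therefore $K\cap B_x=B_{M_0}F$ meets every component of $M$, and $M/(K\cap B_x)\simeq M_0/B_{M_0}$ is the flag variety of $\lie{m}$. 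That is all the subsequent fibre computation uses.
\end{enumerate}
For $GL(n,\HH)$ the issue does not arise: there $M=SL(2,\C)^n$ is connected, so $K\cap B_x$ is literally a Borel subgroup of $M$.
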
   
  Thus, the fibres of the projection $pr$ are exactly flag varieties for $\lie{m}.$  

Recall now the following fundamental fact from homological algebra. 

\begin{theorem}(Grothendieck--Leray--Serre) 
	Let $\mathcal{A},\mathcal{B},\mathcal{C}$ be abelian categories with enough
	 injectives and $F:\mathcal{A}\to \mathcal{B},$ $G:\mathcal{B}\to \mathcal{C}$ 
	 left exact functors. Suppose that $F$ carries injective objects to $G$-acyclic objects. 
	 Then there is a convergent spectral sequence with $E_2$ page 
	 \[ E_2^{pq}=R^pG\comp R^qF\implies R^{p+q}(G\comp F). \]
\end{theorem}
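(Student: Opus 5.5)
This is the classical Cartan--Eilenberg double complex argument, and I would follow it. Fix an object $X\in\mathcal{A}$ and an injective resolution $X\to I^\bullet$. Applying $F$ gives a complex $F(I^\bullet)$ in $\mathcal{B}$ with $H^q(F(I^\bullet))=R^qF(X)$. Since $\mathcal{B}$ has enough injectives, I would choose a Cartan--Eilenberg resolution $F(I^\bullet)\to J^{\bullet,\bullet}$. This is a first-quadrant double complex of injectives with the following properties:
\begin{enumerate}
\item each column $J^{p,\bullet}$ is an injective resolution of $F(I^p)$;
\item the induced complexes of horizontal boundaries, cycles and cohomology are injective resolutions of $B^p(F(I^\bullet))$, $Z^p(F(I^\bullet))$ and $H^p(F(I^\bullet))$ respectively.
\end{enumerate}
Applying $G$ gives a first-quadrant double complex $G(J^{\bullet,\bullet})$ in $\mathcal{C}$. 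Because it is first-quadrant, both of its canonical filtrations are bounded in each total degree, so both associated spectral sequences converge to $H^{*}(\operatorname{Tot} G(J^{\bullet,\bullet}))$.

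\textbf{First spectral sequence (columns first).} By property (1), the vertical cohomology of the $p$-th column is $R^qG(F(I^p))$. Here the hypothesis enters: $I^p$ is injective, so $F(I^p)$ is $G$-acyclic. Hence $R^qG(F(I^p))=0$ for $q>0$, while for $q=0$ it equals $G(F(I^p))$. The $E_1$ page is therefore concentrated on the row $q=0$, where it is the complex $GF(I^\bullet)$. Its cohomology is $R^p(G\circ F)(X)$, so the sequence degenerates and
\[ H^{n}(\operatorname{Tot} G(J^{\bullet,\bullet}))\cong R^n(G\circ F)(X). \]

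\textbf{Second spectral sequence (rows first).} Property (2) implies that each row of $J^{\bullet,\bullet}$ splits. The short exact sequences $0\to Z\to J\to B\to 0$ and $0\to B\to Z\to H\to 0$ are split, since they consist of injective objects and their kernels are injective. Additive functors preserve split exact sequences, so $G$ commutes with forming horizontal cycles, boundaries and cohomology. Thus the horizontal cohomology of $G(J^{\bullet,\bullet})$ in degree $q$ is $G$ applied to an injective resolution of $H^q(F(I^\bullet))=R^qF(X)$. Taking vertical cohomology then yields
\[ E_2^{pq}=R^pG(R^qF(X)), \]
which converges to the same total cohomology, namely $R^{p+q}(G\circ F)(X)$.

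The main obstacle is the second spectral sequence. First, one must check that Cartan--Eilenberg resolutions exist in $\mathcal{B}$. I would build them degreewise with the horseshoe lemma, first resolving $B^p$ and $H^p$ and then assembling resolutions of $Z^p$ and of the terms $F(I^p)$. Second, one must justify the splitting that lets $G$ pass through horizontal cohomology. Finally, to get the functorial form of the theorem, I would check naturality in $X$: Cartan--Eilenberg resolutions are unique up to homotopy, so the spectral sequence from $E_2$ onward is independent of the choices made.
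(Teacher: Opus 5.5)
Your proposal is correct. It is the standard Cartan--Eilenberg double-complex proof, and it handles the two points that need care: the column spectral sequence degenerates because each $F(I^p)$ is $G$-acyclic, and the rows of a Cartan--Eilenberg resolution split, so $G$ commutes with horizontal cohomology. The paper gives no proof of this statement; it quotes it as a standard fact of homological algebra and only uses the degenerate case where $\Gamma(K/M,-)$ is exact on quasi-coherent sheaves because $K/M$ is affine, so there is nothing further to compare.
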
 

Recall that a $K$-equivariant connection is a coherent $\mathcal{D}$-module which is also
 coherent as an $\mathcal{O}$-module. To compute the cohomology groups $H^p(S^L,\tau),$
  we will invoke the composition 
 \[\begin{tikzcd}
S^L \arrow[rd, "g"'] \arrow[r, "pr"] & K/M \arrow[d, "g'"] \\
                                     & \{*\}              
\end{tikzcd}\]
where $g$ and $g'$ are the unique maps to a point (realized as $\operatorname{Spec} \C$ for instance).  
Then $\Gamma(S^L,-)=g_*(-)$ (resp. $g'$ for K/M).Therefore, we have the 
spectral sequence \[ E_2^{pq}=H^q(K/M,R^p(pr)_*(\tau))\implies H^{p+q}(S^L,\tau). \]
As $K/M$ is affine (it is the quotient of a reductive group by a reductive subgroup
 \cite{Borel1991}), the higher cohomology of quasi-coherent sheaves vanishes. 
 Therefore, our spectral sequence degenerates on the second page and we
  obtain isomorphisms \[ H^{0}(K/M, R^p(pr)_*\tau)\simeq H^p(S^L,\tau). \]
The $R^p(pr)_*$ are $K$-equivariant and thus is the sheaf of sections of 
$K\times_M V$ for $V$ a representation of $M.$ $pr$ is proper and thus 
$V$ is necessarily finite dimensional. 

To compute $V$, it suffices to compute the stalks of $R^p(pr)_*\tau.$
 Let $X_y=(pr)^{-1}(y)$ for $y\in K/M.$ The stalks of the pushforward 
 sheaf are given 
 (by \cite[Proposition 8.1]{Hartshorne1977}) \[ (R^p(pr)_*\tau)_y\simeq H^p(X_y,\tau|_{X_y}). \]
By \ref{Fibers_of_pr}, we have that $X_y\simeq X_\lie{m}$ the flag variety of $\lie{m}.$ 
Therefore, the cohomology of $\tau|_{X_y}$ is completely determined by the Borel-Weil-Bott 
Theorem. Let $\rho_{im}$ denote the half-sum of the positive imaginary roots.  
\begin{proposition}
	Let $\lambda$ be the antidominant choice of infinitesimal character for a 
	finite dimensional representation of $M.$ Then $\lambda+\rho_{im}$ is 
	antidominant for the positive system determined by $\lie{b}_x$ on $\lie{t}.$
	 The cohomology groups $H^p(X_\lie{m},\tau|_{X_\lie{m}})$ vanish for $p>0.$
	  For $p=0,$ $H^0(  X_\lie{m},\tau|_{X_\lie{m}})$ is the irreducible 
	  representation of $M$ of infinitesimal character $\lambda.$ If $\lambda$
	   is singular then $H^p( X_\lie{m},\tau|_{X_\lie{m}})=0$ for all $p.$
\end{proposition}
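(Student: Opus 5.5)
The plan is to reduce the statement to the Borel--Weil--Bott theorem on the flag variety $X_{\lie{m}}\simeq M/(K\cap B_x)$ of $\lie{m}$. By Lemma \ref{Fibers_of_pr}, $K\cap B_x$ is a Borel subgroup of $M$. The first step is to identify the restriction $\tau|_{X_y}$ as an $M$-equivariant line bundle. By Lemma \ref{Classification_of_Connections}, $\tau$ is the sheaf of sections of $K\times_{S_x}\omega$, where $\omega$ is one-dimensional and differentiates to the specialization of $\lambda+\rho$ on $\lie{t}$. Restricting to the fibre $X_y=M\cdot x$ gives the homogeneous line bundle $M\times_{K\cap B_x}\omega$ on $X_{\lie{m}}$. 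In the normalization of the paper, this bundle is $\mathcal{O}(-\nu)$ with $\nu$ determined by $\lambda+\rho$ restricted to $\lie{t}$.

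The second step is the comparison of $\rho$'s. By Lemma \ref{Intersection_with_k}, $\lie{m}=\lie{l}\cap\lie{k}$ is spanned by $\lie{t}$ and the compact imaginary root spaces. In the single-conjugacy-class situation we should have $\lie{c}=\lie{t}\ds\lie{a}$ with all imaginary roots compact; I expect this to follow from the absence of noncompact imaginary roots, since any such root would produce a Cayley transform to a second conjugacy class of Cartans. So the roots of $(\lie{m},\lie{t})$ are exactly the imaginary roots, and the positive system cut out by $K\cap B_x$ is $\Sigma^+_{S^L}\cap\Sigma_{S^L,im}$. The half sum of its positive roots is $\rho_{im}$.

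Next I would write $\rho=\rho_{im}+\rho_{\C,\R}$, where $\rho_{\C,\R}$ is the half sum of the positive non-imaginary roots. On the Langlands orbit $\theta_S$ sends these roots to negative roots, so $\rho_{\C,\R}$ lies in the $-1$ eigenspace of $\theta_S$ and vanishes on $\lie{t}$. Therefore $(\lambda+\rho)|_{\lie{t}}=(\lambda+\rho_{im})|_{\lie{t}}$. Applying Definition \ref{Definition_antidominant} with the simple imaginary coroots, the antidominance of $\lambda$ should transfer to antidominance of $\lambda+\rho_{im}$ for the $\lie{b}_x$-positive system on $\lie{t}$, at least after accounting for the $\rho$-shift in the conventions for $\mathcal{D}_\lambda$.

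The last step applies Borel--Weil--Bott on $X_{\lie{m}}$ to this bundle, using the convention $\lambda\mapsto\mathcal{O}(-\lambda)$. If the parameter is regular antidominant, the line bundle is the one attached to a dominant weight after the sign flip. Its higher cohomology vanishes and $H^0$ is the irreducible $M$-module of infinitesimal character $\lambda$. If the parameter is singular, the $\rho$-shifted weight lies on a wall and all cohomology vanishes by Bott's theorem.

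The main obstacle is the bookkeeping of signs and shifts. The conventions that must be kept consistent are the twist $\mathcal{O}(-\lambda)$, the shift by $\rho$ in $\mathcal{D}_\lambda$, and the orientation of the positive system determined by $\lie{b}_x$ relative to $K\cap B_x$. Together they must show that antidominant $\lambda$ lands in the dominant, acyclic range rather than the opposite one. I would check this first on $SL(2,\C)$ viewed as a real group, where $M$ is a torus and everything is explicit, and then on a case with nontrivial compact imaginary roots, such as $GL(2,\HH)$. If $M$ is disconnected, the component group acts through the fixed character of $F$ and does not affect the vanishing.
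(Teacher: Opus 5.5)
Your proposal is correct and follows the paper's argument. The paper combines Borel--Weil--Bott on the fibre $X_{\lie{m}}\simeq M/(K\cap B_x)$ with the single observation $\rho|_{\lie{t}}=\rho_{im}|_{\lie{t}}$, which the paper merely asserts and which you actually justify via $\theta_{S^L}$ negating the positive non-imaginary roots on the Langlands orbit. The sign bookkeeping you flag causes no trouble: that identity makes $\tau|_{X_{\lie{m}}}$ the $\lie{m}$-analogue of $\mathcal{O}(\lambda+\rho)$ for the same antidominant $\lambda$, so Borel--Weil--Bott in the Beilinson--Bernstein normalization applies directly, exactly as in the paper.
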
 
In addition to the statement of the Borel-Weil theorem, the only observation
 needed for the proof of this proposition is that $\rho|_\lie{t}=\rho_{im}|\lie{t}.$
  The geometric fiber of $\tau$ is defined by the specialization of $\lambda+\rho$
   and the restriction to $\lie{t}$ is precisely \[ s_x(\lambda+\rho_{im})|_\lie{t}.\] 
   This classifies $H^0( X_\lie{m},\tau|_{X_\lie{m}})$ as an $M$-module. We inflate
    this to a $(\lie{p},M)$-module by having $\lie{a}$ act by the specialization
     of $\lambda+\rho$ and $\lie{u}$ acting trivially. Denote $(\lie{p},M)$-module as $U.$   
  
We now want to identify the entire cohomology group $H^0(K/M,(pr)_*\tau).$ 
Let $s\in H^0(K/M, (pr)_*\tau)$ and consider the function
 \begin{align*} T_s&:U(\lie{g})\longrightarrow U \\ T_s(\xi)&=(\xi s)(pr(x)).\end{align*} 
 This map intertwines the $(\lie{p},M)$-module structures and thus gives us a map 
 $s\mapsto T_s$ from $H^0(K/M, (pr)_*\tau)$ to
  $\operatorname{coind}_{(\lie{p},M)}^{(\lie{g},M)}(U)=\Hom_{\lie{p},M}(U(\lie{g}),U).$ 
 It is injective trivially. Since $K$ acts algebraically on $ H^0(K/M, (pr)_*\tau)$, we 
 see that this descends to a $(\lie{g},K)$-morphism \[ H^0(K/M, (pr)_*\tau)\to I_{(\lie{p},M)}^{(\lie{g},K)}(U). \]
 These admissible $(\lie{g},K)$-modules have the same $K$-mulitiplicities
  by Frobenius reciprocity and thus this map is an isomorphism. 
 
 We summarize the result as follows: 
 
 \begin{theorem} 
 	Let $\lambda\in \lie{h}^*$ be anti-dominant for $\Sigma_{S^L,im}.$
	 Then \begin{enumerate} 
		\item If $\lambda$ is $\Sigma_{S^L,\im}$-regular,
		 $\Gamma(\mathcal{B},\mathcal{I}(S^L,\tau))\simeq  I_{(\lie{p},M)}^{(\lie{g},K)}(U).$
		\item If $\lambda$ is $\Sigma_{S^L,\im}$-singular, 
		then $ \Gamma(\mathcal{B},\mathcal{I}(S^L,\tau))=0.$ 
		\item $H^p(\mathcal{B},\mathcal{I}(S^L,\tau))=0$ for $p>0.$ 
	\end{enumerate} 	
 \end{theorem}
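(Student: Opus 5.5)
The plan is to assemble the three claims from the computation carried out just above, via the factorization of the global sections functor through $pr:S^L\to K/M$.

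Since $S^L$ is affinely embedded, $i_+=i_*$ and $H^p(\mathcal{B},\mathcal{I}(S^L,\tau))\simeq H^p(S^L,\tau)$. We then apply the Grothendieck--Leray--Serre spectral sequence to $g=g'\comp pr$, with
\[ E_2^{pq}=H^q(K/M,R^p(pr)_*\tau)\implies H^{p+q}(S^L,\tau). \]
Because $K/M$ is affine, all terms with $q>0$ vanish. The sequence therefore collapses to $H^p(S^L,\tau)\simeq H^0(K/M,R^p(pr)_*\tau)$. This reduces everything to knowing the sheaves $R^p(pr)_*\tau$.

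Next we identify $R^p(pr)_*\tau$ fiberwise. By $K$-equivariance it is the sheaf of sections of $K\times_M V_p$ for some finite dimensional $M$-module $V_p$. By base change along the proper map $pr$, the fiber is $V_p\simeq H^p(X_\lie{m},\tau|_{X_\lie{m}})$, where $X_\lie{m}=M/(K\cap B_x)$ is the flag variety of $\lie{m}$ (Lemma \ref{Fibers_of_pr}). Under this identification, $\tau|_{X_\lie{m}}$ is the line bundle whose geometric fiber is the specialization of $\lambda+\rho$ on $\lie{t}$. Using $\rho|_\lie{t}=\rho_{im}|_\lie{t}$, this is the homogeneous line bundle on $X_\lie{m}$ attached to $\lambda+\rho_{im}$, in the sign conventions of $\mathcal{O}(-\lambda)$. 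The Borel--Weil--Bott proposition stated above then gives three facts under the hypothesis that $\lambda$ is $\Sigma_{S^L,im}$-antidominant:
\begin{enumerate}
\item $V_p=0$ for $p>0$;
\item $V_0$ is the irreducible $M$-module of infinitesimal character $\lambda$ when $\lambda$ is $\Sigma_{S^L,im}$-regular;
\item $V_0=0$ when $\lambda$ is $\Sigma_{S^L,im}$-singular.
\end{enumerate}
Part (c) of the theorem follows immediately from the first fact, and part (b) from the third.

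For part (a) we inflate $V_0$ to the $(\lie{p},M)$-module $U$, with $\lie{a}$ acting by the specialization of $\lambda+\rho$ and $\lie{u}$ acting trivially. We then use the map $s\mapsto T_s$, $T_s(\xi)=(\xi s)(pr(x))$, from $H^0(K/M,pr_*\tau)$ into $\Hom_{\lie{p},M}(U(\lie{g}),U)$. Its image consists of $K$-finite vectors, so it lands in $I_{(\lie{p},M)}^{(\lie{g},K)}(U)$.

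The main obstacle is checking that $T_s$ really is a $(\lie{p},M)$-intertwiner and that the induced map is a $(\lie{g},K)$-isomorphism.

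For the intertwining property, the action of $\lie{p}$ on sections at the base point must match the action on $U$. Here the key input is Lemma \ref{Intersection_with_k}: $\lie{u}$ is transverse to $\lie{k}$ and lies in $\lie{b}_x$, so it acts trivially on the fiber, and $\lie{a}$ acts through the twist $\lambda+\rho$.

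Injectivity comes from $K$-translates of $pr(x)$ covering $K/M$: a section whose function $T_s$ vanishes must vanish at every point.

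For surjectivity, I would compare $K$-multiplicities. By algebraic Peter--Weyl (Frobenius reciprocity), for each $\delta\in\widehat{K}$ the left side has $\delta$-multiplicity $\dim\Hom_M(\delta|_M,V_0)$, since $H^0(K/M,K\times_M V_0)=\Ind_M^K V_0$ algebraically. Since $\lie{p}\cap\lie{k}=\lie{m}$, we have $K/M\simeq G_\R/P_\R$ as $K$-spaces in the relevant sense. Hence $I_{(\lie{p},M)}^{(\lie{g},K)}(U)|_K\simeq \Ind_M^K(U|_M)$ as well, so the right side has the same, finite, $\delta$-multiplicity. An injective map between admissible modules with equal finite multiplicities is an isomorphism.

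If the multiplicity comparison for the algebraic induction functor turns out to be delicate, I would instead realize $I_{(\lie{p},M)}^{(\lie{g},K)}(U)$ directly as $K$-finite sections of $K\times_M U$. This is the standard Iwasawa-type restriction of induced modules to $K$, and it makes the isomorphism explicit.
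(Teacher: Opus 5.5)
Your proposal is correct and follows the paper's proof step for step. Both arguments reduce to $H^p(S^L,\tau)$, run the Grothendieck--Leray--Serre spectral sequence for $S^L\to K/M\to\{*\}$ (which collapses because $K/M$ is affine), apply Borel--Weil--Bott on the $\lie{m}$-flag-variety fibers, and build the map $s\mapsto T_s$ into $I_{(\lie{p},M)}^{(\lie{g},K)}(U)$, which is shown to be an isomorphism by comparing $K$-multiplicities via Frobenius reciprocity. One small correction: $i_+=i_*$ holds because $S^L$ is the open orbit in this single-Cartan setting, whereas affine embeddedness is what gives $H^p(\mathcal{B},i_*\tau)\simeq H^p(S^L,\tau)$.
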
  

Combining this with \cite[Theorem 11.47]{KnappVogan1995},
 we realize the cohomological induction functor as parabolic induction. 
\begin{corollary} 
	 If $\lambda$ is $\Sigma_{S^L,\im}$-regular, $\Gamma(\mathcal{B},\mathcal{I}(S^L,\tau))\simeq \operatorname{Ind}_P^G(U)_{[K]}.$ 
\end{corollary}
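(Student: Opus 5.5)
The plan is to identify the $(\lie{g},K)$-module $I_{(\lie{p},M)}^{(\lie{g},K)}(U)$ from part (1) of the preceding theorem with the $K$-finite vectors of a parabolically induced representation, using \cite[Theorem 11.47]{KnappVogan1995}. That result compares the Zuckerman–Bernstein functor applied to $\operatorname{coind}_{(\lie{p},M)}^{(\lie{g},M)}$ with real parabolic induction from $P_\R=M_\R A_\R N_\R$. Under the standing assumption of a single conjugacy class of Cartan subgroups, $\lie{m}=\lie{l}\cap\lie{k}$ by Lemma \ref{Intersection_with_k}. So $M\cap K=M$, and the relative Lie algebra pair $(\lie{p},M)$ has compact Levi part. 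This is exactly the situation where the functor $I_{(\lie{p},M)}^{(\lie{g},K)}$ involves no derived Zuckerman functor in the $M$-direction. It reduces to algebraic induction from $K\cap P=M$ to $K$, twisted by the $\lie{a}\ds\lie{u}$-action.

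The steps are as follows.

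\textbf{Step 1.} Recall that $U$ is the $(\lie{p},M)$-module built earlier: the irreducible $M$-module $H^0(X_\lie{m},\tau|_{X_\lie{m}})$, with $\lie{a}$ acting by the specialization of $\lambda+\rho$ and $\lie{u}$ acting trivially. Check that this is precisely the differentiated form of a representation $\mu\boxtimes\eta\boxtimes\mathbbm{1}$ of $P_\R$, with $\mu$ finite dimensional on $M_\R$ and $\eta$ a character of $A_\R$. Regularity of $\lambda$ on $\Sigma_{S^L,im}$ is used here to guarantee that $U\neq 0$.

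\textbf{Step 2.} Build a $(\lie{g},K)$-map $\Ind_{P_\R}^{G_\R}(U)_{[K_\R]}\to \Hom_{\lie{p},M}(U(\lie{g}),U)_{[K]}$. It sends a $K$-finite smooth section $f$ to the functional $\xi\mapsto (R_\xi f)(e)$. This map is injective, since a real-analytic $K$-finite vector is determined by its Taylor data at the identity.

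\textbf{Step 3.} Compare $K$-multiplicities. By Frobenius reciprocity (compact picture), $\Hom_K(\sigma,\Ind_{P_\R}^{G_\R}(U)_{[K_\R]})=\Hom_{M}(\sigma,U)$. The same value is obtained on the algebraic side, as already used in the proof of the preceding theorem. Both sides are therefore admissible with equal finite multiplicities, so the injection of Step 2 is an isomorphism.

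There is one twist to keep track of: the earlier convention $\mathcal{R}_\lie{q}^0(V)=\Gamma\circ\Coind(V\tensor\bigwedge^{top}\lie{u})$, versus the untwisted functor $I$ of \cite{CookCohomNotes}. This contributes only a shift by $2\rho(\lie{u})$ on $\lie{a}$, so one must normalize $\eta$ accordingly, for example by including $\rho_\lie{a}$.

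The main obstacle is Step 2, specifically making the comparison map well defined and $\lie{g}$-equivariant. One must pin down the direction and the normalization: whether it is coinduction or induction, and whether the $\rho$-shift sits inside $\eta$. Knapp–Vogan's Theorem 11.47 is phrased with $\lie{u}$ versus $\bar{\lie{u}}$ and a specific $\rho$-twist, so the $\lie{a}$-character attached to $U$ must match their $\nu$. I would first try to resolve this by directly verifying that Knapp–Vogan's isomorphism holds in the degree-zero case with $M$ compact, and then absorbing all discrepancies into the choice of $\eta$.
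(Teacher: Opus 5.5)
Your proposal is correct and follows the paper's route: the paper obtains the corollary in one line by combining part (1) of the preceding theorem, $\Gamma(\mathcal{B},\mathcal{I}(S^L,\tau))\simeq I_{(\lie{p},M)}^{(\lie{g},K)}(U)$, with \cite[Theorem 11.47]{KnappVogan1995}. Your Steps 2 and 3 simply unpack that citation, using the same injection-plus-Frobenius-reciprocity multiplicity argument the paper already used for part (1), and absorb into $\eta$ the $\rho$-normalization on $\lie{a}$, which the paper leaves implicit.
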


If $ S $ is any $ K $-orbit, then the Langlands orbit attached to
the same $ \theta $-stable Cartan subalgebra is the open orbit.  By transversality, we can
find an element $ w\in W(\lie{g},\lie{h}) $ such that the following
identity of functors exists $ \textbf{L}I_w\comp \textbf{R}\Gamma=\textbf
{R}\Gamma. $ This then implies that $ \Gamma(S,\tau)=\Gamma(S^L,\tau^L) $
and thus our results before apply with the slight modification $ \lambda\mapsto
w\lambda. $ The only result left to check is that the new character is still antidominant.  
\begin{lemma}
    \label{Antidominant_translation} Assume $ \lambda\in \lie{h}^* $ is
    strongly antidominant.  Let $ x\in S $ be a point in a $ K $-orbit
    on $ \mathcal{B} $ and $ \lie{c} $ a $ \theta $-stable Cartan
    subalgebra contained in $ \lie{b}_x. $ Let $ w $ be a Weyl group
    element transversal to $ S $ which intertwines $ S $ with the
    associated Langlands orbit attached to $ \lie{c}. $ Then $ s_y(w\lambda-\rho)|_\lie
    {t}+\rho_{im} $ is strongly antidominant for the positive root
    system in $ \lie{m} $ defined by $ \lie{f} $ for any $ y\in S^L $
    such that $ \lie{b}_y $ contains $ \lie{c}. $
\end{lemma}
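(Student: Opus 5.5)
The plan is to reduce the claim to a comparison of positive systems under the specialization maps $s_x$ and $s_y$, and then to exploit that $w$ is built from simple reflections through complex roots. First I fix the combinatorics. Since $\lie{c}\subseteq \lie{b}_x\cap\lie{b}_y$, we have two specializations $s_x,s_y:(\lie{h}^*,\Sigma,\Sigma^+)\to(\lie{c}^*,R,R^+_x)$, $(\lie{c}^*,R,R^+_y)$. They differ by an element of $W(\lie{g},\lie{c})$, namely $s_y=s_x\comp w^{-1}$ after identifying abstract and concrete Cartans. This is how I read the statement that $w$ \emph{intertwines} $S$ with $S^L$. The transversality hypothesis means $w$ is a product of simple reflections $s_{\alpha_1}\cdots s_{\alpha_k}$, where each $\alpha_i$ is a complex root for the intermediate orbit that is not in $D_+$. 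Each step therefore lowers $|D_+|$ by one, which is the standard Beilinson--Bernstein intertwining picture of \cite{Hecht2022}.

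The key point is that the imaginary roots are unchanged along this chain. A reflection in a complex root $\alpha$ with $\theta_S\alpha\neq\pm\alpha$ permutes the imaginary roots of $\lie{c}$. Since $\alpha$ is simple, $s_\alpha$ permutes $\Sigma^+\setminus\{\alpha\}$, and $\alpha$ itself is not imaginary. So $s_\alpha$ carries $\Sigma^+_{S,im}$ bijectively onto $\Sigma^+_{s_\alpha S,im}$. By induction on $k$, the positive imaginary system of $\lie{b}_y$ on $\lie{c}$, which is the positive system of $\lie{m}$ defined by $\lie{b}_y\cap\lie{m}$ via Lemma \ref{Fibers_of_pr}, equals $s_y(w\,\Sigma^+_{S,im})$. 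It is in particular the image of a subset of $\Sigma^+$ under $s_y\comp w$.

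Now I would verify antidominance directly. Take $\beta$ positive imaginary for $\lie{b}_y$. Write $\beta=s_y(w\alpha)$ with $\alpha\in\Sigma^+$. Then
\[ \beta^\vee\bigl(s_y(w\lambda-\rho)|_\lie{t}+\rho_{im}\bigr)=\alpha^\vee(\lambda)-\beta^\vee(s_y\rho)+\beta^\vee(\rho_{im}), \]
using that $\beta$ is imaginary, so that restricting to $\lie{t}$ loses nothing when pairing with $\beta^\vee$. By the observation preceding the Proposition, $\rho|_\lie{t}=\rho_{im}|_\lie{t}$ for the positive system of $\lie{b}_y$. The last two terms therefore cancel, leaving $\alpha^\vee(\lambda)$. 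Strong antidominance of $\lambda$ then gives $\operatorname{Re}\alpha^\vee(\lambda)\le0$, which is the claim.

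The main obstacle is making the relation $s_y=s_x\comp w^{-1}$, and the restriction identity, precise. The identity $\rho|_\lie{t}=\rho_{im}|_\lie{t}$ requires the complex positive roots to come in $\theta$-pairs whose restrictions to $\lie{t}$ cancel. That holds on the Langlands orbit precisely because $\theta$ sends positive complex roots to negative ones, as in the proof of Lemma \ref{Intersection_with_k}. I would check this carefully for $y\in S^L$, and would also check the sign convention in the twist $w\lambda-\rho$ versus $w(\lambda+\rho)$, adjusting by a $\rho$-shift if needed.
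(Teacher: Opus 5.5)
Your proposal is correct and follows essentially the paper's argument: both write $s_y=s_x\comp w^{-1}$, handle the $\rho$-terms with the Langlands-orbit identity $\rho|_{\lie{t}}=\rho_{im}|_{\lie{t}}$ (you rightly find this contribution is exactly zero, where the paper calls it ``real and negative''), and reduce the $\lambda$-term to the fact that $w$ inverts no imaginary roots, which the paper takes from \cite[Lemma 5.4]{Hecht2022} ($\Sigma^+_{S,w}\subseteq D_+(S)$ consists of complex roots) and you obtain by induction over complex simple reflections. One slip to fix: to move from $S$ up to $S^L$, the reflecting simple roots must lie \emph{in} $D_+$ of the intermediate orbit, and each step lowers $|D_+|$ by two rather than one; since your argument only uses that these roots are complex, nothing else changes.
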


\begin{proof}
    Let $ y\in S^L $ be a point in $ S^L $ such that $ \lie{b}_y $ also
    contains $ \lie{c}. $ Then the positive systems determined by $ \lie
    {b}_x $ and $ \lie{b}_y $ are related by $ \Sigma_S^+=w\Sigma_{S^L}^+.
    $ Now by construction $ s_y=s_x\comp w^{-1} $ and
    \[
        s_y(w\lambda-\rho)|_\lie{t}+\rho_{im}=s_x(\lambda-w^{-1}\rho)|_\lie
        {t}+\rho_{im}=s_x(\lambda)|_\lie{t}-w^{-1}s_x(\rho)|_\lie{t}+\rho_\lie
        {f}
    \]
    Therefore, to show that this weight is strongly antidominant, it
    suffices to check it on each summand.

    By our choice of points $ x $ and $ y, $ the weight $ -w^{-1}s_x(\rho)|_\lie
    {t}+\rho_{im} $ is the half sum of roots in $ w^{-1}\Sigma_{S}^-\cup
    \Sigma_{S^L,im}^+. $ By the choice of $ w, $ we have that $ w^{-1}\Sigma_S^+=\Sigma_
    {S^L}^+. $ Thus, $ -w^{-1}s_x(\rho)|_\lie{t}+\rho_{im} $ is a
    half sum of roots in $ \Sigma_{S^L}^-\cup \Sigma_{S^L,im}^+. $
    Therefore, if we set $ \Sigma_{S^L,RC}^+ $ to be the subset of
    positive roots which are not imaginary, then $ -w^{-1}s_x(\rho)+\rho_\lie
    {f} $ is the half sum of roots in $ \Sigma_{S^L,RC}^-. $ Therefore,
    for any $ \beta\in \Delta^+(\lie{m},\lie{t}), $ $ \beta^\vee(-w^{-1}s_x
    (\rho)+\rho_{im}) $ is both real and negative.  Hence, $ -w^{-1}s_x
    (\rho)+\rho_{im} $ is strongly antidominant.

    Now consider $ s_x(\lambda)|_\lie{t}. $ Put
    \[
        \Sigma_{S,w}^+=\{ \alpha\in \Sigma^+:  w\alpha\notin \Sigma_{S}^+\}.
    \]
    By
    \cite[Lemma 5.4]{Hecht2022}, this set (for our choice of $ w $) is
    contained in $ D_+(S) $ and consists entirely of positive complex
    roots.  We claim this observation is sufficient to obtain the strong
    antidominance of $ s_x(\lambda)_|\lie{t}. $ To see this, notice that
    if $ \beta\in \Delta^+(\lie{m},\lie{t})=\Sigma_{S^L,im}^+ $ then $ w\beta\in
    \Sigma_{S}^+ $ and $ \beta\in \Sigma_{S}^+ $ if and only if $ w\beta\notin
    \Sigma_{S,w^{-1}}^+. $ We identify this latter set with $ -w\Sigma_{S,w}^+.
    $ In particular, $ \beta\in \Sigma_{S}^+ $ if and only if $ -\beta\in
    \Sigma_{S,w}^+. $ However, as $ \beta $ is imaginary, $ -\beta\notin\Sigma_
    {S,w}^+\subseteq \Sigma_{S,\C}^+. $ Therefore, we have that $ \Sigma_
    {S^L,im}^+\subseteq \Sigma_{S}^+ $ and as $ \lambda $ is strongly
    antidominant, we see that
    \[
        \operatorname{Re}
        \beta^{\vee}(s_x(\lambda)|_\lie{t})\leq 0.
    \]
    This finishes the proof.
\end{proof}

\begin{corollary}
    Let $ G_\R=GL(n,\HH), K=Sp(2n,\C) $, and $ \mathcal{B}=Fl(\C^{2n}) $
    the full flag variety.  The minimal parabolic subgroups $ P_{min}=MAN
    $ have $ M $ given by $ SL(2,\C)^n $ and $ M_\R\simeq SU(2)^n. $
    Then the following hold:
    \begin{enumerate}
        \item
            $ \lie{h}\simeq \lie{t}\ds \lie{t} $ for $ \lie{t} $ the Lie
            algebra of a maximal torus in $ K. $
        \item
            \cite{RichardsonSpringer1990} The $ K $-orbits in $ \mathcal
            {B} $ are in bijection with fixed-point free involutions.
        \item
            Assume $ \lambda$ is the infinitesimal
            character of a finite dimensional representation of $ K $
            with $ \lambda $ the lowest weight.  Then for each orbit $ S
            $, there is a unique irreducible $ K $-equivariant
            connection $ \mathbbm{1}_S $ on $ S $ compatible with $
            \lambda+\rho. $
        \item
            Fix $ x\in S, y\in S^L $ and let $ \lie{c} $ be as above.
            Let $ \tau^L $ be the unique $ K $-equivariant connection on
            $ S^L $ compatible with $ w(\lambda-\rho)+\rho. $ Set $ E=\mathbbm
            {1}_{S^L}^\vee\tensor T_x\omega_\mathcal{B}. $ Then $ E $
            has $ \lie{c} $-weight $ s_y(\rho-w(\lambda)). $
        \item
            The corresponding finite dimensional representation of $ M_\R
            $ given by induction in stages is precisely the unique
            finite dimensional representation with lowest $ \lie{t} $-weight
            given by the restriction of $ s_y(\rho-w(\lambda-\rho_{im})).
            $
        \item
            \cite{Primc1979} The highest weight of the minimal $ K $-type
            of a tempiric representation of $ G_\R $ is given by the
            unique dominant $ W_K $-conjugate of the highest weight of
            the representation of $ M. $
    \end{enumerate}
\end{corollary}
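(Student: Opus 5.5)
The plan is to verify the six items in order. Items (1)–(3) are structural, (4)–(5) are weight bookkeeping that feeds Theorem \ref{Single_Conjugacy_Class_Sections} and Lemma \ref{Antidominant_translation}, and (6) is quoted.

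For (1), fix the standard realization $\theta(g)=J\bar{g}J^{-1}$ on $GL(2n,\C)$, so that $K=Sp(2n,\C)$ and $\lie{s}$ consists of the $J$-symmetric matrices. A fundamental (and, by hypothesis, unique up to conjugacy) $\theta$-stable Cartan is the diagonal torus of $GL(2n,\C)$. Its $\theta$-fixed part is the maximal torus $\lie{t}$ of $\lie{sp}(2n)$, of rank $n$. Its $(-1)$-part $\lie{a}$ is $n$-dimensional, and $\lie{a}$ is identified with $\lie{t}$ via $\theta$, which swaps the two eigenvalues $x_i,x_{i'}$ in each quaternionic line. This gives $\lie{h}\simeq\lie{t}\ds\lie{t}$ as a vector space.

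I would compute the roots explicitly. The roots $e_i-e_{i'}$ pairing the two coordinates of one quaternionic line are imaginary, and they are compact because they are roots of $\lie{sp}(2)=\lie{sl}(2)$. All remaining roots are complex. There are no real roots, which is consistent with $GL(n,\HH)$ having one Cartan class. The Levi $\lie{m}$ of the minimal parabolic is therefore spanned by $\lie{t}$ and the $n$ compact imaginary root spaces (Lemma \ref{Intersection_with_k}), so $M=SL(2,\C)^n$ and $M_\R=SU(2)^n=Sp(1)^n$.

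For (2), I cite Richardson--Springer. Concretely, $K$-orbits on $Fl(\C^{2n})$ correspond to orbits of the symplectic group on flags. These are classified by the relative position of a flag and its $J$-orthogonal complement flag, which gives a fixed-point-free involution of $\{1,\dots,2n\}$.

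For (3), by Lemma \ref{Classification_of_Connections}, connections compatible with $\lambda+\rho$ correspond to characters of $\pi_0(S_x)$. It is enough to show that $S_x=K\cap B_x$ is connected. It is the semidirect product of a unipotent group with $T=C^\theta$, and $T$ is a maximal torus of $Sp(2n,\C)$. Indeed, $C^\theta=\{\operatorname{diag}(t_i,t_i^{-1})\}\simeq(\C^\times)^n$ is connected. So for each orbit the connection is unique. The integrality needed for existence is supplied by the hypothesis that $\lambda$ is a weight of a finite-dimensional $K$-module, which makes $s_x(\lambda+\rho)|_{\lie{t}}$ integral on $T$.

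For (4), I compute the geometric fiber at $x$. The fiber of $\mathbbm{1}_{S^L}$ has $\lie{c}$-weight $s_y(w(\lambda-\rho)+\rho)$, so its dual has weight $-s_y(w(\lambda-\rho)+\rho)$. The fiber of $\omega_{\mathcal{B}}$ has weight $2s_y\rho$, because $T_x\mathcal{B}\simeq\lie{g}/\lie{b}_x$ and so the top exterior power of its dual has weight $2\rho$. Summing gives
\[s_y\bigl(-w\lambda+w\rho-\rho+2\rho\bigr)=s_y(\rho-w\lambda)+s_y(w\rho),\]
so this step requires care with the $w\rho$ term. I expect it to cancel against the $\rho$-shift built into the convention of Lemma \ref{Antidominant_translation}, where $s_y=s_x\circ w^{-1}$. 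The aim is the stated $s_y(\rho-w\lambda)$, and I would recheck the normalization of $\mathcal{O}(-\lambda)$ from Section 3.1 to confirm the sign.

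For (5), I combine Theorem \ref{Single_Conjugacy_Class_Sections} with the Borel--Weil proposition on the fibers $X_{\lie{m}}\simeq\prod\mathbb{P}^1$. The $M$-representation is $H^0$ of the line bundle with fiber $E$. By Lemma \ref{Antidominant_translation}, the shifted weight is antidominant for $\Delta^+(\lie{m},\lie{t})$, so Borel--Weil returns the irreducible $M$-module whose lowest weight is that restriction. The shift by $\rho_{im}$ comes from $\rho|_{\lie{t}}=\rho_{im}|_{\lie{t}}$, which yields $s_y(\rho-w(\lambda-\rho_{im}))|_{\lie{t}}$.

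Item (6) is Primc's computation, combined with Vogan's description of the lowest $K$-type of $\Ind_{P}^{G}(\mu\boxtimes\mathbbm{1}\boxtimes\mathbbm{1})$ for a minimal parabolic with $M$ compact.

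The main obstacle is the sign and shift bookkeeping in (4)–(5), that is, keeping the conventions for specialization, the $\omega_{\mathcal{B}}$ twist and the transversal $w$ consistent. I would test the formula against $n=1$, where $G_\R=GL(1,\HH)$ is compact and the answer must be the finite-dimensional representation itself.
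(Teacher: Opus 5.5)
Your route is the paper's own. The paper declares (1), (3), (4), (5) ``apparent from the calculations above'' and cites Richardson--Springer and Primc for (2) and (6).

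Items (1)--(3) and (6) are fine as you have them: the imaginary roots are $e_i-e_{2n+1-i}$, all compact, there are no real roots, and $S_x=C^\theta\ltimes U$ with $C^\theta\simeq(\C^\times)^n$ connected. There is one slip. The map $g\mapsto J\bar gJ^{-1}$ is the real structure $\sigma_\R$ cutting out $GL(n,\HH)$, not the Cartan involution. The holomorphic involution with fixed points $K=Sp(2n,\C)$ is $\theta(g)=J(g^{T})^{-1}J^{-1}$. It acts on the diagonal Cartan by $x_i\mapsto -x_{2n+1-i}$, so it does not identify $\lie a$ with $\lie t$; (1) is just $\dim\lie t=\dim\lie a=n$.

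The genuine gap is in (4) and (5).

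In (4), your computation is correct, and the extra term does not cancel. With $s_y=s_x\circ w^{-1}$ you get $s_y(w\rho)=s_x(\rho)\neq 0$. Flipping the sign convention for the weight of $\omega_{\mathcal B}$ only moves the error. The stated weight $s_y(\rho-w\lambda)$ comes out exactly when $\tau^L$ is compatible with $w\lambda+\rho$, i.e.\ when $\tau^L$ is the $\mathcal{D}_{w\lambda}$-module produced by the intertwining functor. The paper itself says ``$\lambda\mapsto w\lambda$'' just before Lemma \ref{Antidominant_translation}, and that lemma's weight $s_y(w\lambda-\rho)$ is exactly minus the weight of $E$. You must adopt that reading explicitly. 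Then (4) is the one line $-s_y(w\lambda+\rho)+2s_y(\rho)=s_y(\rho-w\lambda)$. As written, (4) rests on an expected cancellation whose proposed mechanism cannot work.

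In (5), ``$H^0$ of the line bundle with fiber $E$'' is the wrong object.

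\begin{itemize}
\item In the duality theorem, $E$ is fed to $R^{s}$ with $s=\dim(\lie k\cap\lie n_y)=|\Delta^+(\lie m,\lie t)|=\dim X_{\lie m}$, which is the top degree.
\item Lemma \ref{Antidominant_translation} says $\rho_{im}-E|_{\lie t}$ is antidominant. Hence $E|_{\lie t}$ is dominant regular, and $H^0$ of that bundle on $X_{\lie m}\simeq(\mathbb{P}^1)^n$ vanishes. The lemma therefore makes $E$ dominant, the opposite of the antidominance you invoke.
\item What you need is top-degree Bott--Borel--Weil, i.e.\ Serre duality on $X_{\lie m}$. It replaces $E|_{\lie t}$ by $2\rho_{im}-E|_{\lie t}=s_y(w\lambda+\rho)|_{\lie t}$, the fiber weight of $\tau^L$.
\item Equivalently, run the paper's $H^0$ computation on $\tau^L$ itself and then dualize. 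Dualizing is harmless here, since irreducible $SL(2,\C)^n$-modules are self-dual.
\end{itemize}

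Also, the identity $s_y(\rho)|_{\lie t}=\rho_{im}$ only replaces $s_y(\rho)|_{\lie t}$ by $\rho_{im}$; it does not put $\rho_{im}$ inside $w(\cdot)$. So your last sentence asserts the target formula $s_y(\rho-w(\lambda-\rho_{im}))|_{\lie t}$ rather than deriving it. Carry out the computation and compare the result honestly with the stated expression, remembering that a lowest weight must be antidominant.
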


\begin{proof}
    Parts (1),(3),(4), and (5) are apparent from the calculations above.  (2) can
    be found in the final example of
    \cite{RichardsonSpringer1990}.  (6) is Theorem 2 of Section 15 of
    \cite{Primc1979}.  
\end{proof}


\section{The Lusztig--Vogan Bijection}

We are finally capable of computing the Lusztig-Vogan bijection explicitly in the cases of $GL(2,\HH)$ and $GL(3,\HH).$ These give the first examples of real groups that have a Lusztig--Vogan bijection which are not already complex groups.  

\subsection{The Case of $ GL(2,\HH) $}

We now want to give explicit
computations of the above results for $ GL(2,\HH) $ and show how they
exhibit the Lusztig--Vogan bijection.  Let us start with a bit of some
explanation as to how the bijection should work in general.  We want to
find a way to assign to every pair $ (\OO_K,\tau_K) $ a unique
irreducible representation of $ K. $ Equivalently, we want to assign to
each pair a unique tempiric representation.  Morally, this should be
something of a leading term.  One key feature of the proposed bijection
is that the stabilizer of the zero orbit is always the entire group.
Thus, in order to fit in the rest of the orbit pairs, we are required to
shift the parameters coming from each orbit similar to Example 1.
We will show that this procedure works for $ GL(2,\HH) $ explicitly.

For $GL(n,\HH),$ we fix once and for all a choice of symplectic form whose 
matrix is anti-diagonal \[ J=\operatorname{antidiag}(-1,...,-1,1,...,1)\]
with $n$ copies of $-1$ and $1.$ The Cartan decomposition of its Lie algebra
is given as \[ \lie{gl}(n,\HH)=\lie{sp}(n)\ds \lie{s} \]
with matrices in $\lie{sp}(n)$ having the form \[ X=\Mat{ A& B \\ C & -A^{at}}\]
where $A^{at}$ is the ``anti-transpose" of $A$ with entries $A^{at}_{ij}=(a_{n+1-j,n+1-i}),$ 
and where $B=B^{at}$ and $C=C^{at}.$ Matrices in $\lie{s}$ have the form \[ S=\Mat{A & B \\ C & A^{at}}\]
withe $B=-B^{at}$ and $C=-C^{at}.$

Begin with the zero orbit $ \OO_0. $ In this case the stabilizer $ K^0=K
$ is the entirety of $ K. $ Thus, the Zuckerman Character formula for
the trivial representation gives the shift!  The character formula in
this case is best given by the Cousin resolution.  To write this down,
we need to know the $ K $-orbits on $ G/B. $ We have a parametrization
of them
\cite{RichardsonSpringer1990} given by fixed-point free involutions of $
4. $ The closure order is then the reverse Bruhat order (essentially the
involutions which swap integers further away are lower down).  The
Cousin resolution of $ \mathcal{D}_{-\rho} $-modules then takes the
following form:
\[
    [\C]=\mathcal{I}((12)(34),\mathbbm{1})-\mathcal{I}((13)(24),\mathbbm
    {1})+\mathcal{I}((14)(23),\mathbbm{1}).
\]
As $ GL(n,\HH) $ has a single conjugacy class of Cartan subalgebras (hence
subgroups), we know that the standard $ \mathcal{D} $-module supported
on the closed orbit $ (14)(23) $ is given as the cohomologically induced
module attached to a $ \theta $-stable Borel subalgebra.  Thus
\[
    \mathcal{I}((14)(23),\mathbbm{1})=A_\lie{b}(0).
\]
This has lowest $ K $-type $ 2\rho(\lie{n}\cap\lie{s}). $ The weights of
$ \lie{t} $ on $ \lie{s} $ are given by the short roots of the type $ C_2
$ roots system.  Thus, in the standard basis (see \cite{Knapp2005} Chapter 2), 
\[
    2\rho(\lie{n}\cap \lie{s})=(2,0).
\]
The lowest $ K $-type of the other standard modules are given by
combining the results of
\cite{Primc1979} and Theorem \ref{Global_Sections_from_D}.  They are
precisely $ (0,0) $ and $ (1,1) $ respectively.  Thus, we see that the
largest lowest $ K $-type appearing is coming from the closed $ K $-orbit
on $ G/B. $

Turning our attention to the principal $K$-orbit $ \OO_{prin}, $ we have
that $ K^{prin}_{red}=SL(2,\C) $ and $ L\cap K=GL(2,\C) $ and $ L=GL(2,\C)^2.
$ As was done in Example \ref{K_as_Tempiric_Example}, we have that the
Weyl Character formula gives $ L\cap K $ representations as alternating
sums of tempiric representations.  Let $ n\geq 1 $ be the infinitesimal
character of an irreducible finite dimensional representation of $ K^{prin}_
{red}. $ We can realize this as the restriction of an irreducible
representation of $ L\cap K $ with infinitesimal character
\[
    \left(\left\lceil \frac{n}{2} \right \rceil,-\left\lfloor \frac{n}{2}
    \right \rfloor \right).
\]
As the determinant character of $ L\cap K $ restricts trivially to $ SL(2,\C),
$ we could have equivalently picked any irreducible representation with
infinitesimal character of the form
\[
    \left(\left\lceil \frac{n}{2} \right \rceil +k,-\left\lfloor \frac{n}
    {2} \right \rfloor +k \right)
\]
to reflect the appearance of $ \det^{\tensor k}. $ Now the Weyl
character formula for $GL(2,\C)$ as above, gives
\begin{align*}
    V_{\left(\left\lceil \frac{n}{2} \right \rceil +k,-\left\lfloor
    \frac{n}{2} \right \rfloor +k \right)}&= X\left(\left(\frac{\left\lceil
    \frac{n}{2} \right \rceil +k}{2},\frac{-\left\lfloor \frac{n}{2}
    \right \rfloor +k}{2}\right),\left(\frac{\left\lceil \frac{n}{2}
    \right \rceil +k}{2},\frac{-\left\lfloor \frac{n}{2} \right \rfloor
    +k}{2}\right)\right)\\
    &-X\left(\left(\frac{\left\lceil \frac{n}{2} \right \rceil +k+1}{2},\frac
    {-\left\lfloor \frac{n}{2} \right \rfloor +k-1}{2}\right),\left(\frac
    {\left\lceil \frac{n}{2} \right \rceil +k+1}{2},\frac{-\left\lfloor
    \frac{n}{2} \right \rfloor +k-1}{2}\right)\right ).
\end{align*}

Pushing-forward along $ \mu $ is given by cohomological induction as
above.  As we have the version of the Weyl Character formula given as
standard modules, we know how cohomological induction behaves with
respect to these modules:  simply add $ \rho(\lie{u}) $ to the
parameter;  $ \rho(\lie{u}) $ in this case is $ (1,1,-1,-1). $ Thus, the
Euler characteristic of the pushforward:  $ \chi(\textbf{R}\mu_*\pi^*V_{\left
(\left\lceil \frac{n}{2} \right \rceil +k,-\left\lfloor \frac{n}{2}
\right \rfloor +k \right)}) $ is given by
\begin{align*}
    &X\left(\left(\frac{\left\lceil \frac{n}{2} \right \rceil +k+2}{2},\frac
    {-\left\lfloor \frac{n}{2} \right \rfloor +k+2}{2}\right),\left(\frac
    {\left\lceil \frac{n}{2} \right \rceil +k-2}{2},\frac{-\left\lfloor
    \frac{n}{2} \right \rfloor +k-2}{2}\right)\right) \\
    &-X\left(\left(\frac{\left\lceil \frac{n}{2} \right \rceil +k+3}{2},\frac
    {-\left\lfloor \frac{n}{2} \right \rfloor +k+1}{2}\right),\left(\frac
    {\left\lceil \frac{n}{2} \right \rceil +k-1}{2},\frac{-\left\lfloor
    \frac{n}{2} \right \rfloor +k-3}{2}\right)\right ).
\end{align*}

We now wish to extract $ K $-type information from the above standard
modules.  To do so, we need to translate back to the language of induced
representations.  The method of doing this mirrors the complex group
world as $ GL(n,\HH) $ has a single conjugacy class of Cartan subgroups.
We take the standard module parameter $ ((a,b),(c,d)) $ and add them.
This gives us $ (a-c,b-d) $ as the infinitesimal character of the $ SU(2)^2
$ representation.  If this is dominant, then we can obtain the highest
weight by subtracting $ (1,1) $ ($ \rho $ for $ SU(2)^2 $).  So in
general, we first dominate the character (using the Weyl group of $ K $)
and then subtract $ \rho. $

Thus, we see that the infinitesimal characters of the $ SU(2)^2 $
representations corresponding to the standard modules above are as
follows:
\begin{align*}
    \left(\left\lceil \frac{n}{2} \right \rceil +k+2,-\left\lfloor \frac
    {n}{2} \right \rfloor +k+2 \right)&& \left(\left\lceil \frac{n}{2}
    \right \rceil +3+k,-\left\lfloor \frac{n}{2} \right \rfloor+1+k
    \right).
\end{align*}
We want to minimize the norm of these following Vogan's philosophy.  To
determine this, we need to $ K $-dominate the characters first.  The
resulting infinitesimal characters are
\begin{align*}
    \left(\left\lceil \frac{n}{2} \right \rceil +k+2 ,\left\lfloor \frac
    {n}{2} \right \rfloor -k-2 \right)&& \left(\left\lceil \frac{n}{2}
    \right \rceil+k +3,\left\lfloor \frac{n}{2} \right \rfloor-1-k
    \right).
\end{align*}
For small positive $ n, $ this negation of the second factor may give
the negative of the desired integer.  However, as can be seen for $ n=0,
$ these infinitesimal characters are minimized when $ k=-2. $

Therefore, we get
\begin{align*}
    \left(\left\lceil \frac{n}{2} \right \rceil ,\left\lfloor \frac{n}{2}
    \right \rfloor \right)&& \left(\left\lceil \frac{n}{2} \right \rceil
    +1,\left\lfloor \frac{n}{2} \right \rfloor+1 \right).
\end{align*}
Subtracting $ (1,1) $ from both we obtain the highest weight of the
lowest $ K $-types:
\begin{align*}
    \left(\left\lceil \frac{n}{2} \right \rceil-1 ,\left\lfloor \frac{n}
    {2} \right \rfloor-1 \right)&& \left(\left\lceil \frac{n}{2} \right
    \rceil ,\left\lfloor \frac{n}{2} \right \rfloor \right).
\end{align*}
Therefore we see that the larger of the two $ K $-types appearing here
is $ \left(\left\lceil \frac{n}{2} \right \rceil ,\left\lfloor \frac{n}{2}
\right \rfloor \right). $

This gives the following result:

\begin{theorem}
    Let $ G=GL(2,\HH). $ There is a bijection $ \gamma:\{(\OO,\tau)\}\to
    \Lambda^+_K $ given as follows
    \begin{align*}
        (\OO_0,(a,b))&\mapsto (a+2,b) \\
        (\OO_{prin},n)&\mapsto
        \begin{cases}
            (k,k) & n=2k \\
            (k+1,k) & n=2k+1
        \end{cases}.
    \end{align*}
\end{theorem}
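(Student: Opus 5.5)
The plan is to establish two things: that the map $\gamma$ is the $LKT$ map of (4) on each orbit, and that it is a bijection onto $\Lambda_K^+=\{(a,b)\in\Z^2: a\geq b\}$. For the first, I will take the two computations just carried out as input.

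On the zero orbit, the stabilizer is all of $K$, and $\tau=V_{(a,b)}$ is a genuine $K$-representation. I will tensor the Cousin--Zuckerman resolution of $[\C]$ by $V_{(a,b)}$, which amounts to translating to the $\mathcal{D}_\lambda$-version with $\lambda$ the lowest weight. I will then apply Theorem \ref{Single_Conjugacy_Class_Sections} together with the corollary for $GL(n,\HH)$ (parts (5) and (6)) to each of the three standard modules. The lowest $K$-types come out as $(a,b)$ and $(a+1,b+1)$ from the two non-closed orbits and $(a,b)+2\rho(\lie{n}\cap\lie{s})=(a+2,b)$ from the closed orbit. This last one is $A_\lie{b}(\lambda)$, and its lowest $K$-type is $\lambda+2\rho(\lie{n}\cap\lie{s})$.

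Since the infinitesimal character norm of $(a+2,b)+\rho_K$ is strictly largest, and no other lift exists because $K^0=K$, this gives $LKT(\OO_0,(a,b))=(a+2,b)$. On the principal orbit, the computation above, with $k$ optimized at $k=-2$, produces the leading $K$-type $(\lceil n/2\rceil,\lfloor n/2\rfloor)$. I still need to justify the minimization over all lifts, not just the twists $\det^{\otimes k}$ of a single irreducible. I would argue that any virtual lift differs by an element of the kernel of restriction $\mathbb{K}(GL(2,\C))\to\mathbb{K}(SL(2,\C))$. That kernel is spanned by differences $V_\xi-V_{\xi+(k,k)}$. Pushing these forward only adds terms whose dominated infinitesimal characters are at least as large as the $k=-2$ ones, so the minimum over $\tilde\tau$ is attained at the computed lift.

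For bijectivity, the image of $\OO_0$ is $\{(c,d): c\geq d+2\}$, since $(a+2,b)$ with $a\geq b$ runs exactly over these. The principal orbit contributes $(k,k)$ for $n=2k$ and $(k+1,k)$ for $n=2k+1$, which are exactly the weights with $c-d\in\{0,1\}$. These two sets are disjoint and their union is $\Lambda_K^+$.

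Injectivity within each piece is clear: on $\OO_0$ the map is a shift. On $\OO_{prin}$, $n$ is recovered as $c+d$, but this requires the parametrization of $\widehat{K^{prin}}$ by $n\in\Z$ to include the $\det$-twists. Here $K^{prin}$ has reductive part $SL(2,\C)$ times the part seen by the $GL(2,\C)$ center, so I will make that identification explicit, checking that the component group is trivial so that no further local systems appear.

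I expect the main obstacle to be the minimality claim over arbitrary virtual lifts on the principal orbit. My first attempt would be a direct check that adding kernel elements cannot cancel the top term $(\lceil n/2\rceil,\lfloor n/2\rfloor)$. This uses the fact that the two standard modules in the pushforward have distinct dominated infinitesimal characters for each $k$, so the leading term is attained uniquely.
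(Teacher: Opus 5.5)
Your route is the paper's. On $\OO_0$ you use the Cousin--Zuckerman resolution, with the closed-orbit module $A_{\lie b}$ giving the strictly largest term. On $\OO_{prin}$ you use the Weyl character formula for $L\cap K=GL(2,\C)$, shift by $\rho(\lie u)$, and optimize at $k=-2$.

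\textbf{The bijectivity step fails.} Both the target and the principal-orbit parameters are misidentified.
\begin{itemize}
\item Here $K=Sp(4,\C)$ has type $C_2$, so $\Lambda_K^+=\{(c,d)\in\Z^2: c\geq d\geq 0\}$, not $\{c\geq d\}$. The zero-orbit inputs likewise satisfy $a\geq b\geq 0$.
\item For $\OO_{prin}$ (Jordan type $[2,2]$), the reductive part of $K^X$ is exactly $Sp(2,\C)=SL(2,\C)$, the symplectic group of the two-dimensional multiplicity space. It is connected, so your component-group check is fine. But it meets the center of $L\cap K$ only in $\pm 1$, so $\widehat{K^{prin}}=\{V_n: n\geq 0\}$.
\end{itemize}
Your proposed repair, letting $n$ range over $\Z$ by counting $\det$-twists as distinct representations of $K^{prin}$, is therefore false. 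It also contradicts your own minimization step, which correctly uses that $\det^{\otimes k}$ restricts trivially to $K^{prin}$. Since in fact $n\geq 0$, the map would miss, e.g., $(0,-1)$ and $(-1,-1)$ in your codomain. With the correct codomain, your partition argument goes through verbatim: $\OO_0$ fills $\{c-d\geq 2,\ d\geq 0\}$, $\OO_{prin}$ fills $\{c-d\in\{0,1\},\ d\geq 0\}$, and $n=c+d$ gives injectivity on the latter.

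\textbf{The minimality argument over arbitrary virtual lifts also fails as stated.} Kernel elements \emph{can} cancel the top term. Write $X(x,y)$ for the tempiric with $SU(2)^2$ infinitesimal character $(x,y)$. The paper's computation says $V_{(p,q)}$ pushes forward to $X(p+2,q+2)-X(p+3,q+1)$. So the lift $V_{(p,q)}+V_{(p+1,q-1)}-V_{(p+2,q)}$ of $V_n$ (with $p-q=n$) pushes forward to a class in which $X(p+3,q+1)$ has cancelled. Uniqueness of the leading term for each single $V_\xi$ says nothing about cancellation between summands.

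The conclusion is still true, but it needs a global leading-term argument, for example:
\begin{itemize}
\item By the lemma $\mathcal{M}(\OO,\tilde\tau)|_{\OO}=S_\tau$ and the localization sequence $\mathbb{K}^K(\{0\})\to\mathbb{K}^K(\mathcal{N}_\theta)\to\mathbb{K}^K(\OO_{prin})\to 0$, any two lifts have pushforwards differing by a virtual $K$-module $D$ at the origin.
\item By the zero-orbit computation, the terms of maximal norm in the tempiric expansion of $D$ survive. They are leading terms with lowest $K$-types of the form $(a+2,b)$, hence $c-d\geq 2$, so they cannot coincide with either $k=-2$ term (both have $c-d\leq 1$).
\item Therefore the $k=-2$ top term can only be cancelled when something at least as large survives.
\end{itemize}
The paper itself only optimizes over $\det^{\otimes k}$-twists of a single irreducible lift, so this is a step beyond its proof.

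\textbf{A smaller point on $\OO_0$.} Tensoring the resolution of $[\C]$ by the $K$-type $V_{(a,b)}$ is not the same as translation. Translation yields the Zuckerman formula for a finite-dimensional $G$-module, not for $V_{(a,b)}$. The tempiric expansion of an arbitrary $K$-type is better taken from Vogan's Theorem 16.6, as specialized in the paper's corollary for groups with one conjugacy class of Cartan subgroups.
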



\subsection{The Principal and Zero Orbit Generally}

We are now equipped with the necessary tools to compute the $LKT$
 map on the zero and principal orbits for general $n.$ 
These calculations mirror the $GL(2,\HH)$ case and 
thus most of the details follow from that calculation. 


\subsubsection{The Zero Orbit}

 The general shift will mimic the case of $
GL(2,\HH). $ The largest shift should appear on the zero orbit and comes
from the short roots in the type $ C_n $ root system.  As above, $ K^0=K
$ and thus the resolution by tempiric representations is given by taking
the Zuckerman character formula and setting the continuous parameter to
zero.  By Corollary \ref{Cousin_Resolution_Corollary} and Proposition
\ref{Global_Sections_from_D}, we see that the term with the largest
lowest $ K $-type is supported on the closed orbit and is given by $ A_\lie
{b}(0). $ The lowest $ K $-type of this module is
\[
    2\rho(\lie{n}\cap \lie{s})=(2(n-1),2(n-2),...,2,0).
\]
Therefore, we send
\[
    (\OO_0, (a_1,...,a_n))\mapsto (a_1+2(n-1),a_2+2(n-2),...,a_n).
\]


\subsubsection{The Principal Orbit}

We now deal to the other extreme.
In this case, the associated partition is $ [n,n]. $ The stabilizer is $
G^{[n,n]}_{red}=GL(2,\C) $ embedded diagonally in $ L=GL(2,\C)^n. $ The $
K $-stabilizer is $ K^{[n,n]}_{red}=SL(2,\C) $ embedded diagonally as
well.  To be a bit more consistent with other cases, we shall instead
say $ K^{[n,n]}_{red}=Sp(2,\C). $ This lives inside
\[
    L\cap K=
    \begin{cases}
        GL(2,\C)^{\frac{n}{2}} & n \text{ even} \\
        GL(2,\C)^{\frac{n-1}{2}}\times Sp(2,\C) & n \text{ odd}
    \end{cases}.
\]
Therefore, lifting of a representation of the stabilizer to a
representation of $ L\cap K $ amounts the $ GL(2,\HH) $-case above done
either $ \frac{n}{2} $ times (in the even case) or $ \frac{n-1}{2} $
times (in the odd case). Concretely, irreducible representations of $ Sp(2,\C)=SL(2,\C)
$ are parametrized by positive integers given by their highest weight.
So starting with a positive integer $ \tau, $ we lift it to an
irreducible $ L\cap K $ module $ \tilde{\tau} $ in the following two
ways:

\[
    \tilde{\tau}=
    \begin{cases}
        \left( \left\lceil \frac{\tau}{m} \right \rceil+k_1, -\left\lceil
        \frac{\tau-1}{m} \right \rceil+k_1 ,...,\left\lceil \frac{\tau-(m-2)}
        {m} \right \rceil +k_{m},-\left\lceil \frac{\tau-(m-1)}{m}
        \right \rceil +k_{m} \right) & n=2m \\
        & \\
        \left( \left\lceil \frac{\tau}{n} \right \rceil+k_1,- \left\lceil
        \frac{\tau-1}{n} \right \rceil+k_1 ,...,-\left\lceil \frac{\tau-
        (n-2)}{n} \right \rceil +k_{m},\left\lceil \frac{\tau-(n-1)}{n}
        \right \rceil \right) & n=2m-1
    \end{cases}
\]
with the $ k_i \in \Z $ corresponding to different powers of the
determinant.  The lack of a shift in the final term in the odd case
follows from the trivial lifting on this factor of $ Sp(2,\C). $ These
are highest-weights for some irreducible representation of $ L_K=L\cap
K. $

We are now tasked with writing this representation of $ L\cap K $ in
terms of tempiric $ (\lie{l},L\cap K) $-modules.  The easier of the two
cases is when $ n=2m $ is even.  In this case, $ L\cap K $ is isomorphic
to a real form of $ L $ and thus we can treat this situation as in the
complex group situation (see Example \ref{K_as_Tempiric_Example}).  The
Weyl group of $ L\cap K $ here is $ (\Z/2\Z)^{n/2} $ and thus we see the
longest element is $ w_0=(-1,...,-1). $ Therefore, by the Weyl character
formula, we obtain
\[
    [\tilde{\tau}]= \sum_{w\in W_{L_K}-\{w_0\}} (-1)^{l(w)} \Ind_{B}^G(
    \tilde{\tau}+\rho_{L_K}-w\rho_{L_K})+ (-1)^{n/2} \Ind_B^G(\tilde{\tau}+2\rho_
    {L_K}),
\]
the final term being the contribution of $ w_0. $ Now that we have an
expression in terms of tempiric representations for $ L, $ we want to
translate these into the form coming from pseudocharacters.  Similar to
the $ GL(2,\C) $ case above, we take $ \Ind_B^G(\lambda)=X(\lambda) $
where we are only considering the restriction to $ T. $

Then, we obtain the following
\[
    \textbf{R}\mu_*\pi^*(W_{\tilde{\tau}})=\mathcal{R}_{\lie{q}}\left(\sum_
    {w\in W_{L_K}} X\left(\tilde{\tau}+\rho_{L_K}-w\rho_{L_K}\right)\right).
\]
Cohomological induction here results in addition to the infinitesimal
character by $ \rho(\lie{u}). $ Thus,
\[
    \textbf{R}\mu_*\pi^*(W_{\tilde{\tau}})=\sum_{w\in W_{L_K}} X\left(\tilde
    {\tau}+\rho_{L_K}-w\rho_{L_K}+\rho(\lie{u}) \right).
\]
As $ n=2m $ is even, $ \rho(\lie{u})|_T $ is easily calculated to be
\[
    \rho(\lie{u})|_T=(2n-2,2n-2,2n-6,2n-6,...,2,2).
\]
Finally, to obtain the lowest $ K $-type information, we add the
resulting pairs and take the dominant conjugate.  It is clear that the
largest lowest $ K $-type appearing on the right hand side is given by
the lowest $ K $-type of the standard module attached to $ w_0. $ Thus,
the the dominant conjugate of the largest infinitesimal character appearing on the right side is
\begin{align*}
    \operatorname{dom}
    _K\left( \tilde{\tau}+2\rho_{L_K}+\rho(\lie{u})|_T \right).
\end{align*}
Now, $ \tilde{\tau} $ can be chosen to minimize this largest $ K $-type
appearing in the extension.  In particular, let $ k_i=4(i-1)-2(n-1). $
Then the dominant conjugate of the infinitesimal character is
\begin{align*}
    \operatorname{dom}
    _K\left( \tilde{\tau}+2\rho_{L_K}+\rho(\lie{u})|_T \right)&=\\
    &\left( \left\lceil \frac{\tau}{m} \right \rceil+1, \left\lceil
    \frac{\tau-1}{m} \right \rceil+1 ,...,\left\lceil \frac{\tau-(m-1)}{m}
    \right \rceil +1 \right).
\end{align*}
Hence, the highest weight of this representation is
\begin{align*}
    \left( \left\lceil \frac{\tau}{m} \right \rceil, \left\lceil \frac{\tau-1}
    {m} \right \rceil ,...,\left\lceil \frac{\tau-(m-2)}{m} \right
    \rceil ,\left\lceil \frac{\tau-(m-1)}{m} \right \rceil \right).
\end{align*}

Thus for $ n $ even, we have the following map
\begin{equation}
    (\OO_{prin},\tau)\mapsto \left( \left\lceil \frac{\tau}{n} \right
    \rceil, \left\lceil \frac{\tau-1}{n} \right \rceil ,...,\left\lceil
    \frac{\tau-(n-2)}{n} \right \rceil ,\left\lceil \frac{\tau-(n-1)}{n}
    \right \rceil \right)
\end{equation}

We now treat the case of $ n $ odd.  As $ GL(n,\HH) $ has a single
conjugacy class of Cartan subgroups, we have that every $ \theta $-stable
Cartan in $ G $ is fundamental and thus $ \lie{h}^\theta=\lie{t} $ is a
Cartan subalgebra in $ \lie{k}. $ This gives the following reduction of
the pseudocharacters from before:

\begin{corollary}
    \cite[Theorem 1.2]{Vogan2007} Let $ G_\R $ be a real reductive
    algebraic group with a single conjugacy class of Cartan subgroups.
    Then the following sets are in natural bijection:
    \begin{enumerate}
        \item
            Irreducible representations of $ K. $
        \item
            Irreducible tempered representations of $ G_\R $ with real
            infinitesimal character.
        \item
            K-conjugacy classes of pseudocharacters $ (H,\Psi, \bar{\gamma})
            $ satisfying
            \begin{enumerate}
                \item
                    $ \Psi $ is the unique (up to conjugacy) set of
                    roots of $ L_\R=M_\R A_\R $ in the Langlands
                    decomposition of a minimal parabolic $ P_\R=M_\R A_\R
                    N_\R. $
                \item
                    $ \bar{\gamma}|_\lie{a}=0. $
                \item
                    $ \bar{\gamma}-\rho(\Psi)\in \mathbb{X}^*(T) $
                \item
                    $ \ip{\alpha,\gamma}>0 $ for all $ \alpha\in \Psi. $
            \end{enumerate}
    \end{enumerate}
\end{corollary}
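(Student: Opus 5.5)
The plan is to specialize the general trichotomy of the Theorem \ref{Tempiric_LKT_Bijection} (via $K$-pseudocharacters and the Lemma quoted from \cite[Theorem 11.9]{Vogan2007}) to the case where every $\theta$-stable Cartan subgroup is $K$-conjugate to a single fundamental one $H=TA$ with $\lie{h}^\theta=\lie{t}$ a Cartan subalgebra of $\lie{k}$. The bijection (1)$\leftrightarrow$(2) is just the lowest $K$-type map already established. So the content is identifying the final $K$-pseudocharacters up to $K$-conjugacy with the quadruples described in (3)(a)--(d).

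First I fix $H$ and show that the imaginary roots of $H$ are exactly the roots of $\lie{m}$ with respect to $\lie{t}$, where $P_\R=M_\R A_\R N_\R$ is minimal parabolic. Indeed $\lie{a}$ is then maximal abelian in $\lie{s}$, and the roots vanishing on $\lie{a}$ are the imaginary ones. Moreover, there are no real roots: a real root would yield a Cayley transform to a more compact Cartan subgroup, contradicting uniqueness of the conjugacy class. Hence condition (KF-2) is vacuous.

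Next, $M_\R$ has compact Cartan $T_\R$. Since $GL(n,\HH)$-type examples have $\lie{m}\subseteq \lie{k}$, as in the discussion preceding Lemma \ref{Fibers_of_pr}, every imaginary root is compact. This is the step I expect to need care: in general one must argue that equal rank of $M_\R$ together with the absence of noncompact imaginary roots follows from the absence of further Cartan classes, since a noncompact imaginary root would Cayley-transform to a less compact Cartan. With all imaginary roots compact, condition (KF-1) forces $\ip{\alpha,\bar\gamma}\neq 0$ for simple $\alpha\in\Psi$. Combined with (K-2), this gives the strict positivity in (d).

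Condition (K-3) reads $d\Gamma=\bar\gamma+\rho(\Psi)-2\rho_c(\Psi)=\bar\gamma-\rho(\Psi)$, because $\rho_c=\rho$ when all roots are compact. Integrality of $\Gamma\in\widehat T$ is therefore (c), provided $T$ is connected or the disconnected part is absorbed by the character. Restricting to $T$ means $\bar\gamma|_{\lie{a}}=0$, which is (b). Positive systems $\Psi$ for $\Delta(\lie{m},\lie{t})$ are all $W(M,T)\subseteq W_K$-conjugate, giving (a) up to conjugacy.

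Conversely, any datum satisfying (a)--(d) defines a final $K$-pseudocharacter, so it gives a tempiric module by that Lemma. Finally, $K$-conjugacy of pseudocharacters reduces to $N_K(H)$-conjugacy, because all Cartan subgroups are conjugate. This matches the conjugacy in (3) and completes the bijection.
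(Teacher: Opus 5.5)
Your proposal is correct and follows essentially the paper's route: specialize Vogan's parametrization to the unique (fundamental and maximally split) Cartan $H=TA$, note that there are no real roots and that all imaginary roots are compact, restrict the parameter to $\lie{t}$, and unfold the definitions. You supply more detail than the paper, e.g.\ you correctly identify (KF-1) as the source of the strict inequality in (d), where the paper only says the parameters are ``immediately final.''

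Your proviso about $T$ is well taken, and the paper passes over it silently: when $T$ is disconnected, the triple $(H,\Psi,\bar{\gamma})$ determines $\Gamma$ only up to a character of $T/T_0$. This is harmless for $GL(n,\HH)$, where $T$ is a maximal torus of $K=Sp(2n,\C)$ and hence connected.
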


\begin{remark}
    Notice that as $ \Psi $ determines $ P_\R, $ we can simplify the
    last two points to say that $ \gamma $ is $ L_\R $-dominant.  For (iii),
    we abuse the inclusion of characters of the torus by differentiation
    into $ \lie{t}^*. $
\end{remark}

\begin{proof}
    The only new content from Theorem \ref{Tempiric_LKT_Bijection} is $
    (b)\iff (c). $ The if and only if will follow from Theorem \ref{Langlands_Classification_Pseudo_Chars}
    once we classify the pseudocharacters.  Let $ (H,\Psi,\bar{\gamma}) $
    be a pseudocharacter for $ G_\R. $ As discussed before, this is
    immediately final as $ G_\R $ has a single conjugacy class of Cartan
    subalgebras.  A discrete pseudocharacter is one which is defined to
    be zero on the split part of $ \lie{h}. $ In the case of a complex
    group, we pass from a standard pseudocharacter to a discrete one by
    forgetting the restriction to $ \lie{a}. $ This is possible as $
    \lie{t} $ is a Cartan of $ K $ (in general it may be smaller).  This
    case is the same:  decompose $ \lie{h}=\lie{t}\ds \lie{a} $ in a
    Cartan decomposition.  $ \lie{t} $ is a Cartan subalgebra of $ \lie{k}.
    $ Hence, we obtain a discrete parameter by restriction.  Unfurling
    the definition then gives the characterization in the theorem.
\end{proof}

In addition to this reduction, we also can simplify the final theorem of
\cite{Vogan2007}.

\begin{corollary}
    \cite[Theorem 16.6]{Vogan2007}%
    \label{K_as_Tempirics} Let $ G_\R $ be a real reductive algebraic
    group with a single conjugacy class of Cartan subgroups.  Let $ \pi $
    be an irreducible representation of $ K. $ Let $ \gamma=(\Psi,
    \Gamma, \bar{\gamma}) $ be the associated limit character with $ LKT
    (X(\gamma))=\pi. $ Choose a strongly $ \gamma $-compatible parabolic
    subalgebra $ \lie{q}=\lie{l}\ds \lie{u} $ (in the sense of 11.6 in
    Loc.  Cit.).  By this choice, there is a one dimensional
    representation of $ L\cap K $, $ \pi_\lie{q} $, which is the lowest $
    L\cap K $-type of the principal series $ X_L(\gamma_\lie{q}). $ Then
    \[
        \pi=\sum_{A\subseteq \Delta(\lie{u}^{op}\cap \lie{s})} (-1)^{|A|}X_G
        (
        \operatorname{Cind}
        (\Psi, \pi_\lie{q})+2\rho(A))|_K
    \]
    with $\operatorname{Cind}$ a shorthand for the procedure of Proposition \ref{cohomological_induction_pseudo_chars}. 
\end{corollary}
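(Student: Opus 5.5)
We plan to obtain the formula by specializing the proof of \cite[Theorem 16.6]{Vogan2007} to the case where every $\theta$-stable Cartan subgroup is fundamental. The idea is to write $\pi$ as the lowest $K$-type of $X(\gamma)$, resolve the finite-dimensional $L\cap K$-type $\pi_\lie{q}$ by tempiric $(\lie{l},L\cap K)$-modules, and then cohomologically induce.

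First, we fix the strongly $\gamma$-compatible $\theta$-stable parabolic $\lie{q}=\lie{l}\ds\lie{u}$. By the construction of 11.6 in \cite{Vogan2007}, $\gamma_\lie{q}$ is a limit character of $L$ attached to the (unique, since there is a single conjugacy class) Cartan subgroup $H$, and $X_L(\gamma_\lie{q})$ is a principal series of $L_\R$ whose lowest $L\cap K$-type $\pi_\lie{q}$ is one dimensional. Proposition \ref{cohomological_induction_pseudo_chars} then gives $\mathcal{R}_\lie{q}^S X_L(\gamma_\lie{q})=X_G(\gamma)$, with parameter $\operatorname{Cind}(\Psi,\pi_\lie{q})$, because $\gamma_\lie{q}$ lies in the weakly good range by compatibility. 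Its lowest $K$-type is $\pi$.

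Second, we restrict to $K$ using Zuckerman's Blattner formula in the form of the previous subsection. The restriction $\mathcal{R}_\lie{q}(Z)|_K$ depends only on $Z|_{L\cap K}$, through $H^\bullet(K/Q_K, Z\tensor \Sym(\lie{u}\cap\lie{s})\tensor\cdots)$. We apply the Koszul (De Rham) identity
\[ [\C]=\sum_{A}(-1)^{|A|}[\Sym(\lie{u}^{op}\cap\lie{s})\tensor \C_{2\rho(A)}] \]
at the level of $T$-weights, where $A$ runs over subsets of $\Delta(\lie{u}^{op}\cap\lie{s})$; this is $\textstyle{\bigwedge}^\bullet(\lie{u}^{op}\cap \lie{s})$ written weight by weight, with the sign and twist conventions matching $\textstyle{\bigwedge}^{top}$. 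This should let us cancel the symmetric algebra factor and express the one-dimensional $K$-type produced by $\pi_\lie{q}$ as the alternating sum over $A$ of $\mathcal{R}_\lie{q}$ applied to $\pi_\lie{q}\tensor\C_{2\rho(A)}$.

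Third, we check that each twisted term $\pi_\lie{q}\tensor\C_{2\rho(A)}$ is again the lowest $L\cap K$-type of a principal series $X_L(\gamma_\lie{q}+2\rho(A))$, and that the shifted parameter stays weakly fair. Here the single-conjugacy-class hypothesis helps: every pseudocharacter is automatically final, and $\lie{t}$ is a Cartan subalgebra of $\lie{k}$. The corollary preceding this statement then lets us identify each induced term with $X_G(\operatorname{Cind}(\Psi,\pi_\lie{q})+2\rho(A))$ via the Speh--Vogan theorem. The term with $A=\varnothing$ contributes $X_G(\gamma)|_K$, whose lowest $K$-type is $\pi$. The remaining terms must cancel every higher $K$-type, which we expect from the vanishing of $\mathcal{R}^i_\lie{q}$ for $i\neq S$ in the fair range.

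The main obstacle is this third step: verifying that the shifts by $2\rho(A)$ keep every term in the weakly fair range, so that no higher derived functors $\mathcal{R}^i_\lie{q}$ survive, and that the alternating sum reproduces exactly $\pi$ rather than $\pi$ plus boundary corrections. We would first try to deduce it directly from Vogan's positivity estimates on $\lie{u}\cap\lie{s}$ weights in 11.6 of \cite{Vogan2007}. If that fails, we would fall back on the triangularity of the matrix $M(\pi,\tau)$ to show that all $K$-types other than $\pi$ cancel.
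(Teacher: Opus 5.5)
\textbf{Gap 1: the identification $\lie{l}=\lie{h}$ is missing.} This fact is the whole content of the paper's proof. The Levi factor of a strongly $\gamma$-compatible parabolic has only real roots with respect to $\lie{h}$. When $G_\R$ has a single conjugacy class of Cartan subgroups there are no real roots at all, since a Cayley transform through one would produce a Cartan subgroup not conjugate to $H$. Hence $L=H$, $\lie{q}$ is a $\theta$-stable Borel subalgebra, $Q\cap K=B_K$ is a Borel subgroup of $K$, $L\cap K=T$, and $X_L(\gamma_\lie{q})$ is just a character of $H$. In Vogan's Theorem 16.6 the sum over Cartan classes then has one term, the sum over positive roots of $L$ is trivial, and only the sum over $A$ survives. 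That is the paper's entire argument.

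Your Step 3 silently needs this. Koszul plus Blattner gives you $\mathcal{R}_\lie{q}$ evaluated on the one-dimensional $L\cap K$-modules $\pi_\lie{q}\tensor\C_{2\rho(A)}$. Because $\mathcal{R}_\lie{q}(Z)|_K$ depends on all of $Z|_{L\cap K}$, you may replace these by $X_L(\gamma_\lie{q}+2\rho(A))$ only if that module restricts to $L\cap K$ as exactly this character. Knowing that the character is its lowest $L\cap K$-type is not enough. For nonabelian $L$ a principal series has infinitely many $L\cap K$-types, and the replacement is false; this is what Vogan's middle sum corrects. Once $L=H$ is noted, the replacement is a tautology and Step 3 has no content.

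\textbf{Gap 2: the reason the sum is exactly $\pi$ is misidentified.} The vanishing of $\mathcal{R}^i_\lie{q}$ for $i\neq S$ only identifies each Euler characteristic with a single standard module. Neither it nor the triangularity of $M(\pi,\tau)$ cancels the higher $K$-types. The cancellation comes from Koszul, Blattner and Borel--Weil--Bott together. Once the symmetric algebra is cancelled, the alternating sum over $A$ is the Euler characteristic on $K/B_K$ of the single line bundle attached to $\pi_\lie{q}\tensor\textstyle{\bigwedge^{top}}(\lie{u}\cap\lie{s})$; it is not a one-dimensional $K$-type. Borel--Weil--Bott returns exactly $\pi$ because, by the choice of $\pi_\lie{q}$ (the bottom-layer shift in Proposition \ref{cohomological_induction_pseudo_chars}), this weight is the $K$-dominant highest weight of $\pi$. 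You must state this input explicitly.

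\textbf{A smaller error.} Your claim that every pseudocharacter is automatically final is wrong. All imaginary roots are compact here, so a shifted parameter that is $\Psi$-singular violates (F-1) and gives $X_G=0$. Such terms really occur: already for $GL(2,\HH)$ with trivial $\pi$, some of the four terms are of this kind. The identity must therefore be read in the Grothendieck group with those terms equal to zero.
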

\begin{proof}
    Vogan's result is written in terms of a triple sum.  The first sum
    is over conjugacy classes of Cartan subgroups and thus becomes a
    single term.  The second sum is over positive roots of $ L. $ As $ L
    $ is chosen to have only real roots and $ G_\R $ has a single
    conjugacy class of Cartan subgroups, $ L $ is a Cartan subgroup
    itself.  Thus, the theorem reduces to the above.
\end{proof}

We can now return to the $ n $ odd case.  In this situations $ L\cap K $
is a product of copies of $ GL(2,\C) $ and one $ Sp(2,\C). $ The copies
of $ GL(2,\C) $ arise from pairs in $ L $ and thus reduce to the complex
group case.  The $ Sp(2,\C) $ is where we need to invoke the above
corollary.  The real form we have in this situation is $ G_\R=\HH^\times
$ and $ K_0=Sp(1)=SU(2). $ We use the identity $ \HH^\times=SU(2)\times
\R^\times $ with $ \R^+ $ contained in the center of $ \HH^\times $.  In
particular, the decomposition tells us that irreducible representations $
\tau $ of $ \HH^\times $ are all finite dimensional and have a
decomposition as
\[
    \tau=\pi \boxtimes e^{ \nu x}.
\]
For them to be tempiric, $ \nu=0 $ and thus the tempiric representations
are simply irreducible representations of $ SU(2). $ In terms of
pseudocharacters, we only need to keep track of the infinitesimal
character.  So, let $ \tilde{\tau} $ be as above.  The $ Sp(2,\C) $
factor gives a tempiric with infinitesimal character $ 1 $ (when
restricted to $ T $).  On the other factors, we have the Weyl Character
formula which by the general considerations in the even case tells us to
set $ k_i=2+4i-2n. $ This gives us a temperic with lowest $ K $-type
\[
    \left( \left\lceil \frac{\tau}{n} \right \rceil, \left\lceil \frac{\tau-1}
    {n} \right \rceil ,...,\left\lceil \frac{\tau-(n-2)}{n} \right
    \rceil ,\left\lceil \frac{\tau-(n-1)}{n} \right \rceil \right).
\]

Notice that even though we needed an additional step in writing $ \tilde
{\tau} $ as tempiric $ (\lie{l},L\cap K) $-modules, we still arrived at
a lowest $ K $-type which aligns with that of the even case.  This
should not be surprising given the general construction above.

In sum, we have the following: 

\begin{theorem} 
	The $LKT$ map for the principal orbit of $GL(n,\HH)$ is given by \[ (\mathbb{O}_{prin},\tau)\mapsto \left( \left\lceil \frac{\tau}{n} \right \rceil, \left\lceil \frac{\tau-1}
    {n} \right \rceil ,...,\left\lceil \frac{\tau-(n-2)}{n} \right
    \rceil ,\left\lceil \frac{\tau-(n-1)}{n} \right \rceil \right).\]
\end{theorem}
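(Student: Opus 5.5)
The argument assembles the even and odd computations just carried out into one statement. The organizing tool is Corollary \ref{Formula_for_Pushforward} together with the definition (4) of $LKT$. For a fixed irreducible $\tau$ of $K^{[n,n]}_{red}=Sp(2,\C)$, every lift $\tilde{\tau}$ to $L\cap K$ is a highest weight of the displayed form, parametrized by the determinant twists $k_i\in\Z$. The task therefore reduces to two things. First, for each admissible tuple $(k_i)$, identify the tempiric in $\mathcal{M}(\OO_{prin},\tilde{\tau})$ with the largest infinitesimal character. Second, minimize its norm over $(k_i)$ and record the lowest $K$-type via \cite{Primc1979} (item (6) of the corollary on $GL(n,\HH)$).

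The key steps, in order, are as follows.
\begin{enumerate}
\item Expand $\tilde{\tau}$ in tempiric $(\lie{l},L\cap K)$-modules. On the $GL(2,\C)$ factors, use the Weyl character formula as in Example \ref{K_as_Tempiric_Example}. In the odd case, handle the extra $Sp(2,\C)$ factor by Corollary \ref{K_as_Tempirics}; there the tempirics of $\HH^\times$ are just $SU(2)$-representations, so this factor contributes infinitesimal character $1$ and no alternating sum.
\item Apply cohomological induction. By Proposition \ref{cohomological_induction_pseudo_chars} and the Speh--Vogan theorem following it, this shifts each parameter by $\rho(\lie{u})$. One must check that every term lies in the weakly fair range, so that the $\mathcal{R}^i_\lie{q}$ vanish outside degree $S$.
\item Argue that the norm of the resulting $K$-dominated infinitesimal character is maximized by the $w_0$-term $\tilde{\tau}+2\rho_{L_K}+\rho(\lie{u})|_T$. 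The terms $\rho_{L_K}-w\rho_{L_K}$ are sums of positive roots of $L_K$, and adding them only moves the parameter further from the origin once it is positioned appropriately relative to $\rho(\lie{u})$.
\item Minimize over $(k_i)$. The twists $k_i=4(i-1)-2(n-1)$ (respectively $k_i=2+4i-2n$ in the odd case) exactly cancel the staircase $\rho(\lie{u})|_T=(2n-2,2n-2,2n-6,\dots)$. After $W_K$-dominating, which reorders and flips signs of coordinates in type $C_n$, the infinitesimal character becomes $\lceil(\tau-j)/n\rceil+1$ for $j=0,\dots,n-1$. Subtracting $\rho_M=(1,\dots,1)$ then gives the stated highest weight.
\end{enumerate}

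The main obstacle is the optimality claim in step 4: that no other choice of $(k_i)$ produces a strictly smaller largest tempiric. I would attack it first by observing two facts. The map $(k_i)\mapsto \tilde{\tau}+2\rho_{L_K}+\rho(\lie{u})|_T$ shifts the coordinate pairs $(2i-1,2i)$ by $(k_i,k_i)$. The $K$-dominance map takes absolute values and sorts. Since the pair $(\lceil(\tau-j)/n\rceil+c_i,\,-\lceil(\tau-j-1)/n\rceil+c_i)$ has absolute values summing to at least $|\lceil(\tau-j)/n\rceil+\lceil(\tau-j-1)/n\rceil|$, its norm is minimized when the two entries are as close to opposite as possible. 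That is precisely when the determinant shift cancels the $\rho(\lie{u})$-offset, and the chosen $k_i$ achieve this simultaneously for each pair.

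A secondary check is that step 3 is uniform in $(k_i)$ near the optimum, so that taking the max and then the min commutes with the identification above. Finally, consistency with the $n=2$ case (Theorem for $GL(2,\HH)$: $(k,k)$ or $(k+1,k)$) serves as a sanity check on the indexing of the ceilings.
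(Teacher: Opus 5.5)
Your route is the paper's own. You spread $\tau$ over the $GL(2,\C)$ factors with determinant twists and expand by the Weyl character formula, using the $\HH^\times$ remark for the $Sp(2,\C)$ factor when $n$ is odd. You then shift by $\rho(\lie{u})|_T$, keep the $w_0$-term, choose the $k_i$ to cancel $\rho(\lie{u})|_T$, $W_K$-dominate, and subtract $(1,\dots,1)$. The gap is in the reduction you state at the outset, on which your whole optimality argument rests: it is not true that the lifts of $\tau$ are exactly the irreducibles of the displayed form. Because $K^X_{red}=Sp(2,\C)$ sits diagonally in $L\cap K$, the restriction of $V_{(a_1,b_1)}\boxtimes\cdots\boxtimes V_{(a_m,b_m)}$ is the tensor product $\bigotimes_i V_{a_i-b_i}$. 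This is $V_\tau$ plus lower constituents as soon as two factors have $a_i\neq b_i$; already for $n=3$, $\tau=3$ the displayed class restricts to $V_2\otimes V_1=V_3+V_1$. So the displayed classes are lifts only to leading order. Conversely, genuine lifts in the sense of (4) include classes like $V_{(\tau+k,k)}\boxtimes\mathbbm{1}\boxtimes\cdots$, and anything differing from a lift by an element of the kernel of restriction, and minimizing over $(k_i)$ says nothing about these. To prove the formula you would need two things. First, correct the displayed class by lifts of the lower constituents and check that the resulting tempirics stay strictly below the $w_0$-term in norm. Second, give a lower bound showing that no virtual lift has a smaller largest tempiric. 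The paper supplies neither, since it too only varies the determinant twists. Your argument therefore matches its level of rigor, but the optimality question is really over all virtual lifts, not over $(k_i)$, and it remains open.

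There are two smaller slips, both inherited from the paper's text. In the odd case your lift puts highest weight $\lceil(\tau-(n-1))/n\rceil$ on the $Sp(2,\C)$ factor, so that factor contributes $\lceil(\tau-(n-1))/n\rceil+1$ to the infinitesimal character, not $1$; with $1$, the last entry of the answer would be $0$. Also, $k_i=2+4i-2n$ cancels the pair value $2(n+1-2i)$ of $\rho(\lie{u})|_T$ only if $i$ starts at $0$; for $i\geq 1$ the cancelling twist is $4i-2n-2$ in both parities. On the positive side, your step 3 and the ``secondary check'' are easier than you suggest. Write a pair as $(x+c+\epsilon,\,-y+c-\epsilon)$, with $\epsilon\in\{0,1\}$ the Weyl-group contribution and $c$ the net twist. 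The squared norm at $\epsilon=1$ exceeds that at $\epsilon=0$ by $2(x+y)+2>0$ for every $c$, so the $w_0$-term is the maximum uniformly in the twists. The per-pair minimization is governed by $u^2+v^2=\tfrac12\big((u-v)^2+(u+v)^2\big)$ with $u-v$ fixed, rather than by your $\ell^1$ bound. When $x=y+1$ the minimum is attained at two twists, but both give the same $W_K$-dominant character, so the output is well defined.
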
 


\subsection{The Case of $ GL(3,\HH) $}

The case of $ GL(3,\HH) $ adds a new challenge to the computation of the
$ LKT $ map.  There are three orbits:  the zero orbit, the principal
orbit, and the middle orbit.  The final of which corresponds to the
partition $ [2,2,1,1] $ of $ 6. $ We shall call this $ \OO_{mid}. $ This is
our first encounter with an odd orbit!  Recall that this means that $
\lie{s}^2\neq \lie{u}\cap \lie{s} $ for the Jacobson-Morozov parabolic.
In particular, we have a non-trivial weight one eigenspace of $ H, $ $
\lie{s}_1. $

To deal with this at a computational level, we need to modify our
formula from before:
\[
    \sum_{j}(-1)^j
    \operatorname{gr}
    \mathcal{R}_\lie{q}\left(\sum_{\pi} M(\pi,\eta) \overline{X}(\gamma_\pi)\tensor
    \textstyle{\bigwedge^j}\lie{s}_1^*\right).
\]
Let $ \Delta(\lie{s}_1,T) $ denote the $ T $-weights on $ \lie{s}_1. $ A
version of the Weyl Character formula tells us the following:

\begin{lemma}
    \cite[Corollary 12.2(d)]{AdamsVogan2021} Let $ V $ be a finite
    dimensional representation of $ T. $ Then as a virtual $ T $-module:
    \[
        \sum_{i=0}^{\dim V} (-1)^i \textstyle{\bigwedge^i} V=\displaystyle\sum_
        {A\subseteq \Delta(V,T)} (-1)^{|A|} 2\rho(A)
    \]
    where $ 2\rho(A) $ means the sum of the weights in $ A. $
\end{lemma}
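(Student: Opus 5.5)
The identity lives in the representation ring $\mathbb{K}(T)$. Since $T$ is a torus, any finite dimensional $V$ is diagonalizable, so we may write $V=\bigoplus_{\beta\in\Delta(V,T)}\C_\beta$, with weights listed with multiplicity. This is the reading of $\Delta(V,T)$ that makes the statement correct: it is a multiset, and subsets $A$ range over subsets of the index set. The plan is to compute the left-hand side weight by weight and compare coefficients. Because both sides are elements of $\mathbb{K}(T)\simeq\Z[\mathbb{X}^*(T)]$, it suffices to check equality of formal characters.

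The first step is the standard decomposition of exterior powers of a direct sum, $\bigwedge^i(V_1\ds V_2)\simeq\bigoplus_{a+b=i}\bigwedge^aV_1\tensor\bigwedge^bV_2$. This is $T$-equivariant. Iterating it over the line decomposition gives
\[
\textstyle{\bigwedge^i}V\simeq\bigoplus_{\substack{A\subseteq\Delta(V,T)\\ |A|=i}}\ \bigotimes_{\beta\in A}\C_\beta=\bigoplus_{|A|=i}\C_{2\rho(A)}.
\]
Here we use that $\bigwedge^0\C_\beta=\C$, $\bigwedge^1\C_\beta=\C_\beta$, and $\bigwedge^{\geq2}\C_\beta=0$. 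We also use that a tensor product of characters is the character of the sum of the weights, which is exactly the notation $2\rho(A)$ in the statement. Equivalently, the character of $\bigwedge^\bullet V$ with a sign variable is $\prod_{\beta}(1-e^\beta)$, and expanding that product gives the right-hand side.

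The second step is to take the alternating sum over $i$ from $0$ to $\dim V$. Regrouping by $A$ gives $\sum_A(-1)^{|A|}[\C_{2\rho(A)}]$, which is the claim. There is no genuine obstacle here. The only care needed is the bookkeeping for repeated weights: collecting terms with equal $2\rho(A)$ produces integer multiplicities on both sides in the same way. If desired, one can instead argue by induction on $\dim V$ using the short exact sequence $0\to V'\to V\to\C_\beta\to0$, which splits because $T$ is reductive. The inductive step is the product formula $\bigl(\sum_i(-1)^i\bigwedge^iV'\bigr)\tensor(1-\C_\beta)$, and splitting the subsets $A$ according to whether they contain $\beta$ finishes the proof.
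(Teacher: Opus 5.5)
Your proof is correct. The paper gives no argument of its own for this lemma; it only cites Corollary 12.2(d) of Adams--Vogan. Your computation supplies the standard self-contained verification: split $V$ into weight lines and expand $\bigwedge^\bullet$ of the sum, which is the same as expanding $\prod_{\beta}(1-e^{\beta})$.

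Your reading of $\Delta(V,T)$ as a multiset is the right one. With repeated weights the set reading fails: for $V=\C_\beta\oplus\C_\beta$ with $\beta\neq 0$, the left side is $1-2e^{\beta}+e^{2\beta}$ but the set version gives only $1-e^{\beta}$. This never matters in the paper's use on $\lie{s}_1$, whose four weights are distinct.
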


From this, we can make the above formula computable:
\begin{equation}[A_\eta]
    =\sum_{A\subseteq \Delta(\lie{s}_1,T)} \sum_{\pi} M(\pi,\eta)
    \operatorname{gr}
    \mathcal{R}_\lie{q} \overline{X}(\gamma_\pi-2\rho(A))
\end{equation}
The $ M(\pi,\eta) $ are determined completely by Corollary \ref{K_as_Tempirics}.

We are now equipped with all of the necessary tools to compute the $ LKT
$ map on the middle orbit.  We need to establish the relevant structure
theory.  Let us conveniently pick our nilpotent element, and thus our $
\lie{sl}_2(\C) $-triple to be
\begin{align*}
    X=\Mat{ 0&0 &0 &0 & 1 & 0\\
    0& 0& 0& 0& 0& -1 \\
    0& 0& 0& 0& 0&0 \\
    0& 0& 0& 0& 0& 0\\
    0& 0& 0& 0& 0& 0\\
    0& 0& 0& 0& 0&0 } && H=\Mat{1 & 0 &0 &0 &0 &0 \\
    0 & 1 &0 &0 &0 &0\\
    0& 0& 0 &0 &0 &0 \\
    0& 0& 0& 0 &0 &0 \\
    0& 0& 0& 0& -1 &0 \\
    0 &0 &0 &0 &0 & -1}, && Y=X^T.
\end{align*}

This realizes the Levi-factor of the Jacobson-Morozov parabolic as $ L\simeq
GL(2,\C)^3 $ with each $ GL(2) $ embedded along the main diagonal.  This
also gives
\[
    L\cap K\simeq GL(2,\C)\times SL(2,\C)
\]
realized as block-diagonal matrices of the form $
\operatorname{diag}
(A,B,(A^{at})^{-1}). $ A quick calculation shows that $ K^X_{red}\simeq
SL(2,\C)\times SL(2,\C) $.  As $ SL(2,\C) $ is the semisimple part of $
GL(2,\C), $ any irreducible representation of $ GL(2,\C) $ restricts to
an irreducible representation of $ SL(2,\C). $ Therefore, it suffices to
determine which irreducible representations of $ GL(2,\C) $ restrict to
the trivial representation (these are the basis elements of the $
\mathbb{K} $-group).

The restriction map $ \mathbb{K}(L\cap K)\to \mathbb{K}(K^X_{red}) $ is
given on weights of irreducible representations by
\[
    ((a,b); n)\mapsto (a-b;n).
\]
Therefore, the fiber over the trivial representation (highest weight $ (0;0)
$) consists entirely of (sums of) weights $ ((k,k);0) $ corresponding to the character $ \det^{\tensor
k}\boxtimes \mathbbm{1} $.

Now, let $ \tau=(n_1;n_2) $ be the highest weight of an irreducible $ K^X_
{red} $-module.  This has a lift to $ L\cap K $ (and thus to $ P\cap K $)
as
\[
    \tilde{\tau}=\left( \left\lceil \frac{n_1}{2} \right \rceil+k, -\left\lfloor
    \frac{n_1}{2} \right \rfloor+k; n_2\right).
\]
We now mimic the approach from above:  write $ \tilde{\tau} $ in terms
of tempiric $ (\lie{l},L\cap K) $-modules.  For the $ GL(2,\C) $-part,
we have already seen how to do this.  For the $ SL(2,\C) $-part, we saw
how to deal with this in the principal orbit calculation.  In the end,
we obtain the following:
\[
    \tilde{\tau}=X\left( \left\lceil \frac{n_1}{2} \right \rceil+k, -\left\lfloor
    \frac{n_1}{2} \right \rfloor+k; n_2+1\right)- X\left( \left\lceil
    \frac{n_1}{2} \right \rceil+k+1, -\left\lfloor \frac{n_1}{2} \right
    \rfloor+k-1; n_2+1\right).
\]
where the notation of $ X(a,b;c) $ means the tempiric $ \Ind_B^{GL(2,\C)}
(a,b)\boxtimes V_{c} $ with $ V_{c} $ the finite dimensional
representation of $ SL(2,\C) $ of highest weight $ c $ and infinitesimal
character $ c+1. $ As we saw above, we now apply cohomological
induction.  As our group only has a single conjugacy class of Cartan
subgroups, we only need to keep track of the infinitesimal character.
The affects of cohomological induction are thus adding $ \rho(u)|_T. $
In this case,
\[
    \rho(u)|_T=(4,4,0).
\]

Before we rush forward in cavalier style, we need to consider $ \lie{s}_1.
$ In consists of matrices of the form
\[
    \lie{s}_1=\left \{ \Mat{ 0 &0 & a & c & 0 & 0\\
    0 & 0 & b & d & 0 & 0\\
    0 & 0 & 0 &0 & -d & -c \\
    0 & 0 & 0&0&-b&-a\\
    0&0&0&0&0&0 \\
    0&0&0&0&0&0}:  a,b,c,d\in \C \right\}.
\]
Therefore, the $ T $-weights of $ \lie{s}_1 $ are $ e_1-e_3,e_2-e_3,e_1+e_3,
$ and $ e_2+e_3. $ As there are four weights (roots), our sum above
becomes a $ 32 $-term monster.  Luckily, as we saw before, we only need
to consider the $ 16 $-terms coming with the $ +2\rho_{GL(2)} $ shift in
the Weyl Character formula.

At the trivial representation of $ K^X_{red}, $ we see that the affect
of the subset $ A=\{e_1-e_3,e_2-e_3\} $ yields the largest infinitesimal
character.  When $ k=-3 $ this infinitesimal character is minimized.  In
this situation, it is the unique largest infinitesimal character
appearing among the tempiric representations in the expansion.  If we
inspect the lowest $ K $-type formula for this tempiric, we see that we
obtain
\[
    \left(n_2+2, \left\lceil \frac{n_1}{2} \right \rceil, \left\lfloor
    \frac{n_1}{2} \right \rfloor\right).
\]
For this to be a $ K $-dominant $ T $-weight, we need $ n_1\leq 2n_2+1. $
What happens when we are not in this regime?

Considering the case of $ (2;0), $ will guide our expedition.  For this,
we get that a new tempiric has the largest infinitesimal character:  the
tempiric corresponding to the subset $ B=\{e_2-e_3,e_2+e_3\}. $ Up to
conjugation this has the unique largest infinitesimal character and
gives a $ K $-type formula of
\[
    \left( \left\lceil \frac{n_1}{2} \right \rceil+1, \left\lfloor \frac
    {n_1}{2} \right \rfloor+1,n_2\right).
\]
Additionally, we see that this is $ K $-dominant when $ n_1>2n_2+1$
and the assignment \[ (n_1,n_2)\mapsto \left( \left\lceil \frac{n_1}{2} \right \rceil+1, \left\lfloor \frac
    {n_1}{2} \right \rfloor+1,n_2\right)\]
    is injective. 

Therefore, we have the following assignments and theorem.

\begin{theorem}
    For $ GL(3,\HH) $, the following assignments are the bijection in $
    (1): $
    \begin{align*}
        (\OO_0,(a,b,c))&\mapsto (a+4,b+2,c) \\
        (\OO_{mid}, (n_1,n_2))& \mapsto {%
        \begin{cases}
            \left(n_2+2,\left \lceil \frac{n_1}{2} \right\rceil, \left
            \lceil \frac{n_1-1}{2} \right\rceil\right) & n_1\leq 2n_2+1
            \\
            \; \\
            \left(\left \lceil \frac{n_1}{2} \right\rceil+1, \left
            \lceil \frac{n_1-1}{2} \right\rceil+1, n_2\right) & n_1>2n_2+1
        \end{cases}
        } \\
        (\OO_{prin},n)&\mapsto \left( \left \lceil \frac{n}{3} \right\rceil,
        \left \lceil \frac{n-1}{3} \right\rceil, \left \lceil \frac{n-2}
        {3} \right\rceil \right).
    \end{align*}
\end{theorem}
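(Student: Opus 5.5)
The proof assembles the three orbit-by-orbit computations already carried out and then checks that the combined assignment is a bijection onto $\Lambda^+_K$, the dominant weights $(x_1\geq x_2\geq x_3\geq 0)$ of $Sp(6)$ (with $x_3$ allowed arbitrary sign conventions as fixed above). The formulas themselves come from the preceding subsections. The zero-orbit formula is the case $n=3$ of the zero-orbit shift: the Cousin resolution (Corollary \ref{Cousin_Resolution_Corollary}) shows that the largest lowest $K$-type comes from the closed orbit. That orbit gives $A_{\lie{b}}(0)$ with lowest $K$-type $2\rho(\lie{n}\cap\lie{s})=(4,2,0)$. The principal-orbit formula is the general principal-orbit theorem at $n=3$. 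The middle-orbit formula is the computation just carried out with the $32$-term expansion $[A_\eta]$, using Corollary \ref{K_as_Tempirics} and $\rho(\lie{u})|_T=(4,4,0)$. For each orbit I would also verify minimality over lifts. For the middle orbit, the fiber of $\mathbb{K}(L\cap K)\to\mathbb{K}(K^X_{red})$ over a given $\tau$ differs only by powers of $\det$ (and sums thereof). The dominated infinitesimal character of the leading term is then a convex, piecewise-linear function of $k$, and its minimum should occur at $k=-3$. Virtual lifts mixing several $k$ cannot do better, because the maximum over terms is at least the maximum for a single one.

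Injectivity comes next, orbit by orbit. The zero-orbit map is a translation, hence injective. The principal-orbit map $n\mapsto(\lceil n/3\rceil,\lceil (n-1)/3\rceil,\lceil (n-2)/3\rceil)$ lands on triples whose entries differ by at most one and whose sum is $n$, so it is injective. For the middle orbit, the branch $n_1\leq 2n_2+1$ recovers $n_2$ from the first entry and $n_1$ from the sum of the last two. The branch $n_1>2n_2+1$ recovers $n_2$ as the last entry and $n_1$ as (sum of the first two) $-2$.

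The key step is to describe the three images and show they partition $\Lambda^+_K$. The zero-orbit image is exactly $\{x: x_1-x_2\geq 2,\ x_2-x_3\geq 2\}$. The principal image is $\{x: x_1-x_3\leq 1\}$. Both middle branches should land in the complement of these two sets. For the first branch, $x=(n_2+2,\lceil n_1/2\rceil,\lfloor n_1/2\rfloor)$ satisfies $x_2-x_3\leq1$ and $x_1-x_2\geq 1$. For the second branch, $x=(\lceil n_1/2\rceil+1,\lfloor n_1/2\rfloor+1,n_2)$ satisfies $x_1-x_2\leq 1$ and $x_2-x_3\geq 2$. The complement of the zero and principal images is
\[ \{x_1-x_2\leq 1 \text{ or } x_2-x_3\leq 1\}\setminus\{x_1-x_3\leq 1\}. \]
I would split it into two regions. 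The region $x_2-x_3\leq 1$ with $x_1-x_3\geq 2$ should be exactly the first branch. The region $x_2-x_3\geq 2$ with $x_1-x_2\leq 1$ should be exactly the second branch. Solving for $(n_1,n_2)$ in each region, and checking that the inequality defining the region is precisely the branch condition $n_1\leq 2n_2+1$ (respectively $>$), would finish the argument.

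I expect the main obstacle to be the careful bookkeeping at the boundary cases. These are the parity of $n_1$ (the ceilings and floors) and the edge $n_1=2n_2+1$ versus $n_1=2n_2+2$, where the two middle branches and the principal set meet. There one has to confirm that no weight is hit twice or missed. I would check the small cases $n_2=0,1$ directly by hand before trusting the general inequalities. A secondary concern is justifying that the tempiric identified as "largest" in the middle-orbit expansion is unique up to conjugacy, with no tie as in the $SL(2,\R)$ example. For this I would compare the norms of the $16$ dominated characters explicitly and check that the maximum is strict.
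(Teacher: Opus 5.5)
Your argument is the paper's. Both take the three orbit formulas from the preceding computations as given and prove bijectivity by sorting dominant weights according to $x_1-x_2$ and $x_2-x_3$. Your four regions (the zero image, $\{x_1-x_3\le 1\}$, $\{x_2-x_3\le 1,\ x_1-x_3\ge 2\}$, $\{x_2-x_3\ge 2,\ x_1-x_2\le 1\}$) are in fact a cleaner partition than the paper's case split. The paper never treats the region where both differences are $\le 1$, which contains the principal image and first-branch weights such as $(2,1,0)$, the image of $(\OO_{mid},(1,0))$.

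The one step that would not go through as written is your side remark on mixed lifts. The claim ``the maximum over terms is at least the maximum for a single one'' ignores cancellation. After the $-2\rho(A)$ shifts, a tempiric coming from the twist $\det^{k}$ can coincide with one coming from $\det^{k-1}$; take $2\rho(A)=e_1+e_2$ against $A=\varnothing$. The paper, however, only minimizes over single twists $k$ and does not address this either.
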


\begin{proof}
    The final piece to check is that this map is indeed a bijection.  We
    can classify all of the $ K $-dominant $ T $-weights in the
    following way.  Let $ (x,y,z) $ the weight.  The options for the
    differences $ x-y $ and $ y-z $ are first split into two cases:
    both $ x-y $ and $ y-z $ are $ \geq 2 $ or not.  In the affirmative,
    the only option is to be a weight attached to the zero orbit.  The
    map on the zero orbit is injective as the inputs there must satisfy $
    a\geq b\geq c\geq 0. $

    Suppose then that the $ x-y\geq 2. $ Necessarily the other
    difference must be $ \leq 1 $ and $ x\geq 2 $ by the dominance
    condition.  This immediately rules out the pairs coming from the
    principal orbit as the differences in weights there are identically $
    0 $ or $ 1. $ Thus, this weight must live on the middle orbit.  In
    particular, it must be mapped to by a pair $ (n_1,n_2) $ with $ n_1\leq
    2n_2+1. $ However, in this regime, the map is injective clearly.  A
    similar argument reduces the case of $ x-y\leq 1 $ and $ y-z\geq 2 $
    to the other regime for the middle orbit.  This is also injective
    and thus the entire map is.

    Surjectivity follows at once from the consideration of these cases.

\end{proof}

\subsection{The Subregular Orbit Generally}
The features of the middle orbit for $GL(3,\HH)$ turn out to (almost) give the bijection 
in general on the subregular orbit for $n$ odd. The $n$ even case turns out to
 need a slight modification from the principal orbit case. 
So, consider the orbit with given partition $[n-1,1].$ 

\subsubsection{The Odd Case} 
Recall from \cite{Collingwood1993}, that for $Z=SL(n,\C)$ and a
 partition $\lambda=[k_1^{a_1},...,k_\ell^{a_\ell}]$ of $n,$ there is a $Z$-orbit consisting of nilpotent elements
  with Jordan decomposition corresponding to $\lambda.$ By the Jacobson-Morozov theorem, we may 
  complete a choice of $X$ in this orbit to an $\lie{sl}(2,\C)$-triple with the semisimple
element to be composed of diagonal blocks of the form 
\[ \Mat{k-1 & 0 & 0 &... & 0 & 0\\
	0 & k-3 & 0 & ... & 0 & 0 \\
	0 & 0 & k-5 & ... & 0 & 0 \\
	\vdots  & \vdots & \vdots & \vdots & \vdots & \vdots \\
	0 & 0 &0 & ... & -k+3 & 0 \\
	0 & 0 & 0 & ... & 0 & -k+1 }.\] 
for each part of size $k.$ 

For the partition $[n-1,1]$ of $n$ we consider the associated partition $[(n-1)^2,1^2]$ of $2n$
 which corresponds to a nilpotent $GL(2n,\C)$-orbit\footnote{These are the same as $SL(n,\C)$-orbits.}. 
 As we assume $n$ is odd, $n-1$ is even and thus $n-2$ (the first diagonal entry in the semisimple 
 element $H$) is odd. The element $H$ thus is diagonal (up to reordering) of 
 the form \[ H=\operatorname{diag}(n-2,n-2,n-4, n-4,...,1,1,0,0,-1,-1,...,-(n-2),-(n-2)).      \]
 Therefore, there is a weight $1$ $\ad H$-eigenspace and this orbit is odd (in the sense of \cite{Collingwood1993}). 
 
The weight $1$ space is spanned by root subspaces corresponding to the
 roots \[e_{n-2}-e_n, e_{n-1}-e_n,e_{n-2}+e_n,e_{n-1}+e_n.\]  Using our
  general formulation of the pushforward and the results from the middle orbit 
  for $GL(3,\HH),$ we see that two subsets are of particular interest: $I_1=\{e_{n-2}-e_{n},e_{n-1}-e_n\}$ and 
  $I_2=\{e_{n-1}-e_n,e_{n-1}+e_n\}.$ The general form of the pushforward will have 16 terms in this situation 
  and the tempiric representations corresponding to the subsets $I_1$ and $I_2$ will have the maximal 
  infinitesimal character depending on the input weights. 
  
  Notice that for a choice of $X$ compatible with this $H,$ we obtain
   $K^X_{red}\simeq SL(2,\C)^2,$ $L\simeq GL(2,\C)^n,$ $L\cap K\simeq GL(2,\C)^{(n-1)/2}\times SL(2,\C),$
    and the restriction map is given as the identity on the second $SL(2,\C)$-factor.
     That is to say, $K^X_{red}$ is embedded in $L\cap K$ via a composition of the diagonal morphism
      of the first $SL(2,\C)$ into the $GL(2,\C)$ factors and the identity on the final $SL(2,\C)$-factor. 
      This induces a map on virtual modules given on weights 
      by \[ (a_1,b_1,a_2,b_2,...,a_{(n-1)/2)},b_{(n-1)/2},c)\mapsto (\sum_{i} (a_i-b_i),c).\]
      
Let $(a;b)$ be the highest weight of an irreducible $K^X$-module $\tau$. 
Consider the lifts of $\tau$: \[ \tilde{\tau}=\left( \left\lceil \frac{a}{n-1} \right \rceil+k_1, -\left\lceil \frac{a-1}
    {n-1} \right \rceil+k_1 ,...,-\left\lceil \frac{a-(n-2)}{n-1} \right \rceil +k_{(n-1)/2}, b \right).\]
    
Writing this as a sum of virtual $(\lie{l},L\cap K)$-modules we see that the
term with the largest (in norm) infinitesimal character appearing is given as 
\[ X\left( \left\lceil \frac{a}{n-1} \right \rceil+k_1+1,- \left\lceil \frac{a-1}
    {n-1} \right \rceil+k_1-1 ,... ,\left\lceil \frac{a-(n-2)}{n-1} \right \rceil +k_{(n-1)/2}-1, b+1 \right)\]
    where the tuple appearing in the brackets is the infinitesimal character. 
    
To properly apply cohomological induction, we need to compute $\rho(u)|_T.$
 As the parabolic here is isomorphic to the parabolic attached to the principal orbit,
 we see that \[\rho(u)|_T=(2n-2,2n-2,2n-6,2n-6,...,4,4,0).\]
 Unlike the even orbit cases, we have the twist by the sums of $T$-weights on $\lie{s}_1$. 
 For this reason, set $k_i=-(2n-4i-2)$ for $i<(n-1)/2$ and $k_{(n-1)/2}=-3.$ Suppose that $a\leq (n-1)b+(n-2).$ 
 Then for the subset $I_1$, we obtain a tempered representation 
 with largest (in norm) infinitesimal character: 
  \begin{equation}\label{Subregular_odd_1}   \left(b+3,\left\lceil  \frac{a}{n-1} \right \rceil+1, -\left\lceil \frac{a-1}
    {n-1} \right \rceil-1 ,... ,-\left\lceil \frac{a-(n-2)}{n-1} \right \rceil-1 \right).  \end{equation}

When $a>(n-1)b+(n-2)$ we have that the largest tempered representation appearing
corresponds to the subset $I_2.$ This has infinitesimal character: 
  \begin{equation}\label{Subregular_odd_2}  \left(\left\lceil  \frac{a+2}{n-1} \right \rceil+1, -\left\lceil \frac{a+1}
    {n-1} \right \rceil-1 ,... ,-\left\lceil \frac{a-(n-4)}{n-1} \right \rceil-1,b+1 \right).  \end{equation} 
This finishes the $n$ odd case computation. 

\subsubsection{The Even Case}

When $n=2m$ is even, we are in a completely new situation. The partition $[n-1,1]$ has
 a corresponding weighted Dynkin diagram with weights $2$ for all nodes except for
  a single node with weight $0.$ For this reason, the orbit is even \cite{Collingwood1993}!
   Using a similar $H$ as in the previous case, we have
    \[ H=\operatorname{diag}(n-2,n-4,...,0,0,0,0,...,-(n-4),-(n-2)).\]
Picking $X$ compatible with this $H,$ we have 
that $K^X_{red}\simeq SL(2)\times Sp(4,\C), L\simeq GL(2,\C)^{n-2}\times GL(4,\C)$
 and $L\cap K\simeq GL(2,\C)^{m-1}\times Sp(4,\C).$ The $SL(2,\C)$-factor of $K^X$
  is embedded diagonally in the $GL(2,\C)$-factors of $L\cap K.$   
 
 Our lifting rule will be (un)surprisingly similar to the odd case. Consider
  the highest weight of an irreducible $L\cap K$-representation $(a_1,b_1,...,a_{m},b_m; x,y).$ 
  Then the image of this under the restriction map is
   \[  (a_1,b_1,...,a_{m},b_m; x,y)\mapsto \left(\sum_i (a_i-b_i); x,y)\right) \] we thus
    see that the restriction of irreducible $L\cap K$  representations is an irreducible $K^X_{red}$ 
    module.    
   
   To compute the lift, we proceed as above. Given triple of non-negative
    integers $(a;b,c)$ with $b\geq c$ realized as the highest weight of
    an irreducible $K^X_{red}$ representation $\tau$. Then we have  lifts 
    \[ \widetilde{\tau}=\left( \left\lceil \frac{a}{n-2} \right \rceil+k_1, -\left\lceil \frac{a-1}
    {n-2} \right \rceil+k_1 ,...,-\left\lceil \frac{a-(n-3)}{n-2} \right \rceil +k_{m}, b,c \right).\]
Applying the Weyl character formula and the Cousin resolution, 
we obtain a sum of tempiric $(\lie{l},L\cap K)$-modules with the largest 
infinitesimal character appearing being \[ \left( \left\lceil \frac{a}{n-2} \right \rceil+k_1+1, -\left\lceil \frac{a-1}
    {n-2} \right \rceil+k_1-1 ,...,-\left\lceil \frac{a-(n-3)}{n-2} \right \rceil +k_{m}-1, b+3,c+1 \right).\]

We now need to compute $\rho(\lie{u})|_T.$ We can utilize our computation
 from above, for $L\simeq GL(2,\C)^n.$ The difference between $\rho(u)|_T$ for this parabolic
 and the one attached to the subregular orbit is the inclusion of the half sum of the $\lie{h}$-weights 
 \[e_{n-1}-e_{n+1},e_{n-1}-e_{n+2}, e_n-e_{n+1},e_{n}-e_{n+2}\]
 This implies that \[ \rho(\lie{u})|_T=(2n-2,2n-2,2n-6,2n-6,...,6,6,0,0).\]
 
Now we can apply cohomological induction. As $\rho(\lie{u})|_T$ is dominant, the application 
of cohomological induction preserves the ``largest tempiric" property of the sum. Thus, we see
that the largest tempiric ($\lie{g},K)$-module appearing in the expansion of $M(\mathbb{O}_{sr},\tilde{\tau})$ is 
given by the character \[ \left( \left\lceil \frac{a}{n-2} \right \rceil+k_1+(2n-1),...,-\left\lceil \frac{a-(n-3)}{n-2} \right \rceil +k_{m}+5, b+2,c \right).\]
    Hence, for $k_i=-(2n-4i-2)$ we minimize this character. 
    
The final part of the computation is to determine the $W_K$-dominant 
conjugate of these characters. This comes in three parts: 
\begin{description} 
	\item[\textbf{I})] $a\leq (n-2)c.$
	\item[\textbf{II})] $(n-2)c<a\leq (n-2)(b+2).$
	\item[\textbf{III})] $a>(n-2)(b+2).$  
\end{description} 
For case $\textbf{I},$ the lowest $K$-type of the corresponding tempiric module
 has highest weight \begin{equation}\label{Subregular_Even_1} \left(b+2,c, \left\lceil \frac{a}{n-2} \right \rceil, \left\lceil \frac{a-1}
    {n-2} \right \rceil,...,\left\lceil \frac{a-(n-3)}{n-2} \right \rceil\right).\end{equation} 

For case $\textbf{II},$ the lowest $K$-type of the corresponding tempiric module
 has highest weight \begin{equation}\label{Subregular_Even_2} \left(b+2, \left\lceil \frac{a}{n-2} \right \rceil, \left\lceil \frac{a-1}
    {n-2} \right \rceil,...,\left\lceil \frac{a-(n-3)}{n-2} \right \rceil,c\right).\end{equation} 
    
 Lastly, case $\textbf{III}$ has the following corresponding highest weight 
 \begin{equation}\label{Subregular_Even_3} \left( \left\lceil \frac{a}{n-2} \right \rceil, \left\lceil \frac{a-1}
    {n-2} \right \rceil,...,\left\lceil \frac{a-(n-3)}{n-2} \right \rceil,b+2,c\right).\end{equation} 
    
We collect all of this in the following result: 

\begin{theorem} 
	The $LKT$ map on the subregular orbit for $GL(n,\HH)$ is 
	given by the formulas \ref{Subregular_odd_1},\ref{Subregular_odd_2} for $n$
	 odd and \ref{Subregular_Even_1},\ref{Subregular_Even_2},\ref{Subregular_Even_3} 
	 when $n$ is even. 
\end{theorem}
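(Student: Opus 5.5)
The plan is to assemble the computation already carried out in this subsection into the statement, organized in the same way as the principal-orbit and $GL(3,\HH)$ arguments. For a fixed orbit $\OO_{sr}$ with partition $[n-1,1]$ and a fixed irreducible $\tau$ of $K^X_{red}$, I would start from Corollary \ref{Formula_for_Pushforward} in its odd-orbit refinement (the formula $[A_\eta]$ built from \cite[Corollary 12.2(d)]{AdamsVogan2021}). This expresses $M(\OO_{sr},\tilde\tau)$ as a signed sum of $\operatorname{gr}\mathcal{R}_\lie{q}\overline{X}(\gamma_\pi-2\rho(A))$. Here $\pi$ ranges over the tempiric $(\lie{l},L\cap K)$-modules in the expansion of $\tilde\tau$: the Weyl character formula handles the $GL(2,\C)$ factors as in Example \ref{K_as_Tempiric_Example}, and Corollary \ref{K_as_Tempirics} handles the $Sp(2,\C)$ or $Sp(4,\C)$ factor. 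The subset $A$ ranges over $\Delta(\lie{s}_1,T)$, which is empty in the even case. Because $G_\R$ has a single conjugacy class of Cartan subgroups, Proposition \ref{cohomological_induction_pseudo_chars} reduces cohomological induction to adding $\rho(\lie{u})|_T$ to the infinitesimal character. Corollary (6) of Section 3, via \cite{Primc1979}, then turns an infinitesimal character into a lowest $K$-type: take the $W_K$-dominant conjugate and subtract $\rho$ of $SU(2)^n$.

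The key steps, in order, are as follows.

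\textbf{First}, verify the restriction map $\mathbb{K}(L\cap K)\to\mathbb{K}(K^X_{red})$ on highest weights, $(a_i,b_i;\ldots)\mapsto(\sum(a_i-b_i);\ldots)$. From this, the lifts $\tilde\tau$ are exactly the displayed families parametrized by the determinant twists $k_i$.

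\textbf{Second}, in each family identify the term of maximal norm. This is the $w_0$ term, carrying the $+2\rho_{L_K}$ shift. In the odd case one additionally needs the $A=I_1$ or $A=I_2$ twist. Since $\rho(\lie{u})|_T$ is dominant, adding it preserves which term is maximal.

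\textbf{Third}, minimize over the $k_i$. Each $k_i$ enters only through a symmetric pair of coordinates, and its effect is simply to centre that pair. So minimizing over $k_i$ reduces to choosing $k_i=-(2n-4i-2)$, with $k_{(n-1)/2}=-3$ in the odd case, exactly as in the principal orbit computation.

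\textbf{Fourth}, compute the $W_K$-dominant conjugate. Its shape depends on where the coordinate $b+2$ (or $b+3$, resp. $c$) sits relative to the block $\lceil (a-j)/(n-2)\rceil$ (resp. $\lceil (a-j)/(n-1)\rceil$). This position gives the inequalities distinguishing cases I, II and III in the even case, and the two regimes $a\le (n-1)b+(n-2)$ and $a>(n-1)b+(n-2)$ in the odd case. Subtracting $\rho$ then yields formulas \ref{Subregular_odd_1}--\ref{Subregular_Even_3}.

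The main obstacle is the second step in the odd case: proving that among the sixteen relevant terms (subsets $A\subseteq\Delta(\lie{s}_1,T)$ paired with the $w_0$ term) the $I_1$ or $I_2$ term is the \emph{unique} maximum in norm, up to $W_K$-conjugacy. Without uniqueness, $L(M(\OO,\tilde\tau))$ is ill-defined, which is precisely the failure seen for $SL(2,\R)$. My first attempt would be a direct comparison. Each $2\rho(A)$ changes only the coordinates $n-2$, $n-1$ and $n$, by amounts in $\{0,\pm1,\pm2\}$, so after the dominant shift $\rho(\lie{u})|_T$ only the last few coordinates compete. I would then compare the squared norms of the dominant conjugates coordinatewise, separating the cases by the sign of $a-(n-1)b-(n-2)$ as in the $GL(3,\HH)$ analysis. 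The boundary value $a=(n-1)b+(n-2)$ needs a separate check to confirm there is no tie.
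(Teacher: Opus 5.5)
Your outline follows the paper's own computation essentially line by line: the $[A_\eta]$ formula, the Weyl character formula on the $GL(2,\C)$ factors, adding $\rho(\lie{u})|_T$, the same $k_i$ and the same case splits. You are also right that the paper only asserts uniqueness of the leading term. The genuine gap, however, sits earlier, in your first and third steps, and following the paper does not close it because the paper makes the same moves.

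The rule $(a_i,b_i;\ldots)\mapsto(\sum_i(a_i-b_i);\ldots)$ records only top weights; it is not the restriction $\mathbb{K}(L\cap K)\to\mathbb{K}(K^X_{red})$. The first $SL(2,\C)$ of $K^X_{red}$ is diagonal in the $r$ factors $GL(2,\C)$. So $V_{(a_1,b_1)}\boxtimes\cdots\boxtimes V_{(a_r,b_r)}$ restricts to $V_{a_1-b_1}\otimes\cdots\otimes V_{a_r-b_r}$. By Clebsch--Gordan this contains $V_{\sum_i(a_i-b_i)}$ together with lower constituents as soon as two of the $a_i-b_i$ are nonzero. Hence once $r\geq 2$ the balanced $\tilde\tau$ you display is not a lift of $\tau$, and ``verifying'' Step 1 fails.

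For $n$ even your target formulas have a further problem. For $(\lie{gl}(2n),\lie{sp}(2n))$, each Jordan multiplicity space $E_k$ of a nilpotent $X\in\lie{s}$ inherits a symplectic form, so $K^X_{red}\simeq\prod_k Sp(E_k)$. For $[n-1,1]$ this is $Sp(2,\C)\times Sp(2,\C)$, not $SL(2,\C)\times Sp(4,\C)$. It sits in $L\cap K$ through $Sp(2,\C)\times Sp(2,\C)\subset Sp(4,\C)$, with the first factor also diagonal in the $GL(2,\C)$'s. Thus the $\tau$ are labelled by pairs, and the restriction involves branching from $Sp(4,\C)$. The triples $(a;b,c)$ in \eqref{Subregular_Even_1}--\eqref{Subregular_Even_3} therefore do not parametrize $\widehat{K^X_{red}}$.

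Second, even with honest lifts, definition (4) minimizes over the whole coset $\tilde\tau+\ker\left(\mathbb{K}(L\cap K)\to\mathbb{K}(K^X_{red})\right)$. This kernel contains every difference $[\sigma\otimes\det^{\otimes j}]-[\sigma\otimes\det^{\otimes j'}]$, which is more than replacing one twist by another. It also contains Clebsch--Gordan relations such as $V_{(1,0)}\boxtimes V_{(1,0)}-V_{(2,0)}\boxtimes V_{(0,0)}-V_{(0,0)}\boxtimes V_{(0,0)}$. Adding such classes could cancel the leading tempiric of a one-term lift. Tuning the $k_i$ inside one family therefore only bounds $\|\chi(LKT(\OO_{sr},\tau))\|$ from above.

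To prove the theorem you need two further facts:
\begin{itemize}
\item the reverse inequality, namely that no virtual lift has a smaller leading tempiric;
\item uniqueness of the tempiric attaining the minimum.
\end{itemize}
Your sixteen-term comparison for a fixed lift does not address either. Example 1 shows that different virtual extensions really do produce different, and tied, leading terms.

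A smaller point: $\rho(\lie{u})|_T$ contributes $2n-4i+2$ to the $i$-th pair, so centring requires $k_i=-(2n-4i+2)$, as in the principal-orbit case. The value $-(2n-4i-2)$ you state leaves each pair off-centre by $4$ and does not reproduce the displayed formulas.
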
 
   
This completes our study of the $LKT$ map for now. It is clear
 that a more general approach is desirable as these computations
  only become more involved as the stabilizers grow. It is believed
   that the methods of Bezrukavnikov may lead somewhere for
    $GL(n,\HH)$ (or any group with suitably nice geometry, i.e. QCT),
     but we are unsure how far it will go.

\bibliographystyle{alphaurl} 
\bibliography{Article_References_Master}

\end{document}